\newcommand{\res}{\!\!\mathop{\hbox{
                                \vrule height 7pt width .5pt depth 0pt
                                \vrule height .5pt width 6pt depth 0pt}}
                                \nolimits}
\def\z{{\bf z}}
\newtheorem{theorem}{Theorem}[section]
\newtheorem{lemma}[theorem]{Lemma}
\newtheorem{definition}[theorem]{Definition}
\newtheorem{proposition}[theorem]{Proposition}
\newtheorem{corollary}[theorem]{Corollary}
\newtheorem{remark}[theorem]{Remark}
\newtheorem{example}[theorem]{Example}
\newtheorem*{theorem*}{\it Theorem}
\def\vint_#1{\mathchoice%
          {\mathop{\kern 0.2em\vrule width 0.6em height 0.69678ex depth -0.58065ex
                  \kern -0.8em \intop}\nolimits_{\kern -0.4em#1}}%
          {\mathop{\kern 0.1em\vrule width 0.5em height 0.69678ex depth -0.60387ex
                  \kern -0.6em \intop}\nolimits_{#1}}%
          {\mathop{\kern 0.1em\vrule width 0.5em height 0.69678ex
              depth -0.60387ex
                  \kern -0.6em \intop}\nolimits_{#1}}%
          {\mathop{\kern 0.1em\vrule width 0.5em height 0.69678ex depth -0.60387ex
                  \kern -0.6em \intop}\nolimits_{#1}}}
\def\vintslides_#1{\mathchoice%
          {\mathop{\kern 0.1em\vrule width 0.5em height 0.697ex depth -0.581ex
                  \kern -0.6em \intop}\nolimits_{\kern -0.4em#1}}%
          {\mathop{\kern 0.1em\vrule width 0.3em height 0.697ex depth -0.604ex
                  \kern -0.4em \intop}\nolimits_{#1}}%
          {\mathop{\kern 0.1em\vrule width 0.3em height 0.697ex depth -0.604ex
                  \kern -0.4em \intop}\nolimits_{#1}}%
          {\mathop{\kern 0.1em\vrule width 0.3em height 0.697ex depth -0.604ex
                  \kern -0.4em \intop}\nolimits_{#1}}}
\def\R{\mathbb R}
\def\N{\mathbb N}
\def\Z{\mathbb Z}
\def\g{\hbox{\bf g}}
\numberwithin{equation}{section}
\def\NN{{\mathbb{N}}}
\def\1{\raisebox{2pt}{\rm{$\chi$}}}
\def\g{{\bf g}}
\definecolor{violet(ryb)}{rgb}{0.53, 0.0, 0.69}
\begin{document}

\title[Total Variation Flow in  Metric Random Walk Spaces]{\bf The Total Variation Flow in  Metric Random Walk Spaces}

\author[J. M. Maz\'on, M. Solera, J. Toledo]{Jos\'e M. Maz\'on, Marcos Solera and Juli\'{a}n Toledo}

\address{J. M. Maz\'{o}n: Departament d'An\`{a}lisi Matem\`atica,
Universitat de Val\`encia, Dr. Moliner 50, 46100 Burjassot, Spain.
 {\tt mazon@uv.es }}
\address{ M. Solera: Departament d'An\`{a}lisi Matem\`atica,
Universitat de Val\`encia, Dr. Moliner 50, 46100 Burjassot, Spain.
 {\tt marcos.solera@uv.es  }
}
\address{J. Toledo: Departament d'An\`{a}lisi Matem\`atica,
Universitat de Val\`encia, Dr. Moliner 50, 46100 Burjassot, Spain.
 {\tt toledojj@uv.es }}


\keywords{Random walk, Total variation flow, Sets of finite perimeter, Nonlocal operators, Cheeger sets, Calibrable sets, Functions of bounded variation.\\
\indent 2010 {\it Mathematics Subject Classification:} 05C80, 35R02, 05C21, 45C99, 26A45.
}

\setcounter{tocdepth}{1}

%

\begin{abstract}
In this paper we study the Total Variation Flow (TVF) in  metric random walk spaces, which unifies into a broad framework the TVF on locally finite weighted connected graphs, the TVF  determined by finite Markov chains and some nonlocal evolution problems. Once the existence and uniqueness of solutions of the TVF has been proved, we study the asymptotic behaviour of those solutions and, with that aim in view, we establish some inequalities of Poincar\'{e} type. In particular, for finite weighted connected graphs, we show that the solutions reach the average of the initial data in finite time. Furthermore, we  introduce the concepts  of perimeter and mean curvature for subsets of a metric random walk space and we study the relation between isoperimetric inequalities and Sobolev inequalities. Moreover, we introduce the concepts of Cheeger and calibrable sets in metric random walk spaces and characterize calibrability by using the $1$-Laplacian operator. Finally, we study the eigenvalue problem  whereby we give a method to solve the  optimal Cheeger cut problem.

\end{abstract}

\maketitle

%
%
%
%
%
%
%
%
%
%
%

{ \renewcommand\contentsname{Contents }
\setcounter{tocdepth}{3}
\tableofcontents
 }

\section{Introduction and Preliminaries}

A  metric random walk space $[X,d,m]$ is a metric space  $(X,d)$ together with a family  $m = (m_x)_{x \in X}$ of probability measures that  encode the jumps of a Markov chain. Important examples of metric random walk spaces are: locally finite weighted connected graphs, finite Markov chains and $[\R^N, d, m^J]$ with $d$ the Euclidean distance and
$$m^J_x(A) :=  \int_A J(x - y) d\mathcal{L}^N(y) \ \hbox{ for every Borel set } A \subset  \R^N \ ,$$
where $J:\R^N\to[0,+\infty[$ is a measurable, nonnegative and radially symmetric
function with $\int J =1$. Furthermore, given a metric measure space $(X,d, \mu)$ satisfying certain properties we can obtain a metric random walk  space $[X, d, m^{\mu,\epsilon}]$,  called the {\it $\epsilon$-step random walk associated to $\mu$}, where
 $$m^{\mu,\epsilon}_x:= \frac{\mu \res B(x, \epsilon)}{\mu(B(x, \epsilon))}.$$

 Since its introduction as a means of solving the denoising problem in the seminal work by Rudin, Osher and Fatemi (\cite{ROF}), the total variation flow has remained one of the most popular tools in Image Processing. Recall that, from the mathematical point of view, the study of the total variation flow in $\R^N$ was established in \cite{ACMBook}. On the other hand, the use of  neighbourhood filters by Buades, Coll and Morel in \cite{BCM}, that was originally proposed by P. Yaroslavsky (\cite{Y1}), has led to an extensive literature in nonlocal models in image processing (see for instance \cite{BEM}, \cite{GO1}, \cite{KOJ}, \cite{LEL} and the references therein). Consequently, there is great interest in studying the total variation flow in the nonlocal context. As further motivation, note that an image can be considered as a weighted graph, where the pixels are taken as the vertices and the ``similarity'' between pixels as the weights. The way in which these weights are defined depends on the problem at hand, see for instance \cite{ELB} and  \cite{LEL}.

The aim of this paper is to study the total variation flow in metric random walk spaces, obtaining  general results that can be applied, for example, to the different points of view in Image Processing. In this regard, we introduce the $1$-Laplacian operator associated with a metric random walk space, as well as the notions of perimeter and mean curvature for subsets of a metric random walk space. In doing so, we generalize results obtained in \cite{MRT1} and \cite{MRTLibro} for the particular case of $[\R^N, d, m^J]$, and, moreover, generalize results in graph theory. We then proceed to prove existence and uniqueness of solutions of the total variation flow in metric random walk spaces and to study its asymptotic behaviour with the help of some Poincar\'{e} type inequalities. Furthermore, we introduce the concepts of Cheeger and calibrable sets in metric random walk spaces and characterize calibrability by using the $1$-Laplacian operator.  Let us point out that, to our knowledge, some of these results were not yet known for graphs, nonetheless,   we have specified in the main text which important results were already known for graphs. Moreover, in the forthcoming paper \cite{MSTDecomp}, we  apply the theory developed here to obtain the $(BV,L^p)$-decomposition, $p=1,2$, of functions in  metric random walk spaces. This decomposition can be applied to Image Processing if, for example, images are regarded as graphs and, moreover, to other nonlocal models.

Partitioning data into sensible groups is a fundamental problem in machine learning, computer science, statistics and science in general. In these fields, it is usual to face large amounts of empirical data, and getting  a first impression of the data by identifying groups with similar properties can prove to be very useful. One of the most popular approaches to this problem is to find the best balanced cut of a graph representing the data, such as the Cheeger ratio cut (\cite{Cheeger}). Consider  a finite weighted connected graph $G =(V, E)$, where $V = \{x_1, \ldots , x_n \}$ is the set of vertices  (or nodes) and $E$ the set of edges, which are weighted by a function  $w_{ji}= w_{ij} \geq 0$, $(i,j) \in E$. The degree of the vertex $x_i$ is denoted by $d_i:= \sum_{j=1}^n w_{ij}$, $i=1,\ldots, n$. In this context, the Cheeger cut value of a partition $\{ S, S^c\}$ ($S^c:= V \setminus S$) of $V$ is defined as
$$\mathcal{C}(S):= \frac{{\rm Cut}(S,S^c)}{\min\{{\rm vol}(S), {\rm vol}(S^c)\}},$$
where
$${\rm Cut}(A,B) = \sum_{i \in A, j \in B} w_{ij}, $$
and ${\rm vol}(S)$ is the volume of $S$, defined as ${\rm vol}(S):= \sum_{i \in S} d_i$. Furthermore,
$$h(G) = \min_{S \subset V} \mathcal{C}(S)$$
is called the Cheeger constant, and a partition $\{ S, S^c\}$ of $V$ is called a Cheeger cut of $G$ if $h(G)=\mathcal{C}(S)$. Unfortunately, the Cheeger minimization problem of computing $h(G)$ is NP-hard (\cite{HB}, \cite{SB1}). However, it turns out that $h(G)$ can be approximated by the second eigenvalue $\lambda_2$ of the graph Laplacian thanks to the following Cheeger inequality (\cite{Ch}):
\begin{equation}\label{CheegerIne1}
\frac{\lambda_2}{2} \leq h(G) \leq \sqrt{2\lambda_2}.
\end{equation}
 This motivates the spectral clustering method (\cite{Luxburg}), which, in its simplest form, thresholds the second eigenvalue of the graph Laplacian to get an approximation to the Cheeger constant and, moreover, to a Cheeger cut. In order to achieve a better approximation than the one provided by the classical spectral clustering method, a spectral clustering based on the graph $p$-Laplacian was developed in \cite{BH1}, where it is showed that the second eigenvalue of the graph $p$-Laplacian  tends to the Cheeger constant $h(G)$ as $p \to 1^+$. In  \cite{SB1} the idea was taken up by directly considering the variational characterization of the Cheeger constant $h(G)$
 \begin{equation}\label{cheegerSB}
 h(G) = \min_{u \in L^1} \frac{ \vert u \vert_{TV}}{\Vert u - {\rm median}(u)) \Vert_1},
 \end{equation}
where
$$\vert u \vert_{TV} := \frac{1}{2} \sum_{i,j=1}^n w_{ij} \vert u(x_i) - u(x_j) \vert.$$
The subdifferential of the energy functional $\vert \cdot \vert_{TV}$ is the $1$-Laplacian in graphs $\Delta_1$. Using the nonlinear eigenvalue problem $0 \in \Delta_1 u - \lambda \, {\rm sign}(u)$, the theory of $1$-Spectral Clustering is developed in \cite{Chang1},  \cite{Changetal01},  \cite{ChSZ} and \cite{HB}, and good results on the Cheeger minimization problem have been obtained.

In \cite{MST0}, we obtained a generalization, in the framework of metric random walk spaces, of the Cheeger inequality \eqref{CheegerIne1} and of the variational characterization of the Cheeger constant \eqref{cheegerSB}. In this paper, in connection with the $1$-Spectral Clustering, also in metric random walk spaces, we study the eigenvalue problem of the $1$-Laplacian and then relate it to the optimal Cheeger cut problem. Then again,  these results apply, in particular, to locally finite weighted connected graphs, complementing the results given in \cite{Chang1}, \cite{Changetal01}, \cite{ChSZ} and \cite{HB}.

Additionally, regarding the notion of a function of bounded variation in a metric measure space $(X,d, \mu)$ introduced by Miranda in \cite{Miranda1}, we provide, via the $\epsilon$-step random walk associated to $\mu$, a characterization of these functions.

 \subsection{Metric Random Walk Spaces}

Let $(X,d)$ be a  Polish metric  space equipped with its Borel $\sigma$-algebra.
A {\it random walk} $m$ on $X$ is a family of probability measures $m_x$ on $X$, $x \in X$, satisfying the two technical conditions: (i) the measures $m_x$  depend measurably on the point  $x \in X$, i.e., for any Borel set $A$ of $X$ and any Borel set $B$ of $\R$, the set $\{ x \in X \ : \ m_x(A) \in B \}$ is Borel; (ii) each measure $m_x$ has finite first moment, i.e. for some (hence any,  by the triangle inequality) $z \in X$, and for any $x \in X$ one has $\int_X d(z,y) dm_x(y) < +\infty$ (see~\cite{O}).

A {\it metric random walk  space} $[X,d,m]$  is a  Polish metric space $(X,d)$ together with a  random walk $m$ on $X$.

Let $[X,d,m]$ be a metric random walk  space. A   Radon measure $\nu$ on $X$ is {\it invariant} for the random walk $m=(m_x)$ if
$$d\nu(x)=\int_{y\in X}d\nu(y)dm_y(x),$$
that is,  for any $\nu$-measurable set $A$, it holds that $A$ is $m_x$-measurable  for $\nu$-almost all $x\in X$,  $\displaystyle x\mapsto  m_x(A)$ is $\nu$-measurable, and
$$\nu(A)=\int_X m_x(A)d\nu(x).$$
Consequently, if $\nu$ is an invariant measure with respect to $m$ and $f \in L^1(X, \nu)$, it holds that $f \in L^1(X, m_x)$ for $\nu$-a.e. $x \in X$, $\displaystyle x\mapsto \int_X f(y) d{m_x}(y)$ is $\nu$-measurable, and \label{paginv}
$$\int_X f(x) d\nu(x) = \int_X \left(\int_X f(y) d{m_x}(y) \right)d\nu(x).$$

The measure $\nu$ is said to be {\it reversible} for $m$ if, moreover, the following detailed balance condition holds:
\begin{equation}\label{repo001} dm_x(y)d\nu(x)  =   dm_y(x)d\nu(y),
 \end{equation}
 that is, for any Borel set $C \subset X \times X$,  $$  \int_{X}\left(\int_X \1_{C}(x,y)  dm_x(y)\right)d\nu(x)  =  \int_X\left(\int_X\1_C(x,y) dm_y(x)\right)d\nu(y),$$
  where $\1_{C}$ is the characteristic function of the set $C$ defined as $$\1_{C}(x):=\left\{ \begin{array}{ll}
1 & \hbox{if } x\in C,\\
0 & \hbox{otherwise.}
\end{array}\right.$$
Note that the reversibility condition implies the invariance condition. However, we will sometimes write that $\nu$ is invariant and reversible so as to emphasize both conditions.

 We now give some examples of metric random walk spaces that illustrate the general abstract setting. In particular, Markov chains serve as paradigmatic examples that capture many of the properties of this general setting that we will encounter during our study.

 \begin{example}\label{JJ}
  (1)
Consider $(\R^N, d, \mathcal{L}^N)$, with $d$ the Euclidean distance and $\mathcal{L}^N$ the Lebesgue measure. For simplicity we will write $dx$ instead of $d\mathcal{L}^N(x)$. Let  $J:\R^N\to[0,+\infty[$ be a measurable, nonnegative and radially symmetric
function  verifying  $\int_{\R^N}J(x)dx=1$. In $(\R^N, d, \mathcal{L}^N)$ we have the following random walk, starting at $x$,
$$m^J_x(A) :=  \int_A J(x - y) dy \quad \hbox{ for every Borel set } A \subset  \R^N  .$$
Applying Fubini's Theorem it is easy to see that the Lebesgue measure $\mathcal{L}^N$ is an invariant and reversible measure for this random walk.

 Observe that, if we assume that in $\R^N$ we have an homogeneous population and $J(x-y)$ is thought of as the probability distribution of jumping from location $x$ to location $y$, then, for a Borel set $A$ in $\R^N$, $m^J_x(A)$ is measuring how many individuals are going to $A$ from $x$ following the law given by~$J$.
See also the interpretation of the $m$-interaction between sets given in Section~\ref{lasecper}. Finally, note that the same ideas are applicable to the countable spaces given in the following two examples.

\noindent (2)  Let $K: X \times X \rightarrow \R$ be a Markov kernel on a countable space $X$, i.e.,
 $$K(x,y) \geq 0 \quad \forall x,y \in X, \quad \quad \sum_{y\in X} K(x,y) = 1 \quad \forall x \in X.$$
 Then, for $$m^K_x(A):= \sum_{y \in A} K(x,y),$$
 $[X, d, m^K]$ is a metric random walk space for  any  metric  $d$ on $X$.

  Moreover, in Markov chain theory terminology, a measure $\pi $ on $X$ satisfying
 $$\sum_{x \in X} \pi(x) = 1 \quad \hbox{and} \quad \pi(y) = \sum_{x \in X} \pi(x) K(x,y) \quad  \forall y \in X,$$
 is called a stationary probability measure (or steady state) on $X$. This is equivalent to the definition of invariant probability measure for the metric random walk space $[X, d, m^K]$. In general, the existence of such a stationary probability measure on $X$ is not guaranteed. However, for irreducible and positive recurrent Markov chains (see, for example,~\cite{HLL} or~\cite{Norris}) there exists a unique stationary probability measure.

Furthermore, a stationary probability measure $\pi$ is said to be reversible for $K$ if the following detailed balance equation holds:
 $$K(x,y) \pi(x) = K(y,x) \pi(y) \ \hbox{ for } x, y \in X.$$
 By Tonelli's Theorem for series, this balance condition is equivalent to the one given in~\eqref{repo001} for $\nu=\pi$:
 $$dm^K_x(y)d\pi(x)  =   dm^K_y(x)d\pi(y).$$

\noindent (3) Consider  a locally finite weighted discrete graph $G = (V(G), E(G))$, where each edge $(x,y) \in E(G)$ (we will write $x\sim y$ if $(x,y) \in E(G)$) has a positive weight $w_{xy} = w_{yx}$ assigned. Suppose further that $w_{xy} = 0$ if $(x,y) \not\in E(G)$.

 A finite sequence $\{ x_k \}_{k=0}^n$  of vertices on the graph is called a {\it  path} if $x_k \sim x_{k+1}$ for all $k = 0, 1, ..., n-1$. The {\it length} of a path $\{ x_k \}_{k=0}^n$ is defined as the number $n$ of edges in the path. Then, $G = (V(G), E(G))$ is said to be {\it connected} if, for any two vertices $x, y \in V$, there is a path connecting $x$ and $y$, that is, a path $\{ x_k \}_{k=0}^n$ such that $x_0 = x$ and $x_n = y$.  Finally, if $G = (V(G), E(G))$ is connected, define the graph distance $d_G(x,y)$ between any two distinct vertices $x, y$ as the minimum of the lengths of the paths connecting $x$ and $y$. Note that this metric is independent of the weights. We will always assume that the graphs we work with are connected.

For $x \in V(G)$ we define the weight   at the vertex $x$ as $$d_x:= \sum_{y\sim x} w_{xy} = \sum_{y\in V(G)} w_{xy},$$
and the neighbourhood of $x$ as $N_G(x) := \{ y \in V(G) \, : \, x\sim y\}$. Note that, by definition of locally finite graph, the sets $N_G(x)$ are finite. When $w_{xy}=1$ for every $x\sim y$, $d_x$ coincides with the degree of the vertex $x$ in a graph, that is,  the number of edges containing vertex $x$.

  For each $x \in V(G)$  we define the following probability measure
\begin{equation}\label{discRW}m^G_x:=  \frac{1}{d_x}\sum_{y \sim x} w_{xy}\,\delta_y.\\ \\
\end{equation}
 We have that $[V(G), d_G, m^G]$ is a metric random walk space and it is not difficult to see that the measure $\nu_G$ defined as
 $$\nu_G(A):= \sum_{x \in A} d_x,  \quad A \subset V(G),$$
is an invariant and  reversible measure for this random walk.

Given a locally finite weighted discrete graph $G = (V(G), E(G))$, there is a natural definition of a Markov chain
on the vertices. We define the Markov kernel  $K_G: V(G)\times V(G) \rightarrow \R$ as
$$K_G(x,y):= \frac{1}{d_x}  w_{xy}.$$
We have that $m^G$ and $m^{K_G}$ define the same random walk.  If $\nu_G(V(G))$ is finite,   the unique stationary and reversible probability measure is given by
$$\pi_G(x):= \frac{1}{\nu_G(V(G))} \sum_{z \in V(G)} w_{xz}. $$

 \noindent (4) From a metric measure space $(X,d, \mu)$ we can obtain a metric random walk space, the so called {\it $\epsilon$-step random walk associated to $\mu$}, as follows. Assume that balls in $X$ have finite measure and that ${\rm Supp}(\mu) = X$. Given $\epsilon > 0$, the $\epsilon$-step random walk on $X$ starting at $x\in X$, consists in randomly jumping in the ball of radius $\epsilon$ centered at $x$ with probability proportional to $\mu$; namely
 $$m^{\mu,\epsilon}_x:= \frac{\mu \res B(x, \epsilon)}{\mu(B(x, \epsilon))}.$$
Note that $\mu$ is an invariant and reversible measure for the metric random walk space $[X, d, m^{\mu,\epsilon}]$.

 \noindent (5) Given a  metric random walk  space $[X,d,m]$ with invariant and reversible measure $\nu$, and given a $\nu$-measurable set $\Omega \subset X$ with $\nu(\Omega) > 0$, if we define, for $x\in\Omega$,
$$m^{\Omega}_x(A):=\int_A d m_x(y)+\left(\int_{X\setminus \Omega}d m_x(y)\right)\delta_x(A) \ \hbox{ for every Borel set } A \subset  \Omega  ,
$$
 we have that $[\Omega,d,m^{\Omega}]$ is a metric random walk space and it easy to see that $\nu \res \Omega$ is  reversible for $m^{\Omega}$.

In particular, if $\Omega$ is a closed and bounded subset of $\R^N$, we obtain the metric random walk space $[\Omega, d, m^{J,\Omega}]$, where $m^{J,\Omega} = (m^J)^{\Omega}$, that is
$$m^{J,\Omega}_x(A):=\int_A J(x-y)dy+\left(\int_{\R^n\setminus \Omega}J(x-z)dz\right)d\delta_x \ \hbox{ for every Borel set } A \subset  \Omega .$$

\end{example}

 From this point onwards, when dealing with a metric random walk space, we will assume that there exists an invariant and reversible measure for the random walk, which we will always denote by $\nu$. In this regard, when it is clear from the context, a measure denoted by $\nu$ will always be an invariant and reversible measure for the random walk under study. Furthermore, we assume that the metric measure space $(X,d,\nu)$ is $\sigma$-finite.

 \subsection{Completely Accretive Operators and Semigroup Theory}\label{secacc}
Since Semigroup Theory will be used along the paper, we would like to conclude this introduction with some notations and results from this theory along with results from the theory of completely accretive operators (see \cite{BCr2}, \cite{Brezis} and \cite{CrandallLiggett}, or   the Appendix in \cite{ElLibro}, for more details).
We denote by $J_0$ and $P_0$ the following sets of functions:
$$J_0 := \{ j : \R \rightarrow [0, +\infty] \ : \ \mbox{$j$ is convex,
lower semi-continuous and} \ j(0) = 0 \},$$
  $$ P_0:= \left\{q\in  C^\infty(\R) \ : \ 0\le q'\le 1, \hbox{ supp}(q')  \hbox{ is compact and }
  0\notin \hbox{supp}(q) \right\}.
  $$
Let $u,v\in L^1(X,\nu)$. The following relation between $u$ and $v$ is defined in \cite{BCr2}:
\begin{equation}\label{menormenor}u\ll v \ \hbox{ if, and only if,} \ \int_{X} j(u)\,  d\nu  \leq \int_{X} j(v)
 \, d\nu \ \ \hbox{for all} \ j \in J_0.\end{equation}
An operator $\mathcal{A} \subset L^1(X,\nu)\times L^1(X,\nu)$ is called {\it completely accretive} if
$$\int_X (v_1 - v_2) q(u_1 - u_2)d\nu \geq 0 \quad \hbox{for every} \ \ q \in P_0$$
and every $(u_i, v_i) \in \mathcal{A}$, $i=1,2$. Moreover, an operator $\mathcal{A}$ in $L^1(X,\nu)$   is m-{\it
completely accretive} in $L^1(X,\nu)$ if $\mathcal{A}$ is completely accretive and ${Range}(I
+ \lambda \mathcal{A}) = L^1(X,\nu)$ for all $\lambda > 0$ (or, equivalently, for some $\lambda>0$).

\begin{theorem}[\cite{BCr2}, \cite{Brezis}]\label{teointronls}
   If $\mathcal{A}$
 is an m-completely accretive operator in $L^1(X,\nu)$, then, for every $u_0 \in \overline{D(\mathcal{A})}$ (the closure of the domain of $\mathcal{A}$), there exists a unique mild solution (see \cite{CrandallLiggett})
  of the problem
\begin{equation}\label{AACCPP}
\left\{
\begin{array}{l}
\displaystyle \frac{du}{dt} + \mathcal{A}u \ni 0, \\[10pt]
u(0) = u_{0}.
\end{array}
\right.
\end{equation}
Moreover, if $\mathcal{A}$ is the subdifferential of a proper convex and lower semicontinuous function in $L^2(X,\nu)$ then the mild solution of the above problem is a strong solution.

Furthermore  we have the following contraction and maximum principle in any $L^q(X,\nu)$ space, $1\le q\le +\infty$: for $u_{1,0},u_{2,0} \in \overline{D(\mathcal{A})}$ and denoting by $u_i$  the  unique mild solution
  of the problem
\begin{equation}\label{AACCPPiii}
\left\{
\begin{array}{l}
\displaystyle \frac{du_i}{dt} + \mathcal{A}u_i \ni 0, \\[10pt]
u_i(0) = u_{i,0},
\end{array}
\right.
\end{equation}
$ i=1,2,$ we have
\begin{equation}\label{maxpriniiii}\Vert (u_1(t)-u_2(t))^+\Vert_{L^q(X,\nu)}\le \Vert (u_{1,0}-u_{2,0})^+\Vert_{L^q(X,\nu)}\quad \forall\, 0<t<T,
\end{equation}
 where $r^+:=\max \{r,0\}$ for $r\in\R$.
\end{theorem}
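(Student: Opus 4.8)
The plan is to derive all three assertions from the general theory of nonlinear contraction semigroups: the existence and uniqueness of the mild solution from the Crandall--Liggett generation theorem, the $L^q$ contraction and maximum principle from the resolvent characterization of complete accretivity, and the strong-solution statement from Brezis' theory of evolution equations governed by subdifferentials in a Hilbert space.

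\textbf{Existence and uniqueness of the mild solution.} First I would check that an m-completely accretive operator is m-accretive in $L^1(X,\nu)$. Complete accretivity implies accretivity: given $(u_i,v_i)\in\mathcal{A}$, $i=1,2$, choose $q_n\in P_0$ with $q_n'\ge 0$, supported on positive reals, and $q_n(r)\to 1$ for each $r>0$; inserting $q_n$ in $\int_X(v_1-v_2)q_n(u_1-u_2)\,d\nu\ge 0$ and letting $n\to\infty$ yields $\int_{[u_1>u_2]}(v_1-v_2)\,d\nu\ge 0$, and the symmetric choice gives the analogous bound on $[u_1<u_2]$, whence $\|u_1-u_2\|_{L^1}\le\|u_1-u_2+\lambda(v_1-v_2)\|_{L^1}$ for all $\lambda>0$. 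Together with the range condition $\mathrm{Range}(I+\lambda\mathcal{A})=L^1(X,\nu)$, this says $\mathcal{A}$ is m-accretive, so the Crandall--Liggett exponential formula $S(t)u_0:=\lim_{n\to\infty}\bigl(I+\tfrac{t}{n}\mathcal{A}\bigr)^{-n}u_0$ converges uniformly for $t$ in compact intervals and defines a strongly continuous semigroup of contractions on $\overline{D(\mathcal{A})}$; by definition $u(t):=S(t)u_0$ is the unique mild solution of \eqref{AACCPP} in the sense of \cite{CrandallLiggett}.

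\textbf{The $L^q$ contraction and maximum principle.} Here I would invoke the characterization of \cite{BCr2}: $\mathcal{A}$ is completely accretive if and only if, for every $\lambda>0$, the resolvent $J_\lambda:=(I+\lambda\mathcal{A})^{-1}$ is a complete contraction, i.e. $(J_\lambda f-J_\lambda g)^+\ll(f-g)^+$ for all $f,g$. Since $j(r)=|r|^q\in J_0$ for $1\le q<\infty$ and $j(r)=(|r|-k)^+\in J_0$ for $k\ge0$, the relation $w_1\ll w_2$ with $0\le w_1,w_2$ forces $\|w_1\|_{L^q}\le\|w_2\|_{L^q}$ for every $q\in[1,\infty]$ (the case $q=\infty$ following from the inequalities with $(|r|-k)^+$). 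Applying this iteratively along the implicit Euler scheme, $u_i^{(n)}:=\bigl(I+\tfrac{t}{n}\mathcal{A}\bigr)^{-n}u_{i,0}$ satisfies $\|(u_1^{(n)}-u_2^{(n)})^+\|_{L^q}\le\|(u_{1,0}-u_{2,0})^+\|_{L^q}$; passing to the limit $n\to\infty$, using $L^1$-convergence along a subsequence converging $\nu$-a.e. together with Fatou's lemma (and lower semicontinuity of $\|\cdot\|_{L^\infty}$ for $q=\infty$), we obtain \eqref{maxpriniiii} for every $q\in[1,\infty]$.

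\textbf{Strong solutions when $\mathcal{A}$ is a subdifferential.} Suppose now $\mathcal{A}$ is the subdifferential of a proper, convex, lower semicontinuous $\Phi:L^2(X,\nu)\to(-\infty,+\infty]$. Brezis' regularizing theorem for the Hilbertian gradient flow gives that the semigroup generated by $\partial\Phi$ in $L^2(X,\nu)$ maps $\overline{D(\partial\Phi)}$ into $D(\partial\Phi)$ for every $t>0$, that its orbits are locally Lipschitz on $(0,\infty)$ with values in $L^2(X,\nu)$, and that $t\mapsto u(t)$ is right-differentiable with $\tfrac{d^+}{dt}u(t)=-\mathcal{A}^{\circ}u(t)$, where $\mathcal{A}^{\circ}u$ is the element of minimal norm of $\mathcal{A}u$; in particular $u$ is a strong solution of \eqref{AACCPP}. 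The delicate point --- and the main obstacle --- is the compatibility of the two functional settings: one must show that the $L^1$-mild solution from the first step coincides with this $L^2$-strong solution. This follows because the two resolvents agree on the overlap, $(I+\lambda\mathcal{A})^{-1}f=(I+\lambda\,\partial\Phi)^{-1}f$ for $f\in L^1(X,\nu)\cap L^2(X,\nu)$ (both being the unique minimizer of $w\mapsto\tfrac12\|w-f\|_{L^2}^2+\lambda\Phi(w)$, which lies in $L^1$ by complete accretivity), so the Euler approximants coincide on such data; one then extends to general $u_0\in\overline{D(\mathcal{A})}$ by density, using the $L^1$-contraction of the semigroup together with the smoothing estimate for $\|\mathcal{A}^{\circ}u(t)\|_{L^2}$.
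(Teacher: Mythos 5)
This theorem is not proved in the paper at all: it is quoted as background from \cite{BCr2}, \cite{Brezis} and \cite{CrandallLiggett} (see also the Appendix of \cite{ElLibro}), so there is no in-paper argument to compare against. Your proposal is a correct reconstruction of exactly the route those references take: Crandall--Liggett for generation of the mild solution, the Bénilan--Crandall characterization of complete accretivity via completely contractive resolvents together with the choices $j(r)=|r|^q$ and $j(r)=(|r|-k)^+$ in $J_0$ for the $L^q$ contraction/maximum principle, and Brezis' regularizing theorem for subdifferential flows in $L^2(X,\nu)$ plus coincidence of resolvents on $L^1\cap L^2$ for the strong-solution claim. One small technical slip: as the paper defines $P_0$, its elements satisfy $0\le q'\le 1$ and vanish near $0$, so no sequence $q_n\in P_0$ can converge pointwise to $1$ on all of $(0,\infty)$; since the defining inequality $\int_X(v_1-v_2)q(u_1-u_2)\,d\nu\ge 0$ is homogeneous in $q$, it extends to all bounded nondecreasing Lipschitz functions vanishing near the origin, and with that observation your approximation of $\1_{(0,\infty)}$ and the resulting $T$-accretivity in $L^1$ go through. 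With that repair the argument is sound and is the intended proof.
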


\section{Perimeter, Curvature and Total Variation in Metric Random Walk Spaces} \label{sect-perimeter}

\subsection{$m$-Perimeter}\label{lasecper}
 Let $[X,d,m]$ be a metric random walk  space with invariant and  reversible measure $\nu$. We define the {\it $m$-interaction} between two $\nu$-measurable subsets $A$ and $B$ of $X$ as
\begin{equation}\label{nlinterdos} L_m(A,B):= \int_A \int_B dm_x(y) d\nu(x).
 \end{equation}
Whenever  $L_m(A,B) < +\infty$, by the reversibility assumption on $\nu$ with respect to $m$, we have
 $$L_m(A,B)=L_m(B,A).$$

  Following the interpretation given after Example~\ref{JJ}~(1), for a $\nu$-homogeneous population which moves according to the law provided by the random walk $m$, $L_m(A,B)$ measures how many individuals are moving from $A$ to $B$, and, thanks to the reversibility, this is equal to the amount of individuals moving from $B$ to $A$. In this regard, the following concept  measures the total flux of individuals that cross the ``boundary''   (in a very weak sense)  of a set.

 We define the concept of $m$-perimeter of a $\nu$-measurable subset $E \subset X$ as
 $$P_m(E)=L_m(E,X\setminus E) = \int_E \int_{X\setminus E} dm_x(y) d\nu(x).$$
 It is easy to see that
\begin{equation}\label{secondf}P_m(E) = \frac{1}{2} \int_{X}  \int_{X}  \vert \1_{E}(y) - \1_{E}(x) \vert dm_x(y) d\nu(x).
\end{equation}
Moreover,  if $E$ is $\nu$-integrable, we have
 \begin{equation}\label{secondf021}\displaystyle P_m(E)=\nu(E) -\int_E\int_E dm_x(y) d\nu(x).
\end{equation}

The notion of $m$-perimeter can be localized to a bounded open set $\Omega \subset X$ by defining
 \begin{equation}\label{nonlp} \displaystyle P_m(E, \Omega):= L_m( E\cap\Omega, X \setminus E) + L_m(E \setminus \Omega,  \Omega \setminus E).
\end{equation}
Observe that
\begin{equation}\label{nonl2}\displaystyle  L_m(E, X\setminus E) =L_m( E\cap\Omega, X \setminus E) + L_m(E \setminus \Omega,  \Omega \setminus E)+L_m(E\setminus \Omega, X \setminus (E \cup \Omega)) \end{equation}
and, consequently, we have
\begin{equation}\label{nonl3}P_m(E, \Omega) = \int_E \int_{X \setminus E} dm_x(y) d\nu(x) - \int_{E\setminus \Omega} \int_{X \setminus (E \cup \Omega)} dm_x(y) d\nu(x),
\end{equation}
 when both integrals are finite.

\begin{example}\label{dt}
(1) Let  $[\R^N, d, m^J]$ be   the metric random walk space given in Example   \ref{JJ}~(1) with invariant measure $\mathcal{L}^N$.  Then,
    $$P_{m^J} (E) = \frac{1}{2} \int_{\R^N}  \int_{\R^N}  \vert \1_{E}(y) - \1_{E}(x) \vert J(x -y) dy dx,$$
    which coincides with the concept of $J$-perimeter introduced in \cite{MRT1}. On the other hand,
    $$P_{ m^{J,\Omega}} (E) = \frac{1}{2} \int_{\Omega}  \int_{\Omega}  \vert \1_{E}(y) - \1_{E}(x) \vert J(x -y) dy dx.$$
    Note that, in general, $P_{ m^{J,\Omega}} (E) \not= P_{m^J} (E).$

    Moreover,
    $$P_{ m^{J,\Omega}} (E) = \mathcal{L}^N(E) - \int_E \int_E dm_x^{J,\Omega}(y) dx = \mathcal{L}^N(E) - \int_E \int_E J(x-y) dy dx - \int_E \left( \int_{\R^N \setminus \Omega} J(x - z) dz\right) dx$$
    and, therefore,
    \begin{equation}\label{perimomega}
    P_{ m^{J,\Omega}} (E) = P_{ m^{J}} (E)  - \int_E \left( \int_{\R^N \setminus \Omega} J(x - z) dz\right) dx, \quad \forall \, E \subset \Omega.
    \end{equation}

\noindent (2)
In the case of the metric random walk space $[V(G), d_G, m^G ]$ associated to a finite weighted discrete graph $G$, given $A, B \subset V(G)$, ${\rm Cut}(A,B)$ is defined as
$${\rm Cut}(A,B):= \sum_{x \in A, y \in B} w_{xy} = L_{m^G}(A,B),$$
and the perimeter of a set $E \subset V(G)$ is given by
$$\vert \partial E \vert := {\rm Cut}(E,E^c) = \sum_{x \in E, y \in V \setminus E} w_{xy}.$$
Consequently, we have that
\begin{equation}\label{perim}
\vert \partial E \vert = P_{m^G}(E) \quad \hbox{for all} \ E \subset V(G).
\end{equation}

\end{example}

Let us now give some properties of the $m$-perimeter.

  \begin{proposition}\label{launion01}\ Let $A,\ B \subset X$ be $\nu$-measurable sets with finite $m$-perimeter such that $\nu(A \cap B) = 0$. Then,
  $$P_m( A \cup B) = P_m( A) + P_m(B) - 2 L_m(A,B).$$

  \end{proposition}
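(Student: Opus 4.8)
The plan is to work directly from the definition $P_m(S) = L_m(S, X\setminus S)$ together with the bilinearity of $L_m$ in its two arguments (which is immediate from the definition \eqref{nlinterdos} as a double integral) and the symmetry $L_m(C,D) = L_m(D,C)$ valid whenever the quantities are finite. The essential set-theoretic observation is that, since $\nu(A\cap B) = 0$, we may freely discard intersections of measure zero inside any $L_m$ term: for instance $L_m(A, X\setminus(A\cup B)) = L_m(A, (X\setminus A)\setminus B)$ up to a null set, and more importantly $A\cup B$ behaves, $\nu$-a.e., like a disjoint union.

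First I would decompose $X\setminus(A\cup B) = (X\setminus A)\cap(X\setminus B)$ and write, using additivity of $L_m$ in the first slot and the fact that $\nu(A\cap B)=0$,
\begin{align}
P_m(A\cup B) &= L_m(A\cup B,\, X\setminus(A\cup B))\notag\\
&= L_m(A,\, X\setminus(A\cup B)) + L_m(B,\, X\setminus(A\cup B))\notag
\end{align}
(the overlap $A\cap B$ contributing nothing since it is $\nu$-null and $L_m$ integrates $d\nu(x)$ over the first set). Next I would expand each of these two terms by splitting the second slot. For the first term, $X\setminus A = (X\setminus(A\cup B)) \sqcup (B\setminus A)$, hence
\begin{equation}
P_m(A) = L_m(A, X\setminus A) = L_m(A,\, X\setminus(A\cup B)) + L_m(A,\, B\setminus A),\notag
\end{equation}
so $L_m(A, X\setminus(A\cup B)) = P_m(A) - L_m(A, B\setminus A) = P_m(A) - L_m(A,B)$, the last equality again because $\nu(A\cap B)=0$ lets us replace $B\setminus A$ by $B$. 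Symmetrically $L_m(B, X\setminus(A\cup B)) = P_m(B) - L_m(A,B)$. Adding the two gives the claimed identity.

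The only genuine points requiring care — and the place I would be most careful in the writeup — are the measure-theoretic bookkeeping: justifying that every $L_m$ term appearing is finite (so that additivity and symmetry are legitimate), which follows from $P_m(A), P_m(B) < +\infty$ together with monotonicity of $L_m$ under inclusion of arguments and the fact that $L_m(A,B) \le \min\{P_m(A) \text{-type bounds}\}$; and the repeated use of $\nu(A\cap B) = 0$ to pass between $B\setminus A$ and $B$, between $A\setminus B$ and $A$, and to drop the cross term in the first display. None of this is deep, but it should be spelled out so the reader sees that no term is infinite and no null set has been mishandled. I do not anticipate any real obstacle; the proposition is essentially an inclusion–exclusion computation for the bilinear form $L_m$.
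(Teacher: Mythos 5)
Your proof is correct and follows essentially the same route as the paper: split the first slot of $L_m(A\cup B,\,X\setminus(A\cup B))$ using $\nu(A\cap B)=0$, rewrite each piece as $P_m(A)-L_m(A,B)$ and $P_m(B)-L_m(B,A)$ by enlarging the second slot, and combine the two cross terms into $2L_m(A,B)$ via reversibility. The only bookkeeping point worth making explicit (which you correctly flag) is that $\nu(A\cap B)=0$ forces $m_x(A\cap B)=0$ for $\nu$-a.e.\ $x$ by invariance of $\nu$, which is what legitimizes replacing $B\setminus A$ by $B$ inside the inner integrals.
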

\begin{proof} We have
$$
\begin{array}{l}
\displaystyle
P_m( A \cup B) = \int_{ A \cup B} \left(\int_{X \setminus (A \cup B)} dm_x(y) \right) d\nu(x) \\[10pt]
\displaystyle \qquad =  \int_{ A} \left(\int_{X \setminus (A \cup B)} dm_x(y) \right) d\nu(x) +  \int_{B} \left(\int_{X \setminus (A \cup B)} dm_x(y) \right) d\nu(x)
\\[10pt]
\displaystyle \qquad =  \int_{ A} \left(\int_{X\setminus A} dm_x(y) - \int_B dm_x(y)  \right) d\nu(x) +  \int_{B} \left(\int_{X \setminus B} dm_x(y) -  \int_A dm_x(y)\right) d\nu(x),
\end{array}$$
and then, by the reversibility assumption on $\nu$ with respect to $m$,
$$
\begin{array}{l}
\displaystyle
P_m( A \cup B) =
P_m( A) + P_m(B)- 2 \int_A \left( \int_B dm_x(y)\right) d\nu(x).
\end{array}$$

\end{proof}

  \begin{corollary}\label{Juli1}
Let $A,\ B,\ C$ be $\nu$-measurable sets in $X$ with pairwise $\nu$-null intersections. Then
 $$ P_m( A \cup B\cup C)=P_m( A \cup B) +P_m(A\cup C) + P_m(B\cup C) - P_m( A) - P_m(B)-P_m(C) .$$
\end{corollary}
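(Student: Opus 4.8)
The plan is to apply Proposition~\ref{launion01} repeatedly. The key observation is that the three sets $A$, $B$, $C$ have pairwise $\nu$-null intersections, and this property is inherited by the relevant unions: for instance $\nu((A\cup B)\cap C) \le \nu(A\cap C) + \nu(B\cap C) = 0$, so Proposition~\ref{launion01} can legitimately be applied to the pair $\{A\cup B, C\}$.

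First I would expand $P_m(A\cup B\cup C) = P_m((A\cup B)\cup C)$ using Proposition~\ref{launion01} with the pair $\{A\cup B, C\}$, obtaining
$$P_m(A\cup B\cup C) = P_m(A\cup B) + P_m(C) - 2L_m(A\cup B, C).$$
By bilinearity of $L_m$ in its arguments (immediate from the definition \eqref{nlinterdos}, since integration over $A\cup B$ splits as integration over $A$ plus integration over $B$ when $\nu(A\cap B)=0$), we have $L_m(A\cup B, C) = L_m(A,C) + L_m(B,C)$. Next I would apply Proposition~\ref{launion01} three more times to rewrite each of $L_m(A,C)$, $L_m(B,C)$ and $L_m(A,B)$ in terms of perimeters: from the proposition, $2L_m(A,B) = P_m(A) + P_m(B) - P_m(A\cup B)$, and similarly for the other two pairs. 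Substituting these expressions and collecting terms yields the claimed identity.

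The only real point requiring care is bookkeeping: one must check that every application of Proposition~\ref{launion01} is to a pair of sets with $\nu$-null intersection and with finite $m$-perimeter (the finiteness of $P_m(A\cup B)$, etc., follows from the proposition itself once the hypotheses on the constituent sets are met, or alternatively from $P_m(A\cup B) \le P_m(A) + P_m(B)$ which is again a consequence of Proposition~\ref{launion01} together with $L_m \ge 0$). There is no genuine obstacle here — the statement is a purely algebraic consequence of the inclusion–exclusion-type formula in Proposition~\ref{launion01}, and the main thing to get right is simply the sign and coefficient of each term after the substitutions. I expect the entire proof to be three or four lines of display algebra.
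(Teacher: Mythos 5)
Your proof is correct and is essentially the derivation the paper intends: the corollary is stated as a direct consequence of Proposition~\ref{launion01}, and your three applications of that proposition together with the additivity $L_m(A\cup B,C)=L_m(A,C)+L_m(B,C)$ (valid since $\nu(A\cap B)=0$) give exactly the claimed identity after cancellation. The finiteness bookkeeping you mention is also handled correctly, e.g.\ via $L_m(A,B)\le L_m(A,X\setminus B^c)=P_m(A)$ up to $\nu$-null sets.
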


\subsection{$m$-Mean Curvature}\label{def.curvatura}
 Let $E \subset X$ be $\nu$-measurable. For a point $x  \in X$ we define  the {\it $m$-mean curvature of $\partial E$ at $x$} as
\begin{equation}\label{defcurdefdef}H^m_{\partial E}(x):= \int_{X}  (\1_{X \setminus E}(y) - \1_E(y)) dm_x(y).\end{equation}
Observe that
\begin{equation}\label{defcur}H^m_{\partial E}(x) =  1 - 2 \int_E  dm_x(y).\end{equation}

Note that $H^m_{\partial E}(x)$ can be computed for every $x \in X$, not only for points in $\partial E$. This fact will be used later in the paper. Having in mind \eqref{secondf021}, we have that,  for a $\nu$--integrable set $E\subset X$,
$$\int_E H^m_{\partial E}(x) d\nu(x) = \int_E \left( 1 - 2 \int_E  dm_x(y) \right)  d\nu(x) = \nu(E) - 2\int_E\int_E dm_x(y) d\nu(x)$$ $$ = P_m(E) - \int_E\int_E dm_x(y) d\nu(x) = 2P_m(E) -\nu(E).$$ Consequently,
 \begin{equation}\label{1secondf021}\displaystyle \int_E H^m_{\partial E}(x) d\nu(x)=2P_m(E) -\nu(E).
\end{equation}

\subsection{$m$-Total Variation}
Associated to the  random walk $m=(m_x)$ and the invariant measure $\nu$,  we define the space
 $$BV_m(X,\nu):= \left\{ u :X \to \R \ \hbox{ $\nu$-measurable} \, : \, \int_{X}  \int_{X}  \vert u(y) - u(x) \vert dm_x(y) d\nu(x) < \infty \right\}.$$
 We have that $L^1(X,\nu)\subset BV_m(X,\nu)$. The {\it $m$-total variation} of a function $u\in BV_m(X,\nu)$ is defined by
 $$TV_m(u):= \frac{1}{2} \int_{X}  \int_{X}  \vert u(y) - u(x) \vert dm_x(y) d\nu(x).$$
 Note that
\begin{equation}\label{theabove01}P_m(E) = TV_m(\1_E).\end{equation}

Observe that the space $BV_m(X,\nu)$ is the {\it nonlocal} counterpart of classical local bounded variation spaces. Note further that, in the local context, given a Lebesgue measurable set $E\subset\R^n$, its perimeter is equal to the total variation of its characteristic function (see~\eqref{forarb001}) and the above equation~\eqref{theabove01} provides the nonlocal counterpart. In~\eqref{connvver1} and Theorem~\ref{Thequiv} we illustrate further relations between these spaces.

 However, although they represent analogous concepts in different settings, the classical local BV-spaces and the nonlocal BV-spaces are of a different nature. For example, in our nonlocal framework $L^1(X,\nu)\subset BV_m(X,\nu)$ in contrast with classical local bounded variation spaces that are, by definition, contained in $L^1$. Indeed, since each $m_x$ is a probability measure, $x\in X$, and $\nu$ is invariant with respect to $m$, we have that
$$TV_m(u)\leq \frac12 \int_X\int_X |u(y)|dm_x(y)d\nu(x)+\frac12\int_X\int_X|u(x)|dm_x(y)d\nu(x)= \Vert u\Vert_{L^1(X,\nu)}.$$

 Recall the definition of the generalized product measure $\nu \otimes m_x$ (see, for instance, \cite{AFP}), which is defined as the measure on $X \times X$ given by
 \begin{equation}\label{forarb002} \nu \otimes m_x(U) := \int_X   \int_X \1_{U}(x,y) dm_x(y)   d\nu(x)\quad\hbox{for }  U\in \mathcal{B}(X\times X),
 \end{equation} where it is required that the map $x \mapsto m_x(E)$ is $\nu$-measurable for any Borel set $E \in \mathcal{B}(X)$.
 Moreover, it holds that $$\int_{X \times X} g  d(\nu \otimes m_x)   = \int_X   \int_X g(x,y) dm_x(y)  d\nu(x)$$
for every  $g\in L^1(X\times X,\nu\otimes m_x)$. Therefore, we can write
$$TV_m(u)= \frac{1}{2} \int_{X\times X}  \vert u(y) - u(x) \vert d(\nu \otimes m_x)(x,y).$$

\begin{example}\label{exammplee} Let $[V(G), d_G, (m^G_x)]$  be the metric random walk space given in Example   \ref{JJ}~(3) with invariant and reversible  measure $\nu_G$. Then,
$$TV_{m^G} (u) = \frac{1}{2} \int_{V(G)}  \int_{V(G)}  \vert u(y) - u(x) \vert dm^G_x(y) d\nu_G(x) = \frac{1}{2} \int_{V(G)} \frac{1}{d_x} \left(\sum_{y \in V(G)} \vert u(y) - u(x) \vert w_{xy}\right) d\nu_G(x)$$ $$=  \frac{1}{2} \sum_{x \in V(G)} d_x \left(\frac{1}{d_x} \sum_{y \in V(G)} \vert u(y) - u(x) \vert w_{xy}\right) = \frac{1}{2}  \sum_{x \in V(G)} \sum_{y \in V(G)} \vert u(y) - u(x) \vert w_{xy},$$
which coincides with the anisotropic total variation
  defined in \cite{GGOB}.
\end{example}

In the following results we give some properties of the total variation.

 \begin{proposition}\label{lemLipsch} If $\phi : \R \rightarrow \R$ is Lipschitz continuous  then, for every $u \in BV_m(X,\nu)$, $\phi(u) \in BV_m(X,\nu)$ and
  $$TV_m(\phi(u)) \leq \Vert \phi \Vert_{Lip} TV_m(u).$$
\end{proposition}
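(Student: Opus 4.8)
The plan is to reduce everything to the pointwise inequality satisfied by a Lipschitz function and then integrate. First I would recall that for a Lipschitz continuous $\phi:\R\to\R$ with Lipschitz constant $\Vert\phi\Vert_{Lip}$, one has
\[
\vert \phi(s) - \phi(t)\vert \le \Vert\phi\Vert_{Lip}\,\vert s - t\vert \qquad \text{for all } s,t\in\R.
\]
Applying this with $s = u(y)$ and $t = u(x)$ gives, for every $x,y\in X$,
\[
\vert \phi(u)(y) - \phi(u)(x)\vert \le \Vert\phi\Vert_{Lip}\,\vert u(y) - u(x)\vert .
\]

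Next I would integrate this inequality against the measure $d m_x(y)\,d\nu(x)$ over $X\times X$ (equivalently, against $d(\nu\otimes m_x)$, using the representation of $TV_m$ given just before the statement). Since the integrand on the right is in $L^1(X\times X,\nu\otimes m_x)$ by the hypothesis $u\in BV_m(X,\nu)$, monotonicity of the integral yields
\[
\int_X\int_X \vert \phi(u)(y) - \phi(u)(x)\vert\, dm_x(y)\, d\nu(x) \le \Vert\phi\Vert_{Lip} \int_X\int_X \vert u(y) - u(x)\vert\, dm_x(y)\, d\nu(x) < \infty,
\]
which simultaneously shows that $\phi(u)\in BV_m(X,\nu)$ (the double integral is finite) and, after dividing by $2$, that $TV_m(\phi(u)) \le \Vert\phi\Vert_{Lip}\, TV_m(u)$.

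There is essentially no hard step here; the only minor points to check are measurability of $\phi(u)$ (immediate, since $\phi$ is continuous hence Borel and $u$ is $\nu$-measurable) and the applicability of the $\nu\otimes m_x$ formalism, which requires $x\mapsto m_x(E)$ to be $\nu$-measurable for Borel $E$ — but this is part of the standing assumptions on a metric random walk space. One could equally well avoid the product-measure language and argue directly with the iterated integral, invoking the invariance/reversibility of $\nu$ only implicitly (it is not even needed for this inequality). So the ``obstacle'' is purely bookkeeping: making sure the integrals are genuinely finite before manipulating them, which is guaranteed by $u\in BV_m(X,\nu)$.
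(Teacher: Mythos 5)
Your proposal is correct and follows exactly the paper's own argument: apply the pointwise Lipschitz bound $\vert\phi(u)(y)-\phi(u)(x)\vert\le\Vert\phi\Vert_{Lip}\vert u(y)-u(x)\vert$ and integrate against $dm_x(y)\,d\nu(x)$. The additional remarks on measurability and finiteness are harmless bookkeeping that the paper leaves implicit.
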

\begin{proof}
$$TV_m(\phi(u)) = \frac{1}{2} \int_{X}  \int_{X}  \vert \phi(u)(y) - \phi(u)(x) \vert dm_x(y) d\nu(x) $$ $$\leq \Vert \phi \Vert_{Lip} \frac{1}{2} \int_{X}  \int_{X}  \vert u(y) - u(x) \vert dm_x(y) d\nu(x) = \Vert \phi \Vert_{Lip}  TV_m(u).$$
\end{proof}

 \begin{proposition}\label{lemsc1} $TV_m$ is convex and continuous in $L^1(X, \nu)$.
\end{proposition}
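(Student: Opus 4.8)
The plan is to verify convexity by a direct computation and then obtain continuity in $L^1(X,\nu)$ as a consequence of convexity together with a Lipschitz-type bound. For convexity, I would fix $u,v\in L^1(X,\nu)$ and $t\in[0,1]$ and simply apply the triangle inequality and the convexity of the absolute value pointwise inside the double integral: for each pair $(x,y)$,
\[
\bigl|(tu+(1-t)v)(y)-(tu+(1-t)v)(x)\bigr|\le t\,|u(y)-u(x)|+(1-t)\,|v(y)-v(x)|,
\]
and integrating against $d m_x(y)\,d\nu(x)$ and multiplying by $\tfrac12$ yields $TV_m(tu+(1-t)v)\le t\,TV_m(u)+(1-t)\,TV_m(v)$. (Since $L^1(X,\nu)\subset BV_m(X,\nu)$, all three quantities are finite, so there is no issue with $\infty-\infty$.)

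For continuity, the key observation is that $TV_m$ is a seminorm-like functional controlled by the $L^1$ norm: for $u,v\in L^1(X,\nu)$, using $|u(y)-u(x)|-|v(y)-v(x)|\le |u(y)-v(y)|+|u(x)-v(x)|=|(u-v)(y)|+|(u-v)(x)|$ and the same estimate with $u,v$ interchanged, one gets
\[
|TV_m(u)-TV_m(v)|\le TV_m(u-v)\le \|u-v\|_{L^1(X,\nu)},
\]
where the last inequality is exactly the bound $TV_m(w)\le\|w\|_{L^1(X,\nu)}$ already established in the excerpt (using that each $m_x$ is a probability measure and $\nu$ is invariant). This $1$-Lipschitz estimate with respect to the $L^1$ norm immediately gives continuity of $TV_m$ on $L^1(X,\nu)$.

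There is essentially no serious obstacle here; the only points requiring a small amount of care are measurability (so that the double integrals make sense — but this is guaranteed by the $\nu$-measurability built into the definition of $BV_m(X,\nu)$ and the invariance of $\nu$) and ensuring finiteness of all terms, which follows from $L^1(X,\nu)\subset BV_m(X,\nu)$. One could alternatively phrase continuity via lower semicontinuity plus the upper bound, but the direct $1$-Lipschitz estimate is cleaner and proves full continuity at once.
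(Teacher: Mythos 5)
Your proof is correct and follows essentially the same route as the paper: convexity by the pointwise triangle inequality, and continuity via the $1$-Lipschitz estimate $|TV_m(u)-TV_m(v)|\le \|u-v\|_{L^1(X,\nu)}$, which the paper obtains by the same reverse-triangle-inequality computation combined with the invariance of $\nu$. Your intermediate step through $TV_m(u-v)$ is just a slight repackaging of the paper's direct bound.
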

\begin{proof} Convexity follows easily. Let us see that it is continuous. Let $u_n \to u$ in $L^1(X, \nu)$. Since $\nu$ is invariant and  reversible with respect to $m$, we have
$$ \vert  TV_m(u_n) - TV_m(u) \vert =  \frac{1}{2}  \left\vert \int_{X}  \int_{X}  \left(\vert u_n(y) - u_n(x) \vert - \vert u(y) - u(x) \vert\right) dm_x(y) d\nu(x)\right\vert$$ $$\leq \frac{1}{2} \left( \int_{X}  \int_{X} \vert u_n(y) - u(y) \vert  dm_x(y) d\nu(x) +\int_{X}  \int_{X} \vert u_n(x) - u(x) \vert  dm_x(y) d\nu(x)\right)$$ $$= \frac{1}{2} \left( \int_{X}   \vert u_n(y) - u(y) \vert   d\nu(y) +\int_{X}  \vert u_n(x) - u(x) \vert  d\nu(x)\right) = \Vert u_n - u \Vert_{L^1(X, \nu)}.$$
\end{proof}

As in the local case, we have the following coarea formula relating the total variation of a function with the perimeter of its superlevel sets.

\begin{theorem}[\bf Coarea formula]\label{coarea1}
 For any $u \in L^1(X,\nu)$, let $E_t(u):= \{ x \in X \ : \ u(x) > t \}$. Then,
\begin{equation}\label{coaerea}
TV_m(u) = \int_{-\infty}^{+\infty} P_m(E_t(u))\, dt.
\end{equation}
\end{theorem}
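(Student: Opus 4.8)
The plan is to prove the coarea formula by reducing it to the classical one-dimensional layer-cake identity applied pointwise to the integrand, and then invoking Fubini/Tonelli to exchange the order of integration. The key observation is that for any two real numbers $a,b$, the quantity $|b-a|$ can be written as an integral over $t$ of the difference of characteristic functions of superlevel sets: namely
\begin{equation}\label{layercake}
|u(y)-u(x)| = \int_{-\infty}^{+\infty} |\1_{E_t(u)}(y) - \1_{E_t(u)}(x)|\, dt,
\end{equation}
since $\1_{E_t(u)}(y) - \1_{E_t(u)}(x)$ is nonzero (and equal to $\pm 1$) precisely for $t$ between $u(x)$ and $u(y)$. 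First I would verify \eqref{layercake} carefully: fixing $x,y$, the set of $t$ for which $u(x)$ and $u(y)$ lie on opposite sides of the threshold is an interval of length exactly $|u(y)-u(x)|$ (up to its endpoints, which have measure zero), and on that interval the absolute value of the difference of indicators is $1$.

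Next I would substitute \eqref{layercake} into the definition of $TV_m(u)$ and interchange the $t$-integral with the double integral against $dm_x(y)\, d\nu(x)$. Since all integrands are nonnegative and measurable, Tonelli's theorem justifies the exchange; the measurability of $(x,y,t)\mapsto |\1_{E_t(u)}(y)-\1_{E_t(u)}(x)|$ follows because $u$ is $\nu$-measurable and hence the sets $E_t(u)$ depend measurably on $t$ (one can use that $\{(x,t): u(x)>t\}$ is measurable in $X\times\R$). This yields
\begin{equation}\label{coarea-step}
TV_m(u) = \frac{1}{2}\int_{-\infty}^{+\infty}\left( \int_X\int_X |\1_{E_t(u)}(y)-\1_{E_t(u)}(x)|\, dm_x(y)\, d\nu(x)\right) dt.
\end{equation}
Finally, by formula \eqref{secondf} the inner double integral is exactly $2 P_m(E_t(u))$, so the factor $\frac12$ cancels and we obtain $TV_m(u) = \int_{-\infty}^{+\infty} P_m(E_t(u))\, dt$, which is \eqref{coaerea}.

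The routine part of the argument is the layer-cake identity and the bookkeeping with the factor $\frac12$. The main technical obstacle, such as it is, is the measurability and integrability justification needed to apply Tonelli: one must check that $t\mapsto P_m(E_t(u))$ is a measurable function of $t$ and that the triple integral is well-defined. This is where the hypothesis $u\in L^1(X,\nu)$ (together with the standing assumption that $(X,d,\nu)$ is $\sigma$-finite and $\nu$ invariant and reversible) is used: $L^1(X,\nu)\subset BV_m(X,\nu)$, so $TV_m(u)<\infty$, which makes \eqref{coarea-step} an equality of finite quantities and guarantees that $P_m(E_t(u))<\infty$ for a.e.\ $t$. I do not expect any genuine difficulty beyond this standard measure-theoretic care, since the structure mirrors the classical coarea formula for $BV$ functions exactly, with the nonlocal double integral against $\nu\otimes m_x$ playing the role of the gradient measure.
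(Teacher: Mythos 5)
Your proposal is correct and follows essentially the same route as the paper's proof: both rest on the pointwise layer-cake identity $|u(y)-u(x)| = \int_{-\infty}^{+\infty} |\1_{E_t(u)}(y)-\1_{E_t(u)}(x)|\,dt$ followed by Tonelli's theorem to exchange the $t$-integral with the double integral, and then the identification of the inner integral with $2P_m(E_t(u))$ via \eqref{secondf}. The only cosmetic difference is that the paper derives the layer-cake identity from the representation $u(x)=\int_0^{+\infty}\1_{E_t(u)}(x)\,dt-\int_{-\infty}^0(1-\1_{E_t(u)}(x))\,dt$ together with the monotonicity of $t\mapsto\1_{E_t(u)}$, whereas you verify it directly by measuring the interval of thresholds separating $u(x)$ from $u(y)$.
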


\begin{proof} Since
\begin{equation}\label{cooare1}
u(x) = \int_0^{+\infty} \1_{E_t(u)}(x) \, dt - \int_{-\infty}^0 (1 - \1_{E_t(u)}(x)) \, dt,
\end{equation}
we have
$$ u(y) - u(x)  = \int_{-\infty}^{+\infty}  \1_{E_t(u)}(y) - \1_{E_t(u)} (x) \, dt.$$
Moreover, since  $u(y) \geq u(x)$ implies $\1_{E_t(u)}(y) \geq \1_{E_t(u)} (x)$, we obtain that
$$\vert u(y) - u(x) \vert = \int_{-\infty}^{+\infty} \vert \1_{E_t(u)}(y) - \1_{E_t(u)} (x) \vert \,dt.$$
Therefore, we get
$$
\begin{array}{l}
\displaystyle
TV_m(u) =  \frac{1}{2} \int_{X}  \int_{X} \vert u(y) - u(x) \vert  dm_x(y) d\nu(x)
\displaystyle \\ \\ \qquad =  \displaystyle\frac{1}{2} \int_{X}  \int_{X}  \left(\int_{-\infty}^{+\infty} \vert \1_{E_t(u)}(y) - \1_{E_t(u)} (x) \vert dt \right)  dm_x(y) d\nu(x)
\\ \\
\displaystyle \qquad = \int_{-\infty}^{+\infty} \left(\frac{1}{2} \int_{X}  \int_{X}  \vert \1_{E_t(u)}(y) - \1_{E_t(u)} (x) \vert  dm_x(y) d\nu(x) \right) dt = \int_{-\infty}^{+\infty} P_m(E_t(u)) dt,
\end{array}
$$
 where Tonelli-Hobson's Theorem is used in the third equality.
\end{proof}

 Let us recall the following concept of  $m$-connectedness introduced in~\cite{MRT1}:
 A metric random walk space $[X,d,m]$ with invariant and reversible  measure $\nu$ is $m$-connected if,  for any pair of $\nu$-non-null measurable sets $ A,B\subset X$ such that
$A\cup B=X$, we have $L_m(A,B)> 0$.
Moreover, in~ \cite[Theorem 2.19]{MST0}, we see that  this concept is equivalent to the following concept of ergodicity (see~\cite{HLL}) when $\nu$ is a probability measure.
\begin{definition}  Let $[X, d, m]$ be a metric random walk space with invariant and reversible   probability measure~$\nu$. A Borel set $B \subset X$ is said to be {\it invariant} with respect to the random walk $m$ if $m_x(B) = 1$ whenever $x$ is in $B$.
 The invariant  probability measure~$\nu$ is said to be {\it ergodic} if $\nu(B) = 0$ or $\nu(B) = 1$ for every invariant set $B$ with respect to the random walk $m$.

\end{definition}

 Furthermore, by \cite[Theorem 2.21]{MST0}, we have that $\nu$ is  ergodic if, and only if,  for $u\in L^2(X,\nu)$, $\Delta_m u = 0$ implies that $u$ is $\nu$-a.e. equal to a constant, where
$$\Delta_m u(x) := \int_X (u(y) - u(x)) dm_x(y). $$

 As an example, note that the metric random walk space associated to an irreducible and positive recurrent Markov chain on a countable space together with its steady state is $m$-connected (see~\cite{HLL}).  Moreover, the metric random walk space $[V(G), d_G, m^G]$ associated to a locally finite weighted  connected  discrete graph $G = (V(G), E(G))$ is  $m^G$-connected.  In~\cite{MST0} we give further examples involving the metric random walk space given in  Example~\ref{JJ}~(1).

 Observe that, for a metric random walk space $[X,d,m]$ with invariant and reversible measure $\nu$, if the space is $m$-connected, then the $m$-perimeter of any $\nu$-measurable set $E$ with $0<\nu(E)<\nu(X)$ is positive.

  \begin{lemma}\label{lemita1} Assume that $\nu$ is ergodic and let $u\in BV_m(X,\nu)$. Then,
 $$TV_m(u) = 0 \iff u \ \hbox{is constant} \ \nu-\hbox{a.e.}.$$
 \end{lemma}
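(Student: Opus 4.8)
The implication ``$u$ constant $\nu$-a.e. $\Rightarrow TV_m(u)=0$'' is immediate, since then $|u(y)-u(x)| = 0$ for $(\nu\otimes m_x)$-a.e. $(x,y)$ (using that $\nu$ is invariant, so the marginal in each variable is $\nu$). The substantive direction is the converse. My plan is to exploit the coarea formula (Theorem~\ref{coarea1}) together with the $m$-connectedness characterization of ergodicity.

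\textbf{Main argument.} Assume $TV_m(u) = 0$. By the coarea formula,
\[
0 = TV_m(u) = \int_{-\infty}^{+\infty} P_m(E_t(u))\, dt,
\]
so $P_m(E_t(u)) = 0$ for a.e.\ $t \in \R$. Fix such a $t$ and write $E = E_t(u)$. The goal is to show $\nu(E) \in \{0, \nu(X)\}$ (interpreting $\nu(X) = +\infty$ when $\nu$ is not finite). First I would observe that $P_m(E) = L_m(E, X\setminus E) = 0$ together with the reversibility of $\nu$ forces the balance condition to collapse: the map $x \mapsto m_x(X\setminus E)$ vanishes for $\nu$-a.e.\ $x \in E$, i.e.\ $m_x(E) = 1$ for $\nu$-a.e.\ $x\in E$. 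I would then need to upgrade this to the statement that (up to a $\nu$-null modification) $E$ is an invariant Borel set in the sense of the definition preceding the lemma, so that ergodicity yields $\nu(E)\in\{0,\nu(X)\}$; when $\nu$ is finite this is the cited equivalence ($m$-connectedness $\iff$ ergodicity, \cite[Theorem 2.19]{MST0}) applied to the pair $A = E$, $B = X\setminus E$ after noting $L_m(A,B)=0$. Once almost every superlevel set has $\nu$-measure $0$ or $\nu(X)$, the function $t \mapsto \nu(E_t(u))$ is non-increasing and takes only these two values a.e., hence there is a threshold $c \in [-\infty,+\infty]$ with $\nu(E_t(u)) = \nu(X)$ for a.e.\ $t < c$ and $\nu(E_t(u)) = 0$ for a.e.\ $t > c$; this forces $u = c$ $\nu$-a.e., and since $u \in BV_m(X,\nu) \subset$ (functions with finite $m$-total variation) and $u$ is $\R$-valued, $c$ must be finite.

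\textbf{Expected obstacle.} The delicate point is handling the case where $\nu$ is $\sigma$-finite but not a probability (or finite) measure, since the ergodicity definition and the equivalence with $m$-connectedness in the excerpt are stated for invariant \emph{probability} measures. I would address this either by reducing to an exhausting sequence of invariant subsets of finite measure, or — more cleanly — by working directly with the characterization ``$\nu$ ergodic $\iff$ ($\Delta_m u = 0 \Rightarrow u$ constant)'' recalled just before the lemma: from $P_m(E_t(u)) = 0$ for a.e.\ $t$ one shows $\Delta_m \phi(u) = 0$ for suitable truncations $\phi(u) \in L^2(X,\nu)$ and concludes each such truncation is constant, which again pins down $u$. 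A secondary minor point is the measure-theoretic bookkeeping needed to pass from ``$P_m(E_t(u)) = 0$ for a.e.\ $t$'' to a genuine threshold value $c$ and then to ``$u = c$ $\nu$-a.e.''; this is the standard layer-cake argument using \eqref{cooare1} and monotonicity of $t \mapsto \1_{E_t(u)}(x)$, and I would present it briefly rather than in full detail.
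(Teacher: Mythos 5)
Your proposal is correct, but it follows a genuinely different route from the paper's. The paper's proof of the nontrivial direction is a two-line computation: from $TV_m(u)=0$ one gets $\int_X|u(y)-u(x)|\,dm_x(y)=0$ for $\nu$-a.e.\ $x$, hence $|\Delta_m u(x)|\le\int_X|u(y)-u(x)|\,dm_x(y)=0$ $\nu$-a.e., and one concludes at once from the characterization of ergodicity recalled just before the lemma ($\Delta_m u=0\Rightarrow u$ constant $\nu$-a.e.); no coarea formula and no level sets appear. Your main argument is instead the ``isoperimetric'' proof: coarea gives $P_m(E_t(u))=0$ for a.e.\ $t$, the set-level characterization of ergodicity ($m$-connectedness, \cite[Theorem 2.19]{MST0}, applied to $A=E_t(u)$, $B=X\setminus E_t(u)$) forces $\nu(E_t(u))\in\{0,\nu(X)\}$, and the layer-cake/threshold argument pins down a finite constant. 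This is sound, and invoking $m$-connectedness directly, as you do, is cleaner than trying to modify $E_t(u)$ into a literally invariant set in the sense of the definition --- a step you flag but do not actually need. A small payoff of your route (and of your fallback via truncations $\phi(u)$) is that the $\Delta_m$-characterization is stated in the paper for $u\in L^2(X,\nu)$, whereas a general $u\in BV_m(X,\nu)$ need not lie in $L^2$; the level-set argument sidesteps this integrability issue, which the paper's shorter proof passes over. Your concern about $\sigma$-finite, non-finite $\nu$ is largely moot here, since ergodicity is only defined in the paper for invariant probability measures and the lemma is applied after normalizing a finite measure. The price of your approach is length: it routes a one-step pointwise estimate through the coarea formula and a measure-theoretic threshold argument.
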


 \begin{proof} ($\Leftarrow$)  Suppose that $u$ is $\nu$-a.e. equal to a constant $k$, then, since $\nu$ is invariant with respect to $m$, we have
 $$\begin{array}{l}
 \displaystyle
 TV_m(u) = \frac{1}{2} \int_{X}\int_X  \vert u(y) - u(x) \vert dm_x(y)d\nu(x)\\[10pt]
 \displaystyle
 \phantom{TV_m(u)} =\int_{X}\int_X  \vert u(y) - k \vert dm_x(y)d\nu(x)
 \\[10pt]
 \displaystyle
 \phantom{TV_m(u)} =\int_{X}  \vert u(x) - k \vert  d\nu(x)=0.
 \end{array}
 $$
  ($\Rightarrow$) Suppose that $$0 = TV_m(u) = \frac{1}{2} \int_{X}\int_X  \vert u(y) - u(x) \vert dm_x(y)d\nu(x).$$
Then, $\int_X |u(y)-u(x)| dm_x(y)=0$ for $\nu$-a.e. $x\in X$, thus
 $$|\Delta_mu(x)|=\left|\int_X \big(u(y)-u(x)\big) dm_x(y)\right|\le \int_X |u(y)-u(x)| dm_x(y)=0 \quad \hbox{for $\nu$-a.e. $x\in X$},$$
and we are done by the comments preceding the lemma.
\end{proof}

  From now on we will assume that the metric random walk spaces that we work with are $m$-connected (this assumption is only dropped in subsection 2.5).  However, we would like to point out that if a metric random walk space $[X,d,m]$ is not $m$-connected then it may be broken down as $X=A\cup B$ where $A$, $B\subset X$ have $\nu$-positive measure and $L_m(A,B)=0$, allowing us to work with $A$ and $B$ independently. Then, for example, if $E\subset X$ is a $\nu$-measurable set we get $$P_m(E)=P_m(E\cap A)+P_m(E\cap B)$$
and, if $u\in BV_m(X,\nu)$, $$TV_m(u)= \frac{1}{2} \int_{A}\int_A  \vert u(y) - u(x) \vert dm_x(y)d\nu(x)+ \frac{1}{2} \int_{B}\int_B  \vert u(y) - u(x) \vert dm_x(y)d\nu(x).$$

\subsection{Isoperimetric and Sobolev inequalities} The $n$-dimensional {\it isoperimetric inequality} states that
\begin{equation}\label{nisop}
\mathcal{L}^n(\Omega){}^{\frac{n-1}{n}} \leq c_n \mathcal{H}^{n-1}(\partial \Omega)
\end{equation}
for every domain $\Omega \subset \R^n$ with smooth boundary and compact closure, where $c_n = \frac{1}{n \omega_n}$, and $\omega_n$ is the volume of the unit ball. It is well known (see for instance \cite{Mazya}) that \eqref{nisop} is equivalent to the {\it Sobolev inequality }
\begin{equation}\label{sobolinq}
\Vert u \Vert_{\frac{n}{n-1}} \leq c_n \int_{\R^n} \vert \nabla u \vert dx \quad \forall  u \in C_0^{\infty}(\R^n).
\end{equation}

If we replace the Euclidean space $\R^n$ by a  Riemannian manifold $M$ with   measure $\mu_n$,
then the isoperimetric inequality takes the following form:
\begin{equation}\label{nisopmanif}
\mu_n(\Omega)^{\frac{n-1}{n}} \leq C_n \mu_{n-1}(\partial \Omega)
\end{equation}
for all bounded sets $\Omega \subset M$ with smooth boundary, being $\mu_{n-1}$ the surface measure. As in the Euclidean case  (see \cite{MazyaBook} or  \cite{S-C}),   \eqref{nisopmanif} is equivalent to the Sobolev inequality
\begin{equation}\label{SoboInq1}
\left(\int_M \vert u \vert^{\frac{n}{n-1}} d\mu_n \right)^{\frac{n-1}{n}} \leq C_n \int_M \vert \nabla u \vert d\mu_n \quad \forall u \in C_0^{\infty} (M).
\end{equation}
Consequently, it is natural to say that a Riemannian manifold $M$ has {isoperimetric dimension $n$} if \eqref{SoboInq1} holds (see~\cite{CGL}).
 The equivalence between isoperimetric inequalities and Sobolev inequalities in the context of Markov chains was obtained by Varopoulos in \cite{V}.  Let us state these results under the context treated here.

\begin{definition} Let $[X,d,m]$ be a metric random walk space with invariant and reversible measure $\nu$.   We say that $[X,d,m,\nu]$ has  {\it isoperimetric dimension $n$} if there exists a constant  $I_n>0$ such that
\begin{equation}\label{Isop1}
\nu(A)^{\frac{n-1}{n}} \leq I_n P_m(A) \quad \hbox{for all $A\subset X$ with }   0<\nu(A)<\nu(X).
\end{equation}
\end{definition}
 We assume that, for $n = 1$, $ \frac{n}{n-1} = +\infty$ by convention.

 We will denote by $BV^0_m(X,\nu)$ the set of functions $u \in BV_m(X,\nu)$ satisfying that there exists $A\subset X$, with $ 0 < \nu(A)<\nu(X)$, such that $u=0$ in $X\setminus A$.

\begin{theorem}\label{IsoPoint}  $[X,d,m,\nu]$ has   isoperimetric dimension $n$ if, and only if,
\begin{equation}\label{eIsoPoint}
\Vert u \Vert_{L^{\frac{n}{n-1}}(X, \nu) } \leq I_n TV_m(u) \quad \hbox{for all} \ u \in BV^0_m(X,\nu).
\end{equation}
The constant  $I_n$ is the same as in \eqref{Isop1}.
\end{theorem}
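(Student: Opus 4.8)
The plan is to prove the two implications separately, with the coarea formula (Theorem~\ref{coarea1}) serving as the bridge in the direction from the isoperimetric inequality to the Sobolev inequality, and a simple truncation/approximation-by-characteristic-functions argument in the reverse direction.

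\textbf{From \eqref{Isop1} to \eqref{eIsoPoint}.} Assume $[X,d,m,\nu]$ has isoperimetric dimension $n$, and let $u \in BV^0_m(X,\nu)$, say $u = 0$ on $X \setminus A$ with $0 < \nu(A) < \nu(X)$. First reduce to the case $u \geq 0$: writing $u = u^+ - u^-$ and noting that $u^+, u^- \in BV^0_m(X,\nu)$ (both vanish on $X\setminus A$), while $TV_m(u^+) + TV_m(u^-) \le TV_m(u)$ pointwise in the integrand is false in general, so instead I would simply run the argument for the positive and negative parts and use $\|u\|_{n/(n-1)} \le \|u^+\|_{n/(n-1)} + \|u^-\|_{n/(n-1)}$ together with $TV_m(u^\pm)\le TV_m(u)$ (the latter from Proposition~\ref{lemLipsch} applied to the $1$-Lipschitz maps $r\mapsto r^\pm$). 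So assume $u \ge 0$. For $n > 1$ set $v := u^{n/(n-1)}$; then $E_t(v) = E_{t^{(n-1)/n}}(u)$ for $t>0$, and by the layer-cake representation
\begin{equation*}
\|u\|_{L^{n/(n-1)}(X,\nu)}^{n/(n-1)} = \int_X v \, d\nu = \int_0^\infty \nu(E_t(v))\, dt = \int_0^\infty \nu\big(E_{t^{(n-1)/n}}(u)\big)\, dt = \int_0^\infty \nu(E_s(u))^{n/(n-1)}\, ds,
\end{equation*}
where the last step is the change of variables $t = s^{n/(n-1)}$. Since each $E_s(u) \subset A$ has $\nu(E_s(u)) \le \nu(A) < \nu(X)$, the isoperimetric inequality \eqref{Isop1} gives $\nu(E_s(u))^{(n-1)/n} \le I_n P_m(E_s(u))$, hence $\nu(E_s(u))^{n/(n-1)} \le I_n P_m(E_s(u)) \nu(E_s(u))^{1/(n-1)} \le I_n P_m(E_s(u)) \|u\|_{n/(n-1)}^{1/(n-1)}$. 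Integrating in $s$ and using the coarea formula \eqref{coaerea},
\begin{equation*}
\|u\|_{n/(n-1)}^{n/(n-1)} \le I_n \|u\|_{n/(n-1)}^{1/(n-1)} \int_0^\infty P_m(E_s(u))\, ds \le I_n \|u\|_{n/(n-1)}^{1/(n-1)}\, TV_m(u),
\end{equation*}
and dividing by $\|u\|_{n/(n-1)}^{1/(n-1)}$ (harmless if $\|u\|_{n/(n-1)}=0$) yields \eqref{eIsoPoint}. The case $n = 1$ is handled separately: here $\frac{n}{n-1} = +\infty$ by convention, and one must show $\|u\|_\infty \le I_1 TV_m(u)$; this follows because for $0 < t < \|u^+\|_\infty$ the set $E_t(u)$ has $0 < \nu(E_t(u)) \le \nu(A) < \nu(X)$, so $1 \le I_1 P_m(E_t(u))$, and integrating over $t \in (0,\|u^+\|_\infty)$ and using coarea gives $\|u^+\|_\infty \le I_1 \int_0^\infty P_m(E_t(u))\,dt \le I_1 TV_m(u)$, and similarly for $-u$.

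\textbf{From \eqref{eIsoPoint} to \eqref{Isop1}.} This is immediate: given $A \subset X$ with $0 < \nu(A) < \nu(X)$, the function $u := \1_A$ lies in $BV^0_m(X,\nu)$ (it vanishes on $X \setminus A$, and $TV_m(\1_A) = P_m(A) < \infty$ if $A$ has finite perimeter; if $P_m(A) = +\infty$ the inequality \eqref{Isop1} is trivial). Applying \eqref{eIsoPoint} to $u = \1_A$ and using $\|\1_A\|_{L^{n/(n-1)}(X,\nu)} = \nu(A)^{(n-1)/n}$ together with $TV_m(\1_A) = P_m(A)$ (equation \eqref{theabove01}) gives exactly $\nu(A)^{(n-1)/n} \le I_n P_m(A)$.

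\textbf{Main obstacle.} The delicate point is the direction \eqref{Isop1} $\Rightarrow$ \eqref{eIsoPoint}, specifically making the layer-cake manipulations rigorous and keeping the constant sharp: one needs the extra factor $\nu(E_s(u))^{1/(n-1)} \le \|u\|_{n/(n-1)}^{1/(n-1)}$ — which holds because $E_s(u) \subset \{|u| \ge s\}$ and Chebyshev gives $s^{n/(n-1)}\nu(E_s(u)) \le \|u\|_{n/(n-1)}^{n/(n-1)}$, so actually $\nu(E_s(u))^{1/(n-1)} \le \|u\|_{n/(n-1)}^{1/(n-1)}/s^{1/(n-1)}$, and one should instead bound more carefully or, more cleanly, follow the classical Maz'ya scheme of applying the isoperimetric inequality to each superlevel set and summing. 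I would in fact prefer to avoid the $\nu(E_s(u))^{1/(n-1)}$ detour entirely and use the standard argument: apply \eqref{Isop1} to the level sets of $u^+$ and $u^-$ and invoke the one-dimensional inequality $\big(\int_0^\infty \phi(s)^{n/(n-1)} \, d(s^{n/(n-1)})\big)^{(n-1)/n} \le \frac{n}{n-1}\int_0^\infty \phi(s)\,ds$ for nonincreasing $\phi$ (here $\phi(s) = \nu(E_s(u))^{(n-1)/n}$ is nonincreasing), which is exactly the form that preserves the constant; the reduction to $u \ge 0$ via $1$-Lipschitz truncations (Proposition~\ref{lemLipsch}) and subadditivity of the $L^{n/(n-1)}$ norm then completes the argument. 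The $n=1$ endpoint and the possibility of infinite perimeter require the small separate remarks noted above, but present no real difficulty.
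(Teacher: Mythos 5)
Your final scheme --- reduce to nonnegative $u$, apply the isoperimetric inequality to every superlevel set, combine with the coarea formula, and close with the one--dimensional rearrangement inequality for the nonincreasing function $\varphi(t)=\nu(E_t(u))^{(n-1)/n}$ --- is exactly the paper's proof: the paper derives that one--dimensional inequality from $p\,t^{p-1}\varphi(t)^p\le\frac{d}{dt}\bigl[\bigl(\int_0^t\varphi\bigr)^p\bigr]$ with $p=\frac{n}{n-1}$, and your treatment of the $n=1$ endpoint and of the converse via $u=\1_A$ also matches. You were right to discard your first attempt: $\nu(E_s(u))\le\Vert u\Vert_{n/(n-1)}$ is false in general, and the identity $\int_0^\infty\nu(E_t(v))\,dt=\int_0^\infty\nu(E_s(u))^{n/(n-1)}\,ds$ in that display is also wrong, since the change of variables produces $\int_0^\infty \tfrac{n}{n-1}s^{1/(n-1)}\nu(E_s(u))\,ds$, i.e.\ the layer--cake formula for $\Vert u\Vert_p^p$, not a power of $\nu(E_s(u))$.

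Two details in the version you settle on do need repair, because the theorem asserts the \emph{same} constant $I_n$. First, the one--dimensional inequality holds without the factor $\frac{n}{n-1}$: for nonincreasing $\varphi\ge0$ one has $\bigl(\int_0^\infty\varphi(t)^p\,d(t^p)\bigr)^{1/p}\le\int_0^\infty\varphi(t)\,dt$; with the extra factor you wrote, the final bound degrades to $\frac{n}{n-1}\,I_n\,TV_m(u)$. Second, your reduction to $u\ge0$ via $\Vert u\Vert_p\le\Vert u^+\Vert_p+\Vert u^-\Vert_p$ together with $TV_m(u^\pm)\le TV_m(u)$ yields $2I_n$. The paper instead replaces $u$ by $|u|$, using $TV_m(|u|)\le TV_m(u)$ (Proposition~\ref{lemLipsch}) and $\Vert\,|u|\,\Vert_p=\Vert u\Vert_p$, which costs nothing. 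Incidentally, the splitting you dismiss as ``false in general'' is actually an identity: $|u^+(y)-u^+(x)|+|u^-(y)-u^-(x)|=|u(y)-u(x)|$ for all $x,y$, hence $TV_m(u^+)+TV_m(u^-)=TV_m(u)$, and had you used that, your route would also preserve $I_n$. With these two corrections the argument is complete and coincides with the paper's.
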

\begin{proof} ($\Leftarrow$)  Given  $A \subset X$ with $0<\nu(A)<\nu(X)$,
 applying \eqref{eIsoPoint} to $\1_A$, we get
$$\nu(A)^{\frac{n-1}{n}} = \Vert \1_A \Vert_{L^{\frac{n}{n-1}}(X, \nu)} \leq I_n TV_m(\1_A) = I_n P_m(A).$$
($\Rightarrow$) Let us see that \eqref{Isop1} implies \eqref{eIsoPoint}. Since  $TV_m(|u|) \leq TV_m(  u )$, we may assume that $u \geq 0$ without loss of generality.

Suppose first that $n=1$ and let $u \in  BV^0_m(X,\nu)$ such that $u\geq 0$
and is not $\nu$-a.e. equal to $0$ (otherwise, \eqref{eIsoPoint} is trivially satisfied).  Note that, in this case, since $u$ is null outside of a $\nu$-measurable set $A$ with $\nu(A)<\nu(X)$, we have $\nu(E_t(u))<\nu(X)$ for $t>0$ and, moreover, by the definition of the $L^\infty(X,\nu)$-norm, $0<\nu(E_t(u))$ for $t<\Vert u \Vert_{L^\infty(X,\nu)}$. Then, by the coarea formula and \eqref{Isop1},  we have
$$TV_m(u) = \int_{0}^{+\infty} P_m(E_t(u))\, dt =  \int_{0}^{\Vert u \Vert_{L^\infty(X,\nu)}} P_m(E_t(u))\, dt\geq \int_{0}^{\Vert u \Vert_{L^\infty(X,\nu)}} \frac{1}{I_n} dt = \frac{1}{I_n} \Vert u \Vert_{L^\infty(X,\nu)}.$$

Therefore, we may suppose that $n >1$. Let $p:= \frac{n}{n-1}$. Again, by the coarea formula and \eqref{Isop1}, if $u \in BV^0_m(X,\nu)$, $u\geq 0$  and not identically $\nu$-null, we get
\begin{equation}\label{e2coaereapos}TV_m(u) = \int_{0}^{+\infty} P_m(E_t(u))\, dt \geq \int_{0}^{\Vert u \Vert_{L^\infty(X,\nu)}}  \frac{1}{I_n} \nu(E_t(u))^{\frac{1}{p}}\, dt,
\end{equation}
where $\Vert u \Vert_{L^\infty(X,\nu)}=+\infty$ if $u\notin  L^\infty(X,\nu)$.
On the other hand, since the function $\varphi(t):= \nu(E_t(u))^{\frac{1}{p}}$ is nonnegative and non-increasing, we have
$$p t^{p-1} \varphi(t)^p \leq p \left( \int_0^t \varphi(s) ds \right)^{p-1} \varphi(t) = \frac{d}{dt} \left[\left( \int_0^t \varphi(s) ds \right)^{p}\right].$$
Integrating  over $(0, t)$ and  letting $t \to \Vert u \Vert_{L^\infty(X,\nu)}$, we obtain
$$\int_0^{\Vert u \Vert_{L^\infty(X,\nu)}} p t^{p-1} \varphi(t)^p  \, dt \leq \left( \int_0^{\Vert u \Vert_{L^\infty(X,\nu)}} \varphi(t) dt \right)^{p},$$
that is,
\begin{equation}\label{e3coaereapos}
\int_0^{\Vert u \Vert_{L^\infty(X,\nu)}}p t^{p-1} \nu(E_t(u))  \, dt \leq \left( \int_0^{\Vert u \Vert_{L^\infty(X,\nu)}} \nu(E_t(u))^{\frac{1}{p}} dt \right)^{p}.
\end{equation}
Now, $$\Vert u \Vert^p_{L^p(X, \nu)} = \int_X u^p(x) d \nu(x) =  \int_X \left( \int_0^{u(x)} \frac{dt^p}{dt} dt \right) d\nu(x) $$ $$= \int_X \left(\int_0^{\Vert u \Vert_{L^\infty(X,\nu)}}  p t^{p-1} \1_{E_t(u)} dt \right) d\nu(x) = \int_0^{\Vert u \Vert_{L^\infty(X,\nu)}}  p t^{p-1} \nu(E_t(u)) dt.$$
Thus, by \eqref{e3coaereapos}, we get
\begin{equation}\label{e4coaereapos}
\Vert u \Vert_{L^{p}(X, \nu)} \leq \int_0^{\Vert u \Vert_{L^\infty(X,\nu)}}  \nu(E_t(u))^{\frac{1}{p}} dt.
\end{equation}
Finally, from \eqref{e2coaereapos} and \eqref{e4coaereapos}, we obtain \eqref{eIsoPoint}.
\end{proof}

 Note that, if we take  $\Psi_n (r):= \frac{1}{I_n} r^{-\frac{1}{n}}$, we can rewrite \eqref{Isop1} as
 \begin{equation}\label{NIsop1}
\nu(A) \Psi_n(\nu(A))  \leq P_m(A) \quad   \hbox{for all $A\subset X$ with }   0<\nu(A)<\nu(X).
\end{equation}
The next definition was given in \cite{CGL} for Riemannian manifolds.

\begin{definition} {
 Given a non-increasing  function $\Psi : ]0, \infty[ \rightarrow [0,\infty[$, we say that $[X,d,m,\nu]$ satisfies  a  {\it $\Psi$-isoperimetric inequality} if
\begin{equation}\label{Isop1gen} \nu(A) \Psi(\nu(A)) \leq  P_m(A) \quad \hbox{for all $A\subset X$ with }   0<\nu(A)<\nu(X).
\end{equation}
}
\end{definition}

\begin{example}

(1)  In \cite{TILLICH}  (see also the references therein) it is shown   that the lattice $\Z^n$ has isoperimetric dimension $n$ with constant $I_n= \frac{1}{2n}$,   and  that the complete graph $K_n$  satisfies a $\Psi$-isoperimetric inequality  with $\Psi (r) =  n - r $. In addition, it is also proved that the $n$-cube $Q_n$ satisfies a $\Psi$-isoperimetric inequality with $\Psi (r) = {\rm log}_2 (\frac{\nu(Q_n)}{r})$.

\noindent (2) In \cite{MRTLibro}, for $[\mathbb{R}^N,d,m^J]$, it is proved that
\begin{equation}\label{diffic}\Psi_{_{J,N}}(|A|)\le P_J(A)  \quad \hbox{for all } \ A \subset X  \ \hbox{with} \ |A| < +\infty,
\end{equation}
being
\begin{equation}\label{mi958}
\Psi_{_{J,N}}(r)=
 \int_{B_{\left(r/\omega_N\right)^\frac{1}{N}}}H^J_{\partial B_{\|x\|}}(x)dx
  =\int_0^{r}  H_{\partial B_{\left(s/\omega_N\right)^\frac{1}{N}}}^J (\left(s/\omega_N\right)^\frac{1}{N}, 0, \ldots , 0)ds,
\end{equation}
where $B_r$ is the ball of radius $r$ centered at $0$ and $H_{\partial B_r}^J$ is the $m^J$-mean curvature of $\partial B_r$  (see Subsection~\ref{def.curvatura}).
Therefore, $[\mathbb{R}^N,d,m^J, \mathcal{L}^N]$  satisfies a  $\Psi$-isoperimetric inequality, where $\Psi(r) = \frac{1}{r} \Psi_{_{J,N}}(r)$ is a decreasing function.

\end{example}

  The next result was proved in \cite{CGL} for Riemannian manifolds and in \cite{Coulhon} for graphs (see also \cite[Theorem 2]{TILLICH}).

\begin{proposition}\label{trivil} Given a non-increasing  function $\Psi : ]0, \infty[ \rightarrow [0,\infty[$, we have that  $[X,d,m,\nu]$ satisfies  a  $\Psi$-isoperimetric inequality if, and only if, the following inequality holds:
\begin{equation} \label{eq:16}
\Psi(\nu(A)) \Vert u \Vert_{L^1(X, \nu)} \leq TV_m(u)
 \end{equation}
for all $\nu$-measurable sets $A \subset X$ with $0<\nu(A)<\nu(X)$  and all $u \in L^1(X, \nu)$ with $u = 0 \ \hbox{in} \ X \setminus A.$

\end{proposition}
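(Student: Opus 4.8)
The plan is to mirror the proof of Theorem~\ref{IsoPoint}, replacing the power-function weight by the general non-increasing function $\Psi$. The implication \eqref{eq:16}$\Rightarrow$\eqref{Isop1gen} is the easy direction: given $A\subset X$ with $0<\nu(A)<\nu(X)$, apply \eqref{eq:16} with $u=\1_A$ (which indeed vanishes on $X\setminus A$), and use $\Vert\1_A\Vert_{L^1(X,\nu)}=\nu(A)$ together with $TV_m(\1_A)=P_m(A)$ (equation~\eqref{theabove01}) to obtain $\Psi(\nu(A))\nu(A)\le P_m(A)$, which is exactly \eqref{Isop1gen}.

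For the converse, suppose $[X,d,m,\nu]$ satisfies the $\Psi$-isoperimetric inequality and fix a $\nu$-measurable set $A$ with $0<\nu(A)<\nu(X)$ and $u\in L^1(X,\nu)$ with $u=0$ on $X\setminus A$. Since $TV_m(|u|)\le TV_m(u)$ (Proposition~\ref{lemLipsch} with $\phi(r)=|r|$) and $\Vert|u|\Vert_1=\Vert u\Vert_1$, we may assume $u\ge 0$, and we may assume $u$ is not $\nu$-a.e.\ zero. For $t>0$ the superlevel set $E_t(u)=\{u>t\}$ is contained in $A$, hence $\nu(E_t(u))\le\nu(A)<\nu(X)$; also $\nu(E_t(u))>0$ for $0<t<\Vert u\Vert_{L^\infty(X,\nu)}$. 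The key monotonicity observation is that $\Psi$ is non-increasing, so $\nu(E_t(u))\le\nu(A)$ gives $\Psi(\nu(E_t(u)))\ge\Psi(\nu(A))$, and therefore, whenever $0<\nu(E_t(u))<\nu(X)$,
\begin{equation*}
P_m(E_t(u))\ \ge\ \nu(E_t(u))\,\Psi(\nu(E_t(u)))\ \ge\ \nu(E_t(u))\,\Psi(\nu(A)).
\end{equation*}
Now integrate over $t$ using the coarea formula (Theorem~\ref{coarea1}): since $u\ge 0$, $P_m(E_t(u))=0$ for $t<0$, so
\begin{equation*}
TV_m(u)=\int_0^{+\infty}P_m(E_t(u))\,dt\ \ge\ \Psi(\nu(A))\int_0^{+\infty}\nu(E_t(u))\,dt=\Psi(\nu(A))\,\Vert u\Vert_{L^1(X,\nu)},
\end{equation*}
where the last equality is the layer-cake formula $\int_0^\infty\nu(E_t(u))\,dt=\int_X u\,d\nu=\Vert u\Vert_{L^1(X,\nu)}$ for the nonnegative function $u$. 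This is precisely \eqref{eq:16}.

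The only point requiring a little care—and the analogue of the main obstacle in Theorem~\ref{IsoPoint}—is the behaviour at the endpoints of the $t$-integration: the bound $P_m(E_t(u))\ge\nu(E_t(u))\Psi(\nu(A))$ is only guaranteed by the hypothesis when $0<\nu(E_t(u))<\nu(X)$, but for $t\ge\Vert u\Vert_{L^\infty(X,\nu)}$ we have $\nu(E_t(u))=0$ and the inequality holds trivially ($0\ge 0$), while the case $\nu(E_t(u))=\nu(X)$ cannot occur for $t>0$ because $E_t(u)\subset A$ and $\nu(A)<\nu(X)$. Hence the integrated inequality is valid for all $t>0$ and the argument goes through; note that, unlike in Theorem~\ref{IsoPoint}, no extra integration-by-parts trick is needed here since the target exponent is $1$.
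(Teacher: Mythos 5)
Your proof is correct and follows essentially the same route as the paper: the easy direction via $u=\1_A$ and \eqref{theabove01}, and the converse via reduction to $u\ge 0$, the inclusion $E_t(u)\subset A$, the monotonicity of $\Psi$, the coarea formula and the layer-cake identity. Your extra remarks on the endpoint cases $\nu(E_t(u))\in\{0,\nu(X)\}$ are a slightly more careful version of the paper's restriction of the $t$-integral to $(0,\Vert u\Vert_{L^\infty(X,\nu)})$, and nothing else differs.
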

\begin{proof} Taking $u = \1_A$ in \eqref{eq:16}, we obtain that $[X,d,m,\nu]$ satisfies  a  $\Psi$-isoperimetric inequality. Conversely, since  $TV_m(|u|) \leq TV_m(  u )$, it is enough to prove \eqref{eq:16}  for $u \geq 0$. If $u\equiv 0$ in $X$ the result is trivial. Therefore, let $A$ be a $\nu$-measurable set with $0<\nu(A)<\nu(X)$ and $0 \leq u \in L^1(X, \nu)$ a non-$\nu$-null function with $u \equiv 0 \ \hbox{in} \ X \setminus A$. For $t>0$ we have that $E_t(u) \subset A$ and, therefore, $\nu(E_t(u)) \leq \nu(A)$, thus, since $\Psi$ is non-increasing, we have that $\Psi(\nu(E_t(u)) \geq \Psi(A)$. Therefore, by the coarea formula, we have
$$
\begin{array}{c}\displaystyle
TV_m(u) = \int_0^{+\infty} P_m(E_t(u)) dt = \int_0^{\Vert u \Vert_{L^\infty(X,\nu)}} P_m(E_t(u)) dt \geq \int_0^{\Vert u \Vert_{L^\infty(X,\nu)}}  \nu(E_t(u)) \Psi(\nu(E_t(u))) dt \\ \\ \displaystyle
\geq \Psi(\nu(A)) \int_0^{+\infty} \nu(E_t(u)) dt = \Psi(\nu(A)) \Vert u \Vert_{L^1(X, \nu)}.
\end{array}$$
\end{proof}

 As a consequence of Theorem \ref{IsoPoint} and Proposition  \ref{trivil}, we obtain the following result.

 \begin{corollary}
  The following assertions are equivalent:
 \item(i)
  $\Vert u \Vert_{L^{\frac{n}{n-1}}(X, \nu) } \leq I_n TV_m(u) \quad \forall    u \in BV^0_m(X,\nu).$\\
 \item(ii) $ \Vert u \Vert_{L^1(X, \nu)} \leq I_n \nu(A)^{\frac{1}{n}}TV_m(u)$  for all   $A \subset X$ with $0<\nu(A)<\nu(X)$   and   all $u \in L^1(X, \nu)$ with $u = 0$ in $X \setminus A.$
 \end{corollary}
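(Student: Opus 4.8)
The plan is to obtain this corollary simply by composing Theorem~\ref{IsoPoint} and Proposition~\ref{trivil}, with no new estimate beyond an elementary rescaling. The link between the two is the reformulation already recorded in \eqref{NIsop1}: the isoperimetric inequality \eqref{Isop1} with constant $I_n$ is exactly the $\Psi$-isoperimetric inequality \eqref{Isop1gen} for the weight $\Psi_n(r):=\frac{1}{I_n}r^{-1/n}$, which is strictly decreasing and hence non-increasing on $]0,\infty[$, so that Proposition~\ref{trivil} is applicable to it.

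First I would use Theorem~\ref{IsoPoint} to replace assertion (i) by the assertion that $[X,d,m,\nu]$ has isoperimetric dimension $n$, that is, that \eqref{Isop1} holds with this same $I_n$ (the theorem preserves the constant in both directions). I would then read \eqref{Isop1} as the $\Psi_n$-isoperimetric inequality and apply Proposition~\ref{trivil} with $\Psi=\Psi_n$, which gives the equivalence with the inequality
\[
\frac{1}{I_n}\,\nu(A)^{-1/n}\,\Vert u\Vert_{L^1(X,\nu)}\ \leq\ TV_m(u)
\]
valid for all $\nu$-measurable $A\subset X$ with $0<\nu(A)<\nu(X)$ and all $u\in L^1(X,\nu)$ vanishing on $X\setminus A$. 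Multiplying through by the positive, finite factor $I_n\nu(A)^{1/n}$ turns this into assertion (ii); as this operation is reversible, the two are equivalent. Concatenating the chain
\[
\text{(i)}\iff \text{isoperimetric dimension }n\iff \Psi_n\text{-isoperimetric inequality}\iff \text{(ii)}
\]
finishes the argument.

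I do not expect any real difficulty here; the only things to watch are bookkeeping. One should check that the test-function classes are compatible: a $u\in L^1(X,\nu)$ vanishing outside a set $A$ with $0<\nu(A)<\nu(X)$ lies in $BV^0_m(X,\nu)$ because $L^1(X,\nu)\subset BV_m(X,\nu)$, so the class appearing in (ii) sits inside the one used in Theorem~\ref{IsoPoint}, while the coarea-based proof of Proposition~\ref{trivil} already covers the general $BV^0_m$ test functions. One should also confirm that the rescaling factor $I_n\nu(A)^{1/n}$ is genuinely positive and finite under the standing hypothesis $0<\nu(A)<\nu(X)$; the only degenerate possibility, $\nu(A)=+\infty$ when $\nu(X)=+\infty$, makes both displayed inequalities trivially valid and is therefore harmless. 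That is all the proof requires.
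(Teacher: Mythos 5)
Your proposal is correct and is precisely the argument the paper intends: the corollary is stated as an immediate consequence of Theorem~\ref{IsoPoint} (which identifies (i) with isoperimetric dimension $n$, constant preserved) and Proposition~\ref{trivil} applied to $\Psi_n(r)=\frac{1}{I_n}r^{-1/n}$ (which, after the trivial rescaling by $I_n\nu(A)^{1/n}$, yields (ii)). Your bookkeeping remarks on the test-function classes and the positivity of the rescaling factor are sound and add nothing beyond what the paper's implicit composition already requires.
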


Consider the Dirichlet  energy functional $\mathcal{H}_m : L^2(X, \nu) \rightarrow [0, + \infty]$  defined as
$$\mathcal{H}_m(u)= \left\{ \begin{array}{ll} \displaystyle\frac{1}{2} \int_{X \times X} (u(x) - u(y))^2 dm_x(y) d\nu(x) \quad &\hbox{ if $u\in L^2(X, \nu) \cap  L^1(X, \nu)$.} \\ \\ + \infty, \quad &\hbox{else}. \end{array}\right.$$

  The next result, in the context of Markov chains, was obtained by Varopoulos in \cite{V}.

\begin{theorem}\label{megustaa}  Let $n >2$. If the Sobolev inequality
\begin{equation}\label{eIsoPointN}
\Vert u \Vert_{L^{\frac{n}{n-1}}(X, \nu) } \leq I_n TV_m(u) \quad \hbox{for all} \ u \in BV^0_m(X,\nu)
\end{equation}
holds, then there exists $C_n >0$ such that
\begin{equation}\label{eIsoPointNN}
\Vert u \Vert_{L^{\frac{2n}{n-2}}(X, \nu)}^2 \leq C_n \mathcal{H}_m(u) \quad \hbox{for all} \ u \in BV^0_m(X,\nu)
\end{equation}
\end{theorem}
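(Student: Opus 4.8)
The plan is to run a Varopoulos-type bootstrap: feed a suitable power of $u$ into the hypothesis \eqref{eIsoPointN}, split the resulting $m$-total variation using the elementary inequality $|a^\gamma-b^\gamma|\le\gamma(a^{\gamma-1}+b^{\gamma-1})|a-b|$ (valid for $a,b\ge 0$ and $\gamma\ge 1$), and then apply the Cauchy--Schwarz inequality with respect to the measure $\nu\otimes m_x$ so as to produce the Dirichlet form $\mathcal{H}_m$ on one side and an $L^q$-norm of $u$ on the other. First I would reduce to $u\ge 0$ (using $TV_m(|u|)\le TV_m(u)$, $\mathcal{H}_m(|u|)\le\mathcal{H}_m(u)$ and $\||u|\|_q=\|u\|_q$) and to the case $\mathcal{H}_m(u)<+\infty$, since otherwise \eqref{eIsoPointNN} is trivial; note that, by the very definition of $\mathcal{H}_m$, finiteness of $\mathcal{H}_m(u)$ forces $u\in L^1(X,\nu)\cap L^2(X,\nu)$.

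Next, fix the exponent $\gamma:=\frac{2(n-1)}{n-2}$, which is $>1$ precisely because $n>2$, and which is chosen so that $\frac{\gamma n}{n-1}=2(\gamma-1)=\frac{2n}{n-2}=:q$. To make the final division legitimate I would work first with the truncations $v_k:=u\wedge k$, $k>0$. Each $v_k$ is bounded and vanishes outside the set $A$ on which $u$ is supported, so $v_k^\gamma\le k^{\gamma-1}v_k\in L^1(X,\nu)\subset BV_m(X,\nu)$ gives $v_k^\gamma\in BV^0_m(X,\nu)$; moreover $\mathcal{H}_m(v_k)\le\mathcal{H}_m(u)$ because $s\mapsto s\wedge k$ is $1$-Lipschitz, and $v_k\in L^q(X,\nu)$ with $0<\|v_k\|_{q}<+\infty$ as soon as $u$ is not $\nu$-a.e.\ zero. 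Applying \eqref{eIsoPointN} to $v_k^\gamma$ yields $\|v_k\|_{q}^{\gamma}$ on the left, and on the right, after the inequality for $|a^\gamma-b^\gamma|$, the Cauchy--Schwarz inequality in $\nu\otimes m_x$, the invariance of $\nu$, the bound $(a+b)^2\le 2(a^2+b^2)$, and $\big||v_k(y)|-|v_k(x)|\big|\le|v_k(y)-v_k(x)|$, the estimate
\[
TV_m(v_k^\gamma)\ \le\ \sqrt{2}\,\gamma\,\|v_k\|_{q}^{\gamma-1}\,\sqrt{\mathcal{H}_m(v_k)}\ \le\ \sqrt{2}\,\gamma\,\|v_k\|_{q}^{\gamma-1}\,\sqrt{\mathcal{H}_m(u)}.
\]

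Combining the two bounds and dividing by the finite positive quantity $\|v_k\|_{q}^{\gamma-1}$ gives $\|v_k\|_{q}\le\sqrt{2}\,\gamma\,I_n\,\sqrt{\mathcal{H}_m(u)}$; letting $k\to\infty$ (monotone convergence, since $v_k\uparrow u\ge 0$) yields $\|u\|_{q}\le\sqrt{2}\,\gamma\,I_n\,\sqrt{\mathcal{H}_m(u)}$, and squaring produces \eqref{eIsoPointNN} with $C_n=2\gamma^2 I_n^2=\frac{8(n-1)^2}{(n-2)^2}\,I_n^2$. The only delicate point is the a priori finiteness required to divide by a power of $\|v_k\|_q$: this is exactly why one passes to the bounded truncations $v_k$ (which lie in every $L^p(X,\nu)$ because they are bounded and integrable) rather than applying the hypothesis directly to $u^\gamma$. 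Everything else is the standard chain of elementary inequalities, together with Tonelli's theorem to manipulate the double integrals, as already used freely above.
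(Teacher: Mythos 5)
Your proof is correct and follows essentially the same route as the paper: apply \eqref{eIsoPointN} to the power $u^{\gamma}$ with $\gamma=\frac{2(n-1)}{n-2}$, control $TV_m(u^{\gamma})$ via the convexity inequality $|a^{\gamma}-b^{\gamma}|\le\gamma(a^{\gamma-1}+b^{\gamma-1})|a-b|$ and Cauchy--Schwarz against $\mathcal{H}_m$, and divide; you even land on the same constant $C_n=\frac{8(n-1)^2}{(n-2)^2}I_n^2$. Your truncation by $v_k=u\wedge k$ is a welcome extra precaution (it justifies the division by $\Vert\cdot\Vert_q^{\gamma-1}$ and the membership $v_k^{\gamma}\in BV_m^0(X,\nu)$, points the paper passes over silently), but it does not change the substance of the argument.
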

 \begin{proof} We can assume that $u \geq 0$. Let $p:= \frac{2(n-1)}{n-2}$. By \eqref{eIsoPointN}, we have
\begin{equation}\label{E1V}\Vert u \Vert_{\frac{2n}{n-2}}^p = \Vert u \Vert_{\frac{pn}{n-1}}^p = \Vert u^p \Vert_{\frac{n}{n-1}} \leq I_n TV_m(u^p).\end{equation}
On the other hand, since, for $a,b>0$,
 $$\vert b^p - a^p\vert \leq p(a^{p-1} + b^{p-1} ) \vert b - a\vert$$
 by the convexity of $|x|^p$, and having in mind the reversibility of $\nu$, we have
$$TV_m(u^p) \leq \frac12 \int_X \int_X p(u^{p-1}(x) + u^{p-1}(y) ) \vert u(y) - u(x)\vert dm_x(y) d\nu(x) $$ $$= p \int_X \int_X u^{p-1}(x) \vert u(y) - u(x)\vert dm_x(y) d\nu(x) $$ $$\leq p \left(\int_X \int_X u^{2(p-1)}(x)dm_x(y) d\nu(x) \right)^{\frac12} \left(\int_X \int_X  \vert u(y) - u(x)\vert^2 dm_x(y) d\nu(x)\right)^{\frac12} $$ $$= p \Vert u^{p-1} \Vert_{L^2(X,\nu)} \left( 2 \mathcal{H}_m(u)\right)^{\frac12}.$$
Then, by \eqref{E1V}, we get
\begin{equation}\label{d10s002}\Vert u \Vert_{\frac{2n}{n-2}}^p \leq p I_n \Vert u^{p-1} \Vert_{L^2(X,\nu)} \left( 2 \mathcal{H}_m(u)\right)^{\frac12}.
\end{equation}
Now,
$$\Vert u^{p-1} \Vert_{L^2(X,\nu)} = \left(\int_X u^{\frac{2n}{n-2}} d\nu\right)^{\frac12} = \Vert u \Vert_{\frac{2n}{n-2}}^{\frac{n}{n-2}},$$
thus, from~\eqref{d10s002},
$$  \Vert u \Vert_{\frac{2n}{n-2}}^{\frac{2(n-1)}{ n-2}} \leq \textstyle \frac{2(n-1)}{n-2} I_n \Vert u \Vert_{\frac{2n}{n-2}}^{\frac{n}{n-2}} \left( 2 \mathcal{H}_m(u)\right)^{\frac12},$$
and, therefore,
$$\Vert u \Vert_{\frac{2n}{n-2}}^2 \leq C_n \mathcal{H}_m(u)$$
where $C_n = \frac{8(n-1)^2}{(n-2)^2} I_n^2.$
 \end{proof}

  Following Theorem~\ref{IsoPoint} and Theorem~\ref{megustaa} we can also obtain a Sobolev inequality as a consequence of the isoperimetric dimensional inequality.
 \begin{corollary}\label{NIsoPointN}  Assume that $\nu(X) < \infty$. Let  $n >2$. If $[X,d,m,\nu]$ has   isoperimetric dimension $n$ then  there exists $C_n >0$ such that
\begin{equation}\label{eIsoPointNNcoro}
\Vert u \Vert_{L^{\frac{2n}{n-2}}(X, \nu)}^2 \leq C_n \mathcal{H}_m(u) \quad \hbox{for all} \ u \in  BV_m^0(X,\nu).
\end{equation}
 \end{corollary}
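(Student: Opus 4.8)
The plan is to obtain \eqref{eIsoPointNNcoro} as a formal two-step composition of the two results just established, introducing no new analytic ingredient. First I would read off that the hypothesis ``$[X,d,m,\nu]$ has isoperimetric dimension $n$'' is, by definition, the validity of \eqref{Isop1}. By Theorem~\ref{IsoPoint}, \eqref{Isop1} is equivalent to the Sobolev-type inequality
$$\Vert u \Vert_{L^{\frac{n}{n-1}}(X, \nu) } \leq I_n\, TV_m(u) \qquad \hbox{for all } u \in BV^0_m(X,\nu),$$
with the \emph{same} constant $I_n$; in other words, the hypothesis \eqref{eIsoPointN} of Theorem~\ref{megustaa} is fulfilled. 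Since $n>2$ by assumption, I would then invoke Theorem~\ref{megustaa} with this inequality as input: its conclusion \eqref{eIsoPointNN} is exactly the desired estimate \eqref{eIsoPointNNcoro}, and tracking the constant through the argument of Theorem~\ref{megustaa} yields the explicit value $C_n=\dfrac{8(n-1)^2}{(n-2)^2}\,I_n^2$. This is the whole proof.

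The point that needs a little care — and the only place the assumption $\nu(X)<\infty$ enters — is the bookkeeping internal to the application of Theorem~\ref{megustaa}. There one reduces to $u\ge 0$, raises $u$ to the power $p=\frac{2(n-1)}{n-2}$, applies \eqref{eIsoPointN} to $u^p$, and eventually divides the resulting inequality by $\Vert u\Vert_{L^{2n/(n-2)}(X,\nu)}^{n/(n-2)}$, which must first be known to be finite. I would handle an arbitrary $u\in BV^0_m(X,\nu)$ by truncation: for $k>0$, the function $u\wedge k$ still vanishes outside the set $A$ witnessing $u\in BV^0_m(X,\nu)$, it lies in $BV_m(X,\nu)$ because $t\mapsto t\wedge k$ is Lipschitz (Proposition~\ref{lemLipsch}), and the pointwise bound $|(u\wedge k)(x)-(u\wedge k)(y)|\le|u(x)-u(y)|$ gives $\mathcal{H}_m(u\wedge k)\le\mathcal{H}_m(u)$. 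Applying the finite-level estimate to $u\wedge k$ and letting $k\to\infty$ by monotone convergence recovers \eqref{eIsoPointNNcoro} for $u$; the finiteness of $\nu(X)$ makes this limiting step transparent (and, if $\mathcal{H}_m(u)=+\infty$, the inequality is trivial).

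Thus the main obstacle is not logical — the statement follows immediately from Theorem~\ref{IsoPoint} and Theorem~\ref{megustaa} — but purely one of checking that the reductions used in the proof of Theorem~\ref{megustaa} are legitimate for every element of $BV^0_m(X,\nu)$, which the truncation argument above settles.
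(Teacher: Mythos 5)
Your proposal is correct and follows exactly the route the paper intends: the corollary is obtained by composing Theorem~\ref{IsoPoint} (isoperimetric dimension $n$ gives the $L^{\frac{n}{n-1}}$ Sobolev inequality on $BV^0_m(X,\nu)$) with Theorem~\ref{megustaa}, yielding the constant $C_n=\frac{8(n-1)^2}{(n-2)^2}I_n^2$. The additional truncation remark is a sensible (if not strictly required) check on the reductions inside Theorem~\ref{megustaa}, and nothing in your argument deviates from the paper's.
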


 Let us point out that an important consequence of this result is Theorem 5 in \cite{ChY}, which corresponds to Corollary \ref{NIsoPointN} for the particular case of finite weighted graphs.

\subsection{$m$-$TV$ versus $TV$ in Metric Measure Spaces}\label{sec25}

Let $(X, d, \nu)$ be a metric measure space and recall that, for functions in $L^1(X, \nu)$, Miranda introduced a local notion of total variation in \cite{Miranda1} (see also  \cite{ADiM}). To define this notion, first note that for a function $u : X \rightarrow \R$, its {\it slope} (or {\it local Lipschitz constant}) is defined as
 $$\vert \nabla u \vert(x) := \limsup_{y \to x} \frac{\vert u(y) - u(x)\vert}{d(x,y)}, \ \ x\in X,$$
 with the convention that  $\vert \nabla u \vert(x) = 0$ if $x$ is an isolated point.

A function $u \in L^1(X, \nu)$ is said to be a BV-function if there exists a sequence $(u_n)$ of locally Lipschitz functions converging to $u$ in $L^1(X, \nu)$ and such that
 $$\sup_{n \in \N} \int_X \vert \nabla u_n \vert d\nu(x) < \infty.$$
 We shall denote the space of all BV-functions by $BV(X,d, \nu)$. Let $u \in BV(X,d, \nu)$, the total variation of $u$ on an open set $A \subset X$ is defined as:
 $$\vert D u \vert_{\nu}(A):= \inf \left\{ \liminf_{n \to \infty} \int_A \vert \nabla u_n \vert(x) d \nu(x) \ : \ u_n \in  Lip_{loc}(X,\nu), \ u_n \to u \ \hbox{in} \ L^1(A, \nu) \right\}.$$
 A set $E \subset X$ is said to be of finite perimeter if $\1_E \in BV(X,d, \nu)$ and its perimeter is defined as
 \begin{equation}\label{forarb001}{\rm Per}_{\nu}(E):= \vert D \1_E \vert_{\nu}(X).
 \end{equation}
  We want to point out that in \cite{ADiM} the BV-functions are characterized using different notions of total variation.

 As aforementioned, the local classical BV-spaces and the nonlocal BV-spaces are of different nature although they represent analogous concepts in different settings. In this section we compare these spaces, showing that it is possible to relate the nonlocal concept to the local one after rescaling and taking limits.

 \begin{remark}\label{lipBV} Obviously, \begin{equation}\label{clarif}
 \vert D u \vert_\nu \leq \vert \nabla u \vert \, \nu \quad \hbox{if $u$ is locally Lipschitz}.
 \end{equation}
  Furthermore, there exist metric measures spaces in which the equality in this expression  does not hold (see \cite[ Remark 4.4]{APS}).
 \end{remark}

\begin{proposition} Let $[X,d,m]$ be a metric random walk  space with invariant and reversible  measure~$\nu$.
Let $u\in BV(X,d,\nu)$. Then $u\in BV(X,d,m_x)$ for $\nu$-a.e. $x\in X$ and
$$\int_X |Du|_{m_x}(X) d\nu(x)\leq |Du|_{\nu}(X).$$
\end{proposition}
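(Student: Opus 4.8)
The plan is to use the definition of $BV(X,d,\nu)$ directly: pick an approximating sequence $(u_n)$ of locally Lipschitz functions with $u_n\to u$ in $L^1(X,\nu)$ and $\sup_n\int_X|\nabla u_n|\,d\nu<\infty$, and transfer this data to the measures $m_x$. The key observation is that for a locally Lipschitz function $v$, the slope $|\nabla v|$ is the \emph{same} pointwise function regardless of which reference measure we integrate against; only the measure of integration changes. So the natural candidates for approximating $u$ with respect to $m_x$ are the very same $u_n$, and the quantity we must control is $\int_X|\nabla u_n|\,dm_x$ for $\nu$-a.e.\ $x$.

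First I would invoke invariance of $\nu$ with respect to $m$ (the displayed identity on page~\pageref{paginv}): applied to the nonnegative function $f=|\nabla u_n|$ this gives
$$\int_X\left(\int_X|\nabla u_n|(y)\,dm_x(y)\right)d\nu(x)=\int_X|\nabla u_n|(x)\,d\nu(x)\le \sup_k\int_X|\nabla u_k|\,d\nu=:M<\infty,$$
and similarly applied to $f=|u_n-u|$ gives $\int_X\|u_n-u\|_{L^1(X,m_x)}\,d\nu(x)=\|u_n-u\|_{L^1(X,\nu)}\to0$. From the second fact, along a subsequence (not relabelled) we get $u_n\to u$ in $L^1(X,m_x)$ for $\nu$-a.e.\ $x$. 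For the first fact, the main technical step is to pass from ``the $\nu$-integral of $\int|\nabla u_n|\,dm_x$ is bounded uniformly in $n$'' to ``for $\nu$-a.e.\ $x$, $\sup_n\int|\nabla u_n|\,dm_x<\infty$''. This is where I expect the real work to lie: a uniform bound on $\int_X g_n(x)\,d\nu(x)$ does \emph{not} by itself give a $\nu$-a.e.\ bound on $\sup_n g_n(x)$. I would handle this by a Fatou/diagonal argument combined with the lower semicontinuity built into the definition of $|Du|_{m_x}$: by Fatou applied to $g_n(x):=\int_X|\nabla u_n|\,dm_x$, we have $\int_X\liminf_n g_n(x)\,d\nu(x)\le \liminf_n\int_X g_n\,d\nu\le M$, so $\liminf_n g_n(x)<\infty$ for $\nu$-a.e.\ $x$; passing to a further subsequence depending measurably on $x$ is the delicate point, but it can be circumvented by working directly with the infimum in the definition of total variation rather than trying to fix one subsequence uniformly.

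Concretely, I would argue as follows. Fix the sequence $(u_n)$ as above with $\int_X|\nabla u_n|\,d\nu\le M$ for all $n$ and (after passing to a subsequence) $u_n\to u$ in $L^1(X,m_x)$ for $\nu$-a.e.\ $x$. For such $x$, the sequence $(u_n)$ witnesses that $u\in BV(X,d,m_x)$ \emph{provided} $\liminf_n\int_X|\nabla u_n|\,dm_x<\infty$, and moreover by the very definition of the Miranda total variation,
$$|Du|_{m_x}(X)\le \liminf_{n\to\infty}\int_X|\nabla u_n|(y)\,dm_x(y)=\liminf_{n\to\infty} g_n(x).$$
Now integrate this inequality in $x$ over $X$ against $\nu$ and apply Fatou's lemma on the right-hand side together with the invariance identity:
$$\int_X|Du|_{m_x}(X)\,d\nu(x)\le \int_X\liminf_{n\to\infty} g_n(x)\,d\nu(x)\le \liminf_{n\to\infty}\int_X g_n(x)\,d\nu(x)=\liminf_{n\to\infty}\int_X|\nabla u_n|\,d\nu.$$
Finally I would take the infimum over all admissible approximating sequences $(u_n)$ for $u$ in $BV(X,d,\nu)$: the right-hand side becomes $|Du|_\nu(X)$ by definition, and since the left-hand side does not depend on the chosen sequence, the claimed inequality
$$\int_X|Du|_{m_x}(X)\,d\nu(x)\le |Du|_\nu(X)$$
follows. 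The finiteness of $|Du|_\nu(X)$ then forces $\int_X|Du|_{m_x}(X)\,d\nu(x)<\infty$, hence $|Du|_{m_x}(X)<\infty$ — i.e.\ $u\in BV(X,d,m_x)$ — for $\nu$-a.e.\ $x$, which is the first assertion. The one subtlety worth flagging in the write-up is measurability of $x\mapsto |Du|_{m_x}(X)$ and of $x\mapsto g_n(x)$; the latter is measurable by the measurable dependence of $m_x$ on $x$ (condition (i) in the definition of a random walk), and the former is measurable as a $\liminf$ of measurable functions through the displayed semicontinuity bound, so Fatou's lemma applies legitimately.
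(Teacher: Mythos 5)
Your proposal is correct and follows essentially the same route as the paper's proof: transfer the approximating sequence via the invariance identity, extract a subsequence so that $u_n\to u$ in $L^1(X,m_x)$ for $\nu$-a.e.\ $x$, and combine Fatou's lemma with the definition of $|Du|_{m_x}(X)$ as an infimum. The only cosmetic difference is that the paper selects at the outset a sequence with $\lim_n\int_X|\nabla u_n|\,d\nu=|Du|_\nu(X)$ rather than taking the infimum over sequences at the end; both variants are valid (in yours, one should first pass to a subsequence along which $\int_X|\nabla u_n|\,d\nu$ attains its $\liminf$ before the a.e.\ extraction, so that the subsequential $\liminf$ does not increase).
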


\begin{proof}
Since $u \in BV(X,d, \nu)$,  there exists a sequence $\{ u_n \}_{n \in \NN} \subset  Lip_{\text{loc}}(X,\nu)$ such that
$$\lim_{n \to \infty} \Vert u_n - u \Vert_{L^1(X,\nu)} = 0 \quad \hbox{ and} \quad \lim_{n \to \infty} \int_{X} \vert \nabla u_n \vert(x)  d\nu(x) =   \vert Du \vert_{\nu}(X).$$
Now, using the invariance of $\nu$,
$$
\begin{array}{l}
\displaystyle
\int_X \Vert u_n - u \Vert_{L^1(X, m_x)} d\nu(x)=  \int_{X} \left(\int_{X} |u_n(y) - u(y)| dm_x(y) \right) d\nu(x)  \\[10pt]
\displaystyle \qquad =  \int_{X} |u_n(y) - u(y)| d\nu(y) =  \Vert u_n - u \Vert_{L^1(X,\nu)} \xrightarrow{n\to\infty} 0 .
\\[10pt]
\end{array}$$
Therefore, we may take a subsequence, which we still denote by $u_{n}$, such that $\lim_{n \to \infty}\Vert u_n - u \Vert_{L^1(X, m_x)}=0$ for $\nu$-a.e. $x\in X$.

Moreover, by Fatou's lemma and the invariance of $\nu$,
$$
\begin{array}{l}
\displaystyle
\int_X \left(\liminf_{n \to \infty} \int_{X} \vert \nabla u_n \vert(y)  dm_x(y)\right) d\nu(x) \leq \liminf_{n \to \infty}\int_X  \left(\int_{X} \vert \nabla u_n \vert(y)  dm_x(y)\right) d\nu(x)
\\[10pt]
\displaystyle \qquad =  \liminf_{n \to \infty}\int_X  \vert \nabla u_n \vert(y)  d\nu(x) = |Du|_{\nu}(X).
\\[10pt]
\end{array}$$
Consequently, $\liminf_{n \to \infty} \int_{X} \vert \nabla u_n \vert(y)  dm_x(y)<\infty$ and $\lim_{n\to\infty}u_n=u$ in $L^1(X,m_x)$ for $\nu$-a.e. $x\in X$, thus $u\in BV(X,d,m_x)$ for $\nu$-a.e. $x\in X$, and
\begin{equation}\label{mon291151}
\int_X |Du|_{m_x}(X) d\nu(x)\leq |Du|_{\nu}(X) \, .
\end{equation}
\end{proof}

It is  shown in \cite{MRTLibro} that, in the context of Example \ref{JJ}~(1), and assuming that $J$ satisfies
\begin{equation}\label{vi1338}M_J:=\int_{\R^N}J(z)|z|dz<+\infty,
\end{equation}
we have that
\begin{equation}\label{ee1}
TV_{m^J}(u) \leq  \frac{M_J}{2}|Du|_{\mathcal{L}^N}
\end{equation}
for every $u \in BV(\R^N)$.

In the next example we  see that  there exist  metric random walk spaces  in which it is not possible to obtain an inequality like \eqref{ee1}.
\begin{example} Let $G= (V(G), E(G))$ be a locally finite weighted discrete graph with weights $w_{x,y}$. For a fixed $x_0 \in V(G)$ the function $u = \1_{\{x_0\}}$ is a Lipschitz function and, since every vertex is isolated for the graph distance, $|\nabla u|\equiv 0$,  thus
$$|Du|_{\nu_G}(V(G))\leq \int |\nabla u| d\nu_G(x) =0 .$$
However, by Example \ref{exammplee}, we have
$$TV_{m^G}(u) = \frac{1}{2}  \sum_{x \in V(G))} \sum_{y \in V(G)} \vert u(x) - u(y) \vert w_{xy} = \sum_{x \in V(G)), x \not= x_0} w_{x_0 x} > 0. $$
\end{example}

Let $[\mathbb{R}^N,d,m^J]$ be the metric random walk space of Example \ref{JJ}~(1). Then, if $J$ is compactly supported and $u \in BV(\R^N)$ has compact support we have that (see \cite{Davila} and \cite{MRT1})
\begin{equation}\label{connvver1}
\lim_{\epsilon \downarrow 0} \frac{C_J}{\epsilon} TV_{m^{J_{\epsilon}}}(u) = \int_{\R^N} \vert Du \vert d\mathcal{L}^N,
\end{equation}
where $$ J_\epsilon(x):=\frac{1}{\epsilon^{N}}
J\left(\frac{x}{\epsilon}\right) \quad \hbox{and} \quad C_J = \frac{2}{\displaystyle \int_{\R^N} J(z) \vert z_N \vert dz}.$$
In particular, if we take $$J(x):= \frac{1}{\mathcal{L}^N(B(0,1))}\1_{B(0,1)}(x),$$ then
$$J_\epsilon(x) = \frac{1}{\mathcal{L}^N(B(0,\epsilon))}\1_{B(0,\epsilon)}(x).$$
Hence,
$$m^{{\mathcal{L}^N,\epsilon}}_x = m^{J_{\epsilon}}_x,$$
and, consequently, by \eqref{connvver1}, we have
\begin{equation}\label{2connvver1}
\lim_{\epsilon \downarrow 0} \frac{C_J}{\epsilon} TV_{m^{{\mathcal{L}^N,\epsilon}}}(u) = \int_{\R^N} \vert Du \vert d\mathcal{L}^N = \vert Du \vert_{\mathcal{L}^N} (\R^N).
\end{equation}
Therefore, it is natural to pose the following problem: Let $(X,d, \mu)$ be a metric measure space and let $m^{\mu,\epsilon}$ be the  $\epsilon$-step random walk associated to $\mu$, that is,
 $$m^{\mu,\epsilon}_x:= \frac{\mu \res B(x, \epsilon)}{\mu(B(x, \epsilon))}.$$
Are there metric measure spaces for which
\begin{equation}\label{3connvver1}
\lim_{\epsilon \downarrow 0} \frac{1}{\epsilon}TV_{m^{\mu,\epsilon}}(u) \approx \vert Du \vert_{\mu}(X) \quad  \hbox{for all}  \ u \in BV(X,d,\mu)?
\end{equation}

To give a positive answer to the previous question we recall the following concepts on a metric measure space $(X,d, \nu)$: The measure $\nu$ is said to be {\it doubling} if there exists  a constant $C_D \geq 1$ such that
$$0 < \nu(B(x,2r)) \leq C_D \nu(B(x,r)) < \infty \quad \forall \, x \in X, \ \hbox{and all} \ r >0.$$
A doubling measure $\nu$ has the following property. For every $x \in X$ and $0 < r \leq R < \infty$ if $y \in B(x,R)$ then
\begin{equation}\label{egoodi}
\frac{\nu(B(x,R))}{\nu(B(y,r))} \leq C \left( \frac{R}{r} \right)^{q_{\nu}},
\end{equation}
where $C$ is a positive constant depending only on $C_D$ and $q_{\nu} = \log_2 C_D$.

 On the other hand, the metric measure space $(X,d, \nu)$ is said to {\it support a $1$-Poincar\'{e} inequality} if there exist constants $c>0$ and $\lambda \geq1$ such that, for any $u \in {\rm Lip}(X,d)$, the inequality
$$\int_{B(x,r)} \vert u(y) - u_{B(x,r)} \vert d\nu(y) \leq c r \int_{ B(x,\lambda r)} \vert \nabla u \vert (y) d \nu(y)$$
holds, where
$$u_{B(x,r)}:= \frac{1}{\nu(B(x,r))}\int_{ B(x,r) } u(y) d\nu(y).$$

The following result is proved in \cite[Theorem 3.1]{MMS}.

\begin{theorem}[\cite{MMS}]\label{ThMMS} Let $(X,d, \nu)$ be a metric measure space with $\nu$ doubling and supporting a $1$-Poincar\'{e} inequality. Given $u \in L^1(X, \mu)$, we have that $u \in BV(X,d,\nu)$ if, and only if,
$$\liminf_{\epsilon \to 0^+} \frac{1}{\epsilon} \int_{\Delta_{\epsilon}} \frac{\vert u(y) - u(x) \vert}{\sqrt{\nu(B(x,\epsilon))} \sqrt{\nu(B(y,\epsilon))}} d\nu(y) d \nu(x) < \infty,$$
where $\Delta_\epsilon := \{ (x,y) \in X \times X \ : \ d(x,y) < \epsilon \}$.
Moreover, there is a constant $C \geq 1$, that depends only on $(X,d, \nu)$, such that
\begin{equation}\label{vertgood}
\frac{1}{C} \vert Du \vert_\nu(X) \leq \liminf_{\epsilon \to 0^+} \frac{1}{\epsilon} \int_{\Delta_{\epsilon}} \frac{\vert u(y) - u(x) \vert}{\sqrt{\nu(B(x,\epsilon))} \sqrt{\nu(B(y,\epsilon))}} d\nu(y) d \nu(x) \leq C \vert Du \vert_\nu(X).
\end{equation}
\end{theorem}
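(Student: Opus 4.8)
The plan is to prove the two inequalities in~\eqref{vertgood} separately; the characterization of $BV(X,d,\nu)$ then follows at once, since the right-hand side of~\eqref{vertgood} is finite exactly when the middle quantity is. Write $\Phi_\epsilon(u):=\frac1\epsilon\int_{\Delta_\epsilon}\frac{\vert u(y)-u(x)\vert}{\sqrt{\nu(B(x,\epsilon))}\sqrt{\nu(B(y,\epsilon))}}\,d\nu(y)\,d\nu(x)$, and let $C$ denote a constant depending only on the doubling constant $C_D$ and on the constants $c,\lambda$ of the $1$-Poincar\'e inequality, which may change from line to line. I would first record two reductions, both consequences of~\eqref{egoodi}: \textbf{(a)} for each $x\in X$ one has $\int_{\{y\,:\,d(x,y)<\epsilon\}}\frac{d\nu(y)}{\sqrt{\nu(B(x,\epsilon))}\sqrt{\nu(B(y,\epsilon))}}\le C$, so the kernel of $\Phi_\epsilon$ is essentially bounded and $\vert\Phi_\epsilon(u)-\Phi_\epsilon(v)\vert\le\frac{C}{\epsilon}\Vert u-v\Vert_{L^1(X,\nu)}$; in particular, for fixed $\epsilon$, $\Phi_\epsilon$ is continuous on $L^1(X,\nu)$; \textbf{(b)} for every $s>0$ and nonnegative $g\in L^1(X,\nu)$, Fubini's theorem and doubling give $\int_X\big(\frac1{\nu(B(x,s))}\int_{B(x,s)}g\,d\nu\big)\,d\nu(x)\le C\int_X g\,d\nu$.

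\textbf{Upper bound.} By the definition of $\vert Du\vert_\nu(X)$ as an infimum over locally Lipschitz $u_n\to u$ in $L^1(X,\nu)$, and by reduction (a), it suffices to prove $\Phi_\epsilon(u)\le C\int_X\vert\nabla u\vert\,d\nu$ for locally Lipschitz $u$, uniformly in $\epsilon$. This is the classical telescoping argument adapted to this setting: iterating the $1$-Poincar\'e inequality over the dyadic balls $B(x,2^{-k}r)$ and using the Lebesgue differentiation theorem (valid since $\nu$ is doubling) yields a pointwise estimate of the form $\vert u(x)-u_{B(x,r)}\vert\le Cr\sum_{k\ge0}2^{-k}\,\frac1{\nu(B(x,\lambda 2^{-k}r))}\int_{B(x,\lambda 2^{-k}r)}\vert\nabla u\vert\,d\nu$, whence, for $(x,y)\in\Delta_\epsilon$, $\vert u(y)-u(x)\vert$ is bounded by $C\epsilon$ times a weighted sum of averages of $\vert\nabla u\vert$ over balls of radius $\lesssim\epsilon$ centred near $x$ and near $y$. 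Inserting this into $\Phi_\epsilon(u)$, integrating first in $y$ over $\{d(x,y)<\epsilon\}$ (where reduction (a) makes the kernel integrate to $\le C$), then in $x$, and finally invoking reduction (b) on each dyadic level and summing the geometric series, one obtains $\Phi_\epsilon(u)\le C\int_X\vert\nabla u\vert\,d\nu$. Passing $n\to\infty$ at fixed $\epsilon$ via (a) and then taking the infimum over the approximating sequences gives $\limsup_{\epsilon\to0^+}\Phi_\epsilon(u)\le C\vert Du\vert_\nu(X)$.

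\textbf{Lower bound.} Suppose $\liminf_{\epsilon\to0^+}\Phi_\epsilon(u)<\infty$. For each small $\epsilon$ choose a maximal $\epsilon$-separated set $\{x_i\}_i$, so that $\{B(x_i,\epsilon)\}_i$ covers $X$ and, by doubling, there are $\Lambda$ and $N$ (both independent of $\epsilon$) such that $\{B(x_i,\Lambda\epsilon)\}_i$ has overlap at most $N$; take a partition of unity $\{\phi_i\}_i$ with each $\phi_i$ supported in $B(x_i,2\epsilon)$, $\sum_i\phi_i\equiv1$ and $\Vert\nabla\phi_i\Vert_\infty\le C/\epsilon$, and set $u_\epsilon:=\sum_i u_{B(x_i,\epsilon)}\phi_i\in Lip_{\mathrm{loc}}(X,\nu)$. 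One checks $u_\epsilon\to u$ in $L^1(X,\nu)$ as $\epsilon\to0^+$ (approximate $u$ by continuous functions and combine their uniform continuity on small balls with Lebesgue differentiation, the bounded overlap, and reduction (b)). Fixing $x$ and any $j$ with $\phi_j(x)\ne0$, the identity $\sum_i\nabla\phi_i\equiv0$ gives $\nabla u_\epsilon(x)=\sum_i(u_{B(x_i,\epsilon)}-u_{B(x_j,\epsilon)})\nabla\phi_i(x)$, with at most $N$ nonzero terms, each satisfying $B(x_i,\epsilon)\cup B(x_j,\epsilon)\subset B(x_j,\Lambda\epsilon)$; the $1$-Poincar\'e inequality then bounds $\vert u_{B(x_i,\epsilon)}-u_{B(x_j,\epsilon)}\vert$ by $\frac{C}{\nu(B(x_j,\Lambda\epsilon))^2}\int_{B(x_j,\Lambda\epsilon)}\int_{B(x_j,\Lambda\epsilon)}\vert u(y)-u(z)\vert\,d\nu(y)\,d\nu(z)$, so $\vert\nabla u_\epsilon\vert(x)\le\frac{C}{\epsilon}\,\frac{1}{\nu(B(x_{j(x)},\Lambda\epsilon))^2}\int_{B(x_{j(x)},\Lambda\epsilon)}\int_{B(x_{j(x)},\Lambda\epsilon)}\vert u(y)-u(z)\vert\,d\nu(y)\,d\nu(z)$. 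Integrating in $x$, using the bounded overlap of the $B(x_i,\Lambda\epsilon)$ and the comparison $\nu(B(x_i,\Lambda\epsilon))\approx\sqrt{\nu(B(y,\Lambda\epsilon))}\sqrt{\nu(B(z,\Lambda\epsilon))}$ for $y,z\in B(x_i,\Lambda\epsilon)$ (where moreover $(y,z)\in\Delta_{2\Lambda\epsilon}$), one gets $\int_X\vert\nabla u_\epsilon\vert\,d\nu\le C\,\Phi_{2\Lambda\epsilon}(u)$. By the $L^1$-lower semicontinuity of Miranda's total variation (immediate from its definition as a relaxed functional), $\vert Du\vert_\nu(X)\le\liminf_{\epsilon\to0^+}\int_X\vert\nabla u_\epsilon\vert\,d\nu\le C\,\liminf_{\epsilon\to0^+}\Phi_{2\Lambda\epsilon}(u)=C\,\liminf_{\epsilon\to0^+}\Phi_\epsilon(u)$, the left-hand inequality of~\eqref{vertgood}; in particular $u\in BV(X,d,\nu)$ whenever the $\liminf$ is finite.

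\textbf{Main obstacle.} I expect the delicate point to be the lower bound — specifically, the joint bookkeeping needed for the piecewise-average approximant $u_\epsilon$ to simultaneously (i) converge to $u$ in $L^1(X,\nu)$, (ii) be amenable to the $1$-Poincar\'e inequality, which converts the oscillation of the numbers $u_{B(x_i,\epsilon)}$ across neighbouring cells into the oscillation of $u$ itself on one fixed, only slightly dilated ball, and (iii) sum up correctly, so that the overlap of the cover and the successive radius dilations stay under uniform control by doubling and the local estimates reassemble into precisely the singular kernel $\big(\sqrt{\nu(B(y,\cdot))}\sqrt{\nu(B(z,\cdot))}\big)^{-1}$ on $\Delta_{c\epsilon}$; the change of scale $\epsilon\mapsto 2\Lambda\epsilon$ inside the $\liminf$ is what absorbs the inflation of the radii. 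On the upper-bound side the only subtlety is the order of limits — one lets $n\to\infty$ at a \emph{fixed} scale $\epsilon$ (where $1/\epsilon$ is harmless) and only afterwards $\epsilon\to0^+$. Finally, note that, unlike in $\R^N$, there is no intrinsic normalisation of ball volumes available here, so all doubling and Poincar\'e constants are unavoidably absorbed into $C$ and the comparison in~\eqref{vertgood} cannot in general be turned into an equality; the overall strategy is in the spirit of the D\'avila-type characterisations, cf.~\cite{Davila} and \cite{MMS}.
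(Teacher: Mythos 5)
Be aware that the paper does not prove Theorem \ref{ThMMS} at all: it is imported verbatim from \cite[Theorem 3.1]{MMS} and used as a black box (the authors' own contribution is only the subsequent reduction, via \eqref{egoodi}, of the kernel $\bigl(\nu(B(x,\epsilon))\nu(B(y,\epsilon))\bigr)^{-1/2}$ to $\nu(B(x,\epsilon))^{-1}$, which turns the functional into $TV_{m^{\nu,\epsilon}}$). So there is no internal proof to compare against; what you have written is a proof of the cited external result.

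On its own terms your outline is correct and is the standard route for characterizations of this type: a telescoping chain of the $1$-Poincar\'e inequality over dyadic balls for the upper bound, and a discrete convolution $u_\epsilon=\sum_i u_{B(x_i,\epsilon)}\phi_i$ over a maximal $\epsilon$-net together with $L^1$-lower semicontinuity of $\vert D\cdot\vert_\nu$ for the lower bound. Your reductions (a) and (b) are exactly the doubling comparisons the paper itself records in \eqref{egood3}, and the order of limits in the upper bound ($n\to\infty$ at fixed $\epsilon$, then $\epsilon\to0^+$) is handled correctly; note that your argument in fact yields the stronger $\limsup$ bound on the right of \eqref{vertgood}. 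Two small remarks. First, the estimate $\vert u_{B(x_i,\epsilon)}-u_{B(x_j,\epsilon)}\vert\le C\,\nu(B(x_j,\Lambda\epsilon))^{-2}\iint_{B(x_j,\Lambda\epsilon)^2}\vert u(y)-u(z)\vert\,d\nu\,d\nu$ does not use the Poincar\'e inequality at all — it follows from the triangle inequality through $u_{B(x_j,\Lambda\epsilon)}$ and doubling — so the lower bound (hence the implication ``finite $\liminf$ $\Rightarrow$ $u\in BV$'') needs only the doubling hypothesis; Poincar\'e enters only in the upper bound. Second, when summing the localized estimates back into $\Phi_{2\Lambda\epsilon}(u)$ you should make the bounded-overlap bookkeeping explicit, i.e.\ $\sum_i\1_{B(x_i,\Lambda\epsilon)}(y)\1_{B(x_i,\Lambda\epsilon)}(z)\le N$ and every such pair $(y,z)$ lies in $\Delta_{2\Lambda\epsilon}$, with all balls of comparable radius having comparable measure; this is exactly where the constant $C$ picks up its dependence on $C_D$, $c$ and $\lambda$.
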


Now, by Fubini's Theorem, we have
\begin{equation}\label{egood0} \int_{\Delta_{\epsilon}} \frac{\vert u(y) - u(x) \vert}{\sqrt{\nu(B(x,\epsilon))} \sqrt{\nu(B(y,\epsilon))}} d\nu(y) d \nu(x) = \int_X \int_{B(x,\epsilon)} \frac{\vert u(y) - u(x) \vert}{\sqrt{\nu(B(x,\epsilon))} \sqrt{\nu(B(y,\epsilon))}} d\nu(y) d \nu(x).\end{equation}
On the other hand, by \eqref{egoodi}, there exists a constant $C_1 >0$, depending only on $C_D$, such that
\begin{equation}\label{egood2}
\frac{\nu(B(x,\epsilon))}{\nu(B(y,\epsilon))} \leq C_1 \ .
\end{equation}
By  \eqref{egood2}, we have
\begin{equation}\label{egood3}
\frac{1}{\sqrt{C_1}} \frac{1}{\nu(B(x,\epsilon))} \leq \frac{1}{\sqrt{\nu(B(x,\epsilon))}  \sqrt{\nu(B(y,\epsilon))}} \leq \sqrt{C_1} \frac{1}{\nu(B(x,\epsilon))} \quad \forall \, y \in B(x, \epsilon).
\end{equation}
Hence, from \eqref{egood0} and \eqref{egood3}, we get

\begin{equation}
\begin{array}{ll}
\displaystyle \frac{1}{\sqrt{C_1}}  \frac{1}{\epsilon}TV_{m^{\nu,\epsilon}}(u)&\displaystyle =\frac{1}{\sqrt{C_1}}  \frac{1}{\epsilon} \frac{1}{2} \int_X  \frac{1}{\nu(B(x,\epsilon))}  \int_{B(x,\epsilon)} \vert u(y) - u(x) \vert d\nu(y) d \nu(x) \\
&\displaystyle \leq \frac{1}{\epsilon} \frac{1}{2} \int_{\Delta_{\epsilon}} \frac{\vert u(y) - u(x) \vert}{\sqrt{\nu(B(x,\epsilon))} \sqrt{\nu(B(y,\epsilon))}} d\nu(y) d \nu(x) \\
& \displaystyle \leq \sqrt{C_1}  \frac{1}{\epsilon} \frac{1}{2} \int_X  \frac{1}{\nu(B(x,\epsilon))}  \int_{B(x,\epsilon)} \vert u(y) - u(x) \vert d\nu(y) d \nu(x) \\
&\displaystyle = \sqrt{C_1}  \frac{1}{\epsilon}TV_{m^{\nu,\epsilon}}(u).
\end{array}
\end{equation}
  Therefore, we can rewrite  Theorem \ref{ThMMS} as follows.

\begin{theorem}\label{Thequiv} Let $(X,d, \nu)$ be a metric measure space with doubling measure $\nu$ and supporting a $1$-Poincar\'{e} inequality. Given $u \in L^1(X, \nu)$, we have that $u \in BV(X,d,\nu)$ if, and only if,
$$\liminf_{\epsilon \to 0^+} \frac{1}{\epsilon}TV_{m^{\nu,\epsilon}}(u) < \infty.$$
Moreover, there is a constant $C \geq 1$, that depends only on $(X,d, \nu)$, such that
\begin{equation}\label{vertgood1}
\frac{1}{C} \vert Du \vert_\nu(X) \leq \liminf_{\epsilon \to 0^+} \frac{1}{\epsilon}TV_{m^{\nu,\epsilon}}(u) \leq C \vert Du \vert_\nu(X).
\end{equation}
\end{theorem}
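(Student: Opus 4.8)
The statement is essentially a translation of Theorem~\ref{ThMMS} through the elementary identity relating the MMS double integral to $TV_{m^{\nu,\epsilon}}(u)$ that was established just above. The plan is therefore to package the chain of inequalities already derived and then pass to the $\liminf$.

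First I would recall that, by Fubini's Theorem (equation~\eqref{egood0}) together with the doubling estimate~\eqref{egoodi}, there is a constant $C_1>0$ depending only on $C_D$ such that~\eqref{egood3} holds, and consequently, for every $\epsilon>0$,
\begin{equation}
\frac{1}{\sqrt{C_1}}\,\frac{1}{\epsilon}TV_{m^{\nu,\epsilon}}(u)\ \leq\ \frac{1}{\epsilon}\,\frac12\int_{\Delta_\epsilon}\frac{\vert u(y)-u(x)\vert}{\sqrt{\nu(B(x,\epsilon))}\sqrt{\nu(B(y,\epsilon))}}\,d\nu(y)\,d\nu(x)\ \leq\ \sqrt{C_1}\,\frac{1}{\epsilon}TV_{m^{\nu,\epsilon}}(u).
\end{equation}
Since multiplication by a fixed positive constant commutes with $\liminf$, taking $\liminf_{\epsilon\to 0^+}$ in this sandwich shows that the quantity $\liminf_{\epsilon\to 0^+}\frac1\epsilon TV_{m^{\nu,\epsilon}}(u)$ is finite if and only if $\liminf_{\epsilon\to 0^+}\frac1\epsilon\int_{\Delta_\epsilon}\frac{\vert u(y)-u(x)\vert}{\sqrt{\nu(B(x,\epsilon))}\sqrt{\nu(B(y,\epsilon))}}\,d\nu(y)\,d\nu(x)$ is finite. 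By Theorem~\ref{ThMMS} the latter is equivalent to $u\in BV(X,d,\nu)$, which gives the first assertion.

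For the quantitative part I would simply combine the two bounds. From the left inequality above and the right-hand estimate in~\eqref{vertgood} one gets $\liminf_{\epsilon\to0^+}\frac1\epsilon TV_{m^{\nu,\epsilon}}(u)\le 2\sqrt{C_1}\,C\,\vert Du\vert_\nu(X)$, and from the right inequality above and the left-hand estimate in~\eqref{vertgood} one gets $\liminf_{\epsilon\to0^+}\frac1\epsilon TV_{m^{\nu,\epsilon}}(u)\ge \frac{1}{2\sqrt{C_1}\,C}\,\vert Du\vert_\nu(X)$; replacing $C$ by $\max\{C,1\}\cdot 2\sqrt{C_1}$ (still a constant depending only on $(X,d,\nu)$) yields~\eqref{vertgood1}. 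There is no real obstacle here beyond bookkeeping: the only points requiring a word of care are that $\liminf$ genuinely commutes with positive scalar multiplication (so the double inequality survives the limit intact) and that the various multiplicative constants are absorbed into a single final constant.
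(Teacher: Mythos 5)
Your proposal is correct and follows exactly the route the paper takes: the sandwich inequality derived from Fubini and the doubling estimate \eqref{egood3} reduces the statement to Theorem~\ref{ThMMS}, with the factor of $\tfrac12$ and the constants $\sqrt{C_1}$, $C$ absorbed into a single final constant. The paper in fact gives no separate proof beyond the remark ``we can rewrite Theorem~\ref{ThMMS} as follows,'' so your write-up simply makes explicit the bookkeeping the paper leaves implicit.
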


\begin{remark}  Monti, in \cite{Monti}, defines
$$\Vert \nabla u\Vert_{L^1(X,\nu)}^-:= 2\liminf_{\epsilon \to 0^+} \frac{1}{\epsilon}TV_{m^{\nu,\epsilon}}(u),$$ and uses this
to prove  rearrangement theorems in the setting of metric measure spaces.
Moreover, he proposes $\Vert \nabla u\Vert_{L^1(X,\mu)}^-$ as  a possible definition of the
$L_1$-length of the gradient of functions in metric measure spaces.
\end{remark}

\section{The $1$-Laplacian and the Total Variation Flow in Metric Random Walk Spaces}\label{1Lpalce}

Let $[X,d,m]$ be a metric random walk  space with invariant and  reversible measure $\nu$.   Assume, as aforementioned, that $[X,d,m]$ is $m$-connected.

Given a function $u : X \rightarrow \R$ we define its nonlocal gradient $\nabla u: X \times X \rightarrow \R$ as
$$\nabla u (x,y):= u(y) - u(x) \quad \forall \, x,y \in X,$$
 which should not be confused with the {\it slope $\vert \nabla u \vert(x)$, $x\in X$,}  introduced in Section~\ref{sec25}.

For a function $\z : X \times X \rightarrow \R$, its {\it $m$-divergence} ${\rm div}_m \z : X \rightarrow \R$ is defined as
 $$({\rm div}_m \z)(x):= \frac12 \int_{X} (\z(x,y) - \z(y,x)) dm_x(y),$$
and, for $p \geq 1$, we define the space
$$X_m^p(X):= \left\{ \z \in L^\infty(X\times X, \nu \otimes m_x) \ : \ {\rm div}_m \z \in L^p(X,\nu) \right\}.$$

Let $u \in BV_m(X,\nu) \cap L^{p'}(X,\nu)$ and $\z \in X_m^p(X)$,  $1\le p\le \infty$, having in mind that $\nu$ is reversible, we have the following {\it Green's formula}:
\begin{equation}\label{Green}
\int_{X} u(x) ({\rm div}_m \z)(x) d\nu(x) = -\frac12 \int_{X \times X} \nabla u(x,y) \z(x,y)   d(\nu \otimes m_x)(x,y).
\end{equation}

In the next result we characterize $TV_m$ and the $m$-perimeter using the $m$-divergence operator.
Let us denote by $\hbox{sign}_0(r)$ the usual sign function and  by $\hbox{sign}(r)$ the multivalued sign function:
$$\begin{array}{cc}
{\rm sign}_0(r):=  \left\{ \begin{array}{lll} 1 \quad \quad &\hbox{if} \ \  r > 0, \\   \ 0 \quad \quad &\hbox{if} \   \ r = 0,\\ -1 \quad \quad &\hbox{if} \ \ r < 0; \end{array}\right.\quad&\quad{\rm sign}(r):=  \left\{ \begin{array}{lll} 1 \quad \quad &\hbox{if} \ \  r > 0, \\   \left[-1,1\right] \quad \quad &\hbox{if} \   \ r = 0,\\ -1 \quad \quad &\hbox{if} \ \ r < 0. \end{array}\right.
\end{array}
$$
\begin{proposition}\label{lema1409} Let $1\le p\le \infty$. For $u \in BV_m(X,\nu) \cap L^{p'}(X,\nu)$, we have
\begin{equation}\label{Form1}
TV_m(u) =   \sup \left\{ \int_{X} u(x) ({\rm div}_m \z)(x) d\nu(x)  \ : \ \z \in X_m^p(X), \ \Vert \z \Vert_{L^\infty(X\times X, \nu\otimes m_x)} \leq 1 \right\}.
\end{equation}

In particular, for any $\nu$-measurable set $E \subset X$, we have
\begin{equation}\label{Form2}
P_m(E) =   \sup \left\{ \int_{E} ({\rm div}_m \z)(x)  d\nu (x)  \ : \ \z \in  X_m^1(X), \  \Vert \z \Vert_{L^\infty(X\times X, \nu\otimes m_x)} \leq 1 \right\}.
\end{equation}
\end{proposition}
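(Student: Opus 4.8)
The plan is to prove the variational formula \eqref{Form1} by establishing the two inequalities separately, and then to deduce \eqref{Form2} by applying \eqref{Form1} to $u = \1_E$ (recalling $TV_m(\1_E) = P_m(E)$ from \eqref{theabove01}, and noting $\1_E \in BV_m(X,\nu) \cap L^{p'}(X,\nu)$ when, say, $\nu(E)<\infty$; the general case follows by exhausting $E$ or by a direct truncation argument). For the inequality $\geq$, I would start from Green's formula \eqref{Green}: for any $\z \in X_m^p(X)$ with $\Vert \z \Vert_{L^\infty(X\times X,\nu\otimes m_x)} \leq 1$,
\begin{equation*}
\int_X u(x)({\rm div}_m\z)(x)\,d\nu(x) = -\frac12\int_{X\times X}\nabla u(x,y)\,\z(x,y)\,d(\nu\otimes m_x)(x,y) \leq \frac12\int_{X\times X}|u(y)-u(x)|\,d(\nu\otimes m_x)(x,y) = TV_m(u),
\end{equation*}
using only $|\z|\leq 1$ pointwise $(\nu\otimes m_x)$-a.e. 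Taking the supremum over all such $\z$ gives $\sup\{\cdots\} \leq TV_m(u)$.

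For the reverse inequality $\leq$, the natural choice is the ``candidate optimal field'' $\z(x,y) := \hbox{sign}_0(u(y)-u(x))$, which is antisymmetric in the sense that $\z(y,x) = -\z(x,y)$, so that $({\rm div}_m\z)(x) = \frac12\int_X(\z(x,y)-\z(y,x))\,dm_x(y) = \int_X \hbox{sign}_0(u(y)-u(x))\,dm_x(y)$. With this $\z$, Green's formula yields $\int_X u\,({\rm div}_m\z)\,d\nu = \frac12\int_{X\times X}|u(y)-u(x)|\,d(\nu\otimes m_x) = TV_m(u)$ exactly, and clearly $\Vert\z\Vert_\infty \leq 1$. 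The one genuine issue is membership: we need $\z \in X_m^p(X)$, i.e. ${\rm div}_m\z \in L^p(X,\nu)$. For $p=\infty$ this is automatic since $|{\rm div}_m\z|\leq 1$. For $1\leq p<\infty$ with $\nu(X)$ possibly infinite, $\z = \hbox{sign}_0(\nabla u)$ need not have an $L^p$ divergence, so I would instead approximate: replace $u$ by truncations $u_k := (u\wedge k)\vee(-k)$ restricted to sets of finite measure, or more simply use the density of ``nice'' competitors. Concretely, it suffices to exhibit a sequence $\z_n \in X_m^p(X)$ with $\Vert\z_n\Vert_\infty\leq 1$ and $\int_X u\,({\rm div}_m\z_n)\,d\nu \to TV_m(u)$; one can take $\z_n(x,y) := \hbox{sign}_0(u(y)-u(x))\,\1_{B_n\times B_n}(x,y)$ for an exhaustion $B_n \uparrow X$ by sets of finite $\nu$-measure (here ${\rm div}_m\z_n$ is supported in $B_n$ and bounded, hence in every $L^p$), and check by dominated convergence — using $u\in L^{p'}$ and $TV_m(u)<\infty$ — that the integrals converge to $TV_m(u)$.

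The main obstacle is precisely this approximation step: ensuring ${\rm div}_m\z_n \in L^p(X,\nu)$ while keeping $\int_X u\,({\rm div}_m\z_n)\,d\nu$ close to $TV_m(u)$, and justifying the passage to the limit. The integrand convergence is pointwise by construction; domination comes from $|u(x)({\rm div}_m\z_n)(x)| \leq |u(x)|\cdot\big(\int_X|\1_{B_n}(y)-\1_{B_n}(x)|\,dm_x(y) + \text{(bounded)}\big)$, which one bounds in terms of $|u|$ plus the $TV_m$-integrand, both integrable against $\nu$ by hypothesis. Everything else — Green's formula, the pointwise estimate $|\nabla u\cdot\z|\leq|\nabla u|$, the identity $TV_m(\1_E)=P_m(E)$ — is already available in the excerpt. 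I would present the $p=\infty$ case first (clean, no truncation), then remark that for finite $p$ the truncation argument above supplies the competitors.
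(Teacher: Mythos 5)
Your proposal is correct and follows essentially the same route as the paper: the upper bound comes from Green's formula together with $\Vert \z\Vert_\infty\le 1$, and the lower bound from the truncated sign fields $\z_n(x,y)={\rm sign}_0(u(y)-u(x))\1_{K_n\times K_n}(x,y)$ built on a $\sigma$-finite exhaustion $K_n\uparrow X$, exactly as in the paper. The only simplification the paper makes is that, after Green's formula, $\int_X u\,({\rm div}_m(-\z_n))\,d\nu=\frac12\int_{K_n\times K_n}|u(y)-u(x)|\,d(\nu\otimes m_x)$ increases to $TV_m(u)$ by monotone convergence of a nonnegative integrand, so the dominated-convergence bookkeeping you describe is not actually needed.
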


\begin{proof} Let $u \in BV_m(X,\nu) \cap L^{p'}(X,\nu)$. Given $\z \in X_m^p(X)$ with $\Vert \z \Vert_{L^\infty(X\times X, \nu\otimes m_x)} \leq 1$, applying Green's formula
\eqref{Green}, we have
$$
  \int_{X} u(x) ({\rm div}_m \z)(x) d\nu(x)  =-\frac{1}{2}    \int_{X\times X} \nabla u(x,y) \z(x,y) d(\nu \otimes m_x)(x,y) $$ $$\leq \frac{1}{2} \int_{X\times X} \vert u(y) - u(x)\vert dm_x(y)d\nu(x) = TV_m(u).$$
Therefore,
$$\sup \left\{ \int_{X} u(x) ({\rm div}_m \z)(x) d\nu(x)  \ : \ \z \in X_m^p(X), \ \Vert \z \Vert_{L^\infty(X\times X, \nu\otimes m_x)} \leq 1 \right\} \leq TV_m(u).$$
On the other hand, since $(X,d)$ is $\sigma$-finite, there exists a sequence of sets $K_1 \subset K_2 \subset \ldots \subset K_n \subset \ldots $ of $\nu$-finite measure, such that $X = \cup_{n=1}^\infty K_n$.  Then, if we define $\z_n(x,y):= {\rm sign}_0(u(y) - u(x))\1_{K_n \times K_n}(x,y)$, we have that $\z_n \in  X_m^p(X)$ with  $\Vert \z_n \Vert_{L^\infty(X\times X, \nu\otimes m_x)} \leq 1$ and
$$
\begin{array}{ll}
\displaystyle
TV_m(u) &\displaystyle= \frac{1}{2} \int_{X\times X} \vert u(y) - u(x)\vert d(\nu \otimes m_x)(x,y) = \lim_{n \to \infty} \frac{1}{2} \int_{K_n \times K_n} \vert u(y) - u(x)\vert d(\nu \otimes m_x)(x,y)\\[10pt]
&\displaystyle=\lim_{n \to \infty}  \frac{1}{2}    \int_{X \times X} \nabla u(x,y) \z_n(x,y) d(\nu \otimes m_x)(x,y)  = \lim_{n \to \infty} \int_{X} u(x) ({\rm div}_m (-\z_n))(x) d\nu(x)
\\[10pt]
&\displaystyle\leq \sup \left\{ \int_{X} u(x) ({\rm div}_m (\z ))(x) d\nu(x)  \ : \ \z \in X_m^p(X), \ \Vert \z \Vert_{L^\infty(X\times X, \nu\otimes m_x)} \leq 1 \right\}.
\end{array}
$$
\end{proof}

\begin{corollary}\label{semicont} $TV_m$ is lower semi-continuous with respect to the weak convergence in $L^2(X, \nu)$.
\end{corollary}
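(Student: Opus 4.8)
The plan is to deduce the lower semicontinuity of $TV_m$ with respect to weak $L^2(X,\nu)$ convergence directly from the variational characterization established in Proposition~\ref{lema1409}. The key observation is that, taking $p=2$ (so that $p'=2$), for every $u \in L^2(X,\nu) \subset BV_m(X,\nu)$ we have
$$TV_m(u) = \sup\left\{ \int_X u(x)\,({\rm div}_m \z)(x)\,d\nu(x) \ : \ \z \in X_m^2(X),\ \Vert \z \Vert_{L^\infty(X\times X,\nu\otimes m_x)} \leq 1 \right\}.$$
For each \emph{fixed} admissible $\z$, the functional $u \mapsto \int_X u\,({\rm div}_m \z)\,d\nu$ is a continuous linear functional on $L^2(X,\nu)$, since ${\rm div}_m \z \in L^2(X,\nu)$ by definition of $X_m^2(X)$; hence it is continuous with respect to weak $L^2$ convergence. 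Therefore $TV_m$, being the pointwise supremum of a family of weakly continuous (in particular weakly lower semicontinuous) functionals, is itself weakly lower semicontinuous.

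Concretely, I would argue as follows. Let $u_n \rightharpoonup u$ weakly in $L^2(X,\nu)$. Fix any $\z \in X_m^2(X)$ with $\Vert \z \Vert_{L^\infty(X\times X,\nu\otimes m_x)} \leq 1$. Since ${\rm div}_m \z \in L^2(X,\nu)$, weak convergence gives
$$\int_X u(x)\,({\rm div}_m \z)(x)\,d\nu(x) = \lim_{n\to\infty} \int_X u_n(x)\,({\rm div}_m \z)(x)\,d\nu(x) \leq \liminf_{n\to\infty} TV_m(u_n),$$
where the inequality uses \eqref{Form1} applied to each $u_n$. Taking the supremum over all such $\z$ on the left-hand side and invoking \eqref{Form1} for $u$ yields $TV_m(u) \leq \liminf_{n\to\infty} TV_m(u_n)$, which is the claim. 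One should note that $u_n, u \in L^2(X,\nu) \subset BV_m(X,\nu)$, so \eqref{Form1} is indeed applicable with $p=2$.

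I do not anticipate a serious obstacle here; the statement is essentially a formal consequence of the representation of $TV_m$ as a supremum of weakly continuous linear functionals. The only point requiring a little care is making sure the test vector fields used in \eqref{Form1} have $m$-divergence lying in $L^2(X,\nu)$ (so that pairing against a weakly $L^2$-convergent sequence passes to the limit), which is built into the definition of the class $X_m^2(X)$; this is why one works with $p=2$ rather than, say, $p=1$. It is worth remarking that the same argument shows $TV_m$ is lower semicontinuous with respect to \emph{weak-$*$} convergence in any $L^q(X,\nu)$ paired with $X_m^{q'}(X)$, but for the corollary the $L^2$ Hilbert-space setting suffices and is the relevant one for the subsequent semigroup-theoretic analysis.
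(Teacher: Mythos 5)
Your argument is correct and is essentially identical to the paper's proof: both fix a test field $\z \in X_m^2(X)$ with $\Vert \z \Vert_{L^\infty(X\times X,\nu\otimes m_x)}\leq 1$, pass to the limit in the weakly convergent pairing $\int_X u_n\,({\rm div}_m\z)\,d\nu$, bound by $\liminf_n TV_m(u_n)$ via \eqref{Form1}, and then take the supremum over $\z$. No issues.
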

\begin{proof} If $u_n \rightharpoonup u$ weakly in $L^2(X, \nu)$ then, given $\z \in X_m^2(X)$ with $\Vert \z \Vert_{L^\infty(X\times X, \nu\otimes m_x)} \leq 1$, we have that
$$\int_X u(x) ({\rm div}_m \z)(x) d\nu(x) = \lim_{n \to \infty} \int_X u_n(x) ({\rm div}_m \z)(x) d\nu(x) \leq \liminf_{n \to \infty} TV_m(u_n)$$
by Proposition \ref{Form2}. Now, taking the supremum over $\z$ in this inequality, we get
$$TV_m(u) \leq \liminf_{n \to \infty} TV_m(u_n).$$
\end{proof}

Consider the formal nonlocal  evolution equation
\begin{equation}\label{exevol}
u_t(x,t) = \int_{X}  \frac{u(y,t) - u(x,t)}{\vert u(y,t) - u(x,t) \vert} dm_x(y), \quad x \in X, t \geq 0.
\end{equation}
In order to study the Cauchy problem associated to the previous equation, we will see  in Theorem~\ref{ExistUniq} that we can rewrite it as the gradient flow in   $L^2(X,\nu)$ of the functional $\mathcal{F}_m : L^2(X, \nu) \rightarrow ]-\infty, + \infty]$ defined by
$$\mathcal{F}_m(u):= \left\{ \begin{array}{ll} \displaystyle
TV_m(u)
 \quad &\hbox{if} \ u\in L^2(X,\nu)\cap BV_m(X,\nu), \\[10pt] + \infty \quad &\hbox{if } u\in L^2(X,\nu)\setminus BV_m(X,\nu), \end{array} \right.$$
which is convex and lower semi-continuous.
 Following the method used in \cite{ACMBook} we will characterize the subdifferential of the functional $\mathcal{F}_m$.

Given a functional $\Phi : L^2(X,\nu) \rightarrow [0, \infty]$, we define
$\widetilde {\Phi}: L^2(X,\nu) \rightarrow [0, \infty]$ as
\begin{equation}\label{rrt}\widetilde {\Phi}(v):= \sup \left\{ \frac{\displaystyle \int_{X} v(x) w(x) d\nu(x)}{\Phi(w)} \ : \ w \in L^2(X,\nu) \right\}\end{equation}
with the convention that $\frac{0}{0} = \frac{0}{\infty} = 0$.  Obviously, if $\Phi_1 \leq \Phi_2$, then $\widetilde {\Phi}_2 \leq \widetilde {\Phi}_1$.

\begin{theorem}\label{chsubd} Let $u \in  L^2(X,\nu)$ and $v \in L^2(X,\nu)$. The following assertions are equivalent:

\noindent (i) $v \in \partial \mathcal{F}_m (u)$;

\noindent
(ii) there   exists  $\z  \in X_m^2(X)$, $\Vert \z \Vert_{L^\infty(X\times X, \nu\otimes m_x)} \leq 1$ such that
\begin{equation}\label{eqq2}
   v = - {\rm div}_m \z
\end{equation}
and
\begin{equation}\label{eqq1}
\int_{X} u(x) v(x) d\nu(x) = \mathcal{F}_m (u);
\end{equation}

\noindent
(iii)  there exists $\z  \in X_m^2(X)$, $\Vert \z \Vert_{L^\infty(X\times X, \nu\otimes m_x)} \leq 1$ such that \eqref{eqq2} holds and
\begin{equation}\label{eqq3}
\mathcal{F}_m (u) = \frac12\int_{X \times X} \nabla u(x,y) \z(x,y) d(\nu \otimes m_x)(x,y);
\end{equation}

\noindent
(iv)  there exists $\hbox{\bf g}\in L^\infty(X\times X, \nu \otimes m_x)$ antisymmetric with $\Vert \hbox{\bf g} \Vert_{L^\infty(X \times X,\nu\otimes m_x)} \leq 1$
        such that
    \begin{equation}\label{1-lapla.var-ver}
-\int_{X}\g(x,y)\,dm_x(y)=  v(x) \quad \hbox{for }\nu-\mbox{a.e }x\in X,
\end{equation}
         and
     \begin{equation}\label{1-lapla.sign}-\int_{X} \int_{X}\hbox{\bf g}(x,y)dm_x(y)\,u(x)d\nu(x)=\mathcal{F}_m(u).
     \end{equation}

\noindent (v)
      there exists $\hbox{\bf g}\in L^\infty(X\times X, \nu \otimes m_x)$ antisymmetric with $\Vert \hbox{\bf g} \Vert_{L^\infty(X \times X,\nu\otimes m_x)} \leq 1$ verifying \eqref{1-lapla.var-ver} and
         \begin{equation}\label{1-lapla.sign2}\hbox{\bf g}(x,y) \in {\rm sign}(u(y) - u(x)) \quad \hbox{for }(\nu \otimes m_x)-a.e. \ (x,y) \in X \times X.
     \end{equation}
\end{theorem}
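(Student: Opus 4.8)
The plan is to prove the equivalence of (i)--(v) by a cyclic chain of implications, exploiting the general variational machinery that characterizes the subdifferential of a convex, lower semicontinuous, positively homogeneous functional. The crucial observation is that $\mathcal{F}_m$ is positively homogeneous of degree one, so for such functionals one has the classical characterization: $v \in \partial \mathcal{F}_m(u)$ if and only if $\widetilde{\mathcal{F}_m^*}(v) \le 1$ (equivalently $v$ lies in the closed convex set $\partial\mathcal{F}_m(0)$) and $\int_X u\,v\,d\nu = \mathcal{F}_m(u)$. Here $\widetilde{\,\cdot\,}$ is the operation defined in \eqref{rrt}. Using Proposition~\ref{lema1409}, which identifies $TV_m(u)$ with the supremum of $\int_X u\,({\rm div}_m\z)\,d\nu$ over $\z\in X_m^2(X)$ with $\Vert\z\Vert_\infty\le 1$, one sees that $\widetilde{\mathcal{F}_m}(v)\le 1$ is equivalent to the existence of some $\z\in X_m^2(X)$ with $\Vert\z\Vert_{L^\infty(X\times X,\nu\otimes m_x)}\le 1$ and $v=-{\rm div}_m\z$; this is precisely what turns the abstract condition into \eqref{eqq2}.

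Concretely, I would argue as follows. \textbf{(i)$\Leftrightarrow$(ii):} Since $\mathcal{F}_m$ is convex, l.s.c.\ and $1$-homogeneous, $v\in\partial\mathcal{F}_m(u)$ is equivalent to the pair of conditions $\mathcal{F}_m(w)\ge\int_X v\,w\,d\nu$ for all $w$ (i.e.\ $v\in\partial\mathcal{F}_m(0)$) together with $\mathcal{F}_m(u)=\int_X v\,u\,d\nu$. The first condition says exactly $\widetilde{\mathcal{F}_m}(v)\le 1$; by Proposition~\ref{lema1409} and a Hahn--Banach/duality argument (the supremum in \eqref{Form1} being attained as a duality pairing), this holds precisely when $v=-{\rm div}_m\z$ for some admissible $\z$, giving \eqref{eqq2}, while the second condition is \eqref{eqq1}. \textbf{(ii)$\Leftrightarrow$(iii):} Apply Green's formula \eqref{Green} with this $u$ and $\z$: $\int_X u\,(-{\rm div}_m\z)\,d\nu = \frac12\int_{X\times X}\nabla u\,\z\,d(\nu\otimes m_x)$, so \eqref{eqq1} and \eqref{eqq3} are literally the same identity rewritten.

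\textbf{(iii)$\Rightarrow$(v):} Given $\z$ as in (iii), set $\g(x,y):=\frac12(\z(x,y)-\z(y,x))$, the antisymmetrization of $\z$. Then $\Vert\g\Vert_\infty\le\Vert\z\Vert_\infty\le 1$, and since ${\rm div}_m\z(x)=\frac12\int_X(\z(x,y)-\z(y,x))\,dm_x(y)=\int_X\g(x,y)\,dm_x(y)$, we get $v=-{\rm div}_m\z$ rewritten as \eqref{1-lapla.var-ver}. Moreover, by antisymmetry of $\g$ and reversibility of $\nu$, $\frac12\int_{X\times X}\nabla u(x,y)\z(x,y)\,d(\nu\otimes m_x) = \frac12\int_{X\times X}\nabla u(x,y)\g(x,y)\,d(\nu\otimes m_x)$, so $\mathcal{F}_m(u)=\frac12\int_{X\times X}(u(y)-u(x))\g(x,y)\,d(\nu\otimes m_x)$. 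Comparing with $\mathcal{F}_m(u)=TV_m(u)=\frac12\int_{X\times X}|u(y)-u(x)|\,d(\nu\otimes m_x)$ and using $|\g|\le 1$ forces $\g(x,y)(u(y)-u(x))=|u(y)-u(x)|$ for $(\nu\otimes m_x)$-a.e.\ $(x,y)$, which is exactly \eqref{1-lapla.sign2}; this same computation also yields \eqref{1-lapla.sign}, so (iv) holds too. \textbf{(v)$\Rightarrow$(iii):} Given an antisymmetric $\g$ as in (v), simply take $\z:=\g$; then $\z\in X_m^2(X)$ since ${\rm div}_m\z = \int_X\g(\cdot,y)\,dm_x(y) = -v\in L^2(X,\nu)$, condition \eqref{eqq2} is \eqref{1-lapla.var-ver}, and the pointwise relation \eqref{1-lapla.sign2} gives $\nabla u(x,y)\g(x,y)=|u(y)-u(x)|$ a.e., so $\frac12\int_{X\times X}\nabla u\,\z\,d(\nu\otimes m_x)=TV_m(u)=\mathcal{F}_m(u)$, which is \eqref{eqq3}. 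Finally \textbf{(iv)$\Leftrightarrow$(v)}: (v)$\Rightarrow$(iv) was already observed in the (iii)$\Rightarrow$(v) step; for (iv)$\Rightarrow$(v), note \eqref{1-lapla.sign} together with \eqref{1-lapla.var-ver} gives $\mathcal{F}_m(u)=\frac12\int_{X\times X}\nabla u(x,y)\g(x,y)\,d(\nu\otimes m_x)$ by the same antisymmetry/reversibility computation, and then the argument forcing equality in $|\g|\le1$ yields \eqref{1-lapla.sign2}.

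The main obstacle is the rigorous justification of the duality step in (i)$\Leftrightarrow$(ii): one must show that the abstract inequality $\int_X v\,w\,d\nu\le TV_m(w)$ for all $w\in L^2(X,\nu)\cap BV_m(X,\nu)$ actually produces a vector field $\z\in X_m^2(X)$ with $\Vert\z\Vert_\infty\le 1$ realizing $v=-{\rm div}_m\z$, rather than merely an element of some abstract bidual. This requires either invoking the structure of $\partial\mathcal{F}_m(0)$ as the weak-closure of $\{-{\rm div}_m\z : \Vert\z\Vert_\infty\le 1\}$ (which follows because ${\rm div}_m : X_m^2(X)\to L^2(X,\nu)$ has weak-$*$ to weak closed unit-ball image, $L^\infty$ balls being weak-$*$ compact and ${\rm div}_m$ continuous in that topology), or an explicit Fenchel-duality computation showing $\mathcal{F}_m^*$ is the indicator of that set. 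Everything else is a matter of bookkeeping with Green's formula \eqref{Green}, antisymmetrization, and the case-by-case forcing of equality in $|\g|\le 1$.
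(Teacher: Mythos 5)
Your proposal is correct and follows essentially the same route as the paper: the subdifferential characterization for convex, l.s.c., positively $1$-homogeneous functionals together with Proposition~\ref{lema1409} for (i)$\Leftrightarrow$(ii), Green's formula \eqref{Green} for (ii)$\Leftrightarrow$(iii), the antisymmetrization $\g(x,y)=\frac12(\z(x,y)-\z(y,x))$ for (iii)$\Rightarrow$(iv), taking $\z=\g$ for the converse, and the reversibility computation forcing equality in $\vert\g\vert\le 1$ for (iv)$\Leftrightarrow$(v). The duality step you flag as the main obstacle is handled in the paper by introducing $\Psi(v)=\inf\{\Vert\z\Vert_{L^\infty}:\ v=-{\rm div}_m\z\}$ and invoking $\Psi=\widetilde{\widetilde{\Psi}}$ from \cite{ACMBook}; your weak-$*$ compactness argument is precisely what justifies the lower semicontinuity of $\Psi$ and the attainment of its infimum that this route requires.
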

\begin{proof}
Since $\mathcal{F}_m$ is convex, lower semi-continuous and positive homogeneous of degree $1$, by \cite[Theorem 1.8]{ACMBook}, we have
\begin{equation}\label{saii}\partial \mathcal{F}_m (u) = \left\{ v \in L^2(X,\nu)  \ : \ \widetilde{\mathcal{F}_m}(v) \leq 1, \  \int_{X} u(x) v(x) d\nu(x) = \mathcal{F}_m (u)\right\}.\end{equation}

We define, for $v \in   L^2(X,\nu)$,
\begin{equation}\label{Form3}\Psi(v):= \inf \left\{ \Vert \z \Vert_{L^\infty(X\times X, \nu\otimes m_x)} \ : \ \z  \in X^2_m(X), \ v = - {\rm div}_m \z \right\}.
\end{equation}
Observe that $\Psi$ is convex,  lower semi-continuous  and positive homogeneous of degree $1$. Moreover, it is easy to see that, if $\Psi(v) < \infty$, the infimum in \eqref{Form3} is attained i.e., there exists some $\z  \in X_m^2(X)$ such that \ $v = - {\rm div}_m\z$ and $\Psi(v) = \Vert \z \Vert_{L^\infty(X\times X, \nu\otimes m_x)}.$

Let us see that $$\Psi =  \widetilde{\mathcal{F}_m}.$$
 We begin by proving that $\widetilde{\mathcal{F}_m}(v) \leq   \Psi(v)$.
If $\Psi (v) = +\infty$ then this assertion is trivial. Therefore, suppose that $\Psi (v) < +\infty$. Let $\z \in L^\infty(X \times X,\nu\otimes m_x)$ such that $v = - {\rm div}_m \z$. Then, for $w\in L^2(X,\nu)$, we have
$$\int_{X} w(x) v(x) d\nu(x) = \frac12\int_{X \times X}  \nabla w(x,y) \z(x,y)d(\nu \otimes m_x)(x,y) \leq \Vert \z \Vert_{L^\infty(X\times X, \nu\otimes m_x)}  {\mathcal{F}_m}(w).$$
Taking the supremum over $w$ we obtain that $\widetilde{\mathcal{F}_m}(v) \leq  \Vert \z \Vert_{L^\infty(X\times X, \nu\otimes m_x)}$. Now, taking the infimum over $\z$, we get  $\widetilde{\mathcal{F}_m}(v) \leq   \Psi(v)$.

To prove the opposite inequality  let us denote
$$D:= \{  {\rm div}_m \z  \ : \ \z \in X^2_m(X) \}.$$
Then, by \eqref{Form1}, we have that, for $v\in L^2(X, \nu)$,
$$
\begin{array}{rl}
\displaystyle
 \displaystyle\widetilde{\Psi}(v)\!\!\!\! &\displaystyle
 = \sup_{w \in L^2(X,\nu)} \frac{\displaystyle\int_{X} w(x) v(x) d\nu(x)}{\Psi(w)}   \geq \sup_{w \in D }  \frac{\displaystyle\int_{X} w(x) v(x) d\nu(x)}{\Psi(w)} \\ \\
& \displaystyle=  \sup_{\z \in  X^2_m(X)}  \frac{\displaystyle\int_{X} {\rm div}_m\z(x) v(x) d\nu(x)}{\Vert \z \Vert_{L^\infty(X\times X, \nu\otimes m_x)}}   =   \mathcal{F}_m(v).
\end{array}
$$
 Thus, $ \mathcal{F}_m \leq \widetilde{ \Psi}$, which implies, by \cite[Proposition 1.6]{ACMBook}, that $\Psi = \widetilde{\widetilde{\Psi}} \leq \widetilde{ \mathcal{F}_m}$. Therefore, $\Psi = \widetilde{\mathcal{F}_m}$, and, consequently, from \eqref{saii}, we get
$$
\begin{array}{l}
\displaystyle
\partial \mathcal{F}_m (u) = \left\{ v \in L^2(X,\nu)  \ : \   \Psi(v) \leq 1, \  \int_{X} u(x) v(x) d\nu(x) = \mathcal{F}_m(u)\right\} \\[10pt]
  \displaystyle
\phantom{\partial \mathcal{F}_m (u)}
   = \left\{ v \in L^2(X,\nu)  \ : \ \exists \z  \in X_m^2(X), \ v = - {\rm div}_m\z, \ \Vert \z \Vert_{L^\infty(X\times X, \nu\otimes m_x)}  \leq 1, \  \int_{X} u(x) v(x) d\nu(x) = \mathcal{F}_m(u)\right\},
\end{array}
$$
from where the equivalence between (i) and (ii) follows .

To prove the equivalence between (ii) and (iii) we only need to apply Green's formula \eqref{Green}.

 On the other hand, to see that (iii) implies (iv), it is enough to take $\hbox{\bf g}(x,y)=\frac12(\z(x,y)-\z(y,x))$. Moreover, to see that (iv) implies (ii), take $\z(x,y)= \hbox{\bf g}(x,y)$ (observe that, from \eqref{1-lapla.var-ver},  $- {\rm div}_m(\g) = v$, so $\g \in X^2_m(X)$). Finally, to see that  (iv) and (v) are equivalent, we need to show that \eqref{1-lapla.sign} and \eqref{1-lapla.sign2} are equivalent. Now, since $\hbox{\bf g}$ is antisymmetric with $\Vert \hbox{\bf g} \Vert_{L^\infty(X \times X,\nu\otimes m_x)} \leq 1$ and $\nu$ is reversible, we have
 $$- 2 \int_{X} \int_{X}\hbox{\bf g}(x,y)dm_x(y)\,u(x)d\nu(x) = \int_{X\times X}\hbox{\bf g}(x,y) (u(y)- u(x))d(\nu \otimes m_x)(x,y) ,$$
 from where the equivalence between \eqref{1-lapla.sign} and \eqref{1-lapla.sign2} follows.
\end{proof}

By Theorem~\ref{chsubd} and  following \cite[Theorem 7.5]{ElLibro}, the next result is easy to prove.

\begin{proposition}\label{11caract1}    $\partial \mathcal{F}_m$ is an {\rm m}-completely accretive operator in $L^2(X,\nu)$.
 \end{proposition}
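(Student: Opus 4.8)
The plan is to verify the definition of m-complete accretivity directly, using the characterization of $\partial\mathcal{F}_m$ provided by Theorem~\ref{chsubd}. Recall that an operator $\mathcal{A}\subset L^1(X,\nu)\times L^1(X,\nu)$ is completely accretive when, for all $(u_i,v_i)\in\mathcal{A}$, $i=1,2$, one has $\int_X(v_1-v_2)q(u_1-u_2)\,d\nu\geq 0$ for every $q\in P_0$; and it is m-completely accretive if, in addition, $\mathrm{Range}(I+\lambda\partial\mathcal{F}_m)=L^2(X,\nu)$ for some (hence all) $\lambda>0$. So the argument splits into two parts: the accretivity estimate and the range condition.

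For the accretivity estimate, take $(u_i,v_i)\in\partial\mathcal{F}_m$, $i=1,2$. By Theorem~\ref{chsubd}(v) there exist antisymmetric $\mathbf{g}_i\in L^\infty(X\times X,\nu\otimes m_x)$ with $\|\mathbf{g}_i\|_\infty\leq 1$, $\mathbf{g}_i(x,y)\in\mathrm{sign}(u_i(y)-u_i(x))$ a.e., and $v_i(x)=-\int_X\mathbf{g}_i(x,y)\,dm_x(y)$ for $\nu$-a.e.\ $x$. Fix $q\in P_0$; set $q_i=q(u_1-u_2)$. Using the formula for $v_i$, the reversibility of $\nu$ (exactly as in the antisymmetrization identity at the end of the proof of Theorem~\ref{chsubd}), and antisymmetry of $\mathbf{g}_1-\mathbf{g}_2$, I would write
\begin{equation}\label{accest}
\int_X (v_1-v_2)\,q(u_1-u_2)\,d\nu
=\frac12\int_{X\times X}(\mathbf{g}_1(x,y)-\mathbf{g}_2(x,y))\,\big(q(u_1-u_2)(y)-q(u_1-u_2)(x)\big)\,d(\nu\otimes m_x)(x,y).
\end{equation}
Then the integrand is pointwise nonnegative: since $q$ is nondecreasing, $q(u_1-u_2)(y)-q(u_1-u_2)(x)$ has the same sign as $(u_1-u_2)(y)-(u_1-u_2)(x)$; and by the monotonicity of $\mathrm{sign}$ one checks that $(\mathbf{g}_1(x,y)-\mathbf{g}_2(x,y))$ has the same sign as $(u_1(y)-u_1(x))-(u_2(y)-u_2(x))=(u_1-u_2)(y)-(u_1-u_2)(x)$ whenever this quantity is nonzero (and when it is zero, so is $q(u_1-u_2)(y)-q(u_1-u_2)(x)$). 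Hence the product in \eqref{accest} is $\geq 0$, giving complete accretivity.

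For the range condition, $\mathrm{Range}(I+\lambda\partial\mathcal{F}_m)=L^2(X,\nu)$ is automatic because $\mathcal{F}_m$ is proper, convex and lower semi-continuous on the Hilbert space $L^2(X,\nu)$, so $\partial\mathcal{F}_m$ is maximal monotone and $I+\lambda\partial\mathcal{F}_m$ is surjective for every $\lambda>0$ (Minty); one then notes that the resolvent maps $L^1\cap L^2$ into itself with the right contraction estimates so that the operator extends to an m-completely accretive operator in $L^1(X,\nu)$ in the sense of Subsection~\ref{secacc}, following \cite[Theorem 7.5]{ElLibro} verbatim. The main obstacle is the first part: getting the clean identity \eqref{accest} requires care with the measurability/integrability of the double integrals (the $\mathbf{g}_i$ are only $L^\infty(\nu\otimes m_x)$ and $q(u_1-u_2)$ is bounded with compact-support derivative, so everything is integrable against the probability measures $m_x$ and against $\nu$ on the sets where $q(u_1-u_2)\neq 0$), and then the sign analysis of the integrand, which is the genuinely nonlocal analogue of the classical chain-rule argument. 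Everything else is a direct transcription of the completely-accretive-operator machinery already cited in the paper.
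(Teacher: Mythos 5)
Your proof is correct and follows essentially the route the paper intends: the paper gives no details, simply asserting that the result follows from Theorem~\ref{chsubd} together with \cite[Theorem 7.5]{ElLibro}, and what you have written out (the antisymmetrization identity via reversibility, the monotonicity of the multivalued sign graph to get the pointwise nonnegativity of the integrand, and maximal monotonicity of $\partial\mathcal{F}_m$ for the range condition) is precisely that standard argument made explicit.
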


\begin{definition}\label{def35}
We define in $L^2(X,\nu)$ the multivalued operator $\Delta^m_1$ by
\begin{center}$(u, v ) \in \Delta^m_1$ \ if, and only if, \ $-v \in \partial \mathcal{F}_m(u)$.
\end{center}
\noindent  As usual, we will write $v\in \Delta^m_1 u$ for $(u,v)\in \Delta^m_1$.
\end{definition}

 Chang in \cite{Chang1} and  Hein and B\"uhler in \cite{HB}   define  a similar operator in the particular case of finite graphs:

\begin{example}\label{1Laplagraph}  Let $[V(G), d_G, (m^G_x)]$  be the metric random walk  given in Example~\ref{JJ}~(3) with invariant measure $\nu_G$. By Theorem \ref{chsubd}, we have
\begin{align*}
(u, v ) \in \Delta^{m^G}_1 \iff &\hbox{there exists} \ \hbox{\bf g}\in L^\infty(V(G)\times V(G), \nu_G \otimes m^G_x) \ \hbox{ antisymmetric with} \\
&\Vert \hbox{\bf g} \Vert_{L^\infty(V(G)\times V(G), \nu_G \otimes m^G_x)}\leq 1 \ \hbox{such that }
 \frac{1}{d_x}\sum_{y \in V(G)}\g(x,y) w_{xy}=  v(x) \quad \forall \, x\in V(G),
\end{align*}
         and
         $$\hbox{\bf g}(x,y) \in {\rm sign}(u(y) - u(x)) \quad \hbox{for }(\nu_G \otimes m^G_x)-a.e. \ (x,y) \in V(G) \times V(G).$$

   The next example shows that the operator $\Delta^{m^G}_1$ is indeed multivalued. Let $V(G) = \{ a, b \}$ and $w_{aa} = w_{bb} = p$, $w_{ab} = w_{ba} = 1- p$, with $0 < p <1$. Then,
    \begin{align*}
    (u, v ) \in \Delta^{m^G}_1 \iff &\hbox{there exists} \ \hbox{\bf g}\in L^\infty(\{ a, b \}\times \{ a, b \}, \nu_G \otimes m^G_x)  \hbox{ antisymmetric with} \\
    &\Vert \hbox{\bf g} \Vert_{ L^\infty(\{ a, b \}\times \{ a, b \}, \nu_G \otimes m^G_x)} \leq 1\hbox{ such that }  \g(a,a) p + \g(a,b) (1-p)=  v(a),\\
     & \g(b,b) p + \g(b,a) (1-p)=  v(b) \end{align*}
   and
   $$\hbox{\bf g}(a,b) \in {\rm sign}(u(b) - u(a))  .$$

   Now, since $\hbox{\bf g}$ is antisymmetric, we get
         $$v(a) = \g(a,b) (1-p), \quad v(b) =  - \g(a,b) (1-p) \quad \hbox{and} \quad \hbox{\bf g}(a,b) \in {\rm sign}(u(b) - u(a)).$$

\end{example}

\begin{proposition}\label{intpartt}[Integration by parts] For  any $(u,v)\in \Delta_1^m$   it holds that
\begin{equation}\label{intpart}
- \int_X v w d \nu \leq TV_m(w) \qquad \hbox{for all } \  w \in BV_m(X,\nu)\cap L^2(X,\nu),
\end{equation}
and
 \begin{equation}\label{intpart2}
- \int_X v u d \nu = TV_m(u).
\end{equation}
\end{proposition}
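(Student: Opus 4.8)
The plan is to read off the statement from the characterization of $\partial\mathcal{F}_m$ in Theorem~\ref{chsubd}. By Definition~\ref{def35}, $(u,v)\in\Delta_1^m$ means $-v\in\partial\mathcal{F}_m(u)$; in particular $\mathcal{F}_m(u)<+\infty$, so $u\in L^2(X,\nu)\cap BV_m(X,\nu)$ and $\mathcal{F}_m(u)=TV_m(u)$. I would then apply the equivalence of (i) and (ii) in Theorem~\ref{chsubd} to the element $-v$ of $\partial\mathcal{F}_m(u)$, obtaining $\z\in X_m^2(X)$ with $\Vert\z\Vert_{L^\infty(X\times X,\nu\otimes m_x)}\leq 1$ such that $-v=-{\rm div}_m\z$ and $\int_X u(x)\,(-v(x))\,d\nu(x)=\mathcal{F}_m(u)$. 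The last identity is exactly \eqref{intpart2}, so that half of the proposition is immediate.

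For \eqref{intpart}, I would fix $w\in BV_m(X,\nu)\cap L^2(X,\nu)$ and apply Green's formula \eqref{Green} (with $p=p'=2$, which is legitimate since $\z\in X_m^2(X)$ and $w\in BV_m(X,\nu)\cap L^2(X,\nu)$) to get
\[
-\int_X v(x)w(x)\,d\nu(x)=\int_X w(x)\,(-{\rm div}_m\z)(x)\,d\nu(x)=\frac12\int_{X\times X}\nabla w(x,y)\,\z(x,y)\,d(\nu\otimes m_x)(x,y).
\]
Since $\Vert\z\Vert_{L^\infty(X\times X,\nu\otimes m_x)}\leq 1$, the right-hand side is at most
\[
\frac12\int_{X\times X}\vert\nabla w(x,y)\vert\,d(\nu\otimes m_x)(x,y)=\frac12\int_X\int_X\vert w(y)-w(x)\vert\,dm_x(y)\,d\nu(x)=TV_m(w),
\]
which is \eqref{intpart}.

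A self-contained variant that avoids exhibiting $\z$ is also available and worth mentioning: $-v\in\partial\mathcal{F}_m(u)$ means $\mathcal{F}_m(w)\geq\mathcal{F}_m(u)-\int_X v(w-u)\,d\nu$ for all $w\in L^2(X,\nu)$; testing with $w\equiv0$ and with $w=2u$ (and using that $\mathcal{F}_m$ is positively homogeneous of degree one, so $\mathcal{F}_m(2u)=2\mathcal{F}_m(u)$) gives the two opposite inequalities that combine to \eqref{intpart2}, and substituting \eqref{intpart2} back into the subdifferential inequality for arbitrary $w\in BV_m(X,\nu)\cap L^2(X,\nu)$ yields \eqref{intpart}. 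I do not anticipate a genuine obstacle: the proposition is really a repackaging of the subdifferential characterization from Theorem~\ref{chsubd}. The only points I would take care to check are that membership of $-v$ in $\partial\mathcal{F}_m(u)$ forces $u\in BV_m(X,\nu)\cap L^2(X,\nu)$ (so that $\mathcal{F}_m(u)=TV_m(u)<+\infty$) and that the integrability hypotheses of Green's formula~\eqref{Green} are met.
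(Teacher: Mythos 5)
Your proposal is correct. For \eqref{intpart2} you do exactly what the paper does: read it off from the equivalence (i)$\Leftrightarrow$(ii) in Theorem~\ref{chsubd} applied to $-v\in\partial\mathcal{F}_m(u)$ (and your preliminary check that this membership forces $\mathcal{F}_m(u)<+\infty$, hence $u\in BV_m(X,\nu)\cap L^2(X,\nu)$, is the right thing to verify). For \eqref{intpart} your primary route is genuinely different from the paper's: you extract the vector field $\z\in X_m^2(X)$ with $\Vert\z\Vert_{L^\infty(X\times X,\nu\otimes m_x)}\le 1$ and $-v=-{\rm div}_m\z$, and then estimate $-\int_X vw\,d\nu$ via Green's formula \eqref{Green}, which is legitimate since $w\in BV_m(X,\nu)\cap L^2(X,\nu)$; this is in effect a re-run of the ``$\le$'' half of Proposition~\ref{lema1409}. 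The paper instead stays entirely at the level of the subdifferential inequality: $-\int_X vw\,d\nu\le\mathcal{F}_m(u+w)-\mathcal{F}_m(u)\le\mathcal{F}_m(w)$, the second step by subadditivity of the convex, positively one-homogeneous functional $\mathcal{F}_m$. Your ``self-contained variant'' (testing the subdifferential inequality at $w=0$ and $w=2u$ to get \eqref{intpart2}, then feeding it back in for arbitrary $w$) is essentially this same argument in a slightly rearranged order. The subdifferential route is shorter and avoids invoking the structural characterization of $\partial\mathcal{F}_m$; your Green's-formula route makes the mechanism ($\z$ as a calibration) explicit, at the cost of leaning on the heavier Theorem~\ref{chsubd}(ii). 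Both are sound.
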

\begin{proof} Since $-v \in \partial \mathcal{F}_m(u)$, given $w \in BV_m(X,\nu)$, we have that
$$-\int_X v w d \nu \leq \mathcal{F}_m(u+w) - \mathcal{F}_m(u) \leq \mathcal{F}_m(w),$$
so we get \eqref{intpart}.  On the other hand, \eqref{intpart2} is given in Theorem \ref{chsubd}.
\end{proof}

As a consequence of Theorem~\ref{chsubd}, Proposition~\ref{11caract1} and  on account of Theorem~\ref{teointronls}, we can give the following existence and uniqueness result for    the Cauchy problem
\begin{equation}\label{CaychyP}
\left\{ \begin{array}{ll} u_t - \Delta^m_1 u \ni 0 \quad &\hbox{in} \ (0,T) \times X
\\[8pt] u(0,x) = u_0 (x) \quad & \hbox{for } x \in X, \end{array}\right.
\end{equation}
which is a rewrite of the formal expression~\eqref{exevol}.

\begin{theorem}\label{ExistUniq}  For every $u_0 \in L^2( X,\nu)$ and any $T>0$, there exists a unique solution of the Cauchy problem~\eqref{CaychyP} in $(0,T)$ in the following sense: $u \in W^{1,1}(0,T; L^2(X,\nu))$, $u(0, \cdot) = u_0$  in $L^2(X,\nu)$, and, for almost all $t \in (0,T)$,
$$
u_t(t,\cdot) - \Delta^m_1 u(t) \ni 0.
$$
Moreover, we have the following contraction and maximum principle in any $L^q(X,\nu)$--space, $1\le q\le \infty$:
\begin{equation}\label{maxprin}\Vert (u(t)-v(t))^+\Vert_{L^q(X,\nu)}\le \Vert (u_0-v_0)^+\Vert_{L^q(X,\nu)}\quad \forall\, 0<t<T,
\end{equation}
for any pair of solutions, $u,\, v$, of problem~\eqref{CaychyP} with initial data $u_0,\, v_0$ respectively.
\end{theorem}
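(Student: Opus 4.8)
The plan is to recognize the Cauchy problem~\eqref{CaychyP} as the abstract evolution problem~\eqref{AACCPP} governed by $\mathcal{A}:=\partial\mathcal{F}_m$ and then to invoke Theorem~\ref{teointronls}. By Definition~\ref{def35}, $v\in\Delta_1^m u$ is equivalent to $-v\in\partial\mathcal{F}_m(u)$, so the inclusion $u_t-\Delta_1^m u\ni 0$ reads exactly $\frac{du}{dt}+\partial\mathcal{F}_m(u)\ni 0$; solving~\eqref{CaychyP} thus amounts to solving~\eqref{AACCPP} with $\mathcal{A}=\partial\mathcal{F}_m$, and uniqueness together with the contraction estimate will come for free from the abstract statement.

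First I would check the hypotheses of Theorem~\ref{teointronls}. The functional $\mathcal{F}_m:L^2(X,\nu)\to\,]-\infty,+\infty]$ is proper, convex and lower semi-continuous: convexity is inherited from $TV_m$ (Proposition~\ref{lemsc1}), properness holds since $L^1(X,\nu)\subset BV_m(X,\nu)$ forces $D(\mathcal{F}_m)=L^2(X,\nu)\cap BV_m(X,\nu)\neq\emptyset$, and lower semi-continuity on $L^2(X,\nu)$ follows from Corollary~\ref{semicont}. By Proposition~\ref{11caract1}, $\partial\mathcal{F}_m$ is m-completely accretive in $L^2(X,\nu)$, so in particular ${\rm Range}(I+\lambda\,\partial\mathcal{F}_m)=L^2(X,\nu)$ for every $\lambda>0$. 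Finally, $\overline{D(\partial\mathcal{F}_m)}=\overline{D(\mathcal{F}_m)}=L^2(X,\nu)$, because $D(\mathcal{F}_m)\supset L^1(X,\nu)\cap L^2(X,\nu)$ and the latter is dense in $L^2(X,\nu)$ (here the assumed $\sigma$-finiteness of $(X,d,\nu)$ is used). Hence every $u_0\in L^2(X,\nu)$ is an admissible initial datum.

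With these verifications, Theorem~\ref{teointronls} yields, for each $u_0\in L^2(X,\nu)$ and each $T>0$, a unique mild solution of~\eqref{AACCPP}; and since $\partial\mathcal{F}_m$ is the subdifferential of the proper convex lower semi-continuous functional $\mathcal{F}_m$ on $L^2(X,\nu)$, this mild solution is a strong solution, so $u\in W^{1,1}(0,T;L^2(X,\nu))$ (by \cite{Brezis} one in fact has $u\in W^{1,2}_{{\rm loc}}(0,T;L^2(X,\nu))$, but $W^{1,1}$ is all that is asserted), $u(0,\cdot)=u_0$, and $-u_t(t,\cdot)\in\partial\mathcal{F}_m(u(t))$, i.e. $u_t(t,\cdot)-\Delta_1^m u(t)\ni 0$, for a.e. $t\in(0,T)$ --- which is exactly the claimed notion of solution. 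The contraction and maximum principle~\eqref{maxprin} is then precisely~\eqref{maxpriniiii} of Theorem~\ref{teointronls} applied to $\mathcal{A}=\partial\mathcal{F}_m$ with the two initial data $u_0$ and $v_0$. I do not expect a real obstacle here: all the substance lies in the characterization of $\partial\mathcal{F}_m$ (Theorem~\ref{chsubd}) and its m-complete accretivity (Proposition~\ref{11caract1}), which are already established; the only points to be careful about are the density of $D(\mathcal{F}_m)$ in $L^2(X,\nu)$ and the translation of the abstract ``strong solution'' into the $W^{1,1}$-formulation appearing in the statement.
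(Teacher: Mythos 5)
Your proposal is correct and follows exactly the route the paper takes: the theorem is stated there as an immediate consequence of Theorem~\ref{teointronls}, Proposition~\ref{11caract1} and the fact that $\partial\mathcal{F}_m$ is the subdifferential of the proper, convex, lower semi-continuous functional $\mathcal{F}_m$, with the contraction estimate \eqref{maxprin} being \eqref{maxpriniiii}. Your added verifications (density of $D(\mathcal{F}_m)$ via $L^1\cap L^2\subset BV_m\cap L^2$ and the translation of ``strong solution'' into the $W^{1,1}$ formulation) are exactly the details left implicit in the paper.
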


\begin{definition}\label{ladefdeTVM}{
Given $u_0 \in L^2(X, \nu)$, we denote by $e^{t \Delta^m_1}u_0$ the unique solution of problem \eqref{CaychyP}. We call the semigroup $\{e^{t\Delta^m_1} \}_{t \geq 0}$ in $L^2(X, \nu)$ the {\it Total Variational Flow in the metric random walk  space} $[X,d,m]$ with invariant and reversible measure $\nu$.
}\end{definition}

In the next result we give an important property of the total variational flow in  metric random walk  spaces.
\begin{proposition}\label{propert1}  The TVF satisfies the mass conservation property:
for $u_0 \in L^2(X, \nu)$,  $$\int_X e^{t\Delta^m_1 }u_0 d \nu = \int_X u_0 d \nu \quad \hbox{for any} \ t \geq 0.$$
\end{proposition}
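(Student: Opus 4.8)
The plan is to differentiate the mass $t\mapsto\int_X u(t)\,d\nu$ along the flow and to exploit the fact that $\Delta^m_1 u$ is, by construction, an $m$-divergence, so that it integrates to zero against the constant $1$. Write $u(t):=e^{t\Delta^m_1}u_0$. By Theorem~\ref{ExistUniq} the function $u$ is a strong solution, hence $u\in W^{1,1}(0,T;L^2(X,\nu))$ and, for a.e.\ $t\in(0,T)$, the derivative $u_t(t)$ exists in $L^2(X,\nu)$ and satisfies $u_t(t)\in\Delta^m_1 u(t)$, i.e.\ $-u_t(t)\in\partial\mathcal{F}_m(u(t))$. First I would invoke Theorem~\ref{chsubd}(iv): for a.e.\ $t$ there is an antisymmetric $\g(t)\in L^\infty(X\times X,\nu\otimes m_x)$ with $\Vert\g(t)\Vert_{L^\infty(X\times X,\nu\otimes m_x)}\le1$ such that $u_t(t)(x)=\int_X\g(t)(x,y)\,dm_x(y)$ for $\nu$-a.e.\ $x\in X$ (equivalently $u_t(t)={\rm div}_m\g(t)$, by antisymmetry of $\g(t)$).

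The key computation is then that, for any antisymmetric $\g\in L^\infty(X\times X,\nu\otimes m_x)$ which is $\nu\otimes m_x$-integrable,
$$\int_X\int_X\g(x,y)\,dm_x(y)\,d\nu(x)=\int_X\int_X\g(y,x)\,dm_x(y)\,d\nu(x)=-\int_X\int_X\g(x,y)\,dm_x(y)\,d\nu(x),$$
where the first equality is the detailed balance condition~\eqref{repo001} and the second is the antisymmetry of $\g$; hence the integral vanishes. Applying this to $\g(t)$ gives $\int_X u_t(t)\,d\nu=0$ for a.e.\ $t$, and since $t\mapsto\int_X u(t)\,d\nu$ is absolutely continuous with derivative $\int_X u_t(t)\,d\nu$, it is constant on $[0,T]$, which is the assertion.

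The main obstacle is of a purely integrability/interchange nature: one must justify that $t\mapsto\int_X u(t)\,d\nu$ is differentiable with $\frac{d}{dt}\int_X u(t)\,d\nu=\int_X u_t(t)\,d\nu$, and that $\g(t)\in L^1(X\times X,\nu\otimes m_x)$ so that~\eqref{repo001} may be applied to it. Both are immediate when $\nu(X)<\infty$: then $L^2(X,\nu)\hookrightarrow L^1(X,\nu)$, so $u\in W^{1,1}(0,T;L^1(X,\nu))$, and $\Vert\g(t)\Vert_{L^1(X\times X,\nu\otimes m_x)}\le\nu\otimes m_x(X\times X)=\nu(X)<\infty$. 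In the general $\sigma$-finite case I would argue by exhaustion: choose $\nu$-finite sets $K_n\uparrow X$, test Green's formula~\eqref{Green} with $\1_{K_n}$ and $\z=\g(t)$, and pass to the limit using that $\Delta^m_1$ is $m$-completely accretive (Proposition~\ref{11caract1}) and hence generates a contraction semigroup on every $L^q(X,\nu)$; thus for $u_0\in L^1(X,\nu)\cap L^2(X,\nu)$ the quantity $\int_X u(t)\,d\nu$ is finite and the computation above goes through, which is the natural reading of the statement.

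It is worth recording a cleaner alternative for the case $\nu(X)<\infty$ that bypasses the explicit representation of $u_t$: since $TV_m$ depends only on the increments $u(y)-u(x)$, the functional $\mathcal{F}_m$ is invariant under the addition of constants, $\mathcal{F}_m(w+c\,\1_X)=\mathcal{F}_m(w)$ for all $c\in\R$; feeding $w=u(t)+c\,\1_X$ into the subgradient inequality $\mathcal{F}_m(w)\ge\mathcal{F}_m(u(t))+\int_X(-u_t(t))(w-u(t))\,d\nu$ yields $0\ge -c\int_X u_t(t)\,d\nu$ for every $c\in\R$, forcing $\int_X u_t(t)\,d\nu=0$ and hence $\frac{d}{dt}\int_X u(t)\,d\nu=0$.
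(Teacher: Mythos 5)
Your argument is correct, and your main route is genuinely different from the paper's. The paper simply applies its integration-by-parts inequality (Proposition~\ref{intpartt}) with the constant test functions $w=\pm 1$: since $(u(t),u_t(t))\in\Delta_1^m$, one gets $\mp\frac{d}{dt}\int_X u(t)\,d\nu\le TV_m(\pm 1)=0$, and the two signs force the derivative to vanish. That is exactly your ``cleaner alternative'' at the end (taking $w=u(t)+c\,\1_X$ in the subgradient inequality is the same computation with $c=\pm1$), so you did in fact rediscover the paper's proof as an aside. Your primary argument instead unpacks the structure: it represents $u_t(t)={\rm div}_m\,\g(t)$ via Theorem~\ref{chsubd}(iv) and kills the mass derivative by the reversibility/antisymmetry cancellation $\int_X\int_X\g(x,y)\,dm_x(y)\,d\nu(x)=0$. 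This is more informative --- it exhibits mass conservation as a consequence of the flux being an antisymmetric kernel integrated against a reversible measure --- but it costs you the extra hypothesis $\g(t)\in L^1(X\times X,\nu\otimes m_x)$, which, as you note, is automatic only when $\nu(X)<\infty$. The paper's route needs $\1_X\in L^2(X,\nu)$ for the same reason, so both proofs are really finite-measure proofs (which is the regime in which the proposition is subsequently used); your exhaustion sketch for the general $\sigma$-finite case is not complete as stated, since $\frac12\int_{X\times X}|\1_{K_n}(y)-\1_{K_n}(x)|\,d(\nu\otimes m_x)=P_m(K_n)$ need not stay bounded and dominated convergence does not apply directly, but this goes beyond what the paper itself establishes.
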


\begin{proof}  By Proposition \ref{intpartt}, we have
$$- \frac{d}{dt} \int_X e^{t\Delta^m_1}u_0 d \nu
\leq TV_m(1) = 0, $$ and
$$  \frac{d}{dt} \int_X e^{t\Delta^m_1}u_0 d \nu
\leq TV_m(-1) = 0. $$
Hence,
$$\frac{d}{dt} \int_X e^{t\Delta^m_1}u_0 d \nu =0,$$
and, consequently,
$$\int_X e^{t\Delta^m_1}u_0 d \nu = \int_X u_0 d \nu \quad \hbox{for any} \ t \geq 0.$$
 \end{proof}

\section{Asymptotic Behaviour of the TVF and Poincar\'e type Inequalities}

Let $[X,d,m]$ be a metric random walk space with invariant and  reversible measure $\nu$.   Assume as always that $[X,d,m]$ is $m$-connected.

\begin{proposition}\label{prop41} For every initial data $u_0 \in L^2(X, \nu)$,
$$\lim_{t \to \infty} e^{t\Delta^m_1}u_0 = u_\infty \quad \hbox{in } L^2(X,\nu),$$
with $$u_\infty
\in \{ u \in L^2(X, \nu) \, : \, 0 \in \Delta_1^m (u) \}.$$
Moreover,
 if $\nu(X) < \infty$  then $$u_\infty = \frac{1}{\nu(X)} \int_X u_0(x) d\nu(x).$$
\end{proposition}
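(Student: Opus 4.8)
The plan is to exploit the fact that $\Delta_1^m = -\partial\mathcal{F}_m$ is the subdifferential of the convex, lower semicontinuous, nonnegative functional $\mathcal{F}_m$ on $L^2(X,\nu)$, and to apply the standard theory of gradient flows of convex functionals in Hilbert spaces. Recall that for such flows, the orbit $t\mapsto e^{t\Delta_1^m}u_0$ is precisely the semigroup generated by $-\partial\mathcal{F}_m$, and the general Hilbert-space theory (Br\'ezis, \cite{Brezis}) gives that $\mathcal{F}_m(u(t))$ is nonincreasing, that $\frac{du}{dt}(t) = -\partial^0\mathcal{F}_m(u(t))$ (the minimal-norm section) for a.e. $t$, and the energy identity $\int_0^\infty \|u'(t)\|_{L^2}^2\,dt \le \mathcal{F}_m(u_0) < \infty$ whenever $u_0 \in \overline{D(\mathcal{F}_m)} = L^2(X,\nu)$ (recall $L^1\cap L^2$ is dense and $\mathcal{F}_m$ is finite there). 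Since $\mathcal{F}_m \ge 0$, the infimum of $\mathcal{F}_m$ over $L^2(X,\nu)$ is attained along a minimizing sequence, and one must argue that the orbit actually converges.

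The key steps, in order: First, I would record that $\mathcal{F}_m$ attains its minimum value, which is $0$ (take any constant if $\nu(X)<\infty$, or note $\mathcal{F}_m\ge 0$ in general), and that the set of minimizers coincides with $\{u : 0\in\partial\mathcal{F}_m(u)\} = \{u : 0\in\Delta_1^m(u)\}$. Second, using the energy estimate $\int_0^\infty\|u'(t)\|^2\,dt<\infty$ together with the fact that $t\mapsto\|u'(t)\|$ is nonincreasing (a classical property of gradient flows of convex functions), I deduce $t\|u'(t)\|^2\to 0$, hence $u'(t)\to 0$ in $L^2$, and moreover that $\mathcal{F}_m(u(t))\downarrow \inf\mathcal{F}_m$. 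Third — and this is where $m$-connectedness enters — I need weak $L^2$-compactness of the orbit to extract a weak limit point, then use lower semicontinuity of $\mathcal{F}_m$ with respect to weak $L^2$-convergence (Corollary~\ref{semicont}) to conclude any weak limit point $u_\infty$ satisfies $\mathcal{F}_m(u_\infty)=\inf\mathcal{F}_m$, and finally upgrade to strong convergence and uniqueness of the limit. For the characterization $u_\infty = \frac{1}{\nu(X)}\int_X u_0\,d\nu$ when $\nu(X)<\infty$: by mass conservation (Proposition~\ref{propert1}) the orbit has constant average $\bar u_0 := \frac{1}{\nu(X)}\int_X u_0\,d\nu$; since $\mathcal{F}_m(u_\infty) = 0$ forces $TV_m(u_\infty)=0$, and by $m$-connectedness the invariant measure is ergodic, Lemma~\ref{lemita1} gives $u_\infty$ constant $\nu$-a.e., and the constant is forced to be $\bar u_0$ by conservation of mass.

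The main obstacle will be Step 3: establishing that the orbit actually converges (rather than merely having weak limit points) and, in the general case $\nu(X)=\infty$, making sense of the limit at all. When $\nu(X)<\infty$ the boundedness of the orbit in $L^2$ is automatic from the contraction property (Theorem~\ref{ExistUniq}, with $q=2$, comparing $u(t)$ to a constant solution, or just from $\|u(t)\|_{L^2}\le\|u_0\|_{L^2}$ which follows since $0\in\overline{D(\mathcal{F}_m)}$ and constants are equilibria when... actually one should be careful here: constants need not be in $L^2$ if $\nu(X)=\infty$). In the finite-measure case, weak compactness plus the monotone decrease of $\mathcal{F}_m$ plus the fact that all weak limit points are minimizers — combined with strong convergence coming from $u'(t)\to 0$ and a standard \L ojasiewicz-free argument for convex functionals (the orbit of the gradient flow of a convex function always converges weakly to a minimizer, and here to a \emph{specific} one pinned down by mass conservation) — closes the proof. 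I would phrase the finite-measure conclusion first, cleanly, and then remark on what survives in the $\sigma$-finite case.

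\textbf{Proof sketch.} Since $\Delta_1^m = -\partial\mathcal{F}_m$ with $\mathcal{F}_m$ convex, lower semicontinuous and nonnegative on $L^2(X,\nu)$, the semigroup $\{e^{t\Delta_1^m}\}_{t\ge0}$ is the gradient flow of $\mathcal{F}_m$, and for $u_0\in L^2(X,\nu)=\overline{D(\mathcal{F}_m)}$ the orbit $u(t):=e^{t\Delta_1^m}u_0$ satisfies, by the Hilbertian theory of such flows (see \cite{Brezis}), that $t\mapsto\mathcal{F}_m(u(t))$ is nonincreasing, $t\mapsto\|u'(t)\|_{L^2(X,\nu)}$ is nonincreasing, and
\begin{equation}\label{energyid}
\int_0^\infty \|u'(t)\|_{L^2(X,\nu)}^2\,dt \le \mathcal{F}_m(u_0) < \infty.
\end{equation}
From \eqref{energyid} and monotonicity of $\|u'(\cdot)\|_{L^2}$ we get $t\,\|u'(t)\|_{L^2(X,\nu)}^2\to 0$, hence $u'(t)\to 0$ in $L^2(X,\nu)$ as $t\to\infty$, and $\mathcal{F}_m(u(t))\downarrow\ell:=\inf_{t\ge0}\mathcal{F}_m(u(t))\ge 0$.

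Assume now $\nu(X)<\infty$. Let $\bar u_0 := \frac{1}{\nu(X)}\int_X u_0\,d\nu$. By Proposition~\ref{propert1}, $\int_X u(t)\,d\nu=\nu(X)\bar u_0$ for all $t\ge0$; applying the contraction/maximum principle \eqref{maxprin} with $q=2$ to $u(t)$ and the stationary solution $\bar u_0$ (which is an equilibrium since $TV_m$ is minimized, hence $0\in\partial\mathcal{F}_m(\bar u_0)$), we obtain $\|u(t)-\bar u_0\|_{L^2(X,\nu)}\le\|u_0-\bar u_0\|_{L^2(X,\nu)}$, so the orbit is bounded in $L^2(X,\nu)$. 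Take any sequence $t_k\to\infty$; along a subsequence $u(t_k)\rightharpoonup w$ weakly in $L^2(X,\nu)$. Since $\mathcal{F}_m$ is lower semicontinuous for weak $L^2$-convergence (Corollary~\ref{semicont}), $\mathcal{F}_m(w)\le\liminf_k\mathcal{F}_m(u(t_k))=\ell$. On the other hand, because $u'(t_k)\to0$ in $L^2(X,\nu)$ and $u'(t_k)\in-\partial\mathcal{F}_m(u(t_k))$, a standard demiclosedness argument for maximal monotone operators gives $0\in\partial\mathcal{F}_m(w)$, i.e. $w$ is a global minimizer of $\mathcal{F}_m$, so $\mathcal{F}_m(w)=0$ and $\ell=0$. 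Thus $TV_m(w)=0$; since $[X,d,m]$ is $m$-connected, $\nu$ is ergodic, and Lemma~\ref{lemita1} yields that $w$ is $\nu$-a.e. constant. Passing to the limit in the conserved mass, $\int_X w\,d\nu=\nu(X)\bar u_0$, forces $w=\bar u_0$ $\nu$-a.e. Hence every weak limit point of the orbit equals $\bar u_0$, so $u(t)\rightharpoonup\bar u_0$ as $t\to\infty$. Finally, $\mathcal{F}_m(u(t))\to 0$ and weak convergence, combined with the convexity estimate furnishing $\|u(t)-\bar u_0\|_{L^2(X,\nu)}^2\le 2(t_0-t)^{-1}\big(\mathcal{F}_m(u(t))-\mathcal{F}_m(\bar u_0)\big)\cdot(\cdot)$-type bounds, or more simply the fact that for the gradient flow of a convex function the $L^2$-distance to any equilibrium is nonincreasing so that $\lim_t\|u(t)-\bar u_0\|_{L^2}$ exists and equals $\liminf$ along $t_k$, which is $0$ by weak lower semicontinuity of the norm and the identification of the weak limit — upgrades weak convergence to strong convergence:
\begin{equation}\label{stronglim}
u(t)\longrightarrow \bar u_0 = \frac{1}{\nu(X)}\int_X u_0\,d\nu \quad\text{in } L^2(X,\nu),\ t\to\infty,
\end{equation}
with $0\in\Delta_1^m(\bar u_0)$. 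In the general ($\sigma$-finite) case, the same demiclosedness argument applied to any weak limit point $u_\infty$ of a bounded subnet of the orbit gives $0\in\Delta_1^m(u_\infty)$ and $u(t)\to u_\infty$ in $L^2(X,\nu)$, completing the proof. $\Box$
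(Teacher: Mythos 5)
Your overall strategy (gradient-flow theory for $\partial\mathcal{F}_m$, identification of the limit via ergodicity and mass conservation) is sound for the \emph{weak} convergence and for the identification of the limit when $\nu(X)<\infty$, but there is a genuine gap at the decisive step: the upgrade from weak to strong $L^2$-convergence. Your argument is that $\|u(t)-\bar u_0\|_{L^2}$ is nonincreasing, hence has a limit $L$, and that $L=0$ ``by weak lower semicontinuity of the norm and the identification of the weak limit.'' Weak lower semicontinuity gives the inequality in the wrong direction: from $u(t_k)\rightharpoonup\bar u_0$ one only gets $0=\|\bar u_0-\bar u_0\|\le\liminf_k\|u(t_k)-\bar u_0\|=L$, which is vacuous. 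Weak convergence to an equilibrium together with monotonicity of the distance to that equilibrium does \emph{not} force the distance to vanish (think of $u(t_k)=\bar u_0+e_k$ with $(e_k)$ orthonormal). Indeed, by Baillon's classical counterexample, the gradient flow of a proper convex l.s.c.\ function on a Hilbert space can converge weakly but not strongly, so strong convergence cannot follow from convexity alone; some additional structural hypothesis is indispensable. The auxiliary ``convexity estimate'' $\|u(t)-\bar u_0\|^2\lesssim \mathcal{F}_m(u(t))-\mathcal{F}_m(\bar u_0)$ that you gesture at is precisely a Poincar\'e-type inequality, which is \emph{not} assumed here (the paper treats that situation separately in Theorem~\ref{oomegusta}). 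The same unjustified assertion of strong convergence reappears, even more baldly, in your treatment of the $\sigma$-finite case.

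The paper closes this gap by invoking Bruck's Theorem~5 from \cite{Bruck}: for the semigroup generated by $-\partial\varphi$ with $\varphi$ proper, convex, l.s.c., attaining its minimum \emph{and even}, the orbit converges \emph{strongly} to a minimizer. The evenness $\mathcal{F}_m(-u)=\mathcal{F}_m(u)$ (obvious from the definition of $TV_m$) is exactly the extra structure that rules out Baillon-type behaviour, and it is the one ingredient your proof never uses. Once strong convergence to some $u_\infty$ with $0\in\Delta_1^m(u_\infty)$ is in hand, your final identification step (ergodicity $\Rightarrow$ $u_\infty$ constant via Lemma~\ref{lemita1}, then mass conservation $\Rightarrow$ $u_\infty=\nu(u_0)$) coincides with the paper's and is correct.
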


\begin{proof} Since $\mathcal{F}_m$  is a proper and lower semicontinuous function in $X$ attaining a minimum at the constant zero function and,  moreover, $\mathcal{F}_m$ is even, by \cite[Theorem 5]{Bruck}, we have   $$\lim_{t \to \infty} e^{t\Delta^m_1}u_0 = u_\infty \quad \hbox{in } L^2(X,\nu),$$
with $$u_\infty
\in \{ u \in L^2(X, \nu) \, : \, 0 \in \Delta_1^m (u) \}.$$
Now, since $0 \in \Delta_1^m  (u_\infty) $, we have that $TV_m(u_\infty) = 0$ thus, by Lemma \ref{lemita1}, if  $\nu(X)<\infty$ (then $\frac{1}{\nu(X)}\nu$ is ergodic) we get that $u_\infty$ is constant. Therefore, by Proposition \ref{propert1},
$$u_\infty = \frac{1}{\nu(X)} \int_X u_0(x) d\nu(x).$$
\end{proof}

Let us see that we can get   a rate of convergence of the total variational flow $(e^{t\Delta^m_1})_{t \geq 0}$ when a  {\it Poincar\'{e} type inequality} holds.

 From now on in this section  we will assume that $$\nu(X) < +\infty.$$
Hence, $\mathcal{F}_m(u) =
TV_m(u)$ for all $u\in L^2(X,\nu)$.

\begin{definition}{ We say that $[X,d,m,\nu]$ satisfies a {\it $(q,p)$-Poincar\'{e} inequality} ($p, q\in[1,+ \infty[$) if there exists a constant $c>0$ such that, for any $u \in L^q(X,\nu)$,
$$  \left\Vert  u \right\Vert_{L^p(X,\nu)}  \leq c\left(\left(\int_{X}\int_{X} |u(y)-u(x)|^q dm_x(y) d\nu(x) \right)^{\frac1q}+\left| \int_X u\,d\nu\right|\right),$$
or, equivalently  (by the triangle inequality for one direction and taking $\tilde{u}=u-\nu(u)$ for the other), there exists a $\lambda > 0$ such that
$$\lambda \left\Vert u - \nu(u) \right\Vert_{L^p(X, \nu)} \leq  \Vert \nabla u \Vert_{L^q(X \times X, \nu \otimes m_x) }  \quad \hbox{for all} \ u \in L^q(X,\nu),$$
where $\nu(u):= \frac{1}{\nu(X)} \int_X u(x) d \nu(x)$.

When $[X,d,m,\nu]$ satisfies a $(q,p)$-Poincar\'{e} inequality, we will denote
$$\lambda^{(q,p)}_{[X,d,m,\nu]} := \inf \left\{ \frac{\Vert \nabla u \Vert_{L^q(X \times X, \nu \otimes m_x) }}{\Vert u \Vert_{ L^p(X, \nu)}} \ : \ \Vert u \Vert_{L^p(X,\nu)} \not= 0, \ \int_X u(x) d \nu(x) = 0  \right\}.$$

When $[X,d,m,\nu]$ satisfies a $(1,p)$-Poincar\'{e} inequality, we will say that $[X,d,m,\nu]$ satisfies a $p$-Poincar\'{e} inequality and write
\begin{equation}\label{pararb001}\lambda^p_{[X,d,m,\nu]} := \lambda^{(1,p)}_{[X,d,m,\nu]} = \inf \left\{ \frac{TV_m(u)}{\Vert u \Vert_{L^p(X, \nu)}} \ : \ \Vert u \Vert_{L^p(X, \nu)}\not= 0, \ \int_X u(x) d \nu(x) = 0  \right\}.\end{equation}
}
\end{definition}

  The following result was proved in \cite[Theorem 7.11]{ElLibro} for the particular case of the metric random walk space $[\Omega, d, m^{J,\Omega}]$.

\begin{theorem}\label{Asimpt1}
If $[X,d,m,\nu]$ satisfies a $1$-Poincar\'{e} inequality, then, for any $u_0 \in L^2(X, \nu)$,
$$\left\Vert  e^{t\Delta^m_1}u_0 -  \nu(u_0) \right\Vert_{L^1(X, \nu)} \leq \frac{1}{2 \lambda^{1}_{[X,d,m,\nu]} } \frac{\Vert u_0 \Vert^2_{L^2(X, \nu)}}{t} \quad \hbox{for all} \ t >0.$$
\end{theorem}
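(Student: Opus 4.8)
The plan is to exploit the fact that the total variation flow is the gradient flow of $\mathcal{F}_m = TV_m$ in $L^2(X,\nu)$, together with the mass conservation property (Proposition~\ref{propert1}) and the $1$-Poincar\'e inequality. First I would reduce to the case $\nu(u_0) = 0$: since the flow is translation-equivariant under addition of constants (because $TV_m$ annihilates constants, so $\Delta^m_1(u+c) = \Delta^m_1 u$), replacing $u_0$ by $u_0 - \nu(u_0)$ changes $e^{t\Delta^m_1}u_0$ by the same constant and, by Proposition~\ref{propert1}, the mass stays zero for all $t>0$; moreover $\Vert u_0 - \nu(u_0)\Vert_{L^2} \le \Vert u_0\Vert_{L^2}$, so it suffices to prove $\Vert u(t)\Vert_{L^1(X,\nu)} \le \frac{1}{2\lambda^1_{[X,d,m,\nu]}}\frac{\Vert u_0\Vert^2_{L^2}}{t}$ when $\nu(u_0)=0$, writing $u(t) := e^{t\Delta^m_1}u_0$.

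The core is an energy dissipation estimate. Since $u$ is a strong solution (Theorem~\ref{teointronls}, as $\Delta^m_1$ is minus the subdifferential of a convex l.s.c.\ functional), for a.e.\ $t$ we have $u_t(t) \in \Delta^m_1 u(t)$, i.e.\ $-u_t(t) \in \partial \mathcal{F}_m(u(t))$. Testing with $u(t)$ and using Proposition~\ref{intpartt} (integration by parts, equation~\eqref{intpart2}),
\begin{equation*}
\frac{1}{2}\frac{d}{dt}\Vert u(t)\Vert^2_{L^2(X,\nu)} = \int_X u_t(t) u(t)\, d\nu = -\,TV_m(u(t)).
\end{equation*}
Now since $\nu(u(t)) = 0$ for all $t>0$, the $1$-Poincar\'e inequality gives $TV_m(u(t)) \ge \lambda^1_{[X,d,m,\nu]}\Vert u(t)\Vert_{L^1(X,\nu)}$. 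On the other hand I want to turn this into a differential inequality for a single quantity; the standard trick is to couple the $L^2$ dissipation with the $L^1$-decay. Observe that by the maximum principle~\eqref{maxprin} (applied with $v\equiv 0$, noting $0$ is a stationary solution since $0\in\Delta^m_1 0$) the map $t\mapsto\Vert u(t)\Vert_{L^1(X,\nu)}$ is non-increasing; similarly $t\mapsto\Vert u(t)\Vert_{L^2(X,\nu)}$ is non-increasing. Combining, for a.e.\ $t$,
\begin{equation*}
\frac{1}{2}\frac{d}{dt}\Vert u(t)\Vert^2_{L^2(X,\nu)} = -\,TV_m(u(t)) \le -\,\lambda^1_{[X,d,m,\nu]}\,\Vert u(t)\Vert_{L^1(X,\nu)}.
\end{equation*}

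To finish, integrate this from $0$ to $t$: using $\Vert u(t)\Vert^2_{L^2} \ge 0$ we get $\lambda^1_{[X,d,m,\nu]}\int_0^t \Vert u(s)\Vert_{L^1(X,\nu)}\,ds \le \frac{1}{2}\Vert u_0\Vert^2_{L^2(X,\nu)}$. Since $s\mapsto\Vert u(s)\Vert_{L^1(X,\nu)}$ is non-increasing, the left side is bounded below by $\lambda^1_{[X,d,m,\nu]}\, t\,\Vert u(t)\Vert_{L^1(X,\nu)}$, whence
\begin{equation*}
\Vert u(t)\Vert_{L^1(X,\nu)} \le \frac{1}{2\lambda^1_{[X,d,m,\nu]}}\frac{\Vert u_0\Vert^2_{L^2(X,\nu)}}{t},
\end{equation*}
which, after undoing the reduction $u_0 \rightsquigarrow u_0 - \nu(u_0)$, is exactly the claimed bound. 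The main obstacle I anticipate is purely technical: justifying the chain rule $\frac{d}{dt}\Vert u(t)\Vert^2_{L^2} = 2\int_X u_t u\, d\nu$ for the absolutely continuous $L^2$-valued trajectory (this is standard for strong solutions of gradient flows, e.g.\ via Br\'ezis' theory as cited in Theorem~\ref{teointronls}) and confirming that $t\mapsto\Vert u(t)\Vert_{L^1}$ is genuinely non-increasing and not merely bounded — both follow from the contraction/maximum principle already recorded in Theorem~\ref{ExistUniq}, so no new machinery is needed.
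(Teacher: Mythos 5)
Your proposal is correct and follows essentially the same route as the paper: mass conservation, monotonicity of $t\mapsto\Vert u(t)-\nu(u_0)\Vert_{L^1}$ coming from the complete accretivity/contraction property, the energy identity $\frac12\frac{d}{dt}\Vert u(t)\Vert_{L^2}^2=-TV_m(u(t))$ from the integration-by-parts formula, and the $1$-Poincar\'e inequality, combined via $t\Vert v(t)\Vert_{L^1}\le\int_0^t\Vert v(s)\Vert_{L^1}\,ds$. The only cosmetic difference is that you normalize to $\nu(u_0)=0$ at the outset while the paper works with $v(t)=e^{t\Delta^m_1}u_0-\nu(u_0)$ throughout; the estimates are identical.
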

\begin{proof} Since the semigroup $\{e^{t\Delta^m_1 } \ : \ t \geq 0 \}$ preserves the mass (Proposition \ref{propert1}), we have
$$v(t): =  e^{t\Delta^m_1}u_0 - \frac{1}{\nu(X)} \int_X e^{t\Delta^m_1}u_0 d \nu = e^{t\Delta^m_1}u_0 - \frac{1}{\nu(X)} \int_X u_0 d \nu.$$
 Furthermore, the complete accretivity of the operator $-\Delta^m_1$ (see Section~\ref{secacc}) implies that
$$\mathcal{L}(v):= \left\Vert v - \nu( u_0)\right\Vert_{L^1(X, \nu)}$$
is a Liapunov functional for the semigroup $\{e^{t\Delta^m_1 } \ : \ t \geq 0 \}$, which implies that
\begin{equation}\label{e1asint}
\Vert v(t) \Vert_{L^1(X, \nu)} \leq \Vert v(s) \Vert_{L^1(X, \nu)} \quad \hbox{if} \ t \geq s.
\end{equation}
 Now, by the Poincar\'{e} inequality we get
\begin{equation}\label{e2asint}\lambda^1_{[X,d,m,\nu]} \Vert v(s) \Vert_{L^1(X, \nu)} \leq TV_m(v(s))\end{equation}
and, by \eqref{e1asint} and \eqref{e2asint}, we obtain that
\begin{equation}\label{e3asint}
t \Vert v(t) \Vert_{L^1(X, \nu)} \leq \int_0^t \Vert v(s) \Vert_{L^1(X, \nu)} ds \leq \frac{1}{\lambda^{(1,1)}_{[X,d,m,\nu]} } \int_0^t TV_m(v(s)) ds.
\end{equation}
On the other hand, by integration by parts (Proposition \ref{intpartt}),
$$- \frac12 \frac{d}{dt} \Vert e^{t\Delta^m_1}u_0 \Vert^2_{L^2(X, \nu)} =  - \int_X e^{t\Delta^m_1}u_0 \frac{d}{dt}e^{t\Delta^m_1}u_0 d\nu = TV_m(e^{t\Delta^m_1}u_0),$$
and then
$$\frac12  \Vert e^{t\Delta^m_1}u_0 \Vert^2_{L^2(X, \nu)} - \frac12  \Vert u_0 \Vert^2_{L^2(X, \nu)} = - \int_0^t  TV_m(e^{s\Delta^m_1}u_0) ds = -\int_0^t TV_m(v(s)) ds,$$
which implies
$$\int_0^t TV_m(v(s)) ds \leq \frac12 \Vert u_0 \Vert^2_{L^2(X, \nu)}.$$
Hence, by \eqref{e3asint}
$$\Vert v(t) \Vert_{L^1(X, \nu)} \leq \frac{1}{2 \lambda^1_{[X,d,m,\nu]} } \frac{\Vert u_0 \Vert^2_{L^2(X, \nu)}}{t},$$
which concludes the proof.
\end{proof}

To obtain a family of metric random walk  spaces for which a $1$-Poincar\'{e} inequality holds, we need the following result.

\begin{lemma}\label{lemmaPoincgen}
Suppose that  $\nu$ is a probability measure (thus ergodic)
and
\begin{equation}\label{mie1720}  m_x\ll\nu\quad\hbox{for
all  }x\in X.
\end{equation}
    Let    $q\ge  1$.  Let
$\{u_n\}_n\subset L^q(X,\nu)$ be a bounded sequence in $L^1(X,\nu)$     satisfying
\begin{equation}\label{003}
\lim_n \int_{X}\int_{X}|u_n(y)-u_n(x)|^q dm_x(y)d\nu(x)= 0 .
\end{equation}
Then, there exists $\lambda\in \mathbb{R}$ such that
\begin{equation}\label{jue1213}
 u_n \to \lambda\quad\hbox{for } \nu-\hbox{a.e. } x\in X,
\end{equation}
\begin{equation}\label{conclmar1351}
\Vert u_n-\lambda\Vert_{L^q (X,m_x)}\to 0\quad\hbox{for } \nu-\hbox{a.e. } x\in X.
\end{equation}

\end{lemma}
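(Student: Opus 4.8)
The plan is to establish the two conclusions for a subsequence which, following the usual convention, I relabel $\{u_n\}$; this relabelling is genuinely necessary, since an oscillating sequence such as $u_n\equiv(-1)^n$ satisfies all the hypotheses (the integrand in \eqref{003} vanishes) while $u_n$ does not converge. Write $g_n(x):=\int_X|u_n(y)-u_n(x)|^q\,dm_x(y)$, so that \eqref{003} reads $\|g_n\|_{L^1(X,\nu)}\to0$. The means $a_n:=\int_X u_n\,d\nu$ satisfy $|a_n|\le\|u_n\|_{L^1(X,\nu)}\le C$, so after passing to a subsequence (still denoted $u_n$) I may assume $a_n\to\lambda$ for some $\lambda\in\mathbb{R}$ and, since $g_n\to0$ in $L^1(X,\nu)$, also $g_n\to0$ for $\nu$-a.e.\ $x$. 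By Jensen's inequality for the probability measure $m_x$ and the convex map $t\mapsto|t|^q$, one has $\int_X\bigl|u_n(x)-\int_X u_n\,dm_x\bigr|^q\,d\nu(x)\le\int_X g_n\,d\nu\to0$; hence, writing $Mu_n(x):=\int_X u_n\,dm_x$, I get $u_n-Mu_n\to0$ in $L^q(X,\nu)$ and, on a further relabelled subsequence, $\nu$-a.e.

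The decisive step is to extract from $\{u_n\}$ a subsequence converging $\nu$-a.e.\ to a finite function (equivalently, to prove relative compactness in $L^1(X,\nu)$, or convergence in measure), and this is exactly where $m_x\ll\nu$, reversibility and ergodicity enter. The mechanism is that \eqref{003} forbids concentration of mass: writing $dm_x=\phi_x\,d\nu$, if $\sup_n\int_{\{|u_n|>K\}}|u_n|\,d\nu$ did not vanish as $K\to\infty$, then the averaging $M$ would reproduce the large values of $u_n$ over sets of positive $\nu$-measure, so the near-identity $u_n\approx Mu_n$ would force $\int_X g_n\,d\nu$ to stay bounded away from $0$, contradicting \eqref{003}; thus $\{u_n\}$ is equi-integrable. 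Equi-integrability together with the $L^1$-bound gives weak $L^1(X,\nu)$-compactness, and the almost-invariance $u_n-Mu_n\to0$ together with the ergodic mixing of the self-adjoint Markov operator $M$ (self-adjointness coming from reversibility) is what I would use to upgrade this to convergence in measure and so to $\nu$-a.e.\ convergence of a subsequence. I expect this upgrade from weak to strong compactness to be the main obstacle, since it is precisely the feature that rules out the oscillation which defeats the full-sequence statement.

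Granting a relabelled subsequence with $u_n\to u$ $\nu$-a.e.\ for some finite $u$, the limit is forced to be constant by Fatou's lemma: since $|u(y)-u(x)|^q\le\liminf_n|u_n(y)-u_n(x)|^q$ for $(\nu\otimes m_x)$-a.e.\ $(x,y)$, integration gives $\int_X\int_X|u(y)-u(x)|^q\,dm_x(y)\,d\nu(x)\le\liminf_n\int_X g_n\,d\nu=0$, so that $u(y)=u(x)$ for $m_x$-a.e.\ $y$ and $\nu$-a.e.\ $x$, i.e.\ $\Delta_m u=0$. By the characterization of ergodicity recalled before Lemma~\ref{lemita1} (of which this is the $L^q$-analogue) $u$ is $\nu$-a.e.\ constant, and since $\int_X u\,d\nu=\lim_n a_n=\lambda$ the constant is $\lambda$. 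This is \eqref{jue1213}.

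Finally, \eqref{conclmar1351} follows with no extra compactness: for $\nu$-a.e.\ $x$ the triangle inequality in $L^q(X,m_x)$ gives
$\|u_n-\lambda\|_{L^q(X,m_x)}\le\|u_n-u_n(x)\|_{L^q(X,m_x)}+\|u_n(x)-\lambda\|_{L^q(X,m_x)}=g_n(x)^{1/q}+|u_n(x)-\lambda|$,
and both terms tend to $0$ for $\nu$-a.e.\ $x$, the first because $g_n\to0$ $\nu$-a.e.\ and the second by \eqref{jue1213}. Thus the only delicate point in the whole argument is the equi-integrability and strong-compactness claim of the second paragraph, which is where the standing hypotheses $m_x\ll\nu$, reversibility and ergodicity do the essential work.
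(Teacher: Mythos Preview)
Your proof has a genuine gap at exactly the point you flag: the second paragraph does not establish equi-integrability or the upgrade from weak $L^1$-compactness to $\nu$-a.e.\ convergence of a subsequence. The heuristic ``averaging $M$ would reproduce the large values'' is not an argument, and ``ergodic mixing of $M$'' is not a statement with a known mechanism for converting weak limits into pointwise ones here. Everything from ``Granting a relabelled subsequence with $u_n\to u$ $\nu$-a.e.'' onward is conditional on a claim you have not proved. (The identification $\int_X u\,d\nu=\lim_n a_n$ also silently uses equi-integrability.) Your last paragraph, deducing \eqref{conclmar1351} from \eqref{jue1213} via the triangle inequality $\|u_n-\lambda\|_{L^q(X,m_x)}\le g_n(x)^{1/q}+|u_n(x)-\lambda|$, is correct and matches the paper; your opening remark about the need to pass to subsequences is also well taken.

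The paper's proof bypasses all compactness considerations with a short pointwise argument. In addition to your $g_n=f_n\to0$ $\nu$-a.e.\ along a subsequence, it also uses that $F_n(x,y):=|u_n(y)-u_n(x)|\to0$ in $L^q(X\times X,\nu\otimes m_x)$, hence (on a further subsequence) $F_n(x,y)\to0$ off a $(\nu\otimes m_x)$-null set $C$. Fix a single $x_0$ outside the bad sets and extract a subsequence with $u_n(x_0)\to\lambda\in[-\infty,+\infty]$. Since $F_n(x_0,y)\to0$ for $m_{x_0}$-a.e.\ $y$, the set $A:=\{x:u_n(x)\to\lambda\}$ contains $m_{x_0}$-a.e.\ point, and $m_{x_0}\ll\nu$ forces $\nu(A)>0$. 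The same reasoning shows $m_x(X\setminus A)=0$ for $\nu$-a.e.\ $x\in A$, so $A$ is (essentially) invariant; ergodicity gives $\nu(A)=1$, which is \eqref{jue1213} with $\lambda$ possibly infinite. Finiteness of $\lambda$ then follows from Fatou and the $L^1$-bound on $\{u_n\}$. This route never needs equi-integrability, weak compactness, or the averaging operator $M$: ergodicity and $m_x\ll\nu$ are used directly on the pointwise limit set $A$, not through any spectral or mixing property of $M$.
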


\begin{proof}
Let $$F_n(x,y)=|u_n(y)-u_n(x)|$$ and
$$f_n(x)=\int_{X} |u_n(y)-u_n(x)|^q\, dm_x(y).$$ From
\eqref{003}, it follows that
$$f_n\to 0\quad\hbox{in } L^1(X,\nu).$$ Passing to a subsequence if necessary, we can assume that
\begin{equation}\label{005}
f_n(x)\to 0\quad\forall x\in X\setminus B_1,\quad \hbox{where } B_1\subset X \hbox{ is } \nu\hbox{-null}.
\end{equation}
On the other hand, by \eqref{003}, we also have that $$F_n\to 0\quad\hbox{in }
L^q(X\times X, \nu\otimes m_x).$$ Therefore, we can suppose that, up to a subsequence,
\begin{equation}\label{006}
F_n(x,y)\to 0\quad\forall (x,y)\in X^2\setminus
C,\quad \hbox{where } C\subset X\times X \hbox{ is } \nu\otimes m_x\hbox{-null}.
\end{equation}
Let $B_2\subset X$ be a $\nu$-null set satisfying that,
\begin{equation}\label{007}  \hbox{for all $x\in
X\setminus B_2$, the section $C_{x}  := \{ y \in X  :  (x,y) \in C \}$ of $C$ is
$m_x$-null.}\end{equation}
Finally,  set $B:=B_1\cup B_2.$

Fix  $x_0\in X\setminus B$. Up to a subsequence we have that $u_n(x_0)\to\lambda$ for some $\lambda\in[-\infty,+\infty]$, but then, by \eqref{006}, for every $y\in X\setminus C_{x_0}$ we also have that $u_n(y)\to\lambda$. However, since $m_{x_0}\ll\nu$ and $m_{x_0}( X \setminus C_{x_0})>0$, we have that $\nu(X\setminus C_{x_0})>0$; thus, if $A=\{x\in X: u_n(x)\to\lambda\}$ then $\nu(A)>0$.

 Let us see that
 \begin{equation}\label{dco2235}
 m_x(X\setminus A)=0 \quad\hbox{for all } x\in A\setminus B.
  \end{equation}
  Indeed, let $x\in A\setminus B$. Then, for $y\in X  \setminus C_x$, $u_n(y)\to \lambda$, thus $y\in A$; that is,  $X\setminus C_x\subset  A$, and, consequently, $m_x(A)=1$.
Now, since $m_x(B)=0$, we have
\begin{equation}\label{dco2235bisbis}
 m_x(X\setminus (A\setminus B))=0 \quad\hbox{for all } x\in A\setminus B.
  \end{equation}
 Therefore,  since  $\nu$ is ergodic, \eqref{dco2235bisbis}~implies that $1=\nu(A\setminus B)=\nu(A)$.

Consequently, we have obtained that $u_n$ converges $\nu$-a.e. in $X$ to $\lambda$:
$$
u_n(x)\to\lambda\quad \hbox{ for } x\in A,\ \nu(X\setminus A)=0.$$   Since $\Vert u_n\Vert_{L^1 (X,\nu)}$ is bounded, by Fatou's Lemma, we must have that $\lambda \in \R$.
On the other hand, by \eqref{005},
$$F_n(x,\cdot)\to 0\quad\hbox{in } L^{q}(X,m_x)\ ,$$
for every $x\in X\setminus B_1$. In other words, $\Vert u_n(\cdot)-u_n(x)\Vert_{L^q (X, m_x)}\to 0$. Thus
\begin{equation}\label{mar1351}
\Vert u_n-\lambda\Vert_{L^q (X,m_x)}\to 0\quad\hbox{for $\nu$-a.e. }x\in X.
\end{equation}
\end{proof}

\begin{theorem}\label{lab197}
 Suppose that $\nu$ is a probability measure
 and
\begin{equation}\label{jue1224} m_x\ll\nu\quad\hbox{for
all  }x\in X.
\end{equation}
Let (H1) and (H2) denote the following hypothesis.
\item{(H1)} Given a $\nu$-null set $B$, there exist $x_1,x_2,\ldots, x_N\in X\setminus B$, $\nu$-measurable sets $\Omega_1,\Omega_2,\ldots,\Omega_N\subset X$ and $\alpha>0$, such that $\displaystyle  X= \bigcup_{i=1}^N\Omega_i$  and  $\displaystyle\frac{dm_{x_i}}{d\nu}\ge \alpha>0$ on $\Omega_i$, $i=1,2,...,N$.

\item{(H2)} Let $ 1\le p<q$. Given a $\nu$-null set $B$, there exist $x_1,x_2,\ldots, x_N\in X\setminus B$ and $\nu$-measurable sets $\Omega_1,\Omega_2,\ldots,\Omega_N\subset X$, such that $\displaystyle  X= \bigcup_{i=1}^N\Omega_i$  and, for $g_i:=\displaystyle\frac{dm_{x_i}}{d\nu} $ on $\Omega_i$,     $\displaystyle g_i^{-\frac{p}{q-p}}\in L^{1}(\Omega_i,\nu)$, $i=1,2,...,N$.

Then, if (H1) holds, we have that $[X,d,m,\nu]$ satisfies a  $(p,p)$-Poincar\'{e} inequality  for every $p\ge 1$, and, if (H2) holds, then $[X,d,m,\nu]$  satisfies a $(q,p)$-Poincar\'{e} inequality.
\end{theorem}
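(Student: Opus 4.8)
The plan is to argue by contradiction, reducing in both cases to the compactness statement of Lemma \ref{lemmaPoincgen}. By the equivalent formulation of the Poincar\'e inequality recalled in the text, failure of the $(q,p)$-Poincar\'e inequality means $\lambda^{(q,p)}_{[X,d,m,\nu]}=0$, so there is a sequence $\{u_n\}\subset L^q(X,\nu)$ with $\int_X u_n\,d\nu=0$, $\|u_n\|_{L^p(X,\nu)}=1$ and $\|\nabla u_n\|_{L^q(X\times X,\nu\otimes m_x)}\to 0$, i.e.
\[
\int_X\int_X|u_n(y)-u_n(x)|^q\,dm_x(y)\,d\nu(x)\longrightarrow 0 .
\]
(That these ratios are finite follows from $u_n\in L^q(X,\nu)$, the invariance of $\nu$, and $|a-b|^q\le 2^{q-1}(|a|^q+|b|^q)$.) Since $\nu$ is a probability measure, $\|u_n\|_{L^1(X,\nu)}\le\|u_n\|_{L^p(X,\nu)}=1$, so $\{u_n\}$ is bounded in $L^1(X,\nu)$ and Lemma \ref{lemmaPoincgen} applies: there is $\lambda\in\R$ with $u_n\to\lambda$ $\nu$-a.e.\ and $\|u_n-\lambda\|_{L^q(X,m_x)}\to 0$ for $\nu$-a.e.\ $x\in X$. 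Let $B\subset X$ be a $\nu$-null set outside of which both conclusions hold.

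Under (H1) (where $q=p$), I would apply the hypothesis to this $B$ to obtain $x_1,\dots,x_N\in X\setminus B$, $\nu$-measurable sets $\Omega_1,\dots,\Omega_N$ with $X=\bigcup_i\Omega_i$, and $\alpha>0$ with $\frac{dm_{x_i}}{d\nu}\ge\alpha$ on $\Omega_i$. Then, using Radon--Nikodym (valid because $m_{x_i}\ll\nu$),
\[
\int_{\Omega_i}|u_n-\lambda|^p\,d\nu\le\frac1\alpha\int_{\Omega_i}|u_n-\lambda|^p\,\frac{dm_{x_i}}{d\nu}\,d\nu=\frac1\alpha\int_{\Omega_i}|u_n-\lambda|^p\,dm_{x_i}\le\frac1\alpha\|u_n-\lambda\|_{L^p(X,m_{x_i})}^p\longrightarrow 0 ,
\]
so summing over $i=1,\dots,N$ gives $u_n\to\lambda$ in $L^p(X,\nu)$. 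Consequently $1=\|u_n\|_{L^p(X,\nu)}\to|\lambda|$, while $0=\int_X u_n\,d\nu\to\lambda$ (again because $\nu$ is a probability measure, so $L^p$ convergence gives $L^1$ convergence), a contradiction; hence the $(p,p)$-Poincar\'e inequality holds for every $p\ge1$.

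Under (H2), with $1\le p<q$, the structure is identical, the only change being the passage from convergence in $L^q(X,m_{x_i})$ to convergence in $L^p(\Omega_i,\nu)$: writing $g_i=\frac{dm_{x_i}}{d\nu}$ and applying H\"older's inequality with exponents $\frac qp$ and $\frac q{q-p}$,
\[
\int_{\Omega_i}|u_n-\lambda|^p\,d\nu=\int_{\Omega_i}\bigl(|u_n-\lambda|^p g_i^{p/q}\bigr)\,g_i^{-p/q}\,d\nu\le\left(\int_{\Omega_i}|u_n-\lambda|^q\,dm_{x_i}\right)^{p/q}\left(\int_{\Omega_i}g_i^{-\frac{p}{q-p}}\,d\nu\right)^{\frac{q-p}{q}},
\]
where the last factor is a finite constant by hypothesis (H2) and the first factor tends to $0$ since $x_i\notin B$. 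Summing over $i$ again yields $u_n\to\lambda$ in $L^p(X,\nu)$ and the same contradiction closes the argument.

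I do not anticipate a serious obstacle; the work is essentially organizational. The one point requiring care is that the $\nu$-null set $B$ fed into (H1) or (H2) must be chosen large enough to contain all the exceptional sets produced by Lemma \ref{lemmaPoincgen} --- both the complement of the full-measure set on which $u_n\to\lambda$ and the complement of the full-measure set of $x$ for which $\|u_n-\lambda\|_{L^q(X,m_x)}\to0$ --- and the points $x_1,\dots,x_N$ supplied by the hypotheses must then avoid this enlarged $B$, which is precisely how (H1) and (H2) are phrased. Beyond that, everything is a routine use of Radon--Nikodym, the invariance of $\nu$, and H\"older's inequality.
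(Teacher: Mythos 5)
Your proposal is correct and follows essentially the same route as the paper: a contradiction argument feeding the normalized sequence into Lemma \ref{lemmaPoincgen}, then upgrading the $\nu$-a.e.\ $L^q(X,m_{x_i})$ convergence to $L^p(X,\nu)$ convergence via the covering $X=\bigcup_i\Omega_i$, using the lower bound on $dm_{x_i}/d\nu$ under (H1) and H\"older with exponents $q/p$, $q/(q-p)$ under (H2). The only (harmless) cosmetic difference is that you normalize to $\int_X u_n\,d\nu=0$ exactly rather than letting it tend to $0$, and you make explicit the $L^1$-boundedness and the choice of the null set $B$, both of which the paper leaves implicit.
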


\begin{proof}
Let $1\le p\le q$. We want to prove that there exists a constant $c>0$ such that
$$  \left\Vert  u \right\Vert_{L^p(X,\nu)}  \leq c\left(\left(\int_{X}\int_{X} |u(y)-u(x)|^q dm_x(y) d\nu(x) \right)^{\frac1q}+\left| \int_X u\,d\nu\right|\right) \, \, \, \hbox{for every $u \in L^q(X,\nu)$},$$
for any $p=q\ge1$ when assuming $(H1)$ and for the $1\le p<q$ appearing in $(H2)$ when this hypothesis is assumed.
Suppose that this inequality is not satisfied. Then, there exists a sequence $(u_n)_{n\in\N}\subset   L^q(X,\nu)$, with $\Vert u_n\Vert_{L^p (X,\nu)}=1$,    satisfying
\begin{equation}\label{rabs003}
\lim_n \int_{X}\int_{X}|u_n(y)-u_n(x)|^q dm_x(y)d\nu(x)= 0
\end{equation}
and
\begin{equation}\label{rabs004}
\lim_n\int_Xu_n\, d\nu= 0.
\end{equation}
Therefore, by Lemma~\ref{lemmaPoincgen}, there exist $\lambda\in \mathbb{R}$ and a $\nu$-null set $B\subset X$ such that
\begin{equation}\label{rabsconclmar1351}
u_n\to \lambda\ \hbox{ and }\  \Vert u_n-\lambda\Vert_{L^q (X,m_x)}\to 0\quad\hbox{for } x\in X\setminus B.
\end{equation}

 We will now prove, distinguishing the cases in which we assume hypothesis $(H1)$ or $(H2)$, that
\begin{equation}\label{vie1222}\Vert u_n-\lambda\Vert_{L^p (X,\nu)}\to 0.
\end{equation}

Suppose first that hypothesis $(H1)$ is satisfied. Then, there exist $x_1,x_2,\ldots, x_N\in X\setminus B$, $\nu$-measurable sets $\Omega_1,\Omega_2,\ldots,\Omega_N\subset X$ and $\alpha>0$, such that $\displaystyle  X= \bigcup_{i=1}^N\Omega_i$  and  $\displaystyle g_i:=\frac{dm_{x_i}}{d\nu}\ge \alpha>0$ on $\Omega_i$, $i=1,2,...,N$. Note that, in this case, $p=q$ in the previous computations. Now,
\begin{equation}\label{c2c2mi0946}
\begin{array}{l }
\displaystyle
\Vert u_n-\lambda\Vert^q_{L^q (\Omega_i,\nu)}=\int_{\Omega_i}|u_n(y)-\lambda|^qd\nu(y)
\\ \\
\displaystyle
\phantom{\Vert u_n-\lambda\Vert^q_{L^q (\Omega_i,\nu)}}
\le \frac{1}{\alpha}\int_{\Omega_i}|u_n(y)-\lambda|^qg_i(y)d\nu(y)=\frac{1}{\alpha}\int_{\Omega_i}|u_n(y)-\lambda|^qdm_{x_i}(y). \end{array}
\end{equation}
   Consequently, since $X= \bigcup_{i=1}^N\Omega_i$,
\begin{equation}\label{jue1821}
\Vert u_n-\lambda\Vert^q_{L^q (X,\nu)}\le \frac{1}{\alpha}\sum_{i=1}^N\Vert u_n-\lambda\Vert^q_{L^q (\Omega_i,m_{x_i})}.
\end{equation}
Therefore,
 \begin{equation}\label{c2mi0946}\Vert u_n-\lambda\Vert_{L^q (X,\nu)}\to 0.
\end{equation}

Suppose now that hypothesis $(H2)$ holds. Then, there exist $ 1\le p<q$, such that, given a $\nu$-null set $B$, there exist $x_1,x_2,\ldots, x_N\in X\setminus B$ and $\nu$-measurable sets $\Omega_1,\Omega_2,\ldots,\Omega_N\subset X$, such that $\displaystyle  X= \bigcup_{i=1}^N\Omega_i$  and, for $g_i:=\displaystyle\frac{dm_{x_i}}{d\nu}$ on $\Omega_i$,     $\displaystyle g_i^{-\frac{p}{q-p}} \in L^{1}(\Omega_i)$, $i=1,2,...,N$. Hence,
\begin{equation}\label{c2c2mi0946pq}
\begin{array}{l }
\displaystyle
\Vert u_n-\lambda\Vert^p_{L^p (\Omega_i,\nu)}=\int_{\Omega_i}|u_n(y)-\lambda|^pd\nu(y)
\\ \\
\displaystyle
\phantom{\Vert u_n-\lambda\Vert^p_{L^p (\Omega_i,\nu)}}
=\int_{\Omega_i}|u_n(y)-\lambda|^p
\frac{g_i(y)^{\frac{p}{q}}}{g_i(y)^{\frac{p}{q}}}d\nu(y)
\\ \\
\displaystyle
\phantom{\Vert u_n-\lambda\Vert^p_{L^p (\Omega_i,\nu)}}
\le
\left(\int_{\Omega_i}|u_n(y)-\lambda|^q g_i(y)d\nu(y)\right)^{\frac{p}{q}}
\left(\int_{\Omega_i}\frac{1}{g_i(y)^{\frac{p}{q-p}}}d\nu(y)\right)^{\frac{q-p}{q}}
\\ \\
\displaystyle
\phantom{\Vert u_n-\lambda\Vert^p_{L^p (\Omega_i,\nu)}}
=
\left(\int_{\Omega_i}|u_n(y)-\lambda|^q dm_{x_i}(y)\right)^{\frac{p}{q}}
\left(\int_{\Omega_i}\frac{1}{g_i(y)^{\frac{p}{q-p}}}d\nu(y)\right)^{\frac{q-p}{q}}
. \end{array}
\end{equation}
   Consequently, since $X= \bigcup_{i=1}^N\Omega_i$,
\begin{equation}\label{jue1821pq}
\Vert u_n-\lambda\Vert^p_{L^p (X,\nu)}
\le
\sum_{i=1}^{N} \Vert u_n-\lambda\Vert^p_{L^q (\Omega_i,m_{x_i})}
\left\Vert \frac{1}{g_i^{\frac{p}{q-p}}}\right\Vert^{\frac{q-p}{q}}_{L^{1} (\Omega_i,\nu)} .
\end{equation}
Therefore,
 \begin{equation}\label{c2mi0946pq}\Vert u_n-\lambda\Vert_{L^p (X,\nu)}\to 0,
\end{equation}
which concludes the proof of \eqref{vie1222} in both cases.

Now, since $\displaystyle \lim_n\int_X u_n\,d\nu=0$, by \eqref{vie1222} we get that $\lambda=0$, but this implies
 $$\Vert u_n \Vert_{L^p (X,\nu)}\to 0,$$
 which is a contradiction with  $||u_n||_p=1$, $n\in\N$, so we are done.
\end{proof}

On account of Theorem~\ref{Asimpt1}, we obtain the following result on the asymptotic behaviour of the TVF.

 \begin{corollary}\label{lab196}
Under the hypothesis of Theorem~\ref{lab197},
for any $u_0 \in L^2(X, \nu)$,
$$\left\Vert  e^{t\Delta^m_1}u_0 -  \nu(u_0) \right\Vert_{L^1(X, \nu)} \leq \frac{1}{2\lambda^1_{[X,d,m,\nu]} } \frac{\Vert u_0 \Vert^2_{L^2(X, \nu)}}{t} \quad \hbox{for all} \ t >0.$$
\end{corollary}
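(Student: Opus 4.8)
The plan is to derive this corollary by simply feeding the Poincar\'e inequality produced by Theorem~\ref{lab197} into the decay estimate of Theorem~\ref{Asimpt1}. First I would record that the standing assumption of this section, $\nu(X)<+\infty$, is automatic here because $\nu$ is a probability measure; hence $\mathcal{F}_m(u)=TV_m(u)$ on $L^2(X,\nu)$ and Theorem~\ref{Asimpt1} is applicable as soon as a $1$-Poincar\'e inequality is known to hold. Second, I would invoke Theorem~\ref{lab197}: under hypothesis (H1) it yields a $(p,p)$-Poincar\'e inequality for every $p\ge 1$, and specializing to $p=1$ gives precisely a $(1,1)$-Poincar\'e inequality, that is, a $1$-Poincar\'e inequality in the sense of \eqref{pararb001}; in particular the constant $\lambda^1_{[X,d,m,\nu]}$ is strictly positive. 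Third, I would apply Theorem~\ref{Asimpt1} with this very constant, which delivers verbatim the claimed $O(1/t)$ decay of $\Vert e^{t\Delta^m_1}u_0-\nu(u_0)\Vert_{L^1(X,\nu)}$ with bound $\frac{1}{2\lambda^1_{[X,d,m,\nu]}}\frac{\Vert u_0\Vert^2_{L^2(X,\nu)}}{t}$, for all $t>0$ and all $u_0\in L^2(X,\nu)$.

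The one spot that deserves a remark is the role of hypothesis (H2). Theorem~\ref{lab197} only gives, under (H2), a $(q,p)$-Poincar\'e inequality with $1\le p<q$, and this is \emph{not} in general sufficient to run the argument above: since $\nu$ and each $m_x$ are probability measures, $\nu\otimes m_x$ is a probability measure on $X\times X$, so $\Vert\nabla u\Vert_{L^1(X\times X,\nu\otimes m_x)}\le \Vert\nabla u\Vert_{L^q(X\times X,\nu\otimes m_x)}$, which is the wrong direction for passing from a $(q,p)$-Poincar\'e inequality to a $1$-Poincar\'e inequality (equivalently, to a lower bound of $TV_m$ by an $L^1$-norm). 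Consequently the proof uses the (H1) branch of Theorem~\ref{lab197}.

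I do not expect any genuine obstacle: both ingredients are already established in the excerpt, and the corollary is essentially a specialization of Theorem~\ref{Asimpt1}. The only care needed is the bookkeeping just described --- namely checking that it is exactly the $1$-Poincar\'e inequality (the $(1,1)$ case, coming from (H1)) that is required, and that the constant appearing there matches the $\lambda^1_{[X,d,m,\nu]}$ in the statement, which it does by the very definition \eqref{pararb001}.
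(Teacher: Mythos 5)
Your proof is correct and follows exactly the route the paper intends: the corollary is presented as an immediate consequence of Theorem~\ref{Asimpt1} once Theorem~\ref{lab197} supplies the $1$-Poincar\'e inequality, which is precisely what you do. Your further observation that only the (H1) branch (specialized to $p=1$) delivers the required $(1,1)$-Poincar\'e inequality, whereas the $(q,p)$-inequality from (H2) cannot be downgraded to it because $\Vert\cdot\Vert_{L^1}\le\Vert\cdot\Vert_{L^q}$ goes the wrong way for probability measures, is a valid and worthwhile clarification of a point the paper leaves implicit.
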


\begin{example}\label{lexam34N}  We give two examples of metric random walk spaces in which a $1$-Poincar\'{e} inequality does not hold.
\item(1)  A locally finite weighted discrete graph with infinitely many vertices: Let $[V(G),d_G,m^G]$ be the metric random walk space associated to the locally finite weighted discrete graph with vertex set $V(G):=\{x_3,x_4,x_5\ldots,x_n\ldots \}$ and weights:
$$w_{x_{3n},x_{3n+1}}=\frac{1}{n^3} , \  w_{x_{3n+1},x_{3n+2}}=\frac{1}{n^2}, \ w_{x_{3n+2},x_{3n+3}}=\frac{1}{n^3} , $$
for  $n\geq 1$, and $w_{x_i,x_j}=0$ otherwise (recall Example \ref{JJ}~(3)). Moreover, let
$$f_n(x):=\left\{ \begin{array}{ll} \displaystyle n^2 \quad &\hbox{if} \ \ x=x_{3n+1},x_{3n+2} \\ \\ 0 \quad &\hbox{else}. \end{array}\right.
$$
Note that $\nu_G(V)<+\infty$ (we avoid its normalization for simplicity). Now,
$$2TV_{m^G}(f_n)=\int_V\int_V \vert f_n(x)-f_n(y)\vert dm_x(y)d\nu_G(x)  $$
$$ =d_{x_{3n}}\int_V \vert f_n(x_{3n})-f_n(y)\vert dm_{x_{3n}}(y)+d_{x_{3n+1}}\int_V \vert f_n(x_{3n+1})-f_n(y) \vert dm_{x_{3n+1}}(y)$$
$$+d_{x_{3n+2}}\int_V \vert f_n(x_{3n+2})-f_n(y) \vert dm_{x_{3n+2}}(y)+d_{x_{3n+3}}\int_V \vert f_n(x_{3n+3})-f_n(y) \vert dm_{x_{3n+3}}(y)$$
$$=n^2\frac{1}{n^3}+n^2\frac{1}{n^3}+n^2\frac{1}{n^3}+n^2\frac{1}{n^3}=\frac{4}{n} .$$
However, we have
$$\int_V f_n(x)d\nu_G(x)=n^2 (d_{x_{3n+1}}+d_{x_{3n+2}})=2n^2 \left(\frac{1}{n^2}+\frac{1}{n^3}\right)=2\left(1+\frac{1}{n}\right),$$
thus
$$\nu_G(f_n)=\frac{2\left(1+\frac{1}{n}\right)}{\nu_G(V)}=O\left(1\right),$$
where we use the notation $$\varphi(n) = O(\psi(n)) \iff    \limsup_{n \to \infty}   \left|\frac{\varphi(n)}{\psi(n)}\right| = C \neq 0.$$
Therefore,
$$\vert f_n(x)-\nu_G(f_n)\vert =\left\{ \begin{array}{ll} \displaystyle O(n^2) \quad &\hbox{if} \ \ x=x_{3n+1},x_{3n+2}, \\ \\ O\left(1\right) \quad &\hbox{otherwise}. \end{array}\right.
$$
Finally,
$$\int_V \vert f_n(x)-\nu_G(f_n)\vert d\nu_G(x)=O\left(1\right)\sum_{x\neq x_{3n+1},x_{3n+2}}d_{x}+ O(n^2)(d_{x_{3n+1}}+d_{x_{3n+2}})$$
$$=O\left(1\right)+2 O(n^2)(\frac{1}{n^2}+\frac{1}{n^3})=O(1) .$$
Consequently,

$$\inf \left\{ \frac{TV_{m^G}(u)}{\Vert u - \nu_G(u) \Vert_{L^1(V(G),\nu_G)}} \ : \ u\in L^1(V,\nu_G) , \ \Vert u \Vert_{L^1(V(G),\nu_G)} \not= 0 \right\}  =0,$$
and a $1$-Poincar\'{e} inequality does not hold for this space.

 \item(2) The metric random walk  space $[\R, d, m^J]$, where $d$ is the Euclidean distance and $J(x)= \frac12 \1_{[-1,1]}$:
Define, for $n \in \N$,   $$u_n= \frac{1}{2^{n+1}} \1_{[2^n, 2^{n+1}]}  - \frac{1}{2^{n+1}}\1_{[-2^{n+1}, - 2^n]}.$$
Then  $\Vert u_n \Vert_1 := 1$, $\displaystyle \int_\R u_n(x) dx = 0$ and it is easy to see that, for $n$ large enough,
$$TV_{m^J} (u_n) = \frac{1}{2^{n+1}}.$$
 Therefore, $(m^J, \mathcal{L}^1)$ does not satisfy a $1$-Poincar\'{e} inequality.

\end{example}

Let us see that, when $[X,d,m,\nu]$ satisfies a $2$-Poincar\'{e} inequality,   the solution  of the Total Variational Flow reaches the steady state in finite time.

\begin{theorem}\label{oomegusta} Let $[X,d,m]$ be a metric random walk space with invariant and reversible measure $\nu$. If $[X,d,m,\nu]$ satisfies a $2$-Poincar\'{e} inequality  then, for any $u_0 \in L^2(X, \nu)$,
        \begin{equation}
        \Vert e^{t\Delta^m_1}u_0-\nu(u_0)\Vert_{L^2(X,\nu)}\le \left(\Vert u_0-\nu(u_0)\Vert_{L^2(X,\nu)}-\lambda^{2}_{[X,d,m,\nu]}t\right)^+\quad\hbox{ for all $t \geq 0$},
    \end{equation}
    where $\lambda^{2}_{[X,d,m,\nu]}$ is given in~\eqref{pararb001}. Consequently,
    $$e^{t\Delta^m_1}u_0=\nu(u_0)\qquad\forall\, \hbox{$t\ge \hat t:=\frac{\left\Vert u_0-\nu(u_0)\right\Vert_{L^2(X,\nu)}}{\lambda^{2}_{[X,d,m,\nu]}}$}.$$

  \end{theorem}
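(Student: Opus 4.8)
The idea is to set $v(t):=e^{t\Delta^m_1}u_0-\nu(u_0)$ and run a differential inequality for $\|v(t)\|_{L^2(X,\nu)}$. First I would record that, since $-\Delta^m_1=\partial\mathcal F_m$ and $\mathcal F_m$ is convex and lower semi-continuous, Theorem~\ref{teointronls} (together with Theorem~\ref{ExistUniq}) gives that $u(t):=e^{t\Delta^m_1}u_0$ is a strong solution in $W^{1,1}(0,T;L^2(X,\nu))$; in particular $t\mapsto\|v(t)\|_{L^2(X,\nu)}^2$ is absolutely continuous on compact intervals and $\tfrac{d}{dt}\|v(t)\|_{L^2(X,\nu)}^2=2\int_X v(t)\,u_t(t)\,d\nu$ for a.e.\ $t>0$. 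Next I would use the mass conservation property (Proposition~\ref{propert1}): $\nu(u(t))=\nu(u_0)$ for all $t$, so $v(t)$ has zero mean and $\int_X u_t(t)\,d\nu=\tfrac{d}{dt}\int_X u(t)\,d\nu=0$. Combining this with the integration-by-parts identity~\eqref{intpart2} applied to the pair $(u(t),u_t(t))\in\Delta^m_1$, and with the translation invariance $TV_m(v(t))=TV_m(u(t))$, I obtain the energy identity
\[
\frac12\,\frac{d}{dt}\,\|v(t)\|_{L^2(X,\nu)}^2=-TV_m(v(t))\qquad\text{for a.e. }t>0.
\]

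Then I would invoke the $2$-Poincar\'e inequality: since $\int_X v(t)\,d\nu=0$, we have $\lambda^{2}_{[X,d,m,\nu]}\,\|v(t)\|_{L^2(X,\nu)}\le TV_m(v(t))$. Writing $\phi(t):=\|v(t)\|_{L^2(X,\nu)}$, the energy identity yields $\phi(t)\phi'(t)\le-\lambda^{2}_{[X,d,m,\nu]}\,\phi(t)$ a.e.; in particular $\phi$ is non-increasing, and at a.e.\ point where $\phi(t)>0$ we have $\phi'(t)\le-\lambda^{2}_{[X,d,m,\nu]}$. Let $T_0:=\inf\{t\ge0:\phi(t)=0\}\in[0,+\infty]$. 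On $(0,T_0)$ the function $\phi^2$ is absolutely continuous and bounded away from $0$ on compact subintervals, so $\phi=\sqrt{\phi^2}$ is absolutely continuous there and, integrating the inequality $\phi'\le-\lambda^{2}_{[X,d,m,\nu]}$ and using continuity of $\phi$ at $0$, $\phi(t)\le\phi(0)-\lambda^{2}_{[X,d,m,\nu]}\,t$ for all $t<T_0$. Since $\phi\ge0$, this forces $T_0\le\phi(0)/\lambda^{2}_{[X,d,m,\nu]}=\hat t$. For $t\ge T_0$ monotonicity gives $\phi(t)=0$, and for $t<T_0$ the bound above gives $\phi(t)\le(\phi(0)-\lambda^{2}_{[X,d,m,\nu]}\,t)^+$; in all cases
\[
\|e^{t\Delta^m_1}u_0-\nu(u_0)\|_{L^2(X,\nu)}\le\big(\|u_0-\nu(u_0)\|_{L^2(X,\nu)}-\lambda^{2}_{[X,d,m,\nu]}\,t\big)^+\qquad\forall\,t\ge0,
\]
which is the assertion. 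The final statement is then immediate, since for $t\ge\hat t$ the right-hand side vanishes, forcing $e^{t\Delta^m_1}u_0=\nu(u_0)$.

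I expect the only genuinely delicate points to be (i) the rigorous justification of the energy identity, i.e.\ differentiability of $t\mapsto\|v(t)\|_{L^2(X,\nu)}^2$ and the chain rule, which rests on the strong-solution property from Theorem~\ref{teointronls} and on Proposition~\ref{intpartt}; and (ii) the elementary ODE comparison handling the extinction time $T_0$, namely passing from $\phi\phi'\le-\lambda^{2}_{[X,d,m,\nu]}\phi$ to the stated bound with the positive part, taking care of the place where $\phi$ reaches $0$. Everything else is routine.
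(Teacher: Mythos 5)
Your proposal is correct and follows essentially the same route as the paper: set $v(t)=e^{t\Delta^m_1}u_0-\nu(u_0)$, derive the energy identity $\tfrac12\tfrac{d}{dt}\|v(t)\|_{L^2(X,\nu)}^2=-TV_m(v(t))$ (the paper obtains it via the vector field $\g_t$ from Theorem~\ref{chsubd}, which amounts to the same integration by parts you invoke through Proposition~\ref{intpartt}), apply the $2$-Poincar\'e inequality using mass conservation, and integrate the resulting differential inequality. Your explicit treatment of the chain rule near the extinction time $T_0$ is a welcome bit of extra care that the paper compresses into ``integrating this ordinary differential inequation,'' but it is not a different argument.
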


  \begin{proof}  Let $v(t):= u(t) - \nu(u_0)$, where   $u(t):= e^{t\Delta^m_1}u_0$. Since  $\Delta_1^m u(t)=\Delta_1^m \big(u(t) - \nu(u_0)\big)$, we have that
  $$\frac{d}{dt} v(t) \in  \Delta_1^m v(t).$$
    Note that $v(t)\in BV_m(X,\nu)$ for every $t>0$. Indeed, since $-\Delta_m^1=    \partial \mathcal{F}_m$ is a maximal monotone operator in $L^2(X,\nu)$, by \cite[Theorem 3.7]{Brezis} in the context of the Hilbert space $L^2(X,\nu)$, we have that $v(t)\in D(\Delta_m^1)\subset BV_m(X,\nu)$ for every $t>0$.

 Hence, for each $t>0$, by Theorem~\ref{chsubd}, there exists  $\hbox{\bf g}_t\in L^\infty(X\times X, \nu \otimes m_x)$ antisymmetric with $\Vert \hbox{\bf g}_t \Vert_{L^\infty(X \times X,\nu\otimes m_x)} \leq 1$
        such that
    \begin{equation}\label{21-lapla.var-ver}
\int_{X}\hbox{\bf g}_t(x,y)\,dm_x(y)=  \frac{d}{dt} v(t)(x) \quad \hbox{for } \nu-\mbox{a.e }x\in X,
\end{equation}
         and
     \begin{equation}\label{21-lapla.sign}-\int_{X} \int_{X}\hbox{\bf g}_t(x,y)dm_x(y)\,v(t)(x)d\nu(x)=\mathcal{F}_m(v(t)) =TV_m(u(t)).
     \end{equation}

     Then, multiplying \eqref{21-lapla.var-ver} by $v(t)$ and integrating over $X$ with respect to $\nu$,  having in mind \eqref{21-lapla.sign}, we get
     \begin{equation}\label{fij001}\frac12 \frac{d}{dt} \int_X v(t)^2 d \nu + TV_m(v(t))=0, \ \forall t>0.
     \end{equation}

  Now, the semigroup $\{e^{t\Delta^m_1 } \ : \ t \geq 0 \}$ preserves the mass (Proposition \ref{propert1}), so we have that $\nu(u(t)) = \nu(u_0)$ for all $t \geq 0$, and, since $[X,d,m,\nu]$ satisfies a $2$-Poincar\'{e} inequality, we have
  $$\lambda^{2}_{[X,d,m,\nu]} \Vert v(t) \Vert_{L^2(X,\nu)} \leq TV_m(v(t))\quad\hbox{ for all $t \geq 0$}.$$
  Therefore,
  $$\frac12\frac{d}{dt}  \Vert v(t) \Vert_{L^2(X,\nu)}^2 +   \lambda^{2}_{[X,d,m,\nu]} \Vert v(t) \Vert_{L^2(X,\nu)} \leq 0\quad\hbox{for all $t \geq 0$}.$$
  Now, integrating this ordinary differential inequation we get
  $$ \Vert v(t) \Vert_{L^2(X,\nu)} \leq  \left(\Vert v(0) \Vert_{L^2(X,\nu)} - \lambda^{2}_{[X,d,m,\nu]} t\right)^+\quad\hbox{ for all $t \geq 0$} ,$$
 that is,
  $$\Vert u(t)-\nu(u_0)\Vert_{L^2(X,\nu)}\le \left(\Vert u_0-\nu(u_0)\Vert_{L^2(X,\nu)}-\lambda^{2}_{[X,d,m,\nu]}t\right)^+\quad\hbox{ for all $t \geq 0$}.  $$
     \end{proof}

    We define the {\it extinction time} as
     $$T^*(u_0):= \inf \{ t >0 \ : \  e^{t\Delta^m_1}u_0 = \nu(u_0) \} , \ u_0\in L^2(X,\nu).$$
    Under the conditions of Theorem \ref{oomegusta}, we have
    $$ T^*(u_0) \leq \frac{\left\Vert u_0-\nu(u_0)\right\Vert_{L^2(X,\nu)}}{\lambda^{2}_{[X,d,m,\nu]}}, \ u_0\in L^2(X,\nu)  .$$

 To obtain a lower bound on the extinction time, we introduce the following norm which,  in the continuous setting, was introduced in \cite{Meyer}. Given a function $f \in L^2(X, \nu)$, we define
 \begin{equation}\label{1meyer1}
 \Vert f \Vert_{m,*}:= \sup \left\{ \int_X f(x) u(x) d\nu(x)   : u \in L^2(X, \nu)\cap BV_m(X,\nu), \ TV_m(u) \leq 1\right\}.
 \end{equation}
     \begin{theorem}\label{lowerbound}  Let  $u_0 \in L^2(X, \nu)$. If $T^*(u_0) < \infty$   then
 \begin{equation}
 T^*(u_0) \geq \Vert u_0 - \nu(u_0)\Vert_{m,*}.
 \end{equation}
 \end{theorem}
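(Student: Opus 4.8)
The plan is to fix $u(t):=e^{t\Delta^m_1}u_0$, which by Theorem~\ref{ExistUniq} and Theorem~\ref{teointronls} is a strong solution belonging to $W^{1,1}(0,T;L^2(X,\nu))$ for every $T>0$, and to set $v(t):=u(t)-\nu(u_0)$. By the mass conservation property (Proposition~\ref{propert1}) one has $\int_X v(t)\,d\nu=0$ for all $t\ge 0$. The key step will be to show that $t\mapsto \Vert v(t)\Vert_{m,*}$ is $1$-Lipschitz on $[0,\infty)$. Granting this, I would finish as follows. The constant function $\nu(u_0)$ satisfies $0\in\partial\mathcal{F}_m(\nu(u_0))$ (since $\mathcal{F}_m$ attains its minimum $0$ there), so $\tau\mapsto\nu(u_0)$ solves the Cauchy problem with initial datum $\nu(u_0)$; by uniqueness, once the flow reaches $\nu(u_0)$ it stays there, and since $t\mapsto u(t)$ is continuous in $L^2(X,\nu)$ the hypothesis $T^*(u_0)<\infty$ then forces $u(T^*(u_0))=\nu(u_0)$, i.e.\ $v(T^*(u_0))=0$. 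Applying the $1$-Lipschitz bound between $0$ and $T^*(u_0)$ gives
$$\Vert u_0-\nu(u_0)\Vert_{m,*}=\Vert v(0)\Vert_{m,*}=\bigl|\,\Vert v(0)\Vert_{m,*}-\Vert v(T^*(u_0))\Vert_{m,*}\,\bigr|\le T^*(u_0),$$
which is the claim.

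To obtain the $1$-Lipschitz estimate, I would first note that, because $TV_m$ is invariant under the addition of constants, $\partial\mathcal{F}_m(v(t))=\partial\mathcal{F}_m(u(t))$; hence $v$ solves $v_t(t)\in\Delta^m_1 v(t)$ for a.e.\ $t>0$, with $v\in W^{1,1}(0,T;L^2(X,\nu))$. For such $t$ the integration by parts formula of Proposition~\ref{intpartt}, applied to $(v(t),v_t(t))\in\Delta^m_1$, yields $-\int_X v_t(t)\,w\,d\nu\le TV_m(w)$ for every $w\in BV_m(X,\nu)\cap L^2(X,\nu)$; replacing $w$ by $-w$ gives $\bigl|\int_X v_t(t)\,w\,d\nu\bigr|\le TV_m(w)$, whence, taking the supremum over $w$ with $TV_m(w)\le 1$,
$$\Vert v_t(t)\Vert_{m,*}\le 1\qquad\text{for a.e.\ }t>0.$$

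Finally, for $0\le s<t$ and any test function $w\in BV_m(X,\nu)\cap L^2(X,\nu)$ with $TV_m(w)\le 1$, I would write $v(t)-v(s)=\int_s^t v_t(\tau)\,d\tau$ as a Bochner integral and apply Fubini's theorem --- legitimate since $\int_s^t\Vert v_t(\tau)\Vert_{L^2(X,\nu)}\,d\tau<\infty$ --- to get
$$\int_X\bigl(v(t)-v(s)\bigr)w\,d\nu=\int_s^t\Bigl(\int_X v_t(\tau)\,w\,d\nu\Bigr)d\tau\le\int_s^t\Vert v_t(\tau)\Vert_{m,*}\,d\tau\le t-s.$$
Taking the supremum over $w$ gives $\Vert v(t)-v(s)\Vert_{m,*}\le t-s$, and the reverse triangle inequality for the seminorm $\Vert\cdot\Vert_{m,*}$ gives the asserted $1$-Lipschitz property. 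The step most deserving of care is the passage, via Proposition~\ref{intpartt}, from the gradient-flow inclusion to the bound $\Vert v_t(t)\Vert_{m,*}\le1$, together with the verification that the extinction infimum is attained (i.e.\ $v(T^*(u_0))=0$); the remaining ingredients --- mass conservation and the Fubini/duality computation --- are routine.
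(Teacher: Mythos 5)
Your proof is correct and follows essentially the same route as the paper: both arguments reduce to applying the integration‑by‑parts inequality $-\int_X u'(t)\,w\,d\nu\le TV_m(w)$ from Proposition~\ref{intpartt} along the flow, integrating in time over $[0,T^*(u_0)]$ via Fubini, and taking the supremum over $w$ with $TV_m(w)\le 1$. Your packaging through the $1$-Lipschitz continuity of $t\mapsto\Vert v(t)\Vert_{m,*}$, and your explicit verification that $u(T^*(u_0))=\nu(u_0)$, are only slightly more detailed versions of steps the paper uses implicitly.
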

 \begin{proof} If $u(t):= e^{t\Delta^m_1}u_0$, we have
 $$u_0 - \nu(u_0) = - \int_0^{T^*(u_0)}   u'(t)dt.$$
Then, by integration by parts (Proposition \ref{intpartt}), we get
 $$\Vert u_0 - \nu(u_0) \Vert_{m,*} = \sup \left\{\int_X w (u_0 - \nu(u_0)) d \nu \ : \ TV_m(w) \leq 1  \right\} $$ $$= \sup \left\{ \int_X w \left(  \int_0^{T^*(u_0)} -  u'(t)dt\right) d \nu \ : \ TV_m(w) \leq 1  \right\}$$ $$= \sup \left\{ \int_0^{T^*(u_0)} \int_X -w    u'(t)dt  d \nu \ : \ TV_m(w) \leq 1  \right\}$$  $$\leq  \sup \left\{\int_0^{T^*(u_0)} TV_m(w) dt \ : \ TV_m(w) \leq 1  \right\} = T^*(u_0).$$
 \end{proof}

We will now see that we can get a $2$-Poincar\'{e} inequality for finite graphs.

\begin{theorem}\label{Poincgraf} Let $G = (V(G), E(G))$ be a  finite weighted connected discrete graph. Then, following the notation of Example \ref{JJ}~(3), $[V(G),d_G,m^G, \nu_G]$ satisfies a $2$-Poincar\'{e} inequality, that is,
\begin{equation}\label{POINGRA}
\lambda^{2}_{[V(G),d_G,m^G, \nu_G]} =\inf \left\{ \frac{TV_{m^G}(u)}{\Vert u \Vert_{L^2(V(G),\nu_G)}} \ : \ \Vert u \Vert_{L^2(V(G),\nu_G)} \not= 0, \ \int_V u(x) d \nu_G(x) = 0  \right\} >0.
\end{equation}
\end{theorem}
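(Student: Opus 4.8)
The plan is to exploit the finite-dimensionality of $L^2(V(G),\nu_G)$ together with the continuity of $TV_{m^G}$ and the connectedness of $G$. Since $V(G)$ is finite, $L^2(V(G),\nu_G)$ is a finite-dimensional normed space; moreover, by the homogeneity of $TV_{m^G}$ and of the $L^2$-norm (and since $\int_{V(G)}\alpha u\,d\nu_G=0\iff\int_{V(G)} u\,d\nu_G=0$ for $\alpha\neq0$), the infimum defining $\lambda^{2}_{[V(G),d_G,m^G,\nu_G]}$ in \eqref{POINGRA} is unchanged if we restrict to the set
$$S:=\left\{u\in L^2(V(G),\nu_G)\ :\ \textstyle\int_{V(G)} u\,d\nu_G=0,\ \Vert u\Vert_{L^2(V(G),\nu_G)}=1\right\},$$
which is closed and bounded, hence compact. (If $S=\emptyset$, which can only occur when $V(G)$ consists of a single vertex, there is nothing to prove, since then $\lambda^{2}_{[V(G),d_G,m^G,\nu_G]}=+\infty$ by the usual convention.)

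First I would note that the map $u\mapsto TV_{m^G}(u)$ is continuous on $L^2(V(G),\nu_G)$: this follows from Proposition~\ref{lemsc1}, or directly from the explicit formula $TV_{m^G}(u)=\frac12\sum_{x,y\in V(G)}|u(x)-u(y)|\,w_{xy}$ of Example~\ref{exammplee}. Being continuous on the compact set $S$, it attains its minimum there, say at some $u_0\in S$, so that $\lambda^{2}_{[V(G),d_G,m^G,\nu_G]}=TV_{m^G}(u_0)$.

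It then remains to check that $TV_{m^G}(u_0)>0$. If $TV_{m^G}(u_0)=0$, the explicit formula forces $u_0(x)=u_0(y)$ whenever $w_{xy}>0$, that is, whenever $x\sim y$; travelling along a path (which exists because $G$ is connected) then shows that $u_0$ is constant on $V(G)$. Equivalently, one may appeal to Lemma~\ref{lemita1} after normalising $\nu_G$ to a probability measure, which is ergodic since $G$ is $m^G$-connected. But a constant function with $\int_{V(G)}u_0\,d\nu_G=0$ is identically zero, contradicting $\Vert u_0\Vert_{L^2(V(G),\nu_G)}=1$. Hence $\lambda^{2}_{[V(G),d_G,m^G,\nu_G]}=TV_{m^G}(u_0)>0$, which is \eqref{POINGRA}. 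I do not expect any genuine obstacle here; the only points demanding a little care are the compactness (and non-emptiness) of $S$ and the implication ``$TV_{m^G}(u_0)=0\Rightarrow u_0$ constant'', both routine in the finite connected-graph setting.
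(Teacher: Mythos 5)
Your proof is correct and rests on the same two ingredients as the paper's: finite-dimensional compactness to make the infimum attained (the paper phrases this as extracting a convergent subsequence from a minimizing sequence and deriving a contradiction, which is the same mechanism) and connectedness of $G$ to force a function with vanishing total variation to be constant. The repackaging as a direct extremum argument on the compact set $S$ is fine but not a genuinely different route.
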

\begin{proof} Let  $V := V(G) = \{x_1, \ldots, x_m\}$ and suppose that \eqref{POINGRA} is false. Then, there exists a sequence $(u_n)_{n\in\N} \subset L^2(V, \nu_G)$ with $\Vert u_n \Vert_{L^2(V,\nu_G)} =1$ and $\int_V u_n(x) d \nu_G(x) =0$, $n\in\N$, such that
$$0 = \lim_{n \to \infty}TV_{m^G}(u_n) = \lim_{n \to \infty} \sum_{k=1}^m \sum_{y \sim x_k}w_{x_k y} \vert u_n(x_k) - u_n(y) \vert.$$
Hence,  $$ \lim_{n \to \infty} \vert u_n(x_k) - u_n(y) \vert =0 \quad \hbox{if} \ y \sim x_k, \quad \hbox{for any} \ k \in \{1, \ldots, m\}.$$
  Moreover, since $\Vert u_n \Vert_{L^2(V,\nu_G)} =1$, we have that, up to a subsequence,
$$\lim_{n \to \infty} u_n(x_k) = \lambda_k \in \R \quad \hbox{for}  \ \  k= 1, \ldots, m.$$
Now, since the graph is connected, we have that $\lambda = \lambda_k$ for $k= 1, \ldots, m$, thus
$$\lim_{n \to \infty} u_n(y) = \lambda \in \R \quad  \hbox{for all} \  y \in V.$$
However, by the Dominated Convergence Theorem, we get that $u_n \to \lambda$ in $L^2(V,\nu_G )$ and, therefore, since $\int_V u_n(x) d \nu_G(x) =0$, we have $\lambda =0$, which is a contradiction with $\Vert u_n \Vert_{L^2(V,\nu_G)} =1$.
\end{proof}

As a consequence of this last result and~Theorem~\ref{oomegusta}, we get:

\begin{theorem}\label{Poincgrafdos} Let $G = (V(G), E(G))$ be a  finite weighted connected discrete graph. Then,
\begin{equation}
   \Vert e^{t\Delta^{m^G}_1}u_0-\nu(u_0)\Vert_{L^2(V(G),\nu_G)} \le
    \lambda^{2}_{[V(G),d_G,m^G,\nu_G]}\left(\hat t-t\right)^+,
   \end{equation}
where $\hat t:=\frac{\left\Vert u_0-\nu(u_0)\right\Vert_{L^2(V(G),\nu_G)}}{\lambda^{2}_{[V(G),d_G,m^G,\nu_G]}}$. Consequently,
   \begin{equation}
   e^{t\Delta^{m^G}_1}u_0 = \nu(u_0) \quad \hbox{ for all} \  \  t\ge \hat t.
   \end{equation}
\end{theorem}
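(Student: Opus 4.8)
The plan is to obtain Theorem~\ref{Poincgrafdos} as a direct combination of the two results just proved, with essentially no further work. First I would invoke Theorem~\ref{Poincgraf}: since $G$ is a finite weighted connected graph, $[V(G),d_G,m^G,\nu_G]$ satisfies a $2$-Poincar\'e inequality and, what matters here, $\lambda^{2}_{[V(G),d_G,m^G,\nu_G]}>0$. This positivity is the one point deserving a line of care, because it is exactly what makes the quantity $\hat t=\Vert u_0-\nu(u_0)\Vert_{L^2(V(G),\nu_G)}/\lambda^{2}_{[V(G),d_G,m^G,\nu_G]}$ well defined and finite; it is guaranteed by the finiteness of the graph, its connectedness, and compactness of the unit sphere in finite dimensions, all of which are already inside the proof of Theorem~\ref{Poincgraf}.

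With the $2$-Poincar\'e inequality in hand, the hypothesis of Theorem~\ref{oomegusta} is met for the metric random walk space $[V(G),d_G,m^G]$ with invariant and reversible measure $\nu_G$, so for every $u_0\in L^2(V(G),\nu_G)$ that theorem yields
$$\Vert e^{t\Delta^{m^G}_1}u_0-\nu(u_0)\Vert_{L^2(V(G),\nu_G)}\le \left(\Vert u_0-\nu(u_0)\Vert_{L^2(V(G),\nu_G)}-\lambda^{2}_{[V(G),d_G,m^G,\nu_G]}\,t\right)^+\quad\hbox{for all } t\ge 0.$$
Next I would simply rewrite the right-hand side by factoring the positive constant $\lambda^{2}_{[V(G),d_G,m^G,\nu_G]}$ out of the positive part, namely
$$\left(\Vert u_0-\nu(u_0)\Vert_{L^2(V(G),\nu_G)}-\lambda^{2}_{[V(G),d_G,m^G,\nu_G]}\,t\right)^+=\lambda^{2}_{[V(G),d_G,m^G,\nu_G]}\left(\hat t-t\right)^+,$$
which is precisely the asserted inequality.

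Finally, the ``consequently'' part is immediate: for $t\ge\hat t$ one has $(\hat t-t)^+=0$, so the displayed bound forces $\Vert e^{t\Delta^{m^G}_1}u_0-\nu(u_0)\Vert_{L^2(V(G),\nu_G)}=0$, i.e. $e^{t\Delta^{m^G}_1}u_0=\nu(u_0)$ in $L^2(V(G),\nu_G)$ for all $t\ge\hat t$. I do not expect any genuine obstacle here; the entire mathematical content is packaged in Theorems~\ref{Poincgraf} and~\ref{oomegusta}, and the present statement is just their specialization together with the trivial algebraic rescaling of the decay profile.
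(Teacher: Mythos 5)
Your proposal is correct and is exactly the route the paper takes: the paper derives Theorem~\ref{Poincgrafdos} as an immediate consequence of Theorem~\ref{Poincgraf} (which gives $\lambda^{2}_{[V(G),d_G,m^G,\nu_G]}>0$) combined with Theorem~\ref{oomegusta}, the only remaining step being the factoring $\left(\Vert u_0-\nu(u_0)\Vert_{L^2}-\lambda^{2}_{[V(G),d_G,m^G,\nu_G]}\,t\right)^+=\lambda^{2}_{[V(G),d_G,m^G,\nu_G]}\left(\hat t-t\right)^+$ that you carry out. No gaps.
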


\section{$m$-Cheeger and $m$-Calibrable Sets}\label{aire001}

Let $[X,d,m]$ be a metric random walk  space with invariant and reversible measure~$\nu$.   Assume, as before, that $[X,d,m]$ is $m$-connected.

 Given a  set $\Omega \subset X$ with    $0 < \nu(\Omega) < \nu(X)$, we define its {\it $m$-Cheeger constant}  by
\begin{equation}\label{cheeg1}h_1^m(\Omega) := \inf \left\{ \frac{P_m(E)}{\nu(E)} \, : \, E \subset \Omega, \ E \ \hbox{$\nu$-measurable with }  \,   \nu( E)>0 \right\},\end{equation}
  where the notation $h_1^m(\Omega)$ is chosen together with the one that we will use for the classical Cheeger constant (see \eqref{ClassicCheeger}). In both of these, the subscript $1$ is there to further distinguish them from the upcoming notation $h_m(X)$ for the $m$-Cheeger constant of $X$ (see \eqref{1523m}).   Note that, by \eqref{secondf021}, we have that $h_1^m(\Omega) \leq 1$.

A $\nu$-measurable set $E  \subset \Omega$ achieving the infimum in \eqref{cheeg1} is said to be an {\it $m$-Cheeger set} of $\Omega$. Furthermore, we say that $\Omega$ is {\it $m$-calibrable} if it is an $m$-Cheeger set of itself, that is, if
$$h_1^m(\Omega) = \frac{P_m(\Omega)}{\nu(\Omega)}.$$

For ease of notation, we will denote
$$\lambda^m_\Omega:= \frac{P_m(\Omega)}{\nu(\Omega )},$$
for any $\nu$-measurable set $\Omega \subset X$ with $0<\nu(\Omega)<\nu(X)$.

\begin{remark}\label{observ1}

 (1) Let  $[\R^N, d, m^J]$ be   the metric random walk space given in Example~\ref{JJ}~(1) with invariant and reversible measure $\mathcal{L}^N$. Then, the concepts of $m$-Cheeger set and $m$-calibrable set coincide with the concepts of $J$-Cheeger set and $J$-calibrable set introduced in \cite{MRT1} (see also \cite{MRTLibro}).

\noindent (2) If $G = (V(G), E(G))$ is a locally finite weighted discrete graph without loops (i.e., $w_{xx} =0$ for all $x \in V$) and more than two vertices, then any subset consisting of two vertices is $m^G$-calibrable. Indeed, let $\Omega = \{x, y \}$, then, by \eqref{secondf021}, we have
$$\frac{P_{m^G}(\{x\})}{\nu_G(\{x\})}= 1 - \int_{\{x\}}\int_{\{x\}} dm^G_x(z) d\nu_G(z) = 1 \geq \frac{P_{m^G}(\Omega)}{\nu_G(\Omega)},$$
and, similarly,
$$\frac{P_{m^G}(\{y\})}{\nu_G(\{y\})} = 1 \geq \frac{P_{m^G}(\Omega)}{\nu_G(\Omega)}.$$
Therefore, $\Omega$ is $m^G$-calibrable.

\end{remark}

   In \cite{MRT1} it is proved that, for  the metric random walk space $[\R^N, d, m^J]$, each ball is a $J$-calibrable set. In the next example we will see that this result is not true in general.

\begin{example}
Let $V(G)=\{ x_1,x_2,\ldots , x_7 \} $ be a finite weighted discrete graph with the following weights:
$ w_{x_1,x_2}=2 , \ w_{x_2,x_3}=1 , \ w_{x_3,x_4}=2 , \ w_{x_4,x_5}=2  , \ w_{x_5,x_6}=1 , \ w_{x_6,x_7}=2 $ and $w_{x_i,x_j}=0$ otherwise.
Then, if  $E_1=B(x_4,\frac{5}{2})=\{ x_2, x_3, \ldots , x_6 \}$, by \eqref{lambdagraph} we  have
$$\frac{P_{m^G}(E_1)}{\nu_G(E_1)} =\frac{w_{x_1x_2}+w_{x_6x_7}}{d_{x_2}+d_{x_3}+d_{x_4}+d_{x_5} +d_{x_6}} =\frac{1}{4}.$$
But, taking $E_2=B(x_4,\frac{3}{2})=\{ x_3, x_4, x_5 \}\subset E_1$, we have
$$\frac{P_{m^G}(E_2)}{\nu_G(E_2)} =  \frac{w_{x_2x_3}+w_{x_5x_6}}{d_{x_3}+d_{x_4}+d_{x_5}} = \frac{1}{5}.$$
Consequently, the ball $B(x_4,\frac{5}{2})$ is not $m$-calibrable.

\end{example}

In the next Example we will see that there exist metric random walk spaces with sets that do not contain  $m$-Cheeger sets.
  \begin{example}\label{exx1} Consider the same graph of Example \ref{Nexx1}, that is,
 $V(G)=\{x_0,x_1,\ldots,x_n\ldots \}$ with the following weights:
$$w_{x_{2n}x_{2n+1}}=\frac{1}{2^n} , \quad w_{x_{2n+1}x_{2n+2}}=\frac{1}{3^n} \quad \hbox{for} \ n=0, 1, 2, \ldots , $$
and $w_{x_i,x_j}=0$ otherwise. If  $\Omega:=\{ x_1, x_2, x_3\ldots \}$,
 then
$\frac{P_{m^G}(D)}{\nu_G(D)} > 0$
 for every $D\subset \Omega$  with $\nu_G(D)>0$
 but, working as in Example \ref{Nexx1}, we get
 $h_1^m(\Omega)=0$.
     Therefore, $\Omega$ has no $m$-cheeger set.
   \end{example}

It is well known (see \cite{FK}) that the classical Cheeger constant
\begin{equation}\label{ClassicCheeger}
h_1(\Omega):= \inf \left\{ \frac{Per(E)}{\vert E \vert} \, : \, E\subset \Omega, \  \vert E \vert >0 \right\},
\end{equation}
for a bounded smooth domain $\Omega$, is an optimal Poincar\'{e} constant, namely, it coincides with the first eigenvalue of the $1$-Laplacian:
$$h_1(\Omega)=\Lambda_1(\Omega):= \inf \left\{ \frac{\displaystyle\int_\Omega \vert Du \vert +\displaystyle\int_{\partial \Omega} \vert u \vert d \mathcal{H}^{N-1}}{ \displaystyle\Vert u \Vert_{L^1(\Omega)}} \, : \, u \in BV(\Omega), \ \Vert u \Vert_{L^\infty(\Omega)} = 1 \right\}.$$
In order to get a nonlocal version of this result, we introduce the following constant.
For $\Omega \subset X$ with $0<\nu(\Omega)< \nu(X)$, we define
$$\begin{array}{l}\displaystyle
\Lambda_1^m(\Omega)= \inf \left\{ TV_m(u) \ : \ u \in  L^1(X,\nu), \ u= 0 \ \hbox{in} \ X \setminus \Omega, \ u \geq 0, \  \int_X u(x)  d\nu(x) = 1 \right\}
\\ \\
\displaystyle \phantom{\Lambda_1^m(\Omega)}
= \inf \left\{\frac{ TV_m (u)}{\displaystyle \int_X  u(x)  d\nu(x)} \ : \ u \in L^1(X,\nu), \ u= 0 \ \hbox{in} \ X \setminus \Omega,\ u \geq 0, \ u\not\equiv 0  \right\}.
\end{array}
$$

\begin{theorem}  Let $\Omega \subset X$ with $0 < \nu(\Omega) < \nu(X)$. Then,
\begin{equation}\label{first} h_1^m(\Omega) = \Lambda_1^m(\Omega).\end{equation}
\end{theorem}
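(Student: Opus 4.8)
The equality $h_1^m(\Omega) = \Lambda_1^m(\Omega)$ should follow from the coarea formula (Theorem~\ref{coarea1}) applied to nonnegative test functions supported in $\Omega$, exactly as in the classical Cheeger identity. The plan is to prove the two inequalities separately.

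\textbf{The inequality $\Lambda_1^m(\Omega) \le h_1^m(\Omega)$.} For any $\nu$-measurable set $E \subset \Omega$ with $\nu(E) > 0$, take the competitor $u := \1_E$ in the definition of $\Lambda_1^m(\Omega)$. Since $\1_E = 0$ on $X \setminus \Omega$, is nonnegative, and is not $\nu$-a.e. zero, it is admissible, and by \eqref{theabove01} we have $TV_m(\1_E) = P_m(E)$ while $\int_X \1_E\, d\nu = \nu(E)$. Hence $\Lambda_1^m(\Omega) \le P_m(E)/\nu(E)$, and taking the infimum over all such $E$ gives $\Lambda_1^m(\Omega) \le h_1^m(\Omega)$.

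\textbf{The inequality $h_1^m(\Omega) \le \Lambda_1^m(\Omega)$.} Let $u \in L^1(X,\nu)$ with $u = 0$ on $X \setminus \Omega$, $u \ge 0$, and $u \not\equiv 0$. I would apply the coarea formula to get $TV_m(u) = \int_0^{+\infty} P_m(E_t(u))\, dt$, where $E_t(u) = \{x : u(x) > t\}$; note that for $t > 0$ we have $E_t(u) \subset \Omega$ and, since $u \not\equiv 0$, $\nu(E_t(u)) > 0$ for $t$ in a set of positive measure. For each such $t$, the definition of $h_1^m(\Omega)$ gives $P_m(E_t(u)) \ge h_1^m(\Omega)\, \nu(E_t(u))$. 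Integrating, and using the layer-cake identity $\int_0^{+\infty} \nu(E_t(u))\, dt = \int_X u\, d\nu$ (valid since $u \ge 0$), we obtain
\[
TV_m(u) = \int_0^{+\infty} P_m(E_t(u))\, dt \ge h_1^m(\Omega) \int_0^{+\infty} \nu(E_t(u))\, dt = h_1^m(\Omega) \int_X u\, d\nu.
\]
Dividing by $\int_X u\, d\nu > 0$ and taking the infimum over all admissible $u$ yields $\Lambda_1^m(\Omega) \ge h_1^m(\Omega)$, which together with the first inequality proves \eqref{first}.

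I do not expect a serious obstacle here; the only points requiring a little care are that $E_t(u) \subset \Omega$ for $t>0$ (so that $E_t(u)$ is a genuine competitor for $h_1^m(\Omega)$) and that one must restrict the lower bound $P_m(E_t(u)) \ge h_1^m(\Omega)\,\nu(E_t(u))$ to those $t$ with $\nu(E_t(u)) > 0$ — but the inequality is trivially true ($0 \ge 0$) when $\nu(E_t(u)) = 0$, so it holds for a.e.\ $t > 0$ regardless. One should also observe that replacing $u$ by $|u|$ only decreases $TV_m$ (by Proposition~\ref{lemLipsch}) while not affecting the constraints, which is why it suffices to consider $u \ge 0$ in the definition of $\Lambda_1^m(\Omega)$; this is already built into the stated definition.
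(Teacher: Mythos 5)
Your proof is correct and follows essentially the same route as the paper: the indicator-function competitor for one inequality, and the coarea formula combined with the layer-cake (Cavalieri) identity for the other. The extra care you take about the $t$ with $\nu(E_t(u))=0$ and about reducing to $u\ge 0$ is sound and matches what the paper does implicitly.
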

\begin{proof}
Given a $\nu$-measurable subset $E \subset \Omega$  with $\nu(E )> 0$, we have
$$\frac{ TV_m(\1_E)}{\Vert \1_E \Vert_{L^1(X, \nu)}} = \frac{P_m(E)}{\nu (E)}.$$
Therefore, $\Lambda_1^m(\Omega) \leq h_1^m(\Omega)$.  For the opposite inequality we will follow an idea used in~\cite{FK}.
Given $u \in  L^1(X,\nu)$, with $u= 0$ in $X \setminus \Omega$, $u \geq 0$ and $u\not\equiv 0$, we have
$$TV_m(u) = \displaystyle\int_{0}^{+\infty} P_m(E_t(u))\, dt = \displaystyle\int_{0}^{  \Vert u\Vert_{L^\infty(X,\nu)}} \frac{ P_m(E_t(u))}{ \nu (E_t(u))} \nu (E_t(u))\, dt $$ $$\geq h_1^m(\Omega) \displaystyle\int_{0}^{+\infty} \nu (E_t(u))\, dt = h_1^m(\Omega) \int_X  u(x)  d\nu(x)$$
  where the first equality follows by the coarea formula \eqref{coaerea} and the last one by Cavalieri's  Principle. Taking the infimum over $u$ in the above expression we get $\Lambda_1^m(\Omega) \geq h_1^m(\Omega)$.
\end{proof}

Let us recall that, in the local case, a set $\Omega \subset \R^N$ is called {\it calibrable} if
$$\frac{\mbox{Per}(\Omega )}{\vert \Omega  \vert}  = \inf \left\{ \frac{\mbox{Per}(E)}{\vert E\vert} \ : \ E \subset \Omega , \ E \ \hbox{ with finite perimeter,} \ \vert E \vert > 0 \right\}.$$ The following characterization  of convex calibrable sets is proved in \cite{ACCh}.
\begin{theorem}\label{BCNth}(\cite{ACCh})
Given a bounded convex set $\Omega \subset \R^N$ of class $C^{1,1}$, the following assertions are equivalent:
\item(a) $\Omega $ is calibrable.
\item(b) $\1_\Omega $ satisfies $- \Delta_1 \1_\Omega   = \frac{\mbox{Per}(\Omega )}{\vert \Omega  \vert} \1_\Omega $, where $\Delta_1 u:= {\rm div} \left( \frac{Du}{\vert Du \vert}\right)$.
     \item(c) $\displaystyle (N-1)  \underset{x \in \partial \Omega }{\rm ess\, sup} H_{\partial\Omega} (x) \leq  \frac{\mbox{Per}(\Omega )}{\vert \Omega  \vert}.$
\end{theorem}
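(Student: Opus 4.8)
The plan is to prove the cycle of implications $(a)\Rightarrow(b)\Rightarrow(c)\Rightarrow(a)$, handling $(a)\Leftrightarrow(b)$ by soft convex analysis and reserving the geometric work for the passage between $(b)$ and $(c)$. Throughout write $\lambda_\Omega:=\mathrm{Per}(\Omega)/|\Omega|$ and let $\mathbf n$ denote the outer unit normal to $\partial\Omega$.

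For $(a)\Leftrightarrow(b)$ I would first record the standard reformulation: $\Omega$ is calibrable if and only if it minimizes $E\mapsto \mathrm{Per}(E)-\lambda_\Omega|E|$ among sets of finite perimeter. One inequality is immediate; for the other, given a competitor $E$ one writes $E=(E\cap\Omega)\cup(E\setminus\Omega)$ and uses the submodularity of the perimeter together with the convexity of $\Omega$ to reduce to competitors contained in $\Omega$, where the definition of $h_1(\Omega)$ applies. By the (local) coarea formula this is in turn equivalent to $\1_\Omega$ being a minimizer of $u\mapsto\int_{\R^N}|Du|-\lambda_\Omega\int_{\R^N}u$, i.e. to $\lambda_\Omega\1_\Omega\in\partial(TV)(\1_\Omega)$; by the Anzellotti characterization of this subdifferential (the local analogue of Theorem~\ref{chsubd}) this means precisely that there is a bounded vector field $z$ with $\|z\|_\infty\le1$ and $-\mathrm{div}\,z=\lambda_\Omega\1_\Omega$, which is exactly $(b)$ read as $-\Delta_1\1_\Omega=\lambda_\Omega\1_\Omega$. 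Integrating $-\mathrm{div}\,z=\lambda_\Omega$ over $\Omega$ and using $\|z\|_\infty\le1$ forces the normal trace $[z,\mathbf n]=-1$ $\mathcal H^{N-1}$-a.e.\ on $\partial\Omega$ (the inequality in Gauss--Green must be an equality); thus $(a)$ and $(b)$ are equivalent, both amounting to the existence of such a calibrating field.

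The core is $(b)\Leftrightarrow(c)$. Since $\Omega$ is $C^{1,1}$, the distance $d(\cdot,\partial\Omega)$ is $C^{1,1}$ on a collar inside $\Omega$ and the map $\Phi(y,t)=y-t\,\mathbf n(y)$, $y\in\partial\Omega$, foliates $\Omega$ by the inner parallel sets $\Omega_t=\{x\in\Omega:d(x,\partial\Omega)>t\}$, with Jacobian $g(y,t)=\prod_{i=1}^{N-1}(1-t\kappa_i(y))$, where $\kappa_i\ge0$ by convexity. Seeking the field in the form $z(\Phi(y,t))=-\psi(y,t)\,\mathbf n(y)$, the requirement $-\mathrm{div}\,z=\lambda_\Omega$ with $\psi(\cdot,0)=1$ becomes a linear ODE whose solution is
$$\psi(y,t)\,g(y,t)=1-\lambda_\Omega\int_0^t g(y,s)\,ds .$$
Now $\|z\|_\infty\le1$ amounts to $0\le\psi\le1$; the constraint $\psi\le1$ reads $g(y,t)\ge 1-\lambda_\Omega\int_0^t g(y,s)\,ds$, and differentiating at $t=0$ (where equality holds) gives $\sum_i\kappa_i(y)\le\lambda_\Omega$, i.e. $(N-1)H_{\partial\Omega}(y)\le\lambda_\Omega$. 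Conversely, under $(c)$ convexity propagates this inequality to all $t$, and $\psi\ge0$ follows as well (the numerator $1-\lambda_\Omega\int_0^t g$ is nonincreasing and stays nonnegative up to where the foliation degenerates, as its integral over $\partial\Omega$ equals $\mathrm{Per}(\Omega)-\lambda_\Omega|\Omega|=0$); one then checks this collar field, completed by a divergence-free norm-one extension across $\partial\Omega$, furnishes the field of $(b)$. Running the computation backwards, the field of $(b)$ restricted to $\Omega_t$ and the divergence theorem give $\lambda_\Omega|\Omega_t|=-\int_{\partial\Omega_t}[z,\mathbf n_t]\le\mathrm{Per}(\Omega_t)$, and letting $t\downarrow0$ in the ensuing relation between $\mathrm{Per}(\Omega_t)=\int_{\partial\Omega}g(\cdot,t)$ and $|\Omega_t|=|\Omega|-\int_0^t\mathrm{Per}(\Omega_s)\,ds$ yields $(c)$ pointwise a.e.

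The main obstacle I anticipate is the $(c)\Rightarrow(b)$ construction: solving the ODE for $\psi$ and verifying globally on $\Omega$ that $0\le\psi\le1$ exactly under hypothesis $(c)$ — including the behaviour as $t$ reaches the inradius, where $g(y,t)\to0$ and one must see the numerator vanishes to the right order — and then gluing the collar field to a divergence-free exterior field of norm $\le1$ with matching normal trace $-1$ on $\partial\Omega$, all under merely $C^{1,1}$ regularity, so that the curvatures are only $L^\infty$ and every identity must be read $\mathcal H^{N-1}$-a.e. The measure-theoretic bookkeeping of normal traces of $L^\infty$ divergence-measure fields (Anzellotti's theory) is the technical backbone of both directions.
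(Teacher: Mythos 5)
This theorem is not proved in the paper: it is quoted from \cite{ACCh} as background for the nonlocal results of Section 5, so there is no internal proof to compare against; what follows measures your sketch against the actual Alter--Caselles--Chambolle argument. The serious gap is in your passage $(c)\Rightarrow(b)$. The ansatz $z(\Phi(y,t))=-\psi(y,t)\mathbf{n}(y)$ defines $z$ only off the cut locus $\Sigma$ of $\partial\Omega$ inside $\Omega$, and $\Sigma$ is in general reached strictly before the Jacobian $g(y,t)=\prod_i(1-t\kappa_i(y))$ degenerates. Your way out --- that $1-\lambda_\Omega\int_0^{T(y)}g(y,s)\,ds$ vanishes for a.e.\ $y$ at the cut time $T(y)$ because its integral over $\partial\Omega$ equals $\mathrm{Per}(\Omega)-\lambda_\Omega|\Omega|=0$ --- needs a pointwise sign that you never establish and that is in fact false. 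Take the stadium $\Omega$ obtained by capping $[0,L]\times[0,\epsilon]$ with two half-discs of radius $\epsilon/2$: it is $C^{1,1}$, satisfies $(c)$ (one checks $\kappa_{\max}=2/\epsilon\leq\lambda_\Omega$), and is calibrable. On the flat sides $g\equiv 1$ and $T(y)=\epsilon/2$, while $1-\lambda_\Omega\epsilon/2=-\tfrac{\pi\epsilon/2}{2L+\pi\epsilon/2}<0$. So $\psi$ does not vanish at the medial axis, the field has a nonzero jump of its normal component across $\Sigma$, and $\mathrm{div}\,z$ acquires a singular measure on $\Sigma$: the equation $-\mathrm{div}\,z=\lambda_\Omega$ fails in $\mathcal{D}'(\Omega)$. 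This is not bookkeeping to be absorbed by Anzellotti's trace theory; the foliated ansatz itself is wrong. The actual proof of $(c)\Rightarrow(a)$ in \cite{ACCh} runs through an entirely different mechanism: the family of problems $\min\{\mathrm{Per}(E)-\mu|E|:E\subseteq\Omega\}$ (equivalently the resolvent of $\Delta_1$), convexity and $C^{1,1}$ regularity of their minimizers, Giusti's results on the prescribed mean curvature equation, and approximation by smooth convex bodies.

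Two further points. In $(b)\Rightarrow(c)$, comparing $\Omega$ with its inner parallel sets only gives $f(t):=\mathrm{Per}(\Omega_t)-\lambda_\Omega|\Omega_t|\geq 0=f(0)$, and differentiating at $t=0^+$ yields the averaged inequality $(N-1)\int_{\partial\Omega}H\,d\mathcal{H}^{N-1}\leq\lambda_\Omega\,\mathrm{Per}(\Omega)$, not the essential supremum bound in $(c)$; the pointwise bound requires a localized competitor (e.g.\ $\Omega\setminus B_\rho(x_0)$ for $x_0\in\partial\Omega$) together with regularity theory for almost-minimizers. Finally, in $(a)\Leftrightarrow(b)$ the global variational statement equivalent to $\lambda_\Omega\1_\Omega\in\partial TV(\1_\Omega)$ is $\mathrm{Per}(E)\geq\lambda_\Omega|E\cap\Omega|$ for all sets of finite perimeter, not $\mathrm{Per}(E)\geq\lambda_\Omega|E|$ (the latter fails for large balls); with that correction your reduction via submodularity and monotonicity of perimeter under convex inclusion is the standard argument and is fine.
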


\begin{remark}\label{otrorem01}

(1) Let $\Omega \subset X$ be a $\nu$-measurable set with $0<\nu(\Omega)<\nu(X)$ and assume that there exists a constant $\lambda >0$ and a measurable function $\tau$ such that $\tau(x)=1$ for $x\in\Omega$ and
\begin{equation}\label{alares} - \lambda \tau \in \Delta_1^m \1_\Omega  \ \  \hbox{on} \ X.
\end{equation}
Then, by Theorem \ref{chsubd}, there exists $\hbox{\bf g}\in L^\infty(X\times X, \nu \otimes m_x)$ antisymmetric with $\Vert \hbox{\bf g} \Vert_{L^\infty(X \times X,\nu\otimes m_x)} \leq 1$ satisfying
$$
-\int_{X}\g(x,y)\,dm_x(y)= \lambda \tau(x) \quad \hbox{for }\nu-\mbox{a.e }x\in X
$$
and
  $$-\int_{X} \int_{X}\hbox{\bf g}(x,y)dm_x(y)\,\1_\Omega(x)d\nu(x)=\mathcal{F}_m(\1_\Omega) = P_m(\Omega).$$
 Then,
$$\begin{array}{l}\displaystyle\lambda \nu (\Omega) = \int_{X} \lambda \tau(x)\1_\Omega(x) d\nu(x)
\\[16pt]
\displaystyle
\phantom{\lambda \nu (\Omega)}
= -\int_{X} \left( \int_{X}\g(x,y)\,dm_x(y) \right)\1_\Omega(x) d\nu(x)
\\[16pt]
\displaystyle
\phantom{\lambda \nu (\Omega)}
= P_m(\Omega)
\end{array}$$
and, consequently,
$$\lambda =    \frac{P_m(\Omega)}{\nu(\Omega )}=:\lambda_\Omega^m.$$

\noindent (2) Let $\Omega \subset X$ be a $\nu$-measurable set with $0<\nu(\Omega)<\nu(X)$, and $\tau$ a $\nu$-measurable function with $\tau(x)=1$ for $x\in\Omega$. Then
\begin{equation}\label{tue1519} - \lambda^m_\Omega \tau \in \Delta_1^m \1_\Omega \, \quad \hbox{in} \ X \ \Longleftrightarrow \ - \lambda^m_\Omega \tau \in \Delta_1^m 0 \, \quad \hbox{in} \ X.
\end{equation}
Indeed, the left to right implication  follows from the fact that
$$\partial \mathcal{F}_m(u)\subset\partial \mathcal{F}_m(0),$$ and for the converse implication, we have that
there exists $\g \in L^\infty(X \times X,\nu\otimes m_x)$, $\g(x,y) = -\g(y,x)$ for almost all $(x,y) \in X \times X$,  $\Vert \g \Vert_{L^\infty(X \times X,\nu\otimes m_x)} \leq 1$, satisfying
$$
- \lambda^m_\Omega \tau(x)=\int_{X} \g(x,y)\,dm_x(y)   \quad \hbox{for }\nu-\mbox{a.e. }x\in X.
$$
 Now, multiplying by $\1_\Omega$, integrating over $X$ and applying integrating by parts we get
$$\begin{array}{c}\displaystyle  \lambda^m_\Omega \nu( \Omega)  = \lambda^m_\Omega \int_{X} \tau(x)\1_\Omega (x) d\nu(x) = - \int_{X} \int_{X}  \g(x,y) \1_\Omega (x)  dm_x(y)d\nu(x) \\[10pt]
\displaystyle \qquad = \frac{1}{2} \int_{X} \int_{X} \g(x,y) (\1_\Omega (y) - \1_\Omega (x) ) dm_x(y)d\nu(x)
\\[10pt]
\displaystyle \qquad \le \frac{1}{2} \int_{X} \int_{X} \left|\1_\Omega (y) - \1_\Omega (x)\right| dm_x(y)d\nu(x) = P_m(\Omega) .
\end{array}$$
 Then, since $P_m(\Omega)=\lambda^m_\Omega \nu( \Omega)$, the previous inequality is, in fact, an equality and, therefore, we get
$$\g(x,y) \in \hbox{sign}(\1_\Omega (y) - \1_\Omega (x) ) \quad \hbox{for }(\nu \otimes m_x)-\mbox{a.e. }(x,y) \in X \times X,$$
and, consequently,
$$-  \lambda^m_\Omega \tau \in \Delta^m_1 \1_\Omega  \quad \hbox{in} \ X.$$

\end{remark}

The next result is the nonlocal version of the fact that (a) is equivalent to (b) in Theorem~\ref{BCNth}.

 \begin{theorem}\label{falsa}    Let $\Omega \subset X$ be a $\nu$-measurable  set with $0<\nu(\Omega)<\nu(X)$.  Then, the following assertions are equivalent:
 \item{ (i)}       $\Omega$ is $m$-calibrable,
  \item{ (ii)} there exists  a $\nu$-measurable function $\tau$ equal to $1$ in $\Omega$    such that
\begin{equation}\label{22alares} - \lambda^m_\Omega \tau \in \Delta_1^m \1_\Omega \, \quad \hbox{in} \ X,
\end{equation}
 \item{ (iii)}
 \begin{equation}\label{22alaresvi2213} - \lambda^m_\Omega \tau^* \in \Delta_1^m \1_\Omega \, \quad \hbox{in} \ X,
\end{equation}
 for  $$\tau^*(x)=\left\{
\begin{array}{ll}
1   &\quad\hbox{if } x\in  \Omega ,\\[5pt]
\displaystyle - \frac{1}{\lambda_\Omega^m} m_x(\Omega)&\quad\hbox{if } x\in X\setminus\Omega.
\end{array}
\right.$$
\end{theorem}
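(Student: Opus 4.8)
The plan is to prove the cycle $(i)\Rightarrow(iii)\Rightarrow(ii)\Rightarrow(i)$. The implication $(iii)\Rightarrow(ii)$ is immediate, as $\tau^*$ is a $\nu$-measurable function with $\tau^*\equiv1$ on $\Omega$; note also that in $(ii)$ and $(iii)$ one has automatically $\nu(\Omega)<\infty$ and $P_m(\Omega)<\infty$, since $\Delta^m_1\1_\Omega\neq\emptyset$ forces $\1_\Omega\in L^2(X,\nu)\cap BV_m(X,\nu)$. For $(ii)\Rightarrow(i)$: if $-\lambda^m_\Omega\tau\in\Delta^m_1\1_\Omega$ then $\lambda^m_\Omega\tau\in\partial\mathcal{F}_m(\1_\Omega)$, so by Theorem~\ref{chsubd}~(iv) there is an antisymmetric $\g\in L^\infty(X\times X,\nu\otimes m_x)$ with $\Vert\g\Vert_{L^\infty(X\times X,\nu\otimes m_x)}\le1$ and $-\int_X\g(x,y)\,dm_x(y)=\lambda^m_\Omega\tau(x)$ for $\nu$-a.e.\ $x$. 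Antisymmetry gives ${\rm div}_m(-\g)=\lambda^m_\Omega\tau\in L^2(X,\nu)$, so $-\g\in X_m^2(X)$; testing the dual formula \eqref{Form1} for $TV_m$ (with $p=2$) at $\1_E$ against $-\g$ and using $\tau\equiv1$ on $\Omega$, one gets $P_m(E)=TV_m(\1_E)\ge\int_E{\rm div}_m(-\g)\,d\nu=\lambda^m_\Omega\,\nu(E)$ for every $\nu$-measurable $E\subset\Omega$ with $\nu(E)>0$. Hence $h^m_1(\Omega)\geq\lambda^m_\Omega$, and since $h^m_1(\Omega)\leq\lambda^m_\Omega$ trivially, $\Omega$ is $m$-calibrable.

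For the main implication $(i)\Rightarrow(iii)$ I would work in the metric random walk space $[\Omega,d,m^\Omega]$ of Example~\ref{JJ}~(5), whose reversible measure is $\nu\res\Omega$. Using \eqref{secondf021} in $[X,d,m]$ and in $[\Omega,d,m^\Omega]$ one checks the identity
$$P_m(E)=P_{m^\Omega}(E)+\int_E m_x(X\setminus\Omega)\,d\nu(x)\qquad\text{for every }\nu\text{-measurable }E\subset\Omega.$$
Put $\phi(x):=\lambda^m_\Omega-m_x(X\setminus\Omega)$ for $x\in\Omega$; this is a bounded function with $\int_\Omega\phi\,d\nu=\lambda^m_\Omega\,\nu(\Omega)-P_m(\Omega)=0$, and the $m$-calibrability of $\Omega$ is equivalent to $\int_E\phi\,d\nu\leq P_{m^\Omega}(E)$ for all $\nu$-measurable $E\subset\Omega$. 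The key step is to upgrade this to $\int_\Omega\phi\,w\,d\nu\leq TV_{m^\Omega}(w)$ for all $w\in L^2(\Omega,\nu)\cap BV_{m^\Omega}(\Omega,\nu)$: decomposing $w$ into its super- and sublevel sets via \eqref{cooare1}, applying Fubini (valid since $\phi$ is bounded and $w\in L^1(\Omega,\nu)$, as $\nu(\Omega)<\infty$), the bound for characteristic functions, $\int_\Omega\phi\,d\nu=0$, and the coarea formula (Theorem~\ref{coarea1} in $[\Omega,d,m^\Omega]$), one arrives at $\int_\Omega\phi\,w\,d\nu\leq TV_{m^\Omega}(w)$. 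This means $\widetilde{\mathcal{F}_{m^\Omega}}(\phi)\leq1$; and since the argument in the proof of Theorem~\ref{chsubd} — which uses only Green's formula and Proposition~\ref{lema1409}, hence applies verbatim to $[\Omega,d,m^\Omega]$ — identifies $\widetilde{\mathcal{F}_{m^\Omega}}$ with $v\mapsto\inf\{\Vert\z\Vert_\infty:\z\in X^2_{m^\Omega}(\Omega),\ v=-{\rm div}_{m^\Omega}\z\}$ and shows the infimum is attained when finite, there is an antisymmetric $\z\in X^2_{m^\Omega}(\Omega)$ with $\Vert\z\Vert_\infty\leq1$ and $-\int_\Omega\z(x,y)\,dm_x(y)=\phi(x)$ for $\nu$-a.e.\ $x\in\Omega$.

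It then remains to extend $\z$ to an antisymmetric $\g$ on $X\times X$: set $\g:=\z$ on $\Omega\times\Omega$, $\g(x,y):=-1$ if $x\in\Omega,\,y\notin\Omega$, $\g(x,y):=1$ if $x\notin\Omega,\,y\in\Omega$, and $\g:=0$ on $(X\setminus\Omega)\times(X\setminus\Omega)$. Then $\Vert\g\Vert_{L^\infty(X\times X,\nu\otimes m_x)}\le1$ and $\g(x,y)\in{\rm sign}(\1_\Omega(y)-\1_\Omega(x))$ for $(\nu\otimes m_x)$-a.e.\ $(x,y)$, by inspection of the four cases. Splitting $\int_X\g(x,y)\,dm_x(y)$ over $\Omega$ and $X\setminus\Omega$ and inserting the relation satisfied by $\z$, a short computation gives $-\int_X\g(x,y)\,dm_x(y)=\lambda^m_\Omega$ for $\nu$-a.e.\ $x\in\Omega$ (the $m_x(X\setminus\Omega)$ terms cancel) and $-\int_X\g(x,y)\,dm_x(y)=-m_x(\Omega)$ for $\nu$-a.e.\ $x\notin\Omega$; that is, $-\int_X\g(x,y)\,dm_x(y)=\lambda^m_\Omega\tau^*(x)$ $\nu$-a.e.\ on $X$. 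Since $\tau^*\in L^2(X,\nu)$ — using $\int_{X\setminus\Omega}m_x(\Omega)\,d\nu(x)=L_m(X\setminus\Omega,\Omega)=P_m(\Omega)<\infty$ and $\nu(\Omega)<\infty$ — the implication $(v)\Rightarrow(i)$ of Theorem~\ref{chsubd} yields $\lambda^m_\Omega\tau^*\in\partial\mathcal{F}_m(\1_\Omega)$, i.e.\ $-\lambda^m_\Omega\tau^*\in\Delta^m_1\1_\Omega$, which is $(iii)$. The main obstacle I anticipate is precisely the passage $(i)\Rightarrow(iii)$: recognizing $m$-calibrability as the statement that $\phi$ is (up to sign) a subgradient of $TV_{m^\Omega}$ at the constant function $\1_\Omega$, which requires both the coarea upgrade above and transporting the duality $\widetilde{\mathcal{F}}=\Psi$ to the restricted space; the gluing of $\z$ to $\g$ is routine but the sign bookkeeping must be handled with care.
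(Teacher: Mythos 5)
Your proposal is correct. The implications $(iii)\Rightarrow(ii)$ and $(ii)\Rightarrow(i)$ coincide with the paper's argument (the paper phrases $(ii)\Rightarrow(i)$ via Green's formula and antisymmetry rather than via the dual formula \eqref{Form1}, but these are the same computation), and your bookkeeping of the implicit finiteness of $\nu(\Omega)$ and $P_m(\Omega)$ is sound.

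For the main implication $(i)\Rightarrow(iii)$ you take a genuinely different route. The paper stays on all of $X$: it shows directly that $\lambda^m_\Omega\tau^*\in\partial\mathcal{F}_m(0)$ by testing against an arbitrary $w$, using the layer-cake formula, $\int_X\tau^*\,d\nu=0$, calibrability on the sets $E_t(w)\cap\Omega$, and then Proposition~\ref{launion01} together with the coarea formula to reassemble $P_m(E_t(w)\cap\Omega)$ into $P_m(E_t(w))$ plus a remainder $I$ whose sign must be checked by hand; it then transfers from $\partial\mathcal{F}_m(0)$ to $\partial\mathcal{F}_m(\1_\Omega)$ via the equivalence \eqref{tue1519} of Remark~\ref{otrorem01}. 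You instead localize to $[\Omega,d,m^\Omega]$, where the identity $P_m(E)=P_{m^\Omega}(E)+\int_E m_x(X\setminus\Omega)\,d\nu$ absorbs all boundary terms, so that calibrability becomes the clean statement $\int_E\phi\,d\nu\le P_{m^\Omega}(E)$ with $\int_\Omega\phi\,d\nu=0$; the coarea upgrade to $\int_\Omega\phi w\,d\nu\le TV_{m^\Omega}(w)$ then has no remainder to estimate, and the price you pay is invoking the duality $\Psi=\widetilde{\mathcal{F}}$ (with attainment, and passage to the antisymmetric part) in the restricted space, plus the explicit gluing of $\z$ across $\partial\Omega$. Both costs are legitimate: the duality argument of Theorem~\ref{chsubd} uses only Green's formula and Proposition~\ref{lema1409} and so applies to $[\Omega,d,m^\Omega]$ without any connectedness hypothesis, and your four-case sign check for the glued $\g$ is correct, as is the verification that $\tau^*\in L^2(X,\nu)$. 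What your route buys is that it produces, as a by-product, exactly the intrinsic characterization of calibrability by an antisymmetric field on $\Omega\times\Omega$ that the paper only extracts afterwards as Theorem~\ref{trasen002} and Corollary~\ref{corolarioomegamcal}; what the paper's route buys is that it avoids the duality/attainment machinery entirely, at the cost of the more delicate estimate $I\le0$. No gaps.
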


\begin{proof}
Observe that, since we are assuming that the metric random walk space is $m$-connected, we have $P_m(\Omega)>0$ and, therefore, $\lambda_\Omega^m>0$.

 $(iii)\Rightarrow (ii)$ is trivial.

$(ii)\Rightarrow (i)$: Suppose   that
 there exists  a $\nu$-measurable function $\tau$ equal to $1$ in $\Omega$  satisfying~\eqref{22alares}. Hence,  there exists $\hbox{\bf g}\in L^\infty(X\times X, \nu \otimes m_x)$ antisymmetric with $\Vert \hbox{\bf g} \Vert_{L^\infty(X \times X,\nu\otimes m_x)} \leq 1$ satisfying
\begin{equation}\label{esla1}
-\int_{X}\g(x,y)\,dm_x(y)= \lambda_\Omega^m \tau(x) \quad \nu-\mbox{a.e. } x\in X
\end{equation}
and
  \begin{equation}\label{esla2}-\int_{X} \int_{X}\hbox{\bf g}(x,y)dm_x(y)\,\1_\Omega(x)d\nu(x)=P_m(\Omega).\end{equation}
Then, for $F\subset \Omega$ with $\nu(F) >0$, since   $\g$  antisymmetric, by using the reversibility of $\nu$ with respect to $m$,  we have
$$
\begin{array}{l}
\displaystyle
 \lambda^m_\Omega \nu( F)= \lambda^m_\Omega \int_{X} \tau(x) \1_F(x) d\nu(x) = - \int_{X} \int_{X}\g(x,y) \1_F(x) \,dm_x(y) d\nu(x) \\[10pt]
 \qquad \displaystyle = \frac{1}{2}\int_{X} \int_{X} \g(x,y) (\1_F(y) - \1_F(x))  \,dm_x(y) d\nu(x) \leq P_m(F).
 \end{array}
 $$
Therefore, $h_1^m(\Omega) = \lambda^m_\Omega$ and, consequently, $\Omega$ is $m$-calibrable.

 $(i)\Rightarrow (iii)$ Suppose   that $\Omega$ is $m$-calibrable.
Let
$$\tau^*(x)=\left\{
\begin{array}{ll}
1   &\quad\hbox{if } x\in  \Omega ,\\[5pt]
\displaystyle - \frac{1}{\lambda_\Omega^m} m_x(\Omega)&\quad\hbox{if } x\in X\setminus\Omega.
\end{array}
\right.$$
We claim that $-\lambda^m_\Omega \tau^* \in \Delta_1^m0$, that is,
\begin{equation}\label{fr2144}\lambda^m_\Omega \tau^* \in \partial  \mathcal{F}_m (0).
\end{equation}
Take   $ w \in  L^2(X,\nu)$ with $\mathcal{F}_m(w)<+\infty$.
Since
$$w(x) = \int_0^{+\infty} \1_{E_t(w)}(x) dt - \int_{-\infty}^0(1-\1_{E_t(w)})(x) dt,$$
and $$\displaystyle\int_X\tau^*(x)d\nu(x)=
 \int_\Omega1d\nu(x)-\frac{1}{\lambda_\Omega^m}\int_{X\setminus\Omega} m_x(\Omega)d\nu(x)=\nu(\Omega)-\frac{1}{\lambda_\Omega^m}P_m(\Omega)=
 0,$$
 we have
$$
  \int_{X} \lambda^m_\Omega \tau^*(x) w(x) d\nu(x)   =  \lambda^m_\Omega  \int_{-\infty}^{+\infty} \int_{X} \tau^*(x) \1_{E_t(w)}(x) d\nu(x) dt.
$$
Now, using that $\tau^*=1$ in $\Omega$ and $\Omega$ is $m$-calibrable  we have that
 $$\begin{array}{l}\displaystyle \lambda_\Omega^m\int_{-\infty}^{+\infty} \int_{X} \tau^*(x) \1_{E_t(w)}(x) d\nu(x)dt =  \lambda_\Omega^m \int_{-\infty}^{+\infty} \nu( E_t(w)\cap \Omega ) dt +\lambda_\Omega^m \int_{-\infty}^{+\infty} \int_{E_t(w)\setminus \Omega} \tau^*(x)  d\nu(x) dt \\ \\ \displaystyle \qquad \le  \int_{-\infty}^{+\infty}  P_m( E_t(w)\cap \Omega ) dt
 +\lambda_\Omega^m\int_{-\infty}^{+\infty} \int_{E_t(w)\setminus \Omega} \tau^*(x)  d\nu(x) dt.
  \end{array}$$
By Proposition~\ref{launion01} and the coarea formula given in Theorem \ref{coarea1}  we get
$$\begin{array}{l}\displaystyle  \int_{-\infty}^{+\infty}  P_m( E_t(w)\cap \Omega ) dt
  \\ [10pt] \qquad \displaystyle =  \int_{-\infty}^{+\infty}  P_m( E_t(w)\cap \Omega ) dt +\int_{-\infty}^{+\infty}  P_m(  E_t(w)\setminus \Omega ) dt -\int_{-\infty}^{+\infty} 2L_m(E_t(w)\setminus \Omega,E_t(w)\cap \Omega)dt\\[10pt] \qquad \qquad\qquad\qquad \displaystyle - \int_{-\infty}^{+\infty}  P_m(  E_t(w)\setminus \Omega ) dt+\int_{-\infty}^{+\infty}2L_m(E_t(w)\setminus \Omega,E_t(w)\cap \Omega)dt
 \\[10pt] \qquad \displaystyle =\int_{ -\infty}^{+\infty}  P_m(  E_t(w) ) dt - \int_{-\infty}^{+\infty}  P_m(  E_t(w)\setminus \Omega ) dt+\int_{-\infty}^{+\infty}2L_m(E_t(w)\setminus \Omega,E_t(w)\cap \Omega)dt
 \\[10pt] \qquad \displaystyle
 = \mathcal{F}_m(w) - \int_{-\infty}^{+\infty}  P_m(  E_t(w)\setminus \Omega ) dt+\int_{-\infty}^{+\infty}2L_m(E_t(w)\setminus \Omega,E_t(w)\cap \Omega)dt. \end{array}$$
 Hence, if we prove that $$I=- \int_{-\infty}^{+\infty}  P_m(  E_t(w)\setminus \Omega ) dt+\int_{-\infty}^{+\infty}2L_m(E_t(w)\setminus \Omega,E_t(w)\cap \Omega)dt +\lambda_\Omega^m\int_{-\infty}^{+\infty} \int_{E_t(w)\setminus \Omega} \tau^*(x)  d\nu(x) dt \leq 0,$$ we get
 \begin{equation}\label{casgoog}
\int_{X} \lambda^m_\Omega \tau^*(x) w(x) d\nu(x) \leq \mathcal{F}_m(w),
\end{equation}
which proves~\eqref{fr2144}.
 Now, since $$P_m(  E_t(w)\setminus \Omega )= L_m(E_t(w)\setminus \Omega, X \setminus (E_t(w)\setminus \Omega) ) = L_m(E_t(w)\setminus \Omega, (E_t(w)\cap \Omega)\overset{.}{\cup} (X \setminus E_t(w)) ),$$
 and $\tau^*(x)=- \frac{1}{\lambda_\Omega^m} m_x(\Omega)$ for $x\in X\setminus\Omega$,  we have
$$
\begin{array}{l}
\displaystyle
I   =- \int_{-\infty}^{+\infty}  L_m(E_t(w)\setminus \Omega,  X\setminus E_t(w)  ) dt + \int_{-\infty}^{+\infty}L_m(E_t(w)\setminus \Omega,E_t(w)\cap \Omega)dt \\ \\ \displaystyle
\qquad\qquad\qquad\qquad- \int_{-\infty}^{+\infty} \int_{E_t(w)\setminus \Omega} \int_{ \Omega}dm_x(y)  d\nu(x) dt
\\ \\ \displaystyle
\qquad \le
 \int_{-\infty}^{+\infty}L_m(E_t(w)\setminus \Omega,E_t(w)\cap \Omega)dt -\int_{-\infty}^{+\infty} L_m(E_t(w)\setminus \Omega,\Omega)dt
\le 0.
\end{array}
$$
Then, by \eqref{tue1519}, we have that  $$- \lambda^m_\Omega \tau^* \in \Delta_1^m \1_\Omega \, \quad \hbox{in} \ X,$$  and this concludes the proof.
 \end{proof}

Even though, in principle,  the $m$-calibrability of a set is a nonlocal concept, in the next result we will see that the $m$-calibrability of a set depends   only  on the set itself.

\begin{theorem}\label{trasen002}
Let $\Omega  \subset X$ be a $\nu$-measurable set   with $0<\nu(\Omega)<\nu(X)$.  Then, $\Omega$ is  $m$-calibrable if, and only if, there exists an antisymmetric  function $\g$ in $\Omega\times\Omega$ such that
    \begin{equation}\label{trasen001}
  -1\le \g(x,y)\le 1 \qquad \hbox{for $(\nu \otimes m_x)$-\mbox{a.e. }$(x,y) \in \Omega \times \Omega$},\end{equation}    and
\begin{equation}\label{trasen001post}\lambda_\Omega^m = -\int_{\Omega }\g(x,y)\,dm_x(y) + 1 - m_x(\Omega), \quad x \in \Omega.
  \end{equation}
\end{theorem}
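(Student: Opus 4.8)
The plan is to deduce both implications from the two characterizations already at hand: Theorem~\ref{falsa}, which says $\Omega$ is $m$-calibrable if and only if $-\lambda^m_\Omega\tau\in\Delta_1^m\1_\Omega$ for some $\nu$-measurable $\tau$ equal to $1$ on $\Omega$, and Theorem~\ref{chsubd}, which describes $\partial\mathcal F_m(\1_\Omega)$ through an antisymmetric field $\g$ on $X\times X$ with $\Vert\g\Vert_{L^\infty(X\times X,\nu\otimes m_x)}\le 1$, $-\int_X\g(x,y)\,dm_x(y)=v(x)$ and $\g(x,y)\in{\rm sign}(\1_\Omega(y)-\1_\Omega(x))$. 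The crux is that, for $u=\1_\Omega$, the sign constraint forces $\g=-1$ on $\Omega\times(X\setminus\Omega)$ and $\g=+1$ on $(X\setminus\Omega)\times\Omega$, so the only free data is the restriction of $\g$ to $\Omega\times\Omega$; this is precisely the content of the statement. Note also that $m$-connectedness yields $P_m(\Omega)>0$, hence $\lambda^m_\Omega>0$, so division by $\lambda^m_\Omega$ is legitimate.

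For the forward implication I would start from $m$-calibrability, apply Theorem~\ref{falsa} to obtain $\tau$ (with $\tau\equiv 1$ on $\Omega$) and then Theorem~\ref{chsubd}(v) to obtain the antisymmetric $\g$ on $X\times X$. Evaluating $-\int_X\g(x,y)\,dm_x(y)=\lambda^m_\Omega\tau(x)=\lambda^m_\Omega$ at $\nu$-a.e. $x\in\Omega$ and splitting the integral over $\Omega$ and $X\setminus\Omega$, where $\g(x,y)=-1$ by the sign condition, I get $\lambda^m_\Omega=-\int_\Omega\g(x,y)\,dm_x(y)+m_x(X\setminus\Omega)=-\int_\Omega\g(x,y)\,dm_x(y)+1-m_x(\Omega)$; the restriction of $\g$ to $\Omega\times\Omega$ is then the required antisymmetric field satisfying \eqref{trasen001}--\eqref{trasen001post}.

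For the converse I would run this in reverse. Given $\g$ on $\Omega\times\Omega$, I extend it to $\widetilde\g$ on $X\times X$ by $\widetilde\g=\g$ on $\Omega\times\Omega$, $\widetilde\g=-1$ on $\Omega\times(X\setminus\Omega)$, $\widetilde\g=+1$ on $(X\setminus\Omega)\times\Omega$ and $\widetilde\g=0$ on $(X\setminus\Omega)\times(X\setminus\Omega)$; this is antisymmetric with $L^\infty$-norm $\le 1$. Setting $\tau(x):=-\frac{1}{\lambda^m_\Omega}\int_X\widetilde\g(x,y)\,dm_x(y)$, hypothesis \eqref{trasen001post} gives $\tau\equiv 1$ on $\Omega$ (and $\tau$ coincides off $\Omega$ with the $\tau^*$ of Theorem~\ref{falsa}). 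A quick case check shows $\widetilde\g(x,y)\in{\rm sign}(\1_\Omega(y)-\1_\Omega(x))$ $(\nu\otimes m_x)$-a.e.: it is automatic where $\1_\Omega(x)=\1_\Omega(y)$ and forced by the values $\mp 1$ otherwise. Together with the defining identity for $\tau$, Theorem~\ref{chsubd}(v) then gives $-\lambda^m_\Omega\tau\in\Delta_1^m\1_\Omega$, and Theorem~\ref{falsa} lets me conclude that $\Omega$ is $m$-calibrable.

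I do not expect a substantial obstacle: the argument is essentially bookkeeping once one notices that the sign constraint trivializes the cross terms, collapsing a condition on $X\times X$ to one on $\Omega\times\Omega$. The only minor points are measurability of $\widetilde\g$ and $\tau$ (immediate, since $\Omega$ is $\nu$-measurable and $|\widetilde\g|\le 1$) and the $L^2(X,\nu)$-integrability needed to invoke Theorem~\ref{chsubd}, which holds for the same reason as in Theorem~\ref{falsa} (using $\nu(\Omega)<\infty$ and $\int_{X\setminus\Omega}m_x(\Omega)^2\,d\nu\le\int_{X\setminus\Omega}m_x(\Omega)\,d\nu=P_m(\Omega)<\infty$).
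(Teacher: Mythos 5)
Your proposal is correct and follows essentially the same route as the paper's own proof: both directions reduce to Theorem~\ref{falsa} combined with the observation that the sign constraint forces $\g=-1$ on $\Omega\times(X\setminus\Omega)$ and $\g=+1$ on $(X\setminus\Omega)\times\Omega$, and the converse uses exactly the same extension (zero on $(X\setminus\Omega)\times(X\setminus\Omega)$, $\mp1$ on the cross terms) that the paper uses. The only cosmetic difference is that you phrase the converse by defining $\tau$ from the extended field and invoking Theorem~\ref{falsa}(ii), while the paper checks the equivalent system for $\tau^*$ directly; these are the same computation.
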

Observe that, on account of~\eqref{secondf021},  \eqref{trasen001post} is equivalent to
\begin{equation}\label{ju1522tena01}
    m_x(\Omega)=\frac{1}{\nu(\Omega)}\int_\Omega m_z(\Omega)d\nu(z)-\int_{\Omega}  \g(x,y) \,dm_x(y)   \qquad\hbox{for $\nu$-a.e. }x\in\Omega  . \end{equation}

\begin{proof} By Theorem \ref{falsa}, we have that  $\Omega$ is $m$-calibrable if, and only if, there exists $\hbox{\bf g}\in L^\infty(X\times X, \nu \otimes m_x)$ antisymmetric, $\Vert \hbox{\bf g} \Vert_{L^\infty(X \times X,\nu\otimes m_x)} \leq 1$ with  $g(x,y) \in {\rm sign}(\1_\Omega (y) - \1_\Omega (x))$ for $\nu \otimes m_x$-a.e. $(x,y) \in X\times X$, satisfying
\begin{equation}\label{NNesla1}
\lambda_\Omega^m = -\int_{X}\g(x,y)\,dm_x(y)\quad \hbox{for } \nu-\mbox{a.e. } x\in \Omega
\end{equation}
and
\begin{equation}\label{NNesla1NN}
m_x(\Omega) = \int_{X}\g(x,y)\,dm_x(y) \quad \hbox{for }\nu-\mbox{a.e. } x\in X \setminus \Omega.
\end{equation}
Now,   having in mind that $g(x,y) = -1$ if $x \in \Omega$ and $y \in X \setminus \Omega$, we have that, for $x\in\Omega$,
$$\lambda_\Omega^m = 1 - \frac{1}{\nu(\Omega)}\int_\Omega m_x(\Omega) d \nu(x) = -\int_{X}\g(x,y)\,dm_x(y) = -\int_{\Omega }\g(x,y)\,dm_x(y) -\int_{X \setminus \Omega}\g(x,y)\,dm_x(y) $$ $$ = -\int_{\Omega }\g(x,y)\,dm_x(y) + m_x(X \setminus \Omega) = -\int_{\Omega }\g(x,y)\,dm_x(y) + 1 - m_x(\Omega).$$
Bringing together~\eqref{NNesla1} and these equalities we get~\eqref{trasen001} and~\eqref{trasen001post}.

  Let us now suppose that we have an antisymmetric  function $\g$ in $\Omega\times\Omega$ satisfying~\eqref{trasen001} and~\eqref{trasen001post}.
  To check that  $\Omega$ is $m$-calibrable we need to find $\tilde \g(x,y)\in\hbox{sign}\left(\1_\Omega(y)-\1_\Omega(x)\right)$
   antisymmetric such that
$$\left\{\begin{array}{l}
\displaystyle -\lambda_\Omega^m=\int_X\tilde\g(x,y)dm_x(y),\quad x\in\Omega,\\[14pt]
\displaystyle m_x(\Omega)=\int_X\tilde\g(x,y)dm_x(y),\quad x\in X\setminus \Omega,
\end{array}\right.
$$
which is equivalent to
$$\left\{\begin{array}{l}
\displaystyle -\lambda_\Omega^m=\int_\Omega\tilde\g(x,y)dm_x(y)-m_x(X\setminus\Omega), \quad x\in\Omega,\\[14pt]
\displaystyle m_x(\Omega)=\int_{X\setminus\Omega}\tilde\g(x,y)dm_x(y)+m_x(\Omega),\quad x\in X\setminus \Omega,
\end{array}\right.
$$
since, necessarily, $\tilde \g(x,y)=-1$ for $x\in \Omega$ and $y\in X\setminus\Omega$, and $\tilde \g(x,y)=1$ for $x\in X\setminus\Omega$  and $y\in \Omega$. Now, the second equality in this system is satisfied if we take $\tilde\g(x,y)=0$ for $x,y\in X\setminus\Omega$, and the first one is equivalent to~\eqref{ju1522tena01} if we take $\tilde\g(x,y)=\g(x,y)$ for $x,y\in\Omega$.
 \end{proof}

 Set
\begin{equation}\label{omegam}
\Omega_m:=\Omega\cup\partial_m\Omega
\end{equation}
 where $$\partial_m\Omega=\{ x\in X\setminus \Omega \ : \ m_x(\Omega)>0\}.$$

\begin{corollary}\label{corolarioomegamcal}
A $\nu$-measurable set $\Omega\subset X$ is $m$-calibrable if, and only if, it is $m^{\Omega_m}$-calibrable as a subset of $[\Omega_m,d,m^{\Omega_m}]$ with reversible measure~$\nu\res\Omega_m$  (see Example~\ref{JJ}~(5)).
\end{corollary}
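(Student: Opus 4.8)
The plan is to deduce the equivalence from the fact that \emph{all} the data entering the definition of $m$-calibrability of $\Omega$ --- namely the $m$-Cheeger constant $h_1^m(\Omega)$ and the quotient $\lambda_\Omega^m=P_m(\Omega)/\nu(\Omega)$ --- remain unchanged when $m$ is replaced by $m^{\Omega_m}$ and $X$ by $\Omega_m$. The cornerstone is the identity
$$m^{\Omega_m}_x\res\Omega=m_x\res\Omega\qquad\hbox{for }\nu\hbox{-a.e. }x\in\Omega,$$
which in turn rests on the claim that $m_x(X\setminus\Omega_m)=0$ for $\nu$-a.e.\ $x\in\Omega$. To prove this claim I would apply the detailed balance condition \eqref{repo001} to the Borel set $C=\Omega\times(X\setminus\Omega_m)$, obtaining
$$\int_\Omega m_x(X\setminus\Omega_m)\,d\nu(x)=\int_{X\setminus\Omega_m}m_y(\Omega)\,d\nu(y)=0,$$
the last equality because, by the very definition of $\Omega_m=\Omega\cup\partial_m\Omega$, one has $m_y(\Omega)=0$ for every $y\in X\setminus\Omega_m$. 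Then, using the explicit form of $m^{\Omega_m}$ from Example~\ref{JJ}~(5), for $x\in\Omega$ and any Borel $A\subset\Omega$ the atomic term $\bigl(\int_{X\setminus\Omega_m}dm_x\bigr)\delta_x(A)$ vanishes for $\nu$-a.e.\ $x$, which gives the displayed identity.

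Next I would transfer this to perimeters. For a $\nu$-measurable $E\subset\Omega$ and $x\in E$ we have $x\notin\Omega_m\setminus E$, so the atomic part of $m^{\Omega_m}_x$ does not contribute to $m^{\Omega_m}_x(\Omega_m\setminus E)$; moreover $X\setminus E=(\Omega_m\setminus E)\overset{.}{\cup}(X\setminus\Omega_m)$ because $E\subset\Omega\subset\Omega_m$, whence $m^{\Omega_m}_x(\Omega_m\setminus E)=m_x(\Omega_m\setminus E)=m_x(X\setminus E)$ for $\nu$-a.e.\ $x\in\Omega$, by the claim. Integrating over $E$ against $\nu\res\Omega_m$ yields
$$P_{m^{\Omega_m}}(E)=\int_E m_x(X\setminus E)\,d\nu(x)=P_m(E),\qquad (\nu\res\Omega_m)(E)=\nu(E),$$
for every $\nu$-measurable $E\subset\Omega$; in particular $\lambda_\Omega^{m^{\Omega_m}}=\lambda_\Omega^m$. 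I would also record here that $\nu(\partial_m\Omega)>0$, so that $0<(\nu\res\Omega_m)(\Omega)<(\nu\res\Omega_m)(\Omega_m)$ and the $m^{\Omega_m}$-Cheeger constant of $\Omega$ inside $[\Omega_m,d,m^{\Omega_m}]$ is meaningful: indeed, if $\nu(\partial_m\Omega)=0$ then applying \eqref{repo001} to $\Omega\times\partial_m\Omega$ gives $m_x(\partial_m\Omega)=0$ for $\nu$-a.e.\ $x\in\Omega$, which together with the claim forces $P_m(\Omega)=0$, contradicting $m$-connectedness.

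Finally, since the infimum
$$h_1^m(\Omega)=\inf\Bigl\{\frac{P_m(E)}{\nu(E)}\,:\,E\subset\Omega\ \nu\hbox{-measurable},\ \nu(E)>0\Bigr\}$$
and $h_1^{m^{\Omega_m}}(\Omega)$, computed the same way inside $[\Omega_m,d,m^{\Omega_m}]$ with $\nu\res\Omega_m$, range over exactly the same family of test sets $E$ and, by the computation above, yield exactly the same quotients, we get $h_1^m(\Omega)=h_1^{m^{\Omega_m}}(\Omega)$. Combining this with $\lambda_\Omega^m=\lambda_\Omega^{m^{\Omega_m}}$, the two statements ``$h_1^m(\Omega)=\lambda_\Omega^m$'' and ``$h_1^{m^{\Omega_m}}(\Omega)=\lambda_\Omega^{m^{\Omega_m}}$'' are equivalent, which is the assertion. (Equivalently, one may invoke Theorem~\ref{trasen002}: the antisymmetric function $\g$ there lives on $\Omega\times\Omega$, and \eqref{trasen001post} involves only $m_x\res\Omega$ for $x\in\Omega$ together with $\lambda_\Omega^m$, all preserved by the above.) The only genuinely delicate point is the measure-theoretic bookkeeping around the claim --- in particular checking that the atom $\delta_x$ of $m^{\Omega_m}_x$ never enters any interaction $L_{m^{\Omega_m}}(E,\Omega_m\setminus E)$ with $E\subset\Omega$; everything afterwards is a direct comparison of definitions.
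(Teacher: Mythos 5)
Your proposal is correct and follows essentially the route the paper intends: the corollary is stated without proof as a consequence of Theorem~\ref{trasen002}, and your argument supplies exactly the missing verification, namely that $m_x(X\setminus\Omega_m)=0$ for $\nu$-a.e.\ $x\in\Omega$ (via reversibility applied to $\Omega\times(X\setminus\Omega_m)$), hence $P_{m^{\Omega_m}}(E)=P_m(E)$ and $(\nu\res\Omega_m)(E)=\nu(E)$ for all $E\subset\Omega$, so that both the Cheeger ratios and $\lambda_\Omega^m$ are unchanged. The additional check that $\nu(\partial_m\Omega)>0$ (so that $\Omega$ is a non-trivial subset of $\Omega_m$ and calibrability there is well posed) is a worthwhile detail the paper leaves implicit.
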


\begin{remark}\label{mon1751}
 (1) Let $\Omega \subset X$ be a $\nu$-measurable set   with $0<\nu(\Omega)<\nu(X)$.
Observe that, as we have proved,
 \begin{equation}\label{e1Cali}
 \Omega \ \hbox{is   $m$-calibrable} \ \Longleftrightarrow  \ -\lambda_\Omega^m\,\1_\Omega +m_{_{(.)}}(\Omega)\,\1_{X\setminus\Omega} \in \Delta_1^m\1_\Omega \, .
 \end{equation}

\item{  (2)} Let $\Omega \subset X$ be a $\nu$-measurable set. If
\begin{equation}\label{tu13002}-\lambda_\Omega^m\,\1_\Omega +h\,\1_{X\setminus\Omega} \in \Delta_1^m\1_\Omega
\end{equation}
for some $\nu$-measurable function $h$, then
there exists $\hbox{\bf g}\in L^\infty(X\times X, \nu \otimes m_x)$ antisymmetric with \\ $\Vert \hbox{\bf g} \Vert_{L^\infty(X \times X,\nu\otimes m_x)} \leq 1$ satisfying
$$\g(x,y)\in {\rm sign}(\1_{\Omega}(y) -  \1_{\Omega}(x)) \quad (\nu \otimes m_x)-a.e. \ (x,y) \in X \times X $$
and
$$
-\lambda_\Omega^m \,\1_\Omega(x) +h(x)\,\1_{X\setminus\Omega}(x)=\int_{X}\g(x,y)\,dm_x(y)  \quad \nu-\mbox{a.e }x\in X.
$$
 Hence, if $$
\hbox{$\g$ is $\nu\otimes m_x$-integrable}$$
we have that
$$\int_{X\setminus\Omega}h(x)d\nu(x)=P_m(\Omega).$$
Indeed, from~\eqref{tu13002}, for $x\in X\setminus\Omega$,
 $$
 \begin{array}{l}
 \displaystyle
 h(x)=\int_{X}\g(x,y)\,dm_x(y)=\int_{\Omega}\g(x,y)\,dm_x(y)+\int_{X\setminus\Omega}\g(x,y)\,dm_x(y)
 \\ \\
 \displaystyle
 \qquad= \int_{\Omega}\,dm_x(y)+\int_{X\setminus\Omega}\g(x,y)\,dm_x(y)
 \\ \\
 \displaystyle
 \qquad=  m_x(\Omega)+\int_{X\setminus\Omega}\g(x,y)\,dm_x(y).
 \end{array}
 $$
 Hence, integrating over $X\setminus\Omega$ with respect to $\nu$, we get
 $$\int_{X\setminus\Omega}h(x)d\nu(x)=P_m(\Omega)+\int_{X\setminus\Omega}\int_{X\setminus\Omega}\g(x,y)\,dm_x(y)d\nu(x).$$
Moreover,  since $\g$ is antisymmetric and $\nu\otimes m_x$-integrable, we have
$$\int_{X\setminus\Omega}\int_{X\setminus\Omega}\g(x,y)\,dm_x(y)d\nu(x)
=\int_{(X\setminus\Omega)\times (X\setminus\Omega)}\g(x,y)\,d(\nu\otimes m_x)(x,y)=0,
 $$
 and, consequently, we get
\begin{equation}\label{hnocero}
  \int_{X\setminus\Omega}h(x)d\nu(x)=P_m(\Omega).
\end{equation}

 As a consequence of \eqref{hnocero},  if $\nu(X)<\infty$,   since the metric random walk space is $m$-connected, the relation \begin{equation}\label{ju1522}-\lambda_\Omega^m\,\1_\Omega \in \Delta_1^m\1_\Omega \quad\hbox{in }X  \end{equation}
does not hold true for any $\nu$-measurable set $\Omega$ with $0<\nu(\Omega)<\nu(X)$ (recall that, for these $\Omega$, $P_m(\Omega)>0$ by \cite[Theorem 2.21\&2.24]{MST0} thus $h$ is non--null by \eqref{hnocero}).
  Now, if $\nu(X)= +\infty$, then \eqref{ju1522} may be satisfied, as shown in the next example.
\end{remark}

\begin{example} Consider the metric random walk space
$[\R, d, m^J]$ with $\nu=\mathcal{L}^1$ and $J=\frac12\1_{[-1,1]}$. Let us see that
$$-\lambda_{[-1,1]}^{m^J}\1_{[-1,1]}\in \Delta_1^{m^J}\1_{[-1,1]},$$
where $\lambda_{[-1,1]}^{m^J}=\frac14$.
Indeed, take $\g(x,y)$ to be antisymmetric and defined as follows for $y<x$:
$$\g(x,y)=-\1_{\{y<x<y+1<0\}}(x,y)-\frac12\1_{\{-1<y < x< 0\}}(x,y)+\frac12\1_{\{0<y< x< 1\}}(x,y)+\1_{\{0<x-1<y<x\}}(x,y).$$
Then,
$\g\in L^\infty(\R\times \R, \nu \otimes m^J_x)$, $\Vert \g \Vert_{L^\infty(\R \times \R,\nu\otimes m^J_x)} \leq 1$,
$$\g(x,y)\in {\rm sign}(\1_{[-1,1]}(y) - \1_{[-1,1]}(x)) \quad \hbox{for }(\nu \otimes m^J_x)-a.e. \ (x,y) \in \R \times \R ,$$
and
$$
-\frac14 \1_{[-1,1]}(x) =\int_{\R}\g(x,y)\,dm^J_x(y)  \quad \hbox{for }\nu-\mbox{a.e }x\in \R.
$$
Note that $\g$ is not $\nu \otimes m^J_x$ integrable.

\end{example}

\begin{remark} As a consequence of Theorem \ref{BCNth}, it holds  that (see \cite[Introduction]{ACCh} or \cite[Section 4.4]{ACMBook}) a bounded convex set $\Omega \subset \R^N$ is calibrable if, and only if, $u(t,x) = \left( 1 - \frac{\mbox{Per}(\Omega )}{\vert \Omega  \vert} t\right)^+ \1_\Omega(x)$ is a solution of the Cauchy problem
\begin{equation}\label{cauchyCali}
\left\{ \begin{array}{ll} u_t -\Delta_1 u\ni 0 \quad \hbox{in} \ (0, \infty) \times \R^N, \\[8pt] u(0) = \1_\Omega.\end{array}\right.
\end{equation}
That is, a calibrable set $\Omega$ is that for which the gradient descent flow associated to the total variation tends to decrease linearly the height of $\1_\Omega$ without distortion of its boundary.

Now, as a consequence of \eqref{e1Cali}, we   can obtain a similar result in our context if we introduce an abortion term in  the corresponding Cauchy problem. The appearance of this term is due to the nonlocality of the diffusion considered.  Let $\Omega \subset X$ be a $\nu$-measurable set   with $0<\nu(\Omega)<\nu(X)$, then $\Omega$ is $m$-calibrable if, and only if, $u(t,x) = \left( 1 - \lambda^m_\Omega t\right)^+ \1_\Omega(x)$ is a solution of
\begin{equation}\label{CaychyPCali}
\left\{ \begin{array}{ll} u_t(t,x) -\Delta^m_1 u(t,x)\ni - m_{x}(\Omega)\,\1_{X\setminus\Omega}(x) \1_{[0,1/\lambda_\Omega^m)}(t)      &\hbox{in} \ (0,\infty) \times X,
\\[12pt] u(0,x) = \1_\Omega (x),  &x \in X. \end{array}\right.
\end{equation}
  Note that the only if direction follows by the uniqueness of the solution.
\end{remark}

The following result relates the $m$-calibrability with the $m$-mean curvature, this is the nonlocal version of one of the implications in the equivalence between
(a) and (c) in Theorem~\ref{BCNth}.

 \begin{proposition}\label{falsa.22} Let $\Omega \subset X$ be a $\nu$-measurable set with $0<\nu(\Omega)<\nu(X) $.
Then,
\begin{equation}\label{Nlacondcoj02}  \displaystyle \Omega \ \hbox{  $m$-calibrable} \ \Rightarrow \ \frac{1}{\nu(\Omega)}\int_\Omega m_x(\Omega)d\nu(x)
\le 2\,\hbox{$\nu$-$\underset{x\in\Omega}{\rm ess\ inf}\ m_x(\Omega)$}.
\end{equation}
Equivalently,
\begin{equation}\label{Nlacondcoj021802}\Omega \ \hbox{   $m$-calibrable} \ \Rightarrow \  \hbox{ $\nu$-$\underset{x\in\Omega}{\rm ess\,sup}$} \  H^{m}_{\partial \Omega}(x) \leq  \lambda^m_\Omega.
\end{equation}
\end{proposition}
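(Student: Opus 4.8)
The plan is to reduce the two displayed implications to one and then extract the required pointwise lower bound for $m_x(\Omega)$ from the antisymmetric ``calibrating'' field furnished by Theorem~\ref{trasen002}.

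First I would record the elementary identities making \eqref{Nlacondcoj02} and \eqref{Nlacondcoj021802} equivalent. By \eqref{defcur}, $H^m_{\partial\Omega}(x) = 1 - 2\, m_x(\Omega)$ for every $x\in X$, so
$$\hbox{$\nu$-$\underset{x\in\Omega}{\rm ess\,sup}$}\ H^m_{\partial\Omega}(x) = 1 - 2\,\hbox{$\nu$-$\underset{x\in\Omega}{\rm ess\,inf}$}\ m_x(\Omega),$$
while by \eqref{secondf021},
$$\lambda^m_\Omega = \frac{P_m(\Omega)}{\nu(\Omega)} = 1 - \frac{1}{\nu(\Omega)}\int_\Omega m_x(\Omega)\,d\nu(x).$$
Substituting these two expressions transforms $\hbox{$\nu$-$\underset{x\in\Omega}{\rm ess\,sup}$}\ H^m_{\partial\Omega}(x) \le \lambda^m_\Omega$ into precisely \eqref{Nlacondcoj02}; hence it is enough to prove \eqref{Nlacondcoj02}.

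Next I would invoke Theorem~\ref{trasen002}: since $\Omega$ is $m$-calibrable there is an antisymmetric $\g$ on $\Omega\times\Omega$ with $-1\le\g(x,y)\le1$ for $(\nu\otimes m_x)$-a.e.\ $(x,y)\in\Omega\times\Omega$ and, for $\nu$-a.e.\ $x\in\Omega$,
$$\lambda^m_\Omega = -\int_\Omega \g(x,y)\,dm_x(y) + 1 - m_x(\Omega).$$
Because $m_x$ is a probability measure and $|\g|\le1$, one has $\bigl|\int_\Omega \g(x,y)\,dm_x(y)\bigr| \le m_x(\Omega)$, so the identity above forces
$$\bigl|\,\lambda^m_\Omega - 1 + m_x(\Omega)\,\bigr| \le m_x(\Omega)\qquad\hbox{for $\nu$-a.e.\ }x\in\Omega.$$
Reading off the lower bound, $\lambda^m_\Omega - 1 + m_x(\Omega) \ge -m_x(\Omega)$, i.e.\ $m_x(\Omega) \ge \tfrac12\bigl(1-\lambda^m_\Omega\bigr)$ for $\nu$-a.e.\ $x\in\Omega$; using the formula for $\lambda^m_\Omega$ this reads $m_x(\Omega) \ge \tfrac{1}{2\nu(\Omega)}\int_\Omega m_z(\Omega)\,d\nu(z)$ for $\nu$-a.e.\ $x\in\Omega$. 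Taking the essential infimum over $x\in\Omega$ gives \eqref{Nlacondcoj02}, hence also \eqref{Nlacondcoj021802}.

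As for difficulty, there is no real obstacle once Theorem~\ref{trasen002} is at hand; the only point needing care is that \eqref{trasen001post} holds merely for $\nu$-a.e.\ $x\in\Omega$, so the inequality $m_x(\Omega)\ge\tfrac12(1-\lambda^m_\Omega)$ must be derived almost everywhere before passing to the essential infimum. Alternatively, one can bypass Theorem~\ref{trasen002} and argue directly from Theorem~\ref{falsa}: splitting $\int_X\g(x,y)\,dm_x(y)=\int_\Omega+\int_{X\setminus\Omega}$ for $x\in\Omega$ and using $\g(x,y)=-1$ when $x\in\Omega$, $y\in X\setminus\Omega$, one recovers exactly the same estimate $\bigl|\lambda^m_\Omega-1+m_x(\Omega)\bigr|\le m_x(\Omega)$ and concludes as above.
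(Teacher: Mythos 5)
Your proof is correct and follows essentially the same route as the paper: both reduce \eqref{Nlacondcoj021802} to \eqref{Nlacondcoj02} via $H^m_{\partial\Omega}(x)=1-2m_x(\Omega)$ and $\lambda^m_\Omega=1-\frac{1}{\nu(\Omega)}\int_\Omega m_z(\Omega)\,d\nu(z)$, and both extract the pointwise bound $\frac{1}{\nu(\Omega)}\int_\Omega m_z(\Omega)\,d\nu(z)\le 2m_x(\Omega)$ for $\nu$-a.e.\ $x\in\Omega$ from the calibrating field of Theorem~\ref{trasen002} using $\bigl|\int_\Omega\g(x,y)\,dm_x(y)\bigr|\le m_x(\Omega)$. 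The only cosmetic difference is that you work with \eqref{trasen001post} while the paper uses the equivalent rearrangement \eqref{ju1522tena01}.
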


\begin{proof}
By Theorem~\ref{trasen002}, there exists an antisymmetric  function $\g$ in $\Omega\times\Omega$ such that
  \begin{equation}\label{trasen001prueba}
  -1\le \g(x,y)\le 1 \qquad \hbox{for $(\nu \otimes m_x)$-\mbox{a.e. }$(x,y) \in \Omega \times \Omega$},\end{equation}    and
  \begin{equation}\label{ju1522tena01prueba}
   \frac{1}{\nu(\Omega)}\int_\Omega m_z(\Omega)d\nu(z)=m_x(\Omega)+\int_{\Omega}  \g(x,y) \,dm_x(y)   \qquad\hbox{for $\nu$-a.e. }x\in\Omega  . \end{equation}
  Hence, $$\frac{1}{\nu(\Omega)}\int_\Omega m_z(\Omega)d\nu(z)\le 2 m_x(\Omega)\quad \hbox{for $\nu$-a.e. $x\in\Omega$,}$$
  from where~\eqref{Nlacondcoj02} follows.

  The equivalent thesis~\eqref{Nlacondcoj021802}  follows from~\eqref{Nlacondcoj02} and
   the fact that
   \begin{center}
   $\hbox{$\nu$-$\underset{x\in\Omega}{\rm ess\,sup}\ H^{m}_{\partial \Omega}(x)$} \leq  \lambda^m_\Omega$ \ $\Longleftrightarrow$ \
$ \displaystyle \frac{1}{\nu(\Omega)}\int_\Omega m_x(\Omega)d\nu(x)
\le 2 \,\hbox{$\nu$-$\underset{x\in\Omega}{\rm ess\ inf}\ m_x(\Omega)$}.
$\end{center}
\noindent  For this last equivalence recall from \eqref{defcur} that
$$H^m_{\partial \Omega}(x) =  1 - 2 m_x(\Omega) $$
and that
$$\lambda_\Omega^m=\frac{P_m(\Omega)}{\nu(\Omega)}=1-\frac{1}{\nu(\Omega)}\int_\Omega m_x(\Omega) d\nu(x).$$
\end{proof}

The converse of Proposition~\ref{falsa.22} is not true in general, an example is given in \cite{MRT1} (see also~\cite{MRTLibro}) for $[\R^3, d, m^J]$, with $d$ the Euclidean distance and   $J= \frac{1}{|B_1(0)|} \1_{B_1(0)}$.
 Let us see an example, in the case of graphs, where the converse of Proposition~\ref{falsa.22} is not true
 \begin{example} Let $V(G)=\{ x_1,x_2,\ldots , x_8 \} $ be a finite weighted discrete graph with the following weights:
$ w_{x_1,x_2}=  w_{x_2,x_3} =  w_{x_6,x_7} =  w_{x_7,x_8}=2 , \ w_{x_3,x_4}= w_{x_4,x_5} = 1 , \ w_{x_4,x_5}=10$ and $w_{x_i,x_j}=0$ otherwise. If $\Omega:= \{ x_2, x_3, x_4, x_5, x_6, x_7 \}$, we have
$$\lambda_\Omega^{m^G} = \frac{1}{9} \quad \hbox{and} \quad H^{m^G}_{\partial \Omega}(x) \leq 0 \ \quad \forall \, x \in \Omega.$$
Therefore, \eqref{Nlacondcoj021802} holds. However, $\Omega$ is not $m^G$-calibrable since, if $A:= \{ x_4, x_5 \}$, we have
$$\frac{P_{m^G}(A)}{\nu_G(A)} = \frac{1}{11}.$$
 \end{example}

 \begin{proposition}\label{calvscon01} Let $\Omega \subset X$ be a $\nu$-measurable set with $0<\nu(\Omega)<\nu(X) $.
 \item(1)
If $\Omega=\Omega_1\cup\Omega_2$ with $\nu(\Omega_1\cap\Omega_2)=0$, $\nu(\Omega_1)>0$, $\nu(\Omega_2)>0$, and $L_m(\Omega_1,\Omega_2)=0$ (whenever this non-trivial decomposition is satisfied we will write $\Omega=\Omega_1\cup_m\Omega_2$), then
$$\min\{\lambda_{\Omega_1}^m,\lambda_{\Omega_2}^m\}\le \lambda_\Omega^m.$$
\item(2) If $\Omega=\Omega_1\cup_m\Omega_2$ is $m$-calibrable, then each $\Omega_i$ is $m$-calibrable, $i=1,2$, and
$$\lambda_\Omega^m=\lambda_{\Omega_1}^m=\lambda_{\Omega_2}^m.$$

\end{proposition}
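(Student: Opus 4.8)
The plan is to reduce both parts to two elementary observations about the splitting $\Omega=\Omega_1\cup_m\Omega_2$: the additivity of volume and of $m$-perimeter, and the mediant inequality for ratios of nonnegative numbers. Volume additivity, $\nu(\Omega)=\nu(\Omega_1)+\nu(\Omega_2)$, is immediate from $\nu(\Omega_1\cap\Omega_2)=0$. For the perimeter, I would write $P_m(\Omega)=L_m(\Omega,X\setminus\Omega)=L_m(\Omega_1,X\setminus\Omega)+L_m(\Omega_2,X\setminus\Omega)$, splitting the outer integral over the $\nu$-essentially disjoint union $\Omega=\Omega_1\cup\Omega_2$; then, since $\Omega_i\subset\Omega$ and $L_m(\Omega_1,\Omega_2)=0$, one has $L_m(\Omega_1,X\setminus\Omega_1)=L_m(\Omega_1,X\setminus\Omega)+L_m(\Omega_1,\Omega_2)=L_m(\Omega_1,X\setminus\Omega)$, and symmetrically for $\Omega_2$, so $P_m(\Omega)=P_m(\Omega_1)+P_m(\Omega_2)$ (this is also Proposition~\ref{launion01} applied with $L_m(\Omega_1,\Omega_2)=0$). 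As a byproduct $P_m(\Omega_i)=L_m(\Omega_i,X\setminus\Omega)\le P_m(\Omega)$, so $\Omega_1,\Omega_2$ have finite $m$-perimeter whenever $\Omega$ does, and $0<\nu(\Omega_i)\le\nu(\Omega)<\nu(X)$, so $\lambda_{\Omega_1}^m$ and $\lambda_{\Omega_2}^m$ are meaningful finite quantities.

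For part (1) I would then combine these: $\lambda_\Omega^m=\frac{P_m(\Omega_1)+P_m(\Omega_2)}{\nu(\Omega_1)+\nu(\Omega_2)}$ is a mediant of $\lambda_{\Omega_1}^m$ and $\lambda_{\Omega_2}^m$, and for $a_i\ge 0$, $b_i>0$ one always has $\min\{a_1/b_1,a_2/b_2\}\le(a_1+a_2)/(b_1+b_2)\le\max\{a_1/b_1,a_2/b_2\}$, which is a one-line cross-multiplication (if, say, $a_1b_2\le a_2b_1$, then the numerator of $(a_1+a_2)/(b_1+b_2)-a_1/b_1$ equals $a_2b_1-a_1b_2\ge 0$). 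With $a_i=P_m(\Omega_i)$ and $b_i=\nu(\Omega_i)$ this yields $\min\{\lambda_{\Omega_1}^m,\lambda_{\Omega_2}^m\}\le\lambda_\Omega^m$, as claimed.

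For part (2), assume $\Omega$ is $m$-calibrable, that is, $P_m(E)/\nu(E)\ge h_1^m(\Omega)=\lambda_\Omega^m$ for every $\nu$-measurable $E\subset\Omega$ with $\nu(E)>0$. Taking $E=\Omega_1$ and $E=\Omega_2$ gives $\lambda_{\Omega_i}^m\ge\lambda_\Omega^m$, $i=1,2$; combined with part (1) this forces $\min\{\lambda_{\Omega_1}^m,\lambda_{\Omega_2}^m\}=\lambda_\Omega^m$, so one of the two — say $\lambda_{\Omega_1}^m$ — equals $\lambda_\Omega^m$. Then from $\lambda_\Omega^m(\nu(\Omega_1)+\nu(\Omega_2))=P_m(\Omega_1)+P_m(\Omega_2)=\lambda_\Omega^m\nu(\Omega_1)+P_m(\Omega_2)$ we get $P_m(\Omega_2)=\lambda_\Omega^m\nu(\Omega_2)$, i.e.\ $\lambda_{\Omega_2}^m=\lambda_\Omega^m$ as well, so $\lambda_\Omega^m=\lambda_{\Omega_1}^m=\lambda_{\Omega_2}^m$. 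Finally, for each $i$, any $\nu$-measurable $F\subset\Omega_i$ with $\nu(F)>0$ satisfies $F\subset\Omega$, hence $P_m(F)/\nu(F)\ge\lambda_\Omega^m=\lambda_{\Omega_i}^m$; since $F=\Omega_i$ attains this value, $h_1^m(\Omega_i)=\lambda_{\Omega_i}^m$, i.e.\ $\Omega_i$ is $m$-calibrable.

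I do not anticipate a real obstacle here: once perimeter additivity under the hypothesis $L_m(\Omega_1,\Omega_2)=0$ is recorded, both parts are pure arithmetic with mediants together with the definition of $h_1^m$. The only points that deserve a line of care are the finiteness issues noted in the first paragraph (so that the mediant manipulations are legitimate and $\lambda_{\Omega_1}^m$, $\lambda_{\Omega_2}^m$ are defined), which are handled by $P_m(\Omega_i)\le P_m(\Omega)$ and $\nu(\Omega_i)\le\nu(\Omega)<\nu(X)$, and the remark that the $m$-connectedness assumption is not violated, since the decomposition is of $\Omega$ (with $\nu(\Omega)<\nu(X)$), not of $X$.
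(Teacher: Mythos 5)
Your proof is correct and follows essentially the same route as the paper, which disposes of (1) by combining Proposition~\ref{launion01} (giving $P_m(\Omega)=P_m(\Omega_1)+P_m(\Omega_2)$ when $L_m(\Omega_1,\Omega_2)=0$) with the mediant inequality, and of (2) by combining (1) with the definition of $m$-calibrability. You have simply written out the details the paper leaves implicit, including the useful observations that $P_m(\Omega_i)\le P_m(\Omega)$ and that equality of one ratio with the mediant forces equality of the other.
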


\begin{proof}
(1)  is a direct consequence of Proposition~\ref{launion01} and the fact that, for $a,b,c,d$ positive real numbers, $\min\left\{\frac{a}{b},\frac{c}{d}\right\}\le\frac{a+c}{b+d}.$
(2) is a direct consequence of (1) together with the definition of $m$-calibrability.
\end{proof}

\section{The Eigenvalue Problem for the $1$-Laplacian in Metric Random Walk Spaces}\label{Eigenpair}
Let $[X,d,m]$ be a metric random walk  space with invariant and reversible measure~$\nu$ and   assume that $[X,d,m]$ is $m$-connected.

In this section we introduce the eigenvalue problem associated with the $1$-Laplacian $\Delta^m_1$ and its relation with the Cheeger minimization problem.  For the particular case of finite weighted discrete graphs where the weights are either $0$ or $1$, this problem was first studied by  Hein and B\"uhler (\cite{HB}) and a more complete study was subsequently performed by Chang in \cite{Chang1}.

\begin{definition}\label{defeigenpair}{
A pair $(\lambda, u) \in \R \times L^2(X, \nu)$ is called an {\it $m$-eigenpair} of the $1$-Laplacian $\Delta^m_1$ on $X$ if $\Vert u \Vert_{L^1(X,\nu)} = 1$ and    there exists $\xi \in {\rm sign}(u)$  (i.e., $\xi(x) \in {\rm sign}(u(x))$ for every $x\in X$)  such that
\begin{equation}\label{deff1}
\lambda \, \xi \in \partial \mathcal{F}_m(u) = - \Delta^m_1 u.
\end{equation}
The function $u$ is called an {\it $m$-eigenfunction} and $\lambda$ an {\it $m$-eigenvalue} associated to $u$.
}
\end{definition}

   Observe that, if $(\lambda, u)$ is an $m$-eigenpair of $\Delta^m_1$, then $(\lambda, - u)$ is also an $m$-eigenpair of $\Delta^m_1$.

\begin{remark}\label{1257m} By Theorem \ref{chsubd}, the following statements  are equivalent:

\noindent (1) $(\lambda, u)$ is an  $m$-eigenpair of the $1$-Laplacian $\Delta^m_1$ .

 \noindent (2)  There exists
$\hbox{\bf g}\in L^\infty(X\times X, \nu \otimes m_x)$ antisymmetric with $\Vert \hbox{\bf g} \Vert_{L^\infty(X \times X,\nu\otimes m_x)} \leq 1$,
        such that
   \begin{equation}\label{1-lapla.var-ver2} \left\{ \begin{array}{ll}
\displaystyle-\int_{X}\g(x,y)\,dm_x(y)= \lambda \xi(x) \quad \hbox{for }\nu-\mbox{a.e. }x\in X,
    \\ \\ \displaystyle -\int_{X} \int_{X}\hbox{\bf g}(x,y)dm_x(y)\,u(x)d\nu(x)= TV_m(u). \end{array}\right.
    \end{equation}

\noindent (3) There exists   $\hbox{\bf g}\in L^\infty(X\times X, \nu \otimes m_x)$ antisymmetric with $\Vert \hbox{\bf g} \Vert_{L^\infty(X \times X,\nu\otimes m_x)} \leq 1$,
        such that
   \begin{equation}\label{1-lapla.var-ver202} \left\{ \begin{array}{ll}
\displaystyle-\int_{X}{\bf g}(x,y)\,dm_x(y)= \lambda \xi(x) \quad \hbox{for }\nu-\mbox{a.e. }x\in X,
    \\ \\ \displaystyle {\bf g}(x,y)(u(y)-u(x))=|u(y)-u(x)|\quad \hbox{for }\nu \otimes m_x-\hbox{a.e. } (x,y)\in X\times X; \end{array}\right.
    \end{equation}

\noindent (4) There exists
$\hbox{\bf g}\in L^\infty(X\times X, \nu \otimes m_x)$ antisymmetric with $\Vert \hbox{\bf g} \Vert_{L^\infty(X \times X,\nu\otimes m_x)}\leq 1$,
        such that
   \begin{equation}\label{forpadib} \left\{ \begin{array}{ll}
\displaystyle-\int_{X}\g(x,y)\,dm_x(y)= \lambda \xi(x) \quad \hbox{for }\nu-\mbox{a.e. }x\in X,
    \\ \\ \displaystyle \lambda= TV_m(u); \end{array}\right.
    \end{equation}

\end{remark}

\begin{remark}
 Note   that, since $TV_m(u)=\lambda$ for any $m$-eigenpair $(\lambda, u)$ of  $\Delta^m_1$, then
$$\lambda=TV_m(u)= \frac12 \int_X\int_X \vert u(y)-u(x)\vert dm_x(y)d\nu(x)\leq \frac12 \int_X\int_X (\vert u(y)\vert + \vert u(x)\vert)  dm_x(y)d\nu(x)=\Vert u \Vert_1=1,$$
thus $$0\leq \lambda \leq 1.$$
\end{remark}

\begin{example}\label{1Laplagrapheigh}  Let $[V(G), d_G, m^G]$  be the metric random walk space given in Example~\ref{JJ}~(3) with invariant and reversible measure $\nu_G$. Then, a pair $(\lambda, u) \in \R \times L^2(V(G), \nu_G)$ is an $m^G$-eigenpair of  $\Delta^{m^G}_1$ if $\Vert u \Vert_{L^1(V(G),\nu_G)} = 1$ and
 there exists $\xi \in {\rm sign}(u)$ and $\hbox{\bf g}\in L^\infty(V(G)\times V(G), \nu_G \otimes m^{G}_x)$ antisymmetric with $\Vert \hbox{\bf g} \Vert_{L^\infty(V(G)\times V(G),\nu_G\otimes m^G_x)} \leq 1$
        such that
   \begin{equation}\label{1-lapla.var-verexam} \left\{ \begin{array}{ll}
\displaystyle- \sum_{y \in V(G)}\g(x,y)\frac{w_{xy}}{dx} = \lambda \xi(x) \quad \hbox{for }\nu_G-\mbox{a.e. }x\in V(G),
    \\ \\ \displaystyle \hbox{\bf g}(x,y) \in {\rm sign}(u(y) - u(x))\quad \hbox{for }\nu_G \otimes m^{G}_x-\hbox{a.e. } (x,y)\in V(G)\times V(G). \end{array}\right.
    \end{equation}

 In  \cite{Chang1}, Chang  gives the $1$-Laplacian spectrum for some special graphs like the Petersen graph, the complete graph $K_n$, the circle graph with $n$ vertices $C_n$, etc.
We will now provide an example in which the vertices have loops.
     Let $V = V(G)=\{ a, b \}$ and $w_{aa} = w_{bb} = p$, $w_{ab} = w_{ba} = 1- p$, with $0 < p <1$. Then, $(\lambda, u) \in \R \times L^2(V, \nu_G)$ is an $m^G$-eigenpair of  $\Delta^{m^G}_1$ if $\vert u(a) \vert + \vert u(b) \vert = 1$ and
 there exists $\xi \in {\rm sign}(u)$ and $\hbox{\bf g}\in L^\infty(V\times V, \nu_G \otimes m^{G}_x)$ antisymmetric with $\Vert \hbox{\bf g} \Vert_{L^\infty(V\times V,\nu_G\otimes m^G_x)} \leq 1$
        such that
          \begin{equation}\label{1-lapla.var-verexamI} \left\{ \begin{array}{ll}
          \displaystyle \g(a,a)=\g(b,b)=0, \ \g(a,b)=-\g(b,a),
\\ \\ \displaystyle-\g(a,b) (1-p)= \lambda \xi(a), \\ \\ \displaystyle \displaystyle\g(a,b)(1-p)= \lambda \xi(b),
    \\ \\ \displaystyle {\bf g}(a,b)(u(b)-u(a))=|u(b)-u(a)|.
    \end{array}\right.
    \end{equation}
  Now, it is easy to see from system \eqref{1-lapla.var-verexamI},  using a case-by-case argument, that the
  $m$-eigenvalues of $\Delta^{m^G}_1$ are $$\lambda=0\ \hbox{ and } \ \lambda=1-p,$$
and the    following pairs are $m$-eigenpairs of $\Delta^{m^G}_1$ (observe that the measure $\nu_G$ is not normalized):
     $$
     \begin{array}{l}\displaystyle
     \lambda = 0, \quad \hbox{and} \quad (u(a),u(b))= (1/2, 1/2),
     \\[10pt] \displaystyle
       \lambda = 1 - p,  \quad \hbox{and} \quad (u(a),u(b)) =(0,-1)+\mu(1,1)
       \quad\forall 0\le\mu\le 1.
     \end{array}
     $$
  For example, suppose that $(\lambda, u)$ is an $m$-eigenpair with $u(a)=u(b)$. Then, $u(a)=u(b)=\frac12$ ($u(a)=u(b)=-\frac12$ yields the same eigenvalue) and, therefore, $\xi=1$ thus $\lambda=0$. Alternatively, we could have $u(a)>u(b)$ thus $g(a,b)=-1$ and we continue by using \eqref{1-lapla.var-verexamI}.

     Observe that, if a locally finite weighted discrete graph contains a vertex $x$ with no loop, i.e. $w_{x,x}=0$, then $\left(1,\frac{1}{d_x}\delta_x\right)$ is an $m$-eigenpair of the $1$-Laplacian.
      Conversely, if $1$ is an $m$-eigenvalue of $\Delta^{m^G}_1$, then there exists at least one vertex in the graph with no loop (this follows easily from Proposition~\ref{chhar}).
\end{example}

    We have the following relation between $m$-calibrable sets and $m$-eigenpairs of $\Delta^{m}_1$.

  \begin{theorem}\label{eigencalib} Let $\Omega\subset X$ be a $\nu$-measurable set with $0<\nu(\Omega)<\nu(X)$. We have:
  \item(i) If $(\lambda^m_\Omega, \frac{1}{\nu(\Omega)} \1_\Omega)$ is an $m$-eigenpair of $\Delta_1^m$, then $\Omega$ is $m$-calibrable.\\\
  \item(ii) If $\Omega$ is $m$-calibrable and
  \begin{equation}\label{foormul}
  m_x(\Omega) \leq \lambda_\Omega^m \quad   \hbox{for $\nu$-almost every } \ x \in X \setminus \Omega,
  \end{equation}
then $(\lambda^m_\Omega, \frac{1}{\nu(\Omega)} \1_\Omega)$ is an $m$-eigenpair of $\Delta_1^m$.

  \end{theorem}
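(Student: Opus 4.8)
The plan is to prove both implications using the characterization of $m$-eigenpairs in Remark~\ref{1257m} together with the calibrability characterization in Theorem~\ref{falsa} and Theorem~\ref{trasen002}. Note first that, since $u = \frac{1}{\nu(\Omega)}\1_\Omega$ is nonnegative and $\nu(\Omega) > 0$, we have $\Vert u \Vert_{L^1(X,\nu)} = 1$, so the normalization condition in Definition~\ref{defeigenpair} is automatic, and $TV_m(u) = \frac{1}{\nu(\Omega)} P_m(\Omega) = \lambda_\Omega^m$ by~\eqref{theabove01} and the definition of $\lambda_\Omega^m$. Thus, by the equivalence (1)$\iff$(4) in Remark~\ref{1257m}, the pair $(\lambda_\Omega^m, \frac{1}{\nu(\Omega)}\1_\Omega)$ is an $m$-eigenpair if and only if there exist an antisymmetric $\g \in L^\infty(X\times X, \nu\otimes m_x)$ with $\Vert\g\Vert_\infty \leq 1$ and a $\xi \in {\rm sign}\!\left(\frac{1}{\nu(\Omega)}\1_\Omega\right)$ (equivalently $\xi \in {\rm sign}(\1_\Omega)$) such that
$$-\int_X \g(x,y)\, dm_x(y) = \lambda_\Omega^m\, \xi(x) \qquad \hbox{for $\nu$-a.e. } x\in X.$$

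For (i): assume $(\lambda_\Omega^m, \frac{1}{\nu(\Omega)}\1_\Omega)$ is an $m$-eigenpair, so we have $\g$ and $\xi$ as above. For $x\in\Omega$ we have $\xi(x) = 1$, hence $-\int_X \g(x,y)\,dm_x(y) = \lambda_\Omega^m$; equivalently $-\lambda_\Omega^m\tau \in \Delta_1^m 0$ where $\tau := -\frac{1}{\lambda_\Omega^m}\int_X\g(\cdot,y)\,dm_\cdot(y)$ satisfies $\tau = 1$ on $\Omega$. By~\eqref{tue1519} in Remark~\ref{otrorem01}(2) this gives $-\lambda_\Omega^m\tau \in \Delta_1^m \1_\Omega$ in $X$, and then the implication (ii)$\Rightarrow$(i) of Theorem~\ref{falsa} yields that $\Omega$ is $m$-calibrable. (Alternatively, one can argue directly: for any $F\subset\Omega$ with $\nu(F)>0$, multiplying the eigenvalue equation by $\1_F$, integrating, and using antisymmetry of $\g$ and reversibility of $\nu$ as in the proof of Theorem~\ref{falsa}, one gets $\lambda_\Omega^m\nu(F) \leq P_m(F)$, so $h_1^m(\Omega) = \lambda_\Omega^m$.)

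For (ii): assume $\Omega$ is $m$-calibrable and~\eqref{foormul} holds. By Theorem~\ref{trasen002} there is an antisymmetric function $\g_0$ on $\Omega\times\Omega$ with $|\g_0|\leq 1$ $\nu\otimes m_x$-a.e. and $\lambda_\Omega^m = -\int_\Omega \g_0(x,y)\,dm_x(y) + 1 - m_x(\Omega)$ for $x\in\Omega$. We extend $\g_0$ to an antisymmetric $\g$ on $X\times X$ by setting $\g(x,y) = -1$ for $x\in\Omega$, $y\in X\setminus\Omega$; $\g(x,y) = 1$ for $x\in X\setminus\Omega$, $y\in\Omega$; and $\g(x,y) = 0$ for $x,y\in X\setminus\Omega$. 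Then $\Vert\g\Vert_\infty\leq 1$ and $\g(x,y) \in {\rm sign}(\1_\Omega(y) - \1_\Omega(x))$ $\nu\otimes m_x$-a.e. For $x\in\Omega$ we compute $-\int_X\g(x,y)\,dm_x(y) = -\int_\Omega\g_0(x,y)\,dm_x(y) + m_x(X\setminus\Omega) = -\int_\Omega\g_0(x,y)\,dm_x(y) + 1 - m_x(\Omega) = \lambda_\Omega^m$, which is $\lambda_\Omega^m\cdot 1 = \lambda_\Omega^m\xi(x)$ for the choice $\xi = 1$ on $\Omega$. For $x\in X\setminus\Omega$ we compute $-\int_X\g(x,y)\,dm_x(y) = -\int_\Omega 1\,dm_x(y) - \int_{X\setminus\Omega}0\,dm_x(y) = -m_x(\Omega)$; we need this to equal $\lambda_\Omega^m\xi(x)$ for some $\xi(x)\in[-1,1]$, i.e.\ $\xi(x) = -m_x(\Omega)/\lambda_\Omega^m$, and~\eqref{foormul} guarantees $|\xi(x)| \leq 1$. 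Since $u = 0$ on $X\setminus\Omega$, any value in $[-1,1]$ is an admissible choice for $\xi(x)\in{\rm sign}(u(x)) = [-1,1]$. Thus we have produced $\g$ and $\xi\in{\rm sign}(u)$ satisfying the eigenvalue equation, and by Remark~\ref{1257m} the pair $(\lambda_\Omega^m, \frac{1}{\nu(\Omega)}\1_\Omega)$ is an $m$-eigenpair of $\Delta_1^m$.

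The main obstacle is essentially bookkeeping rather than conceptual: one must be careful to verify that the extended flux field $\g$ lies in ${\rm sign}(\1_\Omega(y)-\1_\Omega(x))$ on all pieces of $X\times X$ (which forces the boundary values $\g = \mp 1$) and that the resulting $\xi$ on $X\setminus\Omega$ is a legitimate selection of ${\rm sign}(0) = [-1,1]$ — this is exactly where hypothesis~\eqref{foormul} is used and is the only place it enters. The other subtlety is checking the normalization $TV_m(u) = \lambda_\Omega^m$ so that the condition ``$\lambda = TV_m(u)$'' in form (4) of Remark~\ref{1257m} is met; this follows immediately from $P_m(\Omega) = \lambda_\Omega^m\nu(\Omega)$ and the definition of $TV_m$.
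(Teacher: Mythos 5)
Your proof is correct and follows essentially the same route as the paper: part (i) reduces to the implication (ii)$\Rightarrow$(i) of Theorem~\ref{falsa} (your detour through $\Delta_1^m 0$ and \eqref{tue1519} is a harmless unpacking of what the paper reads off directly from the $0$-homogeneity of $\partial\mathcal{F}_m$), and part (ii) reconstructs, via Theorem~\ref{trasen002}, exactly the function $\tau^*$ of Theorem~\ref{falsa}(iii) and observes that hypothesis~\eqref{foormul} is precisely the condition $\tau^*\in{\rm sign}(\1_\Omega)$. No gaps.
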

  \begin{proof} (i): Since  $(\lambda^m_\Omega, \frac{1}{\nu(\Omega)} \1_\Omega)$ is an $m$-eigenpair of $\Delta_1^m$, there exists $\xi \in {\rm sign}(\1_\Omega)$ such that $- \lambda_\Omega^m  \xi \in \Delta_1^m(\1_\Omega)$. Then, by Theorem \ref{falsa}, we have that $\Omega$ is $m$-calibrable.

\noindent (ii): If  $\Omega$ is $m$-calibrable, by Theorem \ref{falsa}, we have
 \begin{equation}\label{N22alaresvi2213} - \lambda^m_\Omega \tau^* \in \Delta_1^m \1_\Omega \, \quad \hbox{in} \ X
\end{equation}
 for  $$\tau^*(x)=\left\{
\begin{array}{ll}
1   &\quad\hbox{if } x\in  \Omega ,\\[5pt]
\displaystyle - \frac{1}{\lambda_\Omega^m} m_x(\Omega)&\quad\hbox{if } x\in X\setminus\Omega.
\end{array}
\right.$$
Now, by \eqref{foormul}, we have that $\tau^* \in {\rm sign} (\1_\Omega)$ and, consequently, $\left(\lambda^m_\Omega, \frac{1}{\nu(\Omega)} \1_\Omega\right)$ is an $m$-eigenpair of $\Delta_1^m$.
  \end{proof}

 In the next example we see that, in Theorem~\ref{eigencalib}, the reverse implications of~(i) and~(ii) are false in general.

  \begin{example}\label{ejeigenpair}(1)
 Let $G=(V,E)$ be the weighted discrete graph where $V= \{ a,b,c \}$ is the vertex set and the weights are given by $w_{ab} = w_{ac} = w_{bc} = \frac12$ and $w_{aa}=w_{bb}=w_{cc}=0$. Then, $m_a = \frac12 \delta_b +\frac12 \delta_c$, $m_b = \frac12 \delta_a +\frac12 \delta_c$, $m_c = \frac12 \delta_a +\frac12 \delta_b$ and $\nu_G = \delta_a + \delta_b + \delta_c$. By Remark \ref{observ1}(2), we have  that $\Omega:= \{a,b \}$ is  $m^G$-calibrable.  However,
 $ \lambda_\Omega^{m^G} = \frac12$  and $(\frac12, \1_\Omega)$ is not an $m$-eigenpair of $\Delta_1^m$ since $0 \not\in{\rm med}_\nu (\1_\Omega)$ (see Corollary \ref{meddiaa} and the definition of ${\rm med}_\nu$ above that Corollary). Therefore, \eqref{foormul} does not hold (it follows by a simple calculation that $m^G_c (\Omega) = 1>\frac12=\lambda_\Omega^{m^G}$).

\item(2)  Consider the locally finite weighted discrete graph $[\mathbb{Z}^2, d_{\mathbb{Z}^2}, m^{\mathbb{Z}^2}]$, where $d_{\mathbb{Z}^2}$ is the Hamming distance and the weights are defined as usual: $w_{xy}=1$ if $d_{\mathbb{Z}^2}(x,y)=1$ and $w_{xy}=0$ otherwise  (see Example \ref{JJ} (3)).   For ease of notation we denote $m:=m^{\Z^2}$. Let
$$\Omega_k:=\{(i,j)\in \mathbb{Z}^2:0\le i,j\le k-1\} \ \hbox{for} \ k\ge 1 .$$
It is easy to see that
$$\lambda_{\Omega_k}^m=\frac{1}{k}.$$
For $1\le k \le 4$ these sets are $m$-calibrable and satisfy \eqref{foormul}. Therefore, for $1 \le k \le 4$, $\left(\frac{1}{k},\frac{1}{\nu(\Omega_k)}\1_{\Omega_k}\right)$ is an $m$-eigenpair of the $1$-Laplacian in $\mathbb{Z}^2$ and with the same reasoning they are still $m$-eigenpairs of the $1$-Laplacian in the metric random walk space $\left[(\Omega_k)_{m},d_{\mathbb{Z}^2}, m^{(\Omega_k)_{m}}\right]$ (recall Corollary \ref{corolarioomegamcal}, for ease of notation let $m_k:=m^{(\Omega_k)_{m}}$). For this last space, recall the definition of $(\Omega_k)_{m}$ from \eqref{omegam} and that of $m_k=m^{(\Omega_k)_{m}}$ from Example \ref{JJ}~(5). Note further that, in the case of graphs, $\partial_{m^{G}}\Omega$ is the set of vertices outside of $\Omega$ which are related to vertices in $\Omega$, i.e., the vertices outside of $\Omega$ which are at a graph distance of $1$ from $\Omega$. For example, $\Omega_2=\{(0,0),(1,0),(1,1),(0,1)\}$ and
$(\Omega_2)_{m}=\Omega_2\cup\partial_{m}\Omega_2$,
where
$$\partial_{m}\Omega_2=\{(2,0),(2,1),(1,2),(0,2),(-1,1),
(-1,0),(0,-1),(1,-1)\}.$$
Moreover, recalling again Example \ref{JJ}~(5), we have that $(m_2)_x(\{y\})=m_x(\{y\})$ for every $x$, $y\in(\Omega_2)_m$, i.e., the probabilities associated to the jumps between different vertices in $(\Omega_2)_m$ do not vary. On the other hand,
$$(m_2)_x(\{x\})=m_x(\partial_{m}\Omega_2)=\frac14+\frac14=\frac12,$$
for every $x\in \partial_m\Omega_2$ (note that, in this case, each vertex in $\partial_m\Omega_2$ is related to $2$ vertices outside of $(\Omega_2)_m$).  Consequently, informally speaking, a loop ``appears'' at each vertex of $\partial_m\Omega_2$ since there is now the possibility of staying at the same vertex after a jump. However, this new metric random walk space $\left[(\Omega_2)_{m},d_{\mathbb{Z}^2}, m_2\right]$ can be reframed so as to regard it as associated to a weighted discrete graph, thus making the previous formal comment rigorous. In other words, we may define a weighted discrete graph which gives the same associated metric random walk space. This is easily done by taking the vertex set $V:=(\Omega_2)_m$ and the following weights: $w_{x,y}=1$ for $x$, $y\in (\Omega_2)_m$ with $d_{\Z^2}(x,y)=1$, $w_{x,x}=2$ for $x\in \partial_m(\Omega_2)_m$ and $w_{x,y}=0$ otherwise (see Figure \ref{fig:partiguales}).

Let us see what happens for $$\Omega_5:=\{(i,j)\in \mathbb{Z}^2:0\le i,j\le 4\} .$$
In this case,
$$\lambda_{\Omega_5}^{m}=\frac15,$$ and an algebraic calculation gives that $\left(\frac15,\frac{1}{\nu({\Omega_5})}\1_{\Omega_5}\right)$ is an $m$-eigenpair in $\mathbb{Z}^2$ (see Figure~\ref{fig001}). Moreover, $\left(\frac15,\frac{1}{\nu({\Omega_5})}\1_{\Omega_5}\right)$ is also an $m^A$-eigenpair of the $1$-Laplacian in the metric random walk space
$$\left[A:=\{(i,j)\in \mathbb{Z}^2:-2\le i,j\le 6\}, d_{\mathbb{Z}^2}, m^A\right]$$ or even in the metric random walk space obtained, in the same way, with the smaller set shown in Figure~\ref{fig001}.
\begin{figure}[h]
\includegraphics[scale=0.55]{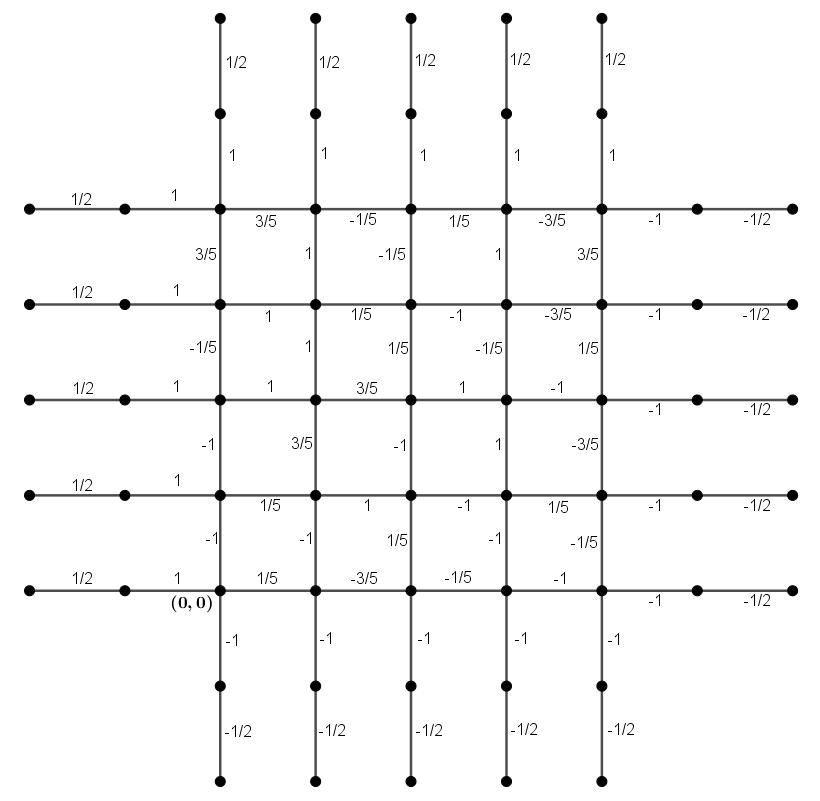}
\caption{The  numbers in the graph are the values of a function $\g(x,y)$ satisfying~(\ref{forpadib}), where $x$ is the vertex to the left of the number represented in the graph and $y$ the one to the right, or, alternatively, $x$ is the one above and $y$ the one below. Elsewhere, $\g(x,y)$ is taken as $0$. The vertex $(0,0)$ is labelled in the graph. As an example, $\g((0,0),(1,0))=1/5$ and $\g((0,1),(0,0))=-1$.}
  \label{fig001}
\end{figure}
 However,
$$(m_5)_{(i,j)}({\Omega_5})= \frac14\quad\forall (i,j)\in(\Omega_5)_{m}\setminus{\Omega_5}$$
so \eqref{foormul} is not satisfied. Furthermore, $\left(\frac15,\frac{1}{\nu({\Omega_5})}\1_{\Omega_5}\right)$ fails to be an $m_5$-eigenpair of the $1$-Laplacian in the metric random walk space $\left[(\Omega_5)_{m},d_{\mathbb{Z}^2}, m_5 \right]$ since the condition on the median  given in Corollary \ref{meddiaa} is not satisfied;
nevertheless, $\Omega_5$ is still  $m_5$-calibrable in this setting.
  \end{example}

  \begin{remark}\label{13021901} Let us give some characterizations of~\eqref{foormul}.
  \item(1) In terms of the  $m$-mean curvature we have that,
  $$
 \eqref{foormul}  \Longleftrightarrow \hbox{$\nu$-}\underset{x\in \Omega^c}{\hbox{esssup}}\,H_{\partial \Omega^c}^m(x)\le\frac{1}{\nu(\Omega)}\int_\Omega H_{\partial \Omega}^m(x)d\nu(x),$$
where $\Omega^c=X\setminus \Omega$. Indeed, \eqref{foormul} is equivalent to $$1-2m_x(\Omega)\ge 1-2\frac{P_m(\Omega)}{\nu(\Omega)}=\frac{\nu(\Omega)-2P_m(\Omega)}{\nu(\Omega)} \quad \hbox{for $\nu$-almost every }  x \in \Omega^c,$$
and this inequality can be rewritten as
 $$-H_{\partial \Omega}^m(x)\le  \frac{1}{\nu(\Omega)}\int_\Omega H_{\partial \Omega}^m(y)d\nu(y) \quad \hbox{for $\nu$-almost every }  x \in \Omega^c$$
  thanks to~\eqref{defcur} and~\eqref{1secondf021}. Hence, since $ H_{\partial \Omega}^m(x)= -H_{\partial \Omega^c}^m(x)$, we are done.

\item(2)  Furthermore, we have that
$$
 \eqref{foormul}  \Longleftrightarrow  \frac{1}{\nu(\Omega)}\int_\Omega m_x(\Omega)d\nu(x)\,\le \hbox{$\nu$-}\underset{x\in \Omega^c}{\hbox{essinf}}\, m_x(\Omega^c).$$
 Indeed, in this case, on account of~\eqref{secondf021}, we rewrite~\eqref{foormul} as
 $$1-m_x(\Omega^c)\le 1-\frac{1}{\nu(\Omega)}\int_\Omega m_y(\Omega)d\nu(y) \quad \hbox{for $\nu$-almost every }  x \in \Omega^c,$$ or, equivalently,
 $$ \frac{1}{\nu(\Omega)}\int_\Omega m_y(\Omega)d\nu(y)\le m_x(\Omega^c) \quad \hbox{for $\nu$-almost every }  x \in \Omega^c,$$
 which gives us the characterization.
  \end{remark}

In the next example we give $m$-eigenpairs of the $1$-Laplacian for the metric random walk spaces given in Example~\ref{JJ}~(1).

    \begin{example} Let $\Omega \subset \R^N$ with $\mathcal{L}^N(\Omega) < \infty$ and consider the metric random walk space $[\Omega, d, m^{J,\Omega}]$ given in  Example~\ref{JJ}~(1) with $J:= \frac{1}{\mathcal{L}^N(B_r(0))} \1_{B_r(0)}$. Moreover, assume that  there exists a ball $B_\rho(x_0) \subset \Omega$ such that ${\rm dist}(B_\rho(x_0), \R^N \setminus \Omega) > r$. Then, by \eqref{perimomega}, we have
  $$P_{ m^{J,\Omega}} (B_\rho(x_0)) = P_{ m^{J}} (B_\rho(x_0)),$$
  and, since $B_\rho(x_0)$ is $m^J$-calibrable, we have that $B_\rho(x_0)$ is $m^{J,\Omega}$-calibrable. Assume also that  $\mathcal{L}^N(B_\rho(x_0)) < \frac12 \mathcal{L}^N(B_r(0))$. Let us see that
   \begin{equation}\label{Nfoormul}
  m^{J,\Omega}_x(B_\rho(x_0)) \leq \lambda_{B_\rho(x_0)}^{m^{J, \Omega}} \quad   \hbox{for $\mathcal{L}^N$-almost every }  x \in \Omega  \setminus B_\rho(x_0).
  \end{equation}
By Remark \ref{13021901}, \eqref{Nfoormul} is equivalent to
\begin{equation}\label{oollee}
\frac{1}{\mathcal{L}^N(B_\rho(x_0))}\int_{B_\rho(x_0)} m^{J,\Omega}_x(B_\rho(x_0))dx \,\le \hbox{$\mathcal{L}^N$-}\underset{x\in \Omega \setminus B_\rho(x_0)}{\hbox{essinf}}\,  m^{J,\Omega}_x(\Omega \setminus B_\rho(x_0)) ).
\end{equation}
Now, for $x\in \Omega$, we have
$$m^{J,\Omega}_x(B_\rho(x_0)) = m^{J}_x(B_\rho(x_0)) = \frac{1}{\mathcal{L}^N(B_r(0))} \int_{B_\rho(x_0)} \1_{B_r(0)}(x-y) dy \leq \frac12.$$
Then, for $x\in \Omega \setminus B_\rho(x_0)$, we have
$$m^{J,\Omega}_x(\Omega \setminus B_\rho(x_0)) ) = 1 - m^{J,\Omega}_x( B_\rho(x_0)) \geq  \frac12 \geq \frac{1}{\mathcal{L}^N(B_\rho(x_0))}\int_{B_\rho(x_0)} m^{J,\Omega}_x(B_\rho(x_0))dx.$$
Hence, \eqref{Nfoormul} holds. Therefore, by Theorem \ref{eigencalib}, we have that $$\left(\lambda^{m^{J,\Omega}}_{B_\rho(x_0)}, \frac{1}{\mathcal{L}^N(B_\rho(x_0))} \1_{B_{\rho(x_0)}}\right)$$ is an $m^{J,\Omega}$-eigenpair of $\Delta^{m^{J,\Omega}}_1$.

Similarly, for the metric random walk space $[\R^n, d, m^{J}]$ with $J= \frac{1}{\mathcal{L}^N(B_r(0))} \1_{B_r(0)}$, and for $\mathcal{L}^N(B_\rho(x_0)) < \frac12 \mathcal{L}^N(B_r(0))$, we have that $$\left(\lambda^{m^{J}}_{B_\rho(x_0)},  \frac{1}{\mathcal{L}^N(B_\rho(x_0))} \1_{B_{\rho(x_0)}}\right)$$ is an $m^{J}$-eigenpair of $\Delta^{m^{J}}_1$.

  \end{example}

\subsection{The $m$-Cheeger Constant of a  Metric Random Walk Space with Finite Measure}\

    In this subsection we give a relation between the non-null $m$-eigenvalues of the $1$-Laplacian and the $m$-Cheeger constant of $X$ when $\nu(X)<+\infty$.

 From now on in this section we assume that $[X, d, m]$  is a metric random walk space with invariant and reversible  probability measure $\nu$. Assuming that $\nu(X)=1$ is not a loss of generality since, for $\nu(X)<+\infty$, we may work with $\frac{1}{\nu(X)}\nu$. Observe that  $\lambda_D^m=\frac{P_m(D)}{\nu(D)}$ remains unchanged if we consider the normalized measure,  and the same is true for the $m$-eigenvalues of the $1$-Laplacian.

 In \cite{MST0} we have defined the {\it $m$-Cheeger constant} of $X$ as
$$
 h_m(X):= \inf \left\{ \frac{P_m (D)}{\min\{ \nu(D), \nu(X \setminus D)\}} \ : \  D \subset X, \ 0 < \nu(D) < 1 \right\} $$
 or, equivalently,
 \begin{equation}\label{1523m}
 h_m(X)= \inf \left\{ \frac{P_m (D)}{\nu(D)} \ : \  D \subset X, \ 0 < \nu(D) \leq \frac{1}{2}\right\}.
\end{equation}
Note that, as a consequence of \eqref{secondf021}, we get $$h_m(X)\le 1.$$
Furthermore, observe that this definition is consistent with the definition on graphs (see \cite{Ch}, also \cite{BJ}):

\begin{example}
Let $[V(G), d_G, m^G]$  be the metric random walk space given in Example~\ref{JJ}~(3) with invariant and reversible measure $\nu_G$. Then, for $E \subset V(G)$, since
    $$P_{m^G}(E) =  \sum_{x \in E} \sum_{y \not\in  E}  w_{x,y} \quad \hbox{and} \quad \nu_G(E):= \sum_{x \in E} d_x,$$
    we have
\begin{equation}\label{lambdagraph}
\frac{P_{m^G}(E)}{\nu_G(E)} = \frac{1}{\sum_{x \in E} d_x}\sum_{x \in E} \sum_{y \not\in  E}  w_{x,y}.
\end{equation}
    Therefore,
    \begin{equation}\label{cheeger1}
    h_{m^G}(V(G)) = \inf \left\{ \frac{1}{\sum_{x \in E} d_x}\sum_{x \in E} \sum_{y \not\in  E}  w_{x,y} \ : \ E \subset V(G), \ 0 < \nu_G(E) \leq \frac12 \nu_G(V)) \right\}.
    \end{equation}
    This minimization problem is closely related with the {\it balance graph cut problem}  that appears in Machine Learning Theory (see \cite{G-TS,G-TSBLBr}).
\end{example}

  Recall that in Section~\ref{aire001} we defined a different $m$-Cheeger constant (see~\eqref{cheeg1}), however, the $m$-Cheeger constant $h_m(X)$ is a global constant of the metric random walk space while the $m$-Cheeger constant $h_1^m(\Omega)$ is defined for non-trivial $\nu$-measurable subsets of the space. Note that, if $\nu(X)=1$, then
$$h_m(X)\le h_1^m(\Omega)$$
for any $\nu$-measurable set $\Omega\subset X$ such that $0<\nu(\Omega)\le 1/2$; and, if $h_m(X)=\frac{P_m (\Omega)}{\nu(\Omega)}$ for a $\nu$-measurable set $\Omega\subset X$ such that $0<\nu(\Omega)\le 1/2$, then $h_m(X)=h_1^m(\Omega)$ and, moreover, $\Omega$ is $m$-calibrable.

 \begin{proposition}\label{eigen1}
  Assume that $\nu$ is a probability measure (and, therefore, ergodic). Let $(\lambda, u)$ be an $m$-eigenpair of  $\Delta^m_1$. Then,
    \item(i) $\lambda = 0 \ \iff \ u \ \hbox{is constant $\nu$-a.e., that is, $u= 1$, or $u=-1$}.$

  \item(ii)   $\lambda \not= 0 \ \iff $  there exists $\xi \in {\rm sign}(u)$ such that
 $\displaystyle \int_X \xi(x) d \nu(x) = 0.$
 \end{proposition}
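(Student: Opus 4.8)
The plan is to exploit the characterizations of an $m$-eigenpair collected in Remark~\ref{1257m}, together with the fact that $\lambda = TV_m(u)$ for any $m$-eigenpair $(\lambda,u)$, and the ergodicity of $\nu$ (via Lemma~\ref{lemita1}) to pin down the constant eigenfunctions. For part~(i), the implication ($\Leftarrow$) is immediate: if $u$ is $\nu$-a.e.\ constant then $TV_m(u)=0$ by Lemma~\ref{lemita1}, and since $\lambda = TV_m(u)$ we get $\lambda=0$; note that the normalization $\Vert u\Vert_{L^1(X,\nu)}=1$ forces the constant to be $\pm1$ because $\nu(X)=1$. For ($\Rightarrow$), if $\lambda=0$ then $TV_m(u)=\lambda=0$, and Lemma~\ref{lemita1} (applicable since $\nu$ is ergodic) gives that $u$ is $\nu$-a.e.\ equal to a constant, which again must be $\pm1$ by the $L^1$-normalization.

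For part~(ii), the cleanest route is to combine (i) with the second equation in the eigenpair characterization \eqref{1-lapla.var-ver2}, namely that there is an antisymmetric $\mathbf g$ with $\Vert\mathbf g\Vert_\infty\le1$ and
$$-\int_X\int_X \mathbf g(x,y)\,dm_x(y)\,u(x)\,d\nu(x) = TV_m(u) = \lambda,$$
together with the first equation $-\int_X \mathbf g(x,y)\,dm_x(y)=\lambda\xi(x)$ for $\nu$-a.e.\ $x$, where $\xi\in{\rm sign}(u)$. Integrating the first equation over $X$ with respect to $\nu$ and using that $\mathbf g$ is antisymmetric and $\nu$ is reversible, the left-hand side vanishes:
$$\int_X\left(\int_X \mathbf g(x,y)\,dm_x(y)\right)d\nu(x) = \int_{X\times X}\mathbf g(x,y)\,d(\nu\otimes m_x)(x,y)=0$$
because the integrand is antisymmetric and $\nu\otimes m_x$ is symmetric (by reversibility), so one must be slightly careful that this integral is well defined — $\mathbf g$ is bounded and $\nu\otimes m_x$ is a probability measure since $\nu(X)=1$, so integrability is automatic. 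Hence $\lambda\int_X\xi(x)\,d\nu(x)=0$. If $\lambda\neq0$ this immediately yields $\int_X\xi\,d\nu=0$ for this particular $\xi$, proving ($\Rightarrow$). Conversely, if $\lambda=0$ then by part~(i) $u$ is $\nu$-a.e.\ the constant $1$ or $-1$, so every $\xi\in{\rm sign}(u)$ equals that same constant $\nu$-a.e., giving $\int_X\xi\,d\nu=\pm\nu(X)=\pm1\neq0$; thus no $\xi\in{\rm sign}(u)$ can have zero integral, which is the contrapositive of ($\Leftarrow$).

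The main obstacle, such as it is, is purely a matter of bookkeeping: making sure the $\xi$ whose integral vanishes is the same $\xi$ that appears in the eigenpair relation \eqref{1-lapla.var-ver2} (the definition of eigenpair only asserts existence of \emph{some} $\xi\in{\rm sign}(u)$ with $\lambda\xi\in\partial\mathcal F_m(u)$, so the statement of (ii) should be read as "there exists $\xi$" consistently on both sides), and checking that integrating the identity $-\int_X\mathbf g(x,y)\,dm_x(y)=\lambda\xi(x)$ against $\nu$ is legitimate and that the resulting double integral of the antisymmetric $\mathbf g$ against the symmetric measure $\nu\otimes m_x$ really is zero — this is exactly the reversibility computation already used repeatedly in the paper (e.g.\ in the proof of Theorem~\ref{chsubd}, equivalence of \eqref{1-lapla.sign} and \eqref{1-lapla.sign2}). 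No genuinely new estimate is needed; the argument is a short assembly of Lemma~\ref{lemita1}, Remark~\ref{1257m}, and the reversibility of $\nu$.
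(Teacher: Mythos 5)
Your proposal is correct and follows essentially the same route as the paper: part (i) via $\lambda=TV_m(u)$ and Lemma~\ref{lemita1}, and part (ii) by integrating the identity $-\int_X\mathbf g(x,y)\,dm_x(y)=\lambda\xi(x)$ against $\nu$ and using antisymmetry of $\mathbf g$ together with reversibility, with the converse reduced to (i). The extra care you take about integrability and about which $\xi$ is used matches what the paper implicitly assumes, so nothing is missing.
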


 Observe that $(0,1)$ and $(0,-1)$ are $m$-eigenpairs of the $1$-Laplacian in metric random walk spaces with an invariant and reversible probability measure.

  \begin{proof} (i) By \eqref{1-lapla.var-ver202}, if $\lambda = 0$, we have that $TV_m(u) =0$ and then, by Lemma \ref{lemita1}, we get that  $u$ is constant $\nu$-a.e.  thus, since $\Vert u\Vert_{L^1(X,\nu)}=1$ (and we are assuming $\nu(X)=1$), either $u=1$, or $u=-1$. Similarly, if $u$ is constant $\nu$-a.e. then $TV_m(u) =0$ and, by  \eqref{1-lapla.var-ver202}, $\lambda =0$.

\noindent (ii) ($\Longleftarrow$) If $\lambda=0$, by (i), we have that $u=1$, or $u=-1$, and this is a contradiction with the existence of $\xi \in {\rm sign}(u)$ such that
 $  \int_X \xi(x) d \nu(x) = 0$.
  ($\Longrightarrow$) There exists $\xi \in {\rm sign}(u)$ and
$\hbox{\bf g}\in L^\infty(X\times X, \nu \otimes m_x)$ antisymmetric with $\Vert \hbox{\bf g} \Vert_{L^\infty(X \times X,\nu\otimes m_x)} \leq 1$ satisfying \eqref{1-lapla.var-ver2}. Hence, since ${\bf g}$ is antisymmetric, by the reversibility of $\nu$, we have
$$\lambda \int_X \xi(x) d\nu(x) = -\int_X \int_{X}\g(x,y)\,dm_x(y) d\nu(x) = 0.$$
Therefore, since $\lambda \not= 0$,
$$\int_X \xi(x) d\nu(x) =0.$$
  \end{proof}

Recall now that, given a function $u : X \rightarrow \R$,   $\mu \in \R$ is a {\it median} of $u$ with respect to the measure $\nu$ if
$$\nu(\{ x \in X \ : \ u(x) < \mu \}) \leq \frac{1}{2} \nu(X) \quad \hbox{and} \quad \nu(\{ x \in X \ : \ u(x) > \mu \}) \leq \frac{1}{2} \nu(X).$$
We denote by ${\rm med}_\nu (u)$ the set of all medians of $u$. It is easy to see that
$$\mu \in {\rm med}_\nu (u) \iff - \nu(\{ u = \mu \}) \leq \nu(\{ x \in X \ : \ u(x) > \mu \}) - \nu(\{ x \in X \ : \ u(x) < \mu \}) \leq \nu(\{ u = \mu \}),$$
from where it follows that
\begin{equation}\label{sigg1}
0 \in {\rm med}_\nu (u) \iff \exists \xi \in {\rm sign}(u) \ \hbox{such that} \ \int_X \xi(x) d \nu(x) = 0.
\end{equation}

By Proposition \ref{eigen1} and relation~\eqref{sigg1}, we have the following result that was   obtained for finite graphs by Hein and B\"uhler in \cite{HB}.

 \begin{corollary}\label{meddiaa}
 If  $(\lambda, u)$ is   an $m$-eigenpair of  $\Delta^m_1$ then
    $$ \lambda \not= 0\ \Longleftrightarrow \ 0 \in {\rm med}_\nu (u).$$
\end{corollary}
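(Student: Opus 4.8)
The plan is to deduce Corollary~\ref{meddiaa} directly from Proposition~\ref{eigen1}(ii) and the elementary equivalence~\eqref{sigg1}, so that the proof is essentially a two-line chain of iff's. First I would recall that, by Proposition~\ref{eigen1}(ii), for an $m$-eigenpair $(\lambda,u)$ of $\Delta^m_1$ we have $\lambda\neq 0$ if and only if there exists $\xi\in{\rm sign}(u)$ with $\int_X\xi(x)\,d\nu(x)=0$. Then I would invoke~\eqref{sigg1}, which states precisely that the existence of such a $\xi$ is equivalent to $0\in{\rm med}_\nu(u)$. Chaining these two equivalences yields $\lambda\neq 0\iff 0\in{\rm med}_\nu(u)$, which is the claim.

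Since~\eqref{sigg1} is stated just before the corollary without proof in the excerpt, I would include a short justification of it for completeness, even though it is routine: one writes $\int_X\xi\,d\nu=\nu(\{u>0\})-\nu(\{u<0\})+\int_{\{u=0\}}\xi\,d\nu$ for $\xi\in{\rm sign}(u)$, notes that $\xi$ is free to take any value in $[-1,1]$ on $\{u=0\}$, so $\int_{\{u=0\}}\xi\,d\nu$ ranges exactly over $[-\nu(\{u=0\}),\nu(\{u=0\})]$, and hence such a $\xi$ with total integral $0$ exists iff
$$
-\nu(\{u=0\})\le \nu(\{u>0\})-\nu(\{u<0\})\le\nu(\{u=0\}),
$$
which, using $\nu(X)=\nu(\{u>0\})+\nu(\{u<0\})+\nu(\{u=0\})$, is exactly the characterization $0\in{\rm med}_\nu(u)$ recorded above. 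This uses that $\nu$ is a probability measure (or at least finite), which is the standing assumption in this subsection.

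There is no real obstacle here: the substantive work has already been done in Proposition~\ref{eigen1}, whose proof in turn rests on Theorem~\ref{chsubd} (the characterization of the subdifferential via an antisymmetric function $\g$) together with the reversibility of $\nu$, which forces $\lambda\int_X\xi\,d\nu=-\int_X\int_X\g(x,y)\,dm_x(y)\,d\nu(x)=0$. The only minor point to be careful about is the ergodicity/constancy step hidden in part~(i) of that proposition (Lemma~\ref{lemita1}), but that is not needed for the direction we are using. Thus the corollary is a genuine corollary, and the proof I would write is simply: ``By Proposition~\ref{eigen1}(ii), $\lambda\neq 0$ iff there is $\xi\in{\rm sign}(u)$ with $\int_X\xi\,d\nu=0$; by~\eqref{sigg1} this holds iff $0\in{\rm med}_\nu(u)$.'' The main thing to double-check is that the hypothesis ``$\nu$ is a probability measure'' is in force (it is, per the subsection's opening paragraph), so that Proposition~\ref{eigen1} applies verbatim.
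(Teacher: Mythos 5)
Your proposal is correct and is exactly the paper's argument: the authors also obtain the corollary by combining Proposition~\ref{eigen1}(ii) with the equivalence~\eqref{sigg1}, which they likewise state without proof. Your added verification of~\eqref{sigg1} (including the use of finiteness of $\nu$) is accurate and harmless extra detail.
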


 Observe that,  by this corollary, if   $\lambda\neq 0$  is an $m$-eigenvalue of  $\Delta^m_1$, then there exists an $m$-eigenvector $u$ associated to $\lambda$ such that its   $0$-superlevel  set $E_0(u)$ has  positive $\nu$-measure. In fact, for any $m$-eigenvector $u$, either $u$ or $-u$ will satisfy this condition.

\begin{proposition}\label{chhar} If $(\lambda, u)$ is an $m$-eigenpair with $\lambda >0$ and $\nu(E_0(u)) > 0$, then $\left(\lambda, \frac{1}{\nu(E_0(u))} \1_{E_0(u)}\right)$ is an $m$-eigenpair,  $\lambda=\lambda_{E_0(u)}^m$  and $E_0(u)$ is $m$-calibrable. Moreover $\nu(E_0(u))\le \frac12$.
\end{proposition}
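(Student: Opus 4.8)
The plan is to reuse \emph{verbatim} the antisymmetric field $\g$ supplied by the hypothesis that $(\lambda,u)$ is an $m$-eigenpair and to show that the very same $\g$ certifies that $\frac{1}{\nu(E_0(u))}\1_{E_0(u)}$ is an $m$-eigenfunction with eigenvalue $\lambda$; Theorem~\ref{eigencalib}(i) will then give the $m$-calibrability of $E_0(u)$ for free. First I would dispatch the bound $\nu(E_0(u))\le\frac12$: since $\lambda>0$, Corollary~\ref{meddiaa} gives $0\in{\rm med}_\nu(u)$, and, as $\nu$ is a probability measure, the defining inequality of a median yields $\nu(E_0(u))=\nu(\{u>0\})\le\frac12\nu(X)=\frac12$. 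In particular $0<\nu(E_0(u))<\nu(X)$, so $\lambda^m_{E_0(u)}$ is defined and Theorem~\ref{eigencalib} is applicable.

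Next, by the definition of an $m$-eigenpair together with Remark~\ref{1257m}(3), fix $\xi\in{\rm sign}(u)$ and an antisymmetric $\g\in L^\infty(X\times X,\nu\otimes m_x)$ with $\|\g\|_{L^\infty(X\times X,\nu\otimes m_x)}\le1$ such that
$$-\int_X\g(x,y)\,dm_x(y)=\lambda\,\xi(x)\quad\hbox{$\nu$-a.e.},\qquad \g(x,y)\big(u(y)-u(x)\big)=|u(y)-u(x)|\quad\hbox{$\nu\otimes m_x$-a.e.}$$
I would then verify that $\g(x,y)\in{\rm sign}\big(\1_{E_0(u)}(y)-\1_{E_0(u)}(x)\big)$ for $(\nu\otimes m_x)$-a.e. $(x,y)$ by a four-case check: if $x\in E_0(u)$ and $y\notin E_0(u)$ then $u(y)<u(x)$, so the second identity forces $\g(x,y)=-1={\rm sign}(-1)$; the case $x\notin E_0(u)$, $y\in E_0(u)$ follows by the antisymmetry of $\g$ together with the reversibility of $\nu$ (which makes $\nu\otimes m_x$ invariant under transposition, hence preserves its null sets); and if $\1_{E_0(u)}(x)=\1_{E_0(u)}(y)$ there is nothing to prove since $|\g(x,y)|\le1$. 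Setting $\xi^*:=-\frac1\lambda\int_X\g(\cdot,y)\,dm_{(\cdot)}(y)$, the first displayed identity gives $\xi^*=\xi$ $\nu$-a.e., whence $\xi^*(x)=1$ for $\nu$-a.e. $x\in E_0(u)$ (since $u(x)>0$ there) and $\xi^*(x)\in[-1,1]$ otherwise; that is, $\xi^*\in{\rm sign}(\1_{E_0(u)})$.

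With these two facts the implication $(v)\Rightarrow(i)$ of Theorem~\ref{chsubd}, applied to the pair $\1_{E_0(u)}$ and $\lambda\xi^*$, yields $\lambda\xi^*\in\partial\mathcal{F}_m(\1_{E_0(u)})$. Since $\mathcal{F}_m$ is convex and positively $1$-homogeneous, $\partial\mathcal{F}_m(\1_{E_0(u)})=\partial\mathcal{F}_m\big(\tfrac1{\nu(E_0(u))}\1_{E_0(u)}\big)$ and ${\rm sign}(\1_{E_0(u)})={\rm sign}\big(\tfrac1{\nu(E_0(u))}\1_{E_0(u)}\big)$; as $\big\|\tfrac1{\nu(E_0(u))}\1_{E_0(u)}\big\|_{L^1(X,\nu)}=1$, the pair $\big(\lambda,\tfrac1{\nu(E_0(u))}\1_{E_0(u)}\big)$ is an $m$-eigenpair. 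Finally, every $m$-eigenpair $(\mu,v)$ satisfies $\mu=TV_m(v)$ (Remark~\ref{1257m}), so here $\lambda=TV_m\big(\tfrac1{\nu(E_0(u))}\1_{E_0(u)}\big)=\tfrac1{\nu(E_0(u))}P_m(E_0(u))=\lambda^m_{E_0(u)}$ by \eqref{theabove01}; applying Theorem~\ref{eigencalib}(i) to the $m$-eigenpair $\big(\lambda^m_{E_0(u)},\tfrac1{\nu(E_0(u))}\1_{E_0(u)}\big)$ concludes that $E_0(u)$ is $m$-calibrable. The computation is essentially bookkeeping; the one delicate point is tracking the almost-everywhere qualifiers in the four-case verification — in particular invoking reversibility to pass a $\nu\otimes m_x$-null set to its transpose — but this is the same manipulation used repeatedly in Section~\ref{1Lpalce}.
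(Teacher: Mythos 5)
Your proposal is correct and follows essentially the same route as the paper's proof: extract the antisymmetric $\g$ and $\xi$ from the eigenpair, run the four-case check to see that $\g(x,y)\in{\rm sign}(\1_{E_0(u)}(y)-\1_{E_0(u)}(x))$ and $\xi\in{\rm sign}(\1_{E_0(u)})$, conclude via Theorem~\ref{chsubd} that the normalized indicator is an eigenfunction, and then invoke Theorem~\ref{eigencalib}(i) for calibrability and Corollary~\ref{meddiaa} for $\nu(E_0(u))\le\frac12$. The only cosmetic difference is that in the case $x\notin E_0(u)$, $y\in E_0(u)$ you appeal to antisymmetry and reversibility, whereas the identity $\g(x,y)(u(y)-u(x))=|u(y)-u(x)|$ with $u(y)>0\ge u(x)$ already forces $\g(x,y)=1$ directly, as in the paper.
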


\begin{proof} First observe that, by Corollary~\ref{meddiaa},  we have  that $\nu(E_0(u))\le \frac12$.
 Since $(\lambda,u)$ is an $m$-eigenpair, there exists $\xi\in\hbox{sign}(u)$ such that $$-\lambda\xi\in \Delta_1^mu;$$
 hence, there exists $\g(x,y)\in\hbox{sign}(u(y)-u(x))$ antisymmetric with $\Vert \hbox{\bf g} \Vert_{L^\infty(X \times X,\nu\otimes m_x)} \leq 1$ , such that
 $$-\int_{X}{\bf g}(x,y)\,dm_x(y)=  \lambda \xi(x) \quad \hbox{for} \ \nu-\mbox{a.e. }x\in X.$$
 Now,
 $$\xi(x) = \left\{\begin{array}{ll}
1&\hbox{if } x\in{E_0(u)} \hbox{ (since $u(x)>0$)},\\ \\
 \in[-1,1] &\hbox{if } x\in X\setminus{E_0(u)},
\end{array}
\right.$$
 and, therefore, $\xi\in\hbox{sign}(\1_{E_0(u)})$.  On the other hand,
$$\g(x,y) = \left\{\begin{array}{ll}
\in[-1,1]&\hbox{if } x,y\in{E_0(u)},\\ \\
-1&\hbox{if } x\in{E_0},\ y\in X\setminus {E_0(u)} \hbox{ (since $u(x)>0,\ u(y)\le 0$)},
\\ \\
 1&\hbox{if } x\in X\setminus{E_0(u)},\ y\in   {E_0(u)} \hbox{ (since $u(x)\le 0,\ u(y)> 0$)},\\ \\
 \in[-1,1] &\hbox{if } x,y\in X\setminus{E_0(u)},
\end{array}
\right.$$
and, consequently,  $\g(x,y)\in\hbox{sign}(\1_{E_0(u)}(y)-\1_{E_0(u)}(x))$. Therefore, we have that
 $\left(\lambda,\frac{1}{\nu({E_0(u)})}\1_{E_0(u)}\right)$ is an $m$-eigenpair of~$\Delta^m_1$. Moreover, by Theorem \ref{eigencalib}, we have that ${E_0(u)}$  is  $m$-calibrable.
\end{proof}

\begin{remark}\label{calvscon02}
As a consequence of Proposition~\ref{calvscon01}, when we search for $m$-eigenpairs of the $1$-Laplacian we can restrict ourselves to $m$-eigenpairs of the form $\left(\lambda, \frac{1}{\nu(E)} \1_{E}\right)$ where $E$ is m-calibrable and not decomposable as $E=E_1\cup_m E_2$. Indeed, suppose that $\left(\lambda, \frac{1}{\nu(E)} \1_{E}\right)$ is an $m$-eigenpair and $E=E_1\cup_m E_2$ for some $E_1$, $E_2\subset E$. Then, by \eqref{1-lapla.var-ver202}, there exist $\xi \in {\rm sign}(\1_E)$ and $\hbox{\bf g}\in L^\infty(X\times X, \nu \otimes m_x)$ antisymmetric with $\Vert \hbox{\bf g} \Vert_{L^\infty(X \times X,\nu\otimes m_x)} \leq 1$,
        such that
   \begin{equation} \left\{ \begin{array}{ll}
\displaystyle-\int_{X}{\bf g}(x,y)\,dm_x(y)= \lambda \xi(x) \quad \nu-\mbox{a.e. }x\in X,
    \\ \\ \displaystyle {\bf g}(x,y)\in\hbox{sign}(\1_{E}(y)-\1_{E}(x))\quad \nu \otimes m_x-\hbox{a.e. } (x,y)\in X\times X. \end{array}\right.
    \end{equation}
Then, we may take the same $\xi$ and $g(x,y)$ to see that $\left(\lambda, \frac{1}{\nu(E_1)} \1_{E_1}\right)$ is also  an $m$-eigenpair. Indeed, since $\lambda_E^m=\lambda_{E_1}^m$,  we only need to verify that ${\bf g}(x,y)\in\hbox{sign}(\1_{E_1}(y)-\1_{E_1}(x))$ $\nu \otimes m_x$-a.e.. For $x\in E_1$ we have:
\begin{itemize}
  \item if $y\in E_1$, then $\1_E(y)-\1_E(x)=0=\1_{E_1}(y)-\1_{E_1}(x)$,
  \item if $y\in X\setminus E$, then $\1_E(y)-\1_E(x)=-1=\1_{E_1}(y)-\1_{E_1}(x)$,
\end{itemize}
and, since $L_m(E_1,E_2)=0$, we have that $\nu \otimes m_x(E_1\times E_2)=0$ so the condition is satisfied. Similarly for $x\in E_2$ (again $\nu \otimes m_x(E_2\times E_1)=0$). If $x\in X\setminus E$ then,
\begin{itemize}
  \item if $y\in E_1$, $\1_E(y)-\1_E(x)=1=\1_{E_1}(y)-\1_{E_1}(x)$,
  \item if $y\in E_2$, $\1_E(y)-\1_E(x)=1\in\hbox{sign}(0)=\hbox{sign}(\1_{E_1}(y)-\1_{E_1}(x))$
  \item if $y\in X\setminus E$, $\1_E(y)-\1_E(x)=0=\1_{E_1}(y)-\1_{E_1}(x)$.
\end{itemize}
\end{remark}

Let
 $$\Pi(X):= \left\{ u \in L^1(X, \nu) \ : \ \Vert u \Vert_{L^1(X,\nu)} = 1  \ \hbox{and} \ 0 \in {\rm med}_\nu (u) \right\}$$
and
\begin{equation}\label{minnb}\lambda_1^m(X) := \inf \left\{ TV_m(u) \ : \  u \in \Pi(X) \right\}.\end{equation}
In \cite{MST0} we proved the following result.

 \begin{theorem}[\cite{MST0}]\label{charact} Let $[X, d, m]$ be a metric random walk space with invariant and reversible  probability measure~$\nu$.  Then,

 \item(i)
 $
 h_m(X) = \lambda_1^m(X).
$
 \item(ii) For $\Omega \subset X$ $\nu$-measurable with $\nu(\Omega) = \frac12$,
 $h_m(X) = \lambda_\Omega^m \iff   \1_\Omega - \1_{X \setminus \Omega} \ \hbox{ is a minimizer of } \ \eqref{minnb}.$
\end{theorem}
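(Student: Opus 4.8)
The proof naturally splits into the identity $h_m(X)=\lambda_1^m(X)$ of part (i) and the characterisation (ii), which will follow quickly once (i) is in hand. For the inequality $\lambda_1^m(X)\le h_m(X)$ the plan is to feed into \eqref{minnb} the normalised indicator of a competitor for $h_m(X)$: given a $\nu$-measurable $D\subset X$ with $0<\nu(D)\le\frac12$, put $u:=\frac{1}{\nu(D)}\1_D$. Since $\nu(X)=1$ we get $\Vert u\Vert_{L^1(X,\nu)}=1$, while $\nu(\{u>0\})=\nu(D)\le\frac12$ and $\nu(\{u<0\})=0$, so $0\in{\rm med}_\nu(u)$ and hence $u\in\Pi(X)$; moreover $TV_m(u)=\frac{1}{\nu(D)}TV_m(\1_D)=\frac{P_m(D)}{\nu(D)}$ by \eqref{theabove01}. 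Taking the infimum over all such $D$ yields $\lambda_1^m(X)\le h_m(X)$.

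For the reverse inequality I would combine the coarea formula (Theorem~\ref{coarea1}) with the median hypothesis. Fix $u\in\Pi(X)$. If $t>0$ then $E_t(u)\subset\{u>0\}$, so $\nu(E_t(u))\le\frac12$; thus either $\nu(E_t(u))=0$, in which case $P_m(E_t(u))=0$, or $0<\nu(E_t(u))\le\frac12$ and \eqref{1523m} gives $P_m(E_t(u))\ge h_m(X)\,\nu(E_t(u))$. If $t<0$ then $X\setminus E_t(u)=\{u\le t\}\subset\{u<0\}$, so $\nu(X\setminus E_t(u))\le\frac12$, and since $P_m(E_t(u))=P_m(X\setminus E_t(u))$ by reversibility of $\nu$, the same dichotomy gives $P_m(E_t(u))\ge h_m(X)\,\nu(X\setminus E_t(u))$. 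Splitting the coarea integral at $t=0$ and using the pointwise identity $\int_0^{+\infty}\1_{E_t(u)}(x)\,dt+\int_{-\infty}^0(1-\1_{E_t(u)}(x))\,dt=|u(x)|$, which is exactly \eqref{cooare1}, Tonelli's theorem gives
$$TV_m(u)=\int_{-\infty}^0 P_m(E_t(u))\,dt+\int_0^{+\infty}P_m(E_t(u))\,dt\geq h_m(X)\int_X|u(x)|\,d\nu(x)=h_m(X),$$
and taking the infimum over $u\in\Pi(X)$ proves $\lambda_1^m(X)\ge h_m(X)$, completing (i).

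For (ii) I would test with $u:=\1_\Omega-\1_{X\setminus\Omega}$, where $\nu(\Omega)=\frac12$. Then $\Vert u\Vert_{L^1(X,\nu)}=1$ and $\nu(\{u>0\})=\nu(\{u<0\})=\frac12$, so $u\in\Pi(X)$. Its superlevel sets are $E_t(u)=\emptyset$ for $t\ge1$, $E_t(u)=\Omega$ for $-1\le t<1$, and $E_t(u)=X$ (whence $P_m(E_t(u))=0$) for $t<-1$; so by the coarea formula $TV_m(u)=2P_m(\Omega)=\frac{P_m(\Omega)}{\nu(\Omega)}=\lambda_\Omega^m$. Since $\Omega$ is admissible in \eqref{1523m}, $h_m(X)\le\lambda_\Omega^m$, and by part (i) a function is a minimiser of \eqref{minnb} precisely when its $m$-total variation equals $h_m(X)$. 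Hence $\1_\Omega-\1_{X\setminus\Omega}$ is a minimiser of \eqref{minnb} if and only if $\lambda_\Omega^m=h_m(X)$, which is the asserted equivalence.

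I do not expect a deep obstruction here; the delicate points are bookkeeping ones. In the reverse inequality of (i) one must, for $t<0$, pass to the complement $X\setminus E_t(u)$ and invoke $P_m(A)=P_m(X\setminus A)$ rather than apply \eqref{1523m} to $E_t(u)$ itself, whose measure may exceed $\frac12$; and the degenerate case $\nu(E_t(u))=0$ must be handled separately so that \eqref{1523m} is only applied to sets of positive measure. Everything else (finiteness of the perimeters, which holds since $P_m(A)\le\nu(A)\le\nu(X)=1$, and the applications of Tonelli's theorem) is routine.
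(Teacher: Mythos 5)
Your proof is correct, and the delicate points you flag (passing to the complement $X\setminus E_t(u)$ for $t<0$ via $P_m(A)=P_m(X\setminus A)$, and excluding null level sets before invoking \eqref{1523m}) are handled properly; note only that the pointwise identity you use is the layer-cake formula for $|u|=u^++u^-$, i.e.\ the sum version of \eqref{cooare1} rather than \eqref{cooare1} itself. The paper does not reprove this theorem but cites \cite{MST0} for it, and your argument — coarea formula split at the median level, combined with the normalised-indicator test functions — is the standard route to this variational characterisation.
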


By  Corollary~\ref{meddiaa}, if $(\lambda, u)$ is  an $m$-eigenpair of $\Delta^m_1$ and $\lambda\not= 0$  then
$u \in \Pi(X)$. Now, $TV_m(u)=\lambda$,
thus,   as a corollary of  Theorem~\ref{charact}~(i), we have the following result.
  Recall that, for finite graphs, it is well known that the first non--zero eigenvalue coincides with the Cheeger constant (see \cite{Chang1}).
\begin{theorem}\label{charactCorol}
If $\lambda\not= 0$ is an $m$-eigenvalue of  $\Delta^m_1$ then
$$ h_m(X) \leq \lambda.$$
\end{theorem}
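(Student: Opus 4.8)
The plan is to show that any eigenfunction associated with a nonzero eigenvalue is an admissible competitor in the variational problem \eqref{minnb} defining $\lambda_1^m(X)$, and then invoke the identification $h_m(X)=\lambda_1^m(X)$ from Theorem~\ref{charact}(i).

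First I would fix an $m$-eigenpair $(\lambda,u)$ of $\Delta^m_1$ with $\lambda\neq 0$. By Definition~\ref{defeigenpair} we have $\Vert u\Vert_{L^1(X,\nu)}=1$, and by the equivalence (1)$\Leftrightarrow$(4) in Remark~\ref{1257m} (equivalently, by \eqref{1-lapla.var-ver202} together with the computation recorded in the remark preceding this theorem) we get $TV_m(u)=\lambda$. Next, since $\lambda\neq 0$, Corollary~\ref{meddiaa} yields $0\in{\rm med}_\nu(u)$. Combining $\Vert u\Vert_{L^1(X,\nu)}=1$ with $0\in{\rm med}_\nu(u)$ gives $u\in\Pi(X)$.

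Therefore, by the definition of $\lambda_1^m(X)$ in \eqref{minnb} as the infimum of $TV_m$ over $\Pi(X)$, we obtain $\lambda_1^m(X)\le TV_m(u)=\lambda$. Finally, Theorem~\ref{charact}(i) states $h_m(X)=\lambda_1^m(X)$, so $h_m(X)\le\lambda$, which is the claim.

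There is essentially no serious obstacle here: the proof is a short assembly of previously established facts. The only points that require a moment's care are (i) recalling that every $m$-eigenpair satisfies $TV_m(u)=\lambda$, which comes from the characterization in Theorem~\ref{chsubd} (rephrased in Remark~\ref{1257m}), and (ii) correctly reading Corollary~\ref{meddiaa} to conclude membership in $\Pi(X)$; everything else is the direct application of Theorem~\ref{charact}(i).
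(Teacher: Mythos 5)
Your proof is correct and follows exactly the paper's own argument: the paper likewise notes that Corollary~\ref{meddiaa} places $u$ in $\Pi(X)$, that $TV_m(u)=\lambda$, and then invokes Theorem~\ref{charact}(i). (The paper also remarks that the result follows alternatively from Proposition~\ref{chhar}, but its primary proof is the one you give.)
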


  This result also follows by Proposition~\ref{chhar} since $\nu(E_0(u))\le \frac12$.

 In the next result we will see  that if the infimum in~\eqref{1523m}  is attained then $h_m(X)$ is an $m$-eigenvalue of~$\Delta^m_1$.

 \begin{theorem}\label{d10s001}  Let $\Omega$ be a $\nu$-measurable subset of $X$ such that $0<\nu(\Omega)\le\frac 12$.

\item(i)   If $\Omega$ and $X\setminus\Omega$
are $m$-calibrable then $\left(\lambda_\Omega^m,\frac{1}{\nu(\Omega)}\1_\Omega\right)$ is an $m$-eigenpair of  $\Delta^m_1$.

\item(ii)
If $h_m(X)=\lambda^m_\Omega$ then $\Omega$ and $X\setminus\Omega$
are $m$-calibrable

\item(iii)
If $h_m(X)=\lambda^m_\Omega$     then $\left(\lambda_\Omega^m,\frac{1}{\nu(\Omega)}\1_\Omega\right)$ is an $m$-eigenpair of  $\Delta^m_1$.
 \end{theorem}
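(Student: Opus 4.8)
The plan is to establish the three items in the order (i), (ii), (iii), using (i) and (ii) together to deduce (iii). For item (i), I would start from the hypothesis that both $\Omega$ and $X\setminus\Omega$ are $m$-calibrable. By Theorem \ref{trasen002}, applied to $\Omega$, there is an antisymmetric function $\g_1$ on $\Omega\times\Omega$ with $\Vert\g_1\Vert_\infty\le 1$ satisfying \eqref{trasen001post}, i.e. $\lambda_\Omega^m = -\int_\Omega \g_1(x,y)\,dm_x(y) + 1 - m_x(\Omega)$ for $x\in\Omega$; and applying the same theorem to $X\setminus\Omega$ gives an antisymmetric $\g_2$ on $(X\setminus\Omega)\times(X\setminus\Omega)$ with $\lambda_{X\setminus\Omega}^m = -\int_{X\setminus\Omega}\g_2(x,y)\,dm_x(y) + 1 - m_x(X\setminus\Omega)$ for $x\in X\setminus\Omega$. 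The key observation is that $\lambda_\Omega^m = \lambda_{X\setminus\Omega}^m$; indeed, by \eqref{secondf021} and reversibility $P_m(\Omega) = P_m(X\setminus\Omega)$ while $\nu(\Omega) + \nu(X\setminus\Omega) = \nu(X) = 1$, so one must check the algebra $\frac{P_m(\Omega)}{\nu(\Omega)} = \frac{P_m(X\setminus\Omega)}{\nu(X\setminus\Omega)}$ — this forces $\nu(\Omega) = \frac12$ unless... wait, actually this requires care, so let me instead only use that the hypothesis $h_m(X) = \lambda_\Omega^m$ in (ii) pins down the common value. For (i) as stated, I would assemble a global antisymmetric $\g$ on $X\times X$ by setting $\g = \g_1$ on $\Omega\times\Omega$, $\g = \g_2$ on $(X\setminus\Omega)\times(X\setminus\Omega)$, $\g(x,y) = -1$ for $x\in\Omega$, $y\in X\setminus\Omega$ and $\g(x,y)=1$ for $x\in X\setminus\Omega$, $y\in\Omega$, exactly as in the last paragraph of the proof of Theorem \ref{trasen002}. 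Then $\g(x,y)\in\mathrm{sign}(\1_\Omega(y)-\1_\Omega(x))$ a.e., and one computes $-\int_X\g(x,y)\,dm_x(y)$ separately for $x\in\Omega$ and $x\in X\setminus\Omega$, obtaining $\lambda_\Omega^m$ on $\Omega$ and $-m_x(\Omega)$... here is where I need $\lambda_\Omega^m = \lambda_{X\setminus\Omega}^m$; granting this equality, setting $\xi := \tau^*$ from Theorem \ref{falsa}(iii) and invoking \eqref{foormul}-type bookkeeping, together with Theorem \ref{chsubd}(v) and Definition \ref{defeigenpair}, yields that $(\lambda_\Omega^m, \frac{1}{\nu(\Omega)}\1_\Omega)$ is an $m$-eigenpair.

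For item (ii), suppose $h_m(X) = \lambda_\Omega^m$ with $0<\nu(\Omega)\le\frac12$. Since $h_m(X) = \inf\{P_m(D)/\nu(D) : 0<\nu(D)\le\frac12\}$ by \eqref{1523m}, any $\nu$-measurable $E\subset\Omega$ with $\nu(E)>0$ has $\nu(E)\le\frac12$, hence $\frac{P_m(E)}{\nu(E)}\ge h_m(X) = \lambda_\Omega^m = \frac{P_m(\Omega)}{\nu(\Omega)}$; this says precisely that $h_1^m(\Omega) = \lambda_\Omega^m$, i.e. $\Omega$ is $m$-calibrable. For $X\setminus\Omega$: note $\nu(X\setminus\Omega)\ge\frac12$, and $P_m(X\setminus\Omega) = P_m(\Omega)$ by reversibility, so $\lambda_{X\setminus\Omega}^m = \frac{P_m(\Omega)}{\nu(X\setminus\Omega)}\le\frac{P_m(\Omega)}{\nu(\Omega)} = \lambda_\Omega^m = h_m(X)$. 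Given any $F\subset X\setminus\Omega$ with $\nu(F)>0$, I would split into cases: if $\nu(F)\le\frac12$, then $\frac{P_m(F)}{\nu(F)}\ge h_m(X)\ge\lambda_{X\setminus\Omega}^m$; if $\nu(F)>\frac12$, then $\nu(X\setminus F)<\frac12$ and $\nu(X\setminus F)\ge\nu(\Omega)>0$ (since $F\subset X\setminus\Omega$ means $\Omega\subset X\setminus F$), so $\frac{P_m(F)}{\nu(F)} = \frac{P_m(X\setminus F)}{\nu(F)}\ge\frac{P_m(X\setminus F)}{\nu(X\setminus F)}\cdot\frac{\nu(X\setminus F)}{\nu(F)}$ — here I'd use that $\frac{P_m(X\setminus F)}{\nu(X\setminus F)}\ge h_m(X) = \frac{P_m(\Omega)}{\nu(\Omega)}$ and a monotonicity-of-ratios argument (as in Proposition \ref{calvscon01}, the elementary fact $\min\{a/b,c/d\}\le(a+c)/(b+d)$) to conclude $\frac{P_m(F)}{\nu(F)}\ge\lambda_{X\setminus\Omega}^m$. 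Thus $h_1^m(X\setminus\Omega) = \lambda_{X\setminus\Omega}^m$ and $X\setminus\Omega$ is $m$-calibrable. This also upgrades the inequality $\lambda_{X\setminus\Omega}^m\le\lambda_\Omega^m$ — combined with the calibrability and the defining infimum I expect equality $\lambda_{X\setminus\Omega}^m = \lambda_\Omega^m$ to follow, which is exactly what was missing in step (i).

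Item (iii) is then immediate: under $h_m(X) = \lambda_\Omega^m$, item (ii) gives that both $\Omega$ and $X\setminus\Omega$ are $m$-calibrable and, as just noted, $\lambda_\Omega^m = \lambda_{X\setminus\Omega}^m$, so item (i) applies and $(\lambda_\Omega^m, \frac{1}{\nu(\Omega)}\1_\Omega)$ is an $m$-eigenpair of $\Delta_1^m$. I expect the main obstacle to be the clean verification in (i) that the glued function $\g$ produces the correct value $\lambda_\Omega^m\,\xi$ on all of $X$ with $\xi\in\mathrm{sign}(\1_\Omega)$; the subtlety is that on $X\setminus\Omega$ one gets $-\int_X\g(x,y)\,dm_x(y) = m_x(X\setminus\Omega) - \int_{X\setminus\Omega}\g_2(x,y)\,dm_x(y) = -\lambda_{X\setminus\Omega}^m + $ (a correction), which must be shown to equal $\lambda_\Omega^m\,\tau^*(x)$ for the $\tau^*$ of Theorem \ref{falsa}; reconciling this needs the equality of the two $\lambda$'s together with careful use of \eqref{secondf021}, and then one still has to confirm $\tau^*\in\mathrm{sign}(\1_\Omega)$, i.e. $|\tau^*(x)|\le 1$ on $X\setminus\Omega$, which is where an estimate of the form $m_x(\Omega)\le\lambda_\Omega^m$ (available since $\lambda_{X\setminus\Omega}^m = \lambda_\Omega^m$ and $X\setminus\Omega$ is calibrable, cf. Proposition \ref{falsa.22}) enters. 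A secondary technical point is justifying, in (ii), the ratio-monotonicity step for sets $F$ of large measure; this is elementary but should be spelled out.
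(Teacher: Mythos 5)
Your part (ii) is essentially correct and matches the paper's argument (the paper runs the same case analysis by contradiction; your direct version, using $\nu(F)\le\nu(X\setminus\Omega)$ and $\nu(X\setminus F)\ge\nu(\Omega)$ to compare the ratios, is fine). The genuine gap is in part (i), and it propagates to (iii). Your construction hinges on the equality $\lambda_\Omega^m=\lambda_{X\setminus\Omega}^m$, which you correctly suspect is problematic: since $P_m(\Omega)=P_m(X\setminus\Omega)$, that equality holds \emph{only} when $\nu(\Omega)=\tfrac12$, and your hope that the hypothesis $h_m(X)=\lambda_\Omega^m$ in (ii)/(iii) "pins down the common value" is false — nothing prevents the Cheeger infimum from being attained by a set of measure strictly less than $\tfrac12$, in which case $\lambda_{X\setminus\Omega}^m<\lambda_\Omega^m$ strictly. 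Your fallback, taking $\xi=\tau^*$ from Theorem \ref{falsa}(iii), requires the pointwise bound $m_x(\Omega)\le\lambda_\Omega^m$ on $X\setminus\Omega$ (condition \eqref{foormul}), and this is \emph{not} implied by the hypotheses of (i): Proposition \ref{falsa.22} applied to the calibrable set $X\setminus\Omega$ only gives $m_x(\Omega)\le\tfrac12\bigl(1+\lambda_{X\setminus\Omega}^m\bigr)$, which is weaker than $\lambda_\Omega^m$ except in degenerate cases. Indeed, Example \ref{ejeigenpair}(2) (the square $\Omega_5$ in $\Z^2$) shows \eqref{foormul} can fail even though $\bigl(\lambda_\Omega^m,\tfrac{1}{\nu(\Omega)}\1_\Omega\bigr)$ is an eigenpair, so any route that forces $\xi=\tau^*$ cannot work.

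What the paper does instead is to exploit the calibrability of the \emph{complement} to replace the pointwise function $-m_x(\Omega)/\lambda_\Omega^m$ by a constant. One glues $\g:=\g_1$ on $\Omega\times\Omega$, $\g=-1$ on $\Omega\times(X\setminus\Omega)$, $\g=1$ on $(X\setminus\Omega)\times\Omega$, and $\g:=-\g_2$ (note the sign flip, which your gluing omits) on $(X\setminus\Omega)\times(X\setminus\Omega)$, where $\g_2$ is the field certifying the calibrability of $X\setminus\Omega$ via \eqref{trasen001post}. A direct computation then gives
$$-\int_X\g(x,y)\,dm_x(y)=\lambda_\Omega^m \ \hbox{ on }\Omega,\qquad -\int_X\g(x,y)\,dm_x(y)=-\lambda_{X\setminus\Omega}^m \ \hbox{ on }X\setminus\Omega,$$
so one may take $\xi\equiv 1$ on $\Omega$ and $\xi\equiv-\lambda_{X\setminus\Omega}^m/\lambda_\Omega^m$ on $X\setminus\Omega$; this $\xi$ lies in ${\rm sign}(\1_\Omega)$ because $0\le\lambda_{X\setminus\Omega}^m\le\lambda_\Omega^m$ — an \emph{inequality}, which does follow from $\nu(\Omega)\le\tfrac12$ — and because ${\rm sign}(0)=[-1,1]$, so no equality of the two ratios is needed. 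With this repair, (iii) follows from (i) and (ii) exactly as you intend.
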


\begin{proof}
First of all, observe that, since $\nu(\Omega)\le\frac 12$,
$$\lambda_{X\setminus\Omega}^m \le \lambda_\Omega^m. $$

 \item{\it (i)}: By Theorem~\ref{trasen002}, since $\Omega$  is  $m$-calibrable, there exists an antisymmetric  function $\g_1$ in $\Omega\times\Omega$ such that
    \begin{equation}\label{trasen001proof001}
  -1\le \g_1(x,y)\le 1 \qquad \hbox{for $(\nu \otimes m_x)$-\mbox{a.e. }$(x,y) \in \Omega \times \Omega$},\end{equation}
    and
\begin{equation}\label{trasen001postpro0f001}\lambda_\Omega^m = -\int_{\Omega }\g_1(x,y)\,dm_x(y) + 1 - m_x(\Omega)\quad\hbox{$\nu$-a.e. $x\in\Omega$};
  \end{equation}
and, since $X\setminus\Omega $  is  $m$-calibrable, there exists an antisymmetric  function $\g_2$ in $(X\setminus\Omega)\times(X\setminus\Omega)$ such that
    \begin{equation}\label{trasen001proof001com}
  -1\le \g_2(x,y)\le 1 \qquad \hbox{for $(\nu \otimes m_x)$-\mbox{a.e. }$(x,y) \in (X\setminus\Omega)\times(X\setminus\Omega)$},\end{equation}    and
\begin{equation}\label{trasen001postpro0f001com}\lambda_{X\setminus\Omega}^m = -\int_{X\setminus\Omega }\g_2(x,y)\,dm_x(y) + 1 - m_x(X\setminus \Omega)\quad\hbox{$\nu$-a.e. $x\in X\setminus\Omega$}.
  \end{equation}
  Consequently, by taking
  $$\g(x,y)=\left\{
  \begin{array}{ll}
  \g_1(x,y)&\hbox{ if }x,y\in \Omega,\\[6pt]
  -1&\hbox{ if }x\in \Omega,y\in X\setminus\Omega,\\[6pt]
   1&\hbox{ if }x\in X\setminus\Omega,y\in\Omega,\\[6pt]
    -\g_2(x,y)&\hbox{ if }x,y\in X\setminus\Omega,
  \end{array}\right.
    $$
    we have that $\g(x,y)\in\hbox{sign}\left(\1_\Omega(y)-\1_\Omega(x)\right)$. Moreover,  from~\eqref{trasen001postpro0f001},
 $$\lambda_{\Omega}^m =  -\int_{X}\g(x,y)\,dm_x(y)\quad\hbox{for $\nu$-a.e. $x\in \Omega$},$$
 and,  since $\lambda_{X\setminus\Omega}^m \le \lambda_\Omega^m $, from~\eqref{trasen001postpro0f001com},
 $$-\lambda_\Omega^m\le  -\lambda_{X\setminus\Omega}^m= -\int_{X}\g(x,y)\,dm_x(y) \le \lambda_\Omega^m  \quad\hbox{for $\nu$-a.e. $x\in X\setminus\Omega$}.$$
 Hence, by Remark~\ref{1257m}~(2), we conclude that $\left(\lambda_\Omega^m,\frac{1}{\nu(\Omega)}\1_\Omega\right)$ is an $m$-eigenpair of  $\Delta^m_1$.

\item{\it (ii)}:  Since $h_m(X)=\frac{P_m(\Omega)}{\nu(\Omega)}$ and $0<\nu(\Omega)\le\frac12$, we have $h_m(X)=h_1^m(\Omega)=\frac{P_m(\Omega)}{\nu(\Omega)}$ and, consequently, $\Omega$ is $m$-calibrable. Let us suppose that $X\setminus\Omega$ is not $m$-calibrable. Then, there exists $E\subset X\setminus\Omega$ such that $\nu(E)<\nu(X\setminus\Omega)$ and $$\lambda_E^m<\lambda_{X\setminus\Omega}^m \ .$$
Now, this implies that  $\nu(E)>\frac12$ since, otherwise, we get
 $$\lambda_E^m<\lambda_{X\setminus\Omega}^m\le\lambda_\Omega^m=h_m(X)$$
which is a contradiction. Moreover, since $\nu(E)<\nu(X\setminus\Omega)$, $\lambda_E^m<\lambda_{X\setminus\Omega}^m$ also implies that
   $$P_m(E)<P_m(X\setminus\Omega)=P_m(\Omega).$$
   However, since $\nu(E)>\frac12$, we have that $\nu(X\setminus E)<\frac12$ and, consequently, taking into account that $\nu(\Omega)\le\nu(X\setminus E)$, we get
 $$\lambda_{X\setminus E}^m=\frac{P_m(E)}{\nu(X\setminus E)}<\frac{P_m(\Omega)}{\nu(\Omega)}=h_m(X),$$
which is also a contradiction.

  Finally, {\it (iii)}  is a direct consequence of {\it (i)} and {\it (ii)}.
\end{proof}

As a consequence of Proposition \ref {chhar} and Theorem \ref{d10s001}, we have the following result.

\begin{corollary}\label{important1} If $h_m(X)$ is a positive $m$-eigenvalue of~$\Delta^m_1$, then, for  any eigenvector $u$ associated to $h_m(X)$ with $\nu(E_0(u))>0$,
 \begin{center}\label{ok1}
 $\left(h_m(X),\frac{1}{\nu(E_0(u))}\1_{E_0(u)}\right)$ is an $m$-eigenpair of~$\Delta^m_1$,
 \end{center}
 $\nu(E_0(u))\le \frac12$, and
 \begin{equation}\label{biieen}h_m(X)=\lambda_{E_0(u)}^m.\end{equation}
   Moreover, both ${E_0(u)}$ and $X\setminus{E_0(u)}$ are $m$-calibrable.
\end{corollary}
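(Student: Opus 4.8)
The plan is to obtain this statement as a straightforward combination of Proposition~\ref{chhar} and Theorem~\ref{d10s001}(ii), with essentially no new argument required. First I would fix a positive $m$-eigenvalue $\lambda=h_m(X)$ of $\Delta^m_1$ and an associated $m$-eigenfunction $u$ (so, by Definition~\ref{defeigenpair}, $\Vert u\Vert_{L^1(X,\nu)}=1$) satisfying $\nu(E_0(u))>0$, where as before $E_0(u)=\{x\in X:u(x)>0\}$. Since $(h_m(X),u)$ is then an $m$-eigenpair with $h_m(X)>0$ and $\nu(E_0(u))>0$, Proposition~\ref{chhar} applies directly and yields at once: $\bigl(h_m(X),\frac{1}{\nu(E_0(u))}\1_{E_0(u)}\bigr)$ is an $m$-eigenpair of $\Delta^m_1$; $h_m(X)=\lambda^m_{E_0(u)}$, which is precisely~\eqref{biieen}; the set $E_0(u)$ is $m$-calibrable; and $\nu(E_0(u))\le\frac12$.

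It then remains only to see that $X\setminus E_0(u)$ is $m$-calibrable as well. For this I would invoke Theorem~\ref{d10s001}(ii) with $\Omega:=E_0(u)$. The hypotheses are met: $0<\nu(\Omega)\le\frac12$ by the assumption $\nu(E_0(u))>0$ together with the bound $\nu(E_0(u))\le\frac12$ just obtained from Proposition~\ref{chhar}, and $h_m(X)=\lambda^m_{\Omega}$ by~\eqref{biieen}. Theorem~\ref{d10s001}(ii) then gives that both $\Omega=E_0(u)$ and $X\setminus\Omega=X\setminus E_0(u)$ are $m$-calibrable, which finishes the proof.

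I do not expect any genuine obstacle here, since this is a bookkeeping corollary; the only point deserving a moment's attention is that the inequality $\nu(E_0(u))\le\frac12$ needed to invoke Theorem~\ref{d10s001}(ii) is itself part of the conclusion of Proposition~\ref{chhar} (it ultimately comes from Corollary~\ref{meddiaa}), so nothing extra must be proved. One may also note that, working as throughout this section under the normalization $\nu(X)=1$, all the quantities involved ($\lambda^m_D$, the $m$-eigenvalues, and $h_m(X)$) are scale-invariant, so no loss of generality is incurred by this normalization, exactly as observed before~\eqref{1523m}.
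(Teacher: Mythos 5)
Your proposal is correct and matches the paper's intended argument: the paper derives this corollary precisely as a combination of Proposition~\ref{chhar} (which gives the eigenpair statement, \eqref{biieen}, the calibrability of $E_0(u)$, and the bound $\nu(E_0(u))\le\frac12$) and Theorem~\ref{d10s001}(ii) applied to $\Omega=E_0(u)$ (which gives the calibrability of $X\setminus E_0(u)$). No gaps; the hypothesis checks you perform are exactly the ones needed.
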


 \begin{remark}\label{1330m}
  For $\Omega \subset X$ with $\nu(\Omega) = \frac12$ (thus $\lambda_\Omega^m=2P_m(\Omega)$) we have   that:
  \item(1)
   $\Omega$ and $X\setminus\Omega$
are $m$-calibrable  if, and only if, $\left(2P_m(\Omega),t\1_\Omega-(2-t)\1_{X\setminus\Omega}\right)$ is an $m$-eigenpair of  $\Delta^m_1$  for any $t\in[0,2]$.
\item(2)
If $h_m(X)=   2P_m(\Omega)$ then $\left(2P_m(\Omega),t\1_\Omega-(2-t)\1_{X\setminus\Omega}\right)$ is an $m$-eigenpair of  $\Delta^m_1$  for all $t\in[0,2]$.

\end{remark}

\begin{example} In  Figure~\ref{fig:partiguales}, following the notation in Example~\ref{ejeigenpair}(2), we consider the metric random walk space $\left[X:=(\Omega_2)_{m}, d_{\mathbb{Z}^2}, m_2:=m^{(\Omega_2)_{m}}\right]$. In Figure~\ref{fig:partiguales}(A), we show this space partitioned into two $m_2$-calibrable sets,  $E =\{ (-1,0), (0,0), (1,0), (-1,1), (0,1), (1,1)\}$  and $X\setminus E$, of equal measure, hence, by the previous remark, both $(\lambda_E^{m_2},\frac{1}{\nu(E)}\1_E)$ and $(\lambda_{ E}^{m_2},\frac{1}{\nu(E)}\1_{X\setminus E})$ are $m_2$-eigenpairs. However, the Cheeger constant $h_{m_2}(X)$ is smaller than the eigenvalue $\lambda_{E}^{m_2}$    since, for $D = \{ (1,-1), (1,0), (2,0), (2,1),(1,1),  (1,2)\}$, we have $\lambda_D^{m_2} = \frac16$   (see Figure~\ref{fig:partiguales}(B)).

\begin{figure}[h]\centering
  \begin{subfigure}[t]{0.4\textwidth}
  \includegraphics[width=\textwidth]{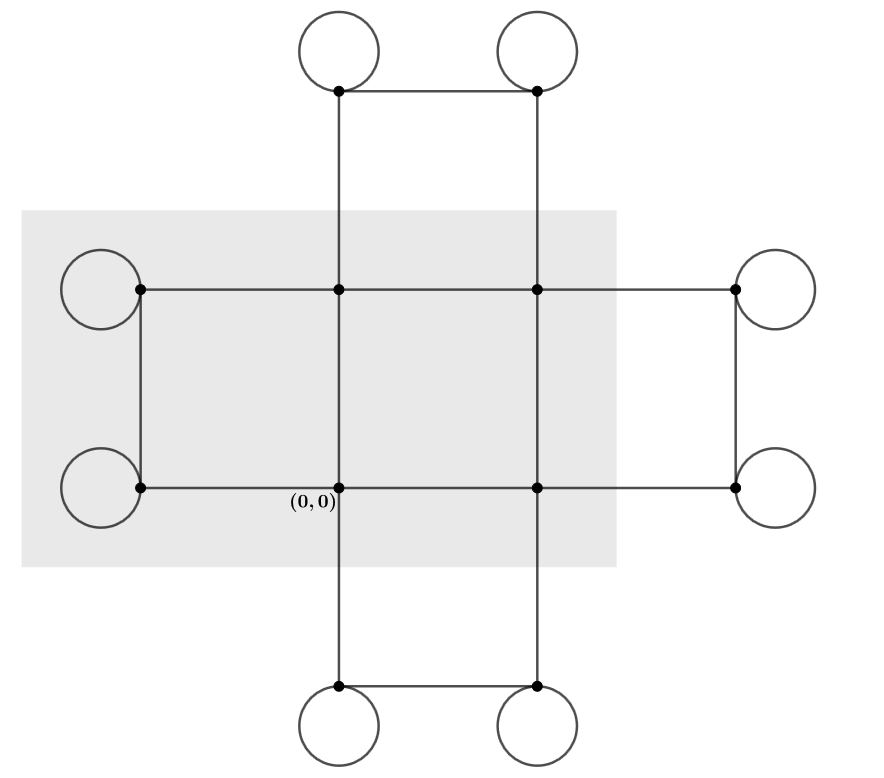}
  \caption{Let $E$ be the set formed by the vertices in the shaded region. Then $\lambda_E^{m_2}=\frac{1}{4}$. }
  \label{fig:test1}
  \end{subfigure}\hspace{1cm}
 \begin{subfigure}[t]{0.36\textwidth}
  \includegraphics[width=\textwidth]{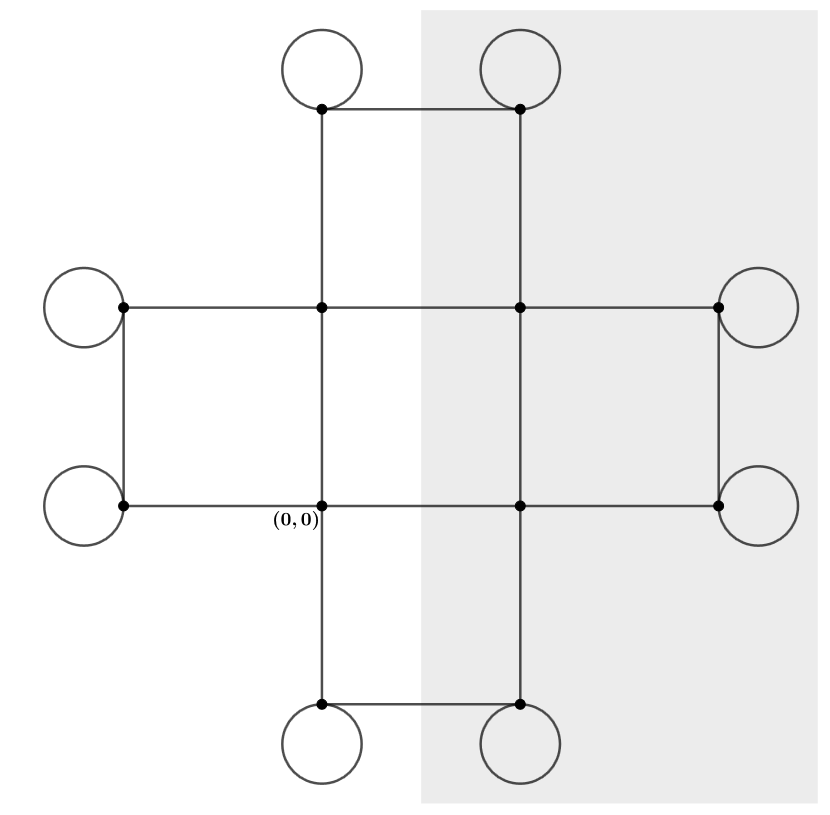}
  \caption{Let $D$ be the set formed by the vertices in the shaded region. Then $\lambda_D^{m_2}=\frac{1}{6}$.}
  \label{fig:test2}
  \end{subfigure}
  \caption{The line segments represented in the figures correspond to the edges between adjacent vertices, with $w_{xy}=1$ for any pair of these neighbouring vertices. The loops that ``appear'' when considering $m_2$ (see Example~\ref{ejeigenpair}(2)) are represented by circles.
  }\label{fig:partiguales}
\end{figure}

\end{example}

\begin{remark}\label{corol001} By Theorems~\ref{charactCorol} and~\ref{d10s001}, and Corollary \ref{important1}, for  finite weighted connected discrete graphs, we have that
 \begin{equation}\label{changeig}h_m(X) \ \hbox{ is  the first non-zero eigenvalue of} \   \Delta^{m^G}_1 \end{equation}
(as already proved in~\cite{Chang1}, \cite{Changetal01} and~\cite{HB}) and, to solve the optimal Cheeger cut problem, it is enough to find an  eigenvector associated to $h_m(X)$ since  then $\{E_0(u),X\setminus E_0(u)\}$ or $\{E_0(-u),X\setminus E_0(-u)\}$ is a Cheeger cut.

  \end{remark}

  In the next examples we will see that \eqref{changeig} is not true in general. We obtain infinite weighted connected discrete graphs (with finite invariant and reversible measure) for which there is no first positive $m$-eingenvalue.

\begin{example}\label{Nexx1}(1)
 Let $[V(G),d_G,m^G]$ be the metric random walk space defined in Example \ref{JJ}~(3) with vertex set  $V(G)=\{x_0,x_1,\ldots,x_n,\ldots \}$ and weights defined as follows:
$$w_{x_{2n}x_{2n+1}}=\frac{1}{2^n} , \quad w_{x_{2n+1}x_{2n+2}}=\frac{1}{3^n} \quad \hbox{for} \ n=0, 1, 2, \dots \hbox{ and } w_{x,y}=0 \hbox{ otherwise.}$$
 We have $d_{x_0}=1,\ d_{x_1}=2$ and, for $n\geq 1$,
$$
\begin{array}{c}\displaystyle d_{x_{2n}}=w_{x_{2n-1}x_{2n}}+w_{x_{2n}x_{2n+1}}=
\frac{1}{3^{n-1}}+\frac{1}{2^n},\\ \\ \displaystyle d_{x_{2n+1}}=w_{x_{2n}x_{2n+1}}+w_{x_{2n+1}x_{2n+2}}=\frac{1}{2^n}+\frac{1}{3^n}.
\end{array}
$$
Furthermore,
$$\nu_G(V) = \sum_{i=0}^\infty  d_{x_i} = 3+\sum_{n=1}^\infty \frac{1}{3^{n-1}}+\frac{1}{2^n} + \frac{1}{2^n}+\frac{1}{3^n} = 7. $$
Observe that the measure $\nu_G$ is not normalized, but this does not affect the result  because the constants $\lambda^m_\Omega$ and the $m$-eigenvalues of the $1$-Laplacian are independent of this normalization.

Consider $E_n:=\{ x_{2n}, x_{2n+1}\}$ for $n\ge 1$.  By (2) in Remark \ref{observ1},
we have that $E_n$ is $m^G$-calibrable. On the other hand,
$$m_{x_{2n-1}}(E_n)=\frac{1}{1+(\frac32)^{n-1}} \ , \ \ m_{x_{2n+2}}(E_n)=\frac{1}{1+\frac34(\frac32)^{n-1}}=\lambda_{E_n}^{m^G} \ , \ \hbox{and} \  m_x(E_n)=0 \ \hbox{else in  $V\setminus E_n$} . $$
Hence,
$$ m_x(E_n) \leq \lambda_{E_n}^{m^G} \quad \hbox{for all} \ x \in V \setminus E_n.$$
Then, by Theorem \ref{eigencalib}, we have that
 $(\lambda^{m^G}_{E_n}, \frac{1}{\nu(E_n)} \1_{E_n})$ is a $m^G$-eigenpair of $\Delta_1^{m^G}$.
 Now,
 $$\lim_{n \to \infty} \lambda_{E_n}^{m^G} = \lim_{n \to \infty}\frac{2^{n+1}}{2^{n+1}+3^{n}} =0.$$
Consequently, both by Theorem \ref{charactCorol} and by definition of $h_{m^G}(V(G))$, we get
 $$ h_{m^G}(V(G)) = 0.$$

 \item(2)   Let $0<s<r<\frac12$. Let $[V(G),d_G,m^G]$ be the metric random walk space defined in Example \ref{JJ}~(3) with vertex set $V(G)=\{x_0,x_1,\ldots,x_n,\ldots \}$ and weights defined as follows:
$$w_{x_0,x_1}= \frac{r}{1-r}+\frac{s}{1-s},\quad w_{x_{n}x_{n+1}}=r^n+s^n \quad \hbox{for} \ n= 1, 2, 3, \dots \hbox{ and } w_{x,y}=0 \hbox{ otherwise}.$$
Then,
\begin{center}$ h_{m^G}(V(G)) = \displaystyle \frac{1-r}{1+r}$ \
is not an $m^G$-eigenvalue of $\Delta_1^{m^G}$.\end{center}
Indeed, to start with, observe that
$\nu_G(V(G))=\frac{4r}{1-r}+\frac{4s}{1-s}$,
$$\nu_G(\{x_0\})\le\frac{\nu_G(V(G))}{2},\  \nu_G(\{x_0,x_1\})>\frac{\nu_G(V(G))}{2},$$
$$\nu_G(\{x_1\})\le\frac{\nu_G(V(G))}{2},\  \nu_G(\{x_1,x_2\})>\frac{\nu_G(V(G))}{2},$$
and, for
$E_n:=\{x_n,x_{n+1},x_{n+2},\dots\},$ $n\ge 2$,
$$\nu_G(E_n)\le \frac{\nu_G(V(G))}{2}.$$ Now, for $n\ge 2$,
$$\lambda_{E_n}^m=\frac{r^{n-1}+s^{n-1}}{r^{n-1}+s^{n-1}+2\left(\frac{r^n}{1-r}+\frac{s^n}{1-s}\right)}
=\frac{r^{n-1}+s^{n-1}}{\frac{1+r}{1-r}r^{n-1}+\frac{1+s}{1-s}s^{n-1}}
$$
 decreases as $n$ increases (therefore, the sets $E_n$ are not $m$-calibrable), and
 $$\lim_n\lambda_{E_n}^m=\frac{1-r}{1+r}.$$
  Let us see that, for any
  $E\subset V(G)$ with   $0<\nu_G(E)\le  \frac{\nu(V(G))}{2}$, we have $\lambda_E^m>\frac{1-r}{1+r}$. Indeed, to start with, observe that if $E=\{x_0\}$ or $E=\{x_1\}$ then $\lambda_{\{x_0\}}^m= \lambda_{\{x_1\}}^m=1> \frac{1-r}{1+r}$. Moreover, we have that $\{x_0,x_1\}   \not\subset E$ and $\{x_1,x_2\}   \not\subset E$ since  $\nu_G(\{x_0,x_1\})\not\le\frac{\nu_G(V(G))}{2}$ and  $\nu_G(\{x_1,x_2\})\not\le\frac{\nu_G(V(G))}{2}$. Therefore, it remains to see what happens for sets $E$ satisfying
  \item{ (i)}
  $x_0\in E$, $x_1\notin E$ and $x_n\in E$ for some $n\ge 2$,
  \item{ (ii)} $x_1\in E$, $x_0\notin E$ and $x_n\in E$ for some $n\ge 3$,
  \item{ (iii)} $x_0\notin  E$, $x_1\notin E$ and $x_n\in E$ for some $ n\ge 2$.

\noindent For the case (i), let $n_1\in \mathbb{N}$ be the first index $n\ge 2$ such that $x_n\in E$; for the case (ii), let $n_2\in \mathbb{N}$ be the first index $n\ge 3$ such that $x_n\in E$; and for the case (iii), let $n_3\in \mathbb{N}$ be the first index $n\ge 2$ such that $x_n\in E$. Now, for the case (i) we have that
$$\lambda_E^m\ge\lambda_{\{x_0\}\cup E_{n_1}}\ge\lambda_{E_{n_1}}.$$
Indeed, the first equality follows from the fact that $P_m(E)\ge P_m(\{x_0\}\cup E_{n_1})$ and $\nu(E)\le \nu(\{x_0\}\cup E_{n_1})$ and the second one follows since
$$\lambda_{\{x_0\}\cup E_{n_1}}=\frac{\frac{r}{1-r}+\frac{s}{1-s} +P_m(E_{n_1})}{\frac{r}{1-r}+\frac{s}{1-s}
+\nu(E_{n_1})}>\frac{ P_m(E_{n_1})}{ \nu(E_{n_1})}=\lambda_{E_{n_1}}.$$
Hence, $\lambda_E^m>\frac{1-r}{1+r}$.
With a  similar argument we get, in the case (ii),
$$\lambda_E^m\ge\lambda_{\{x_1\}\cup E_{n_2}}\ge\lambda_{E_{n_2}}>\frac{1-r}{1+r};$$
and, in the case (iii),
$$\lambda_E^m\ge \lambda_{E_{n_3}}>\frac{1-r}{1+r}.$$

    Consequently, $h_{m^G}(V(G)) = \frac{1-r}{1+r}$ and, by Corollary \ref{important1}, it is not an $m$-eigenvalue of $\Delta_1^{m^G}$.
  \end{example}

\bigskip
\noindent {\bf Acknowledgment.} The authors wish to thank the anonymous referee whose comments after a detailed reading of the paper allowed them to improve its presentation. The authors have been partially supported  by the Spanish MCIU and FEDER, project PGC2018-094775-B-100. The second author was also supported by the Spanish Ministerio de Ciencia, Innovaci\'on y Universidades under Grant BES-2016-079019.

\end{document}